\documentclass{amsart}
\usepackage[margin=1.0in]{geometry}
\usepackage{amsthm, amsmath, amssymb, mathtools, mathrsfs}
\usepackage{trimclip, enumitem, adjustbox}
\usepackage{tikz}
\usepackage{tikz-qtree}

\usepackage[bbgreekl]{mathbbol}
\usepackage{bbm}

\usepackage{aliascnt}
\usepackage{hyperref}
\usepackage{breakurl}
\usepackage{array, longtable}

\makeatletter
\def\LT@start{%
\let\LT@start\endgraf
\endgraf\penalty\z@\vskip\LTpre
\dimen@\pagetotal
\advance\dimen@ \ht\ifvoid\LT@firsthead\LT@head\else\LT@firsthead\fi
\advance\dimen@ \dp\ifvoid\LT@firsthead\LT@head\else\LT@firsthead\fi
\advance\dimen@ \ht\LT@foot
\dimen@ii\vfuzz
\vfuzz\maxdimen
\setbox\tw@\copy\z@
\setbox\tw@\vsplit\tw@ to \ht\@arstrutbox
\setbox\tw@\vbox{\unvbox\tw@}%
\vfuzz\dimen@ii
\advance\dimen@ \ht
\ifdim\ht\@arstrutbox>\ht\tw@\@arstrutbox\else\tw@\fi
\advance\dimen@\dp
\ifdim\dp\@arstrutbox>\dp\tw@\@arstrutbox\else\tw@\fi
\advance\dimen@ -\pagegoal
\ifdim \dimen@>\z@\unskip\vfil\break\fi
\global\@colroom\@colht
\ifvoid\LT@foot\else
\advance\vsize-\ht\LT@foot
\global\advance\@colroom-\ht\LT@foot
\dimen@\pagegoal\advance\dimen@-\ht\LT@foot\pagegoal\dimen@
\maxdepth\z@
\fi
\ifvoid\LT@firsthead\copy\LT@head\else\box\LT@firsthead\fi
\output{\LT@output}}
\makeatother

\emergencystretch=\maxdimen

\usepackage[textsize=footnotesize]{todonotes}

\usepackage{xcolor}
\definecolor{dblue}{rgb}{0,0,0.70}
\hypersetup{
	unicode=true,
	colorlinks=true,
	citecolor=dblue,
	linkcolor=dblue,
	anchorcolor=dblue
}

\makeatletter
\expandafter\g@addto@macro\csname th@plain\endcsname{%
	\thm@notefont{\bfseries}
}%
\expandafter\g@addto@macro\csname th@remark\endcsname{%
	\thm@headfont{\bfseries}
}%
\makeatother

\newtheorem{theorem}{Theorem}[section]	
\newtheorem*{theorem*}{Theorem}

\newaliascnt{lemma}{theorem}
\newtheorem{lemma}[lemma]{Lemma}
\aliascntresetthe{lemma}
\newtheorem*{lemma*}{Lemma}

\newaliascnt{proposition}{theorem}
\newtheorem{proposition}[proposition]{Proposition}
\aliascntresetthe{proposition}

\newaliascnt{corollary}{theorem}
\newtheorem{corollary}[corollary]{Corollary}
\aliascntresetthe{corollary}

\theoremstyle{remark}

\newaliascnt{remark}{theorem}

\aliascntresetthe{remark}
\newaliascnt{question}{theorem}
\newtheorem{question}[question]{Question}
\aliascntresetthe{question}

\newtheorem*{question*}{Question}

\newaliascnt{definition}{theorem}
\newtheorem{definition}[definition]{Definition}
\aliascntresetthe{definition}

\newaliascnt{example}{theorem}
\newtheorem{example}[example]{Example}
\aliascntresetthe{example}

\newaliascnt{convention}{theorem}

\aliascntresetthe{convention}

\makeatletter
\def\l@subsection{\@tocline{2}{0pt}{1pc}{5pc}{}} \def\l@subsection{\@tocline{2}{0pt}{2pc}{6pc}{}}
\makeatother
\usepackage[style = numeric, sorting = nyt]{biblatex}
\title{The Axiom of Double Complement and its opposites}
\author{Hanul Jeon}
\email{ \href{mailto:hanuljeon95@gmail.com}{hanuljeon95@gmail.com}}
\urladdr{ \href{https://hanuljeon95.github.io}{https://hanuljeon95.github.io} }
\address{Institute of Discrete Mathematics and Geometry, Vienna University of Technology. Wiedner Hauptstra\ss e 8–10, 1040 Vienna, Austria} 

\author{James E. Hanson}
\email{ \href{mailto:jameseh@iastate.edu}{jameseh@iastate.edu}}
\urladdr{ \href{https://james-hanson.github.io/}{https://james-hanson.github.io} }
\address{Department of Mathematics, Iowa State University, Ames, IA 50011} 

\thanks{We appreciate Thomas Forster for suggesting the Kripke models as a tool to approach the problem. We are also grateful to Andrew W. Swan for suggesting proof of \autoref{Theorem:PreservationDComOrig} under classical logic. The research of the first author is supported in part by NSF grant DMS--2153975 and DMS--2451350.}
\subjclass[2020]{03E70, 03F65, 03E35, 03F25}

\renewcommand{\mathbb}{\mathbbm}

\newcommand{\dcom}{{\lnot\lnot}}

\mathchardef\mhyphen="2D
\newcommand{\restricts}{\mkern-5mu\upharpoonright}
\newcommand{\VDash}{ %
\mathrel{\text{\clipbox{0pt 0pt {.8\width} 0pt}{$\Vdash$}}\mkern.9mu}\vDash}

\newcommand{\rrarrows}{\rightrightarrows}
\newcommand{\llarrows}{\leftleftarrows}
\newcommand{\lrlrarrows}{\mathrel{\mathrlap{\leftleftarrows}{\rightrightarrows}}}

\makeatletter
\DeclareRobustCommand\widecheck[1]{{\mathpalette\@widecheck{#1}}}
\def\@widecheck#1#2{%
    \setbox\z@\hbox{\m@th$#1#2$}%
    \setbox\tw@\hbox{\m@th$#1%
       \widehat{%
          \vrule\@width\z@\@height\ht\z@
          \vrule\@height\z@\@width\wd\z@}$}%
    \dp\tw@-\ht\z@
    \@tempdima\ht\z@ \advance\@tempdima2\ht\tw@ \divide\@tempdima\thr@@
    \setbox\tw@\hbox{%
       \raise\@tempdima\hbox{\scalebox{1}[-1]{\lower\@tempdima\box
\tw@}}}%
    {\ooalign{\box\tw@ \cr \box\z@}}}
\makeatother

\newcommand{\mv}{\operatorname{mv}}
\newcommand{\dom}{\operatorname{dom}}
\newcommand{\ext}{\operatorname{ext}}
\newcommand{\ran}{\operatorname{ran}}
\newcommand{\Dec}{\operatorname{Dec}}
\newcommand{\WDec}{\operatorname{WDec}}
\newcommand{\Ord}{\mathrm{Ord}}
\newcommand{\up}{\operatorname{up}}
\newcommand{\op}{\operatorname{op}}
\newcommand{\Union}{\operatorname{Union}}
\newcommand{\Power}{\operatorname{Power}}
\newcommand{\Sep}{\operatorname{Sep}}
\newcommand{\krk}{\operatorname{krk}}

\begin{document}
\maketitle

\begin{abstract}
	Powell introduced the Axiom of Double Complement ($\mathsf{DCom}$) to give his double-negation interpretation of $\mathsf{ZF}$ into $\mathsf{IZF_{Rep}}$. However, the consistency, strength, and compatibility of $\mathsf{DCom}$ remain open problems. This article aims to survey the compatibility and consistency strength of $\mathsf{DCom}$, its consequence and opposites, which will be named $\mathsf{NDCom}$ and $\mathsf{ADCom}$. We will also develop Lubarsky's Kripke models over $\mathsf{CZF}$ to derive these results.

	We will show that $\mathsf{DCom}$ proves the Powerset axiom over $\mathsf{CZF}$ and is independent of $\mathsf{IZF}$. We will also show that $\mathsf{ADCom}$ does not add consistency strength over $\mathsf{CZF}$, by modifying the construction of Lubarsky's model for $\mathsf{CZF+\lnot Pow}$. We will also show that $\mathsf{DCom}$, $\mathsf{ADCom}$, and $\mathsf{NDCom}$ are persistent under realizability under modest conditions.
\end{abstract}

\tableofcontents

\section{Introduction}
Powell \cite{Powell1975} introduced the Axiom of Double Complement ($\mathsf{DCom}$), which states that every set $x$ has a \emph{double complement}
\begin{equation*}
    x^\dcom = \{z : \lnot\lnot (z\in x)\}.
\end{equation*} 
Powell used $\mathsf{DCom}$ to construct an inner model of $\mathsf{ZF}$ under $\mathsf{IZF_{Rep}+DCom}$, intuitionistic $\mathsf{ZF}$ with Replacement in place of Collection and $\mathsf{DCom}$.
However, Powell's consistency proof is not widely known, and apparently not accepted into the mathematical community.
One suspected reason is that the relationship between $\mathsf{IZF}$ and $\mathsf{DCom}$ is unclear, even though Powell's method has attractive features. For example, it does not involve any non-extensional set theories appearing in H. Friedman's double-negation translation  \cite{Friedman1973}.
Some literature studied $\mathsf{DCom}$, e.g., Grayson \cite{Grayson1979} showed that Powell's inner model is isomorphic to the forcing extension $V^{\mathcal{P}^{\lnot\lnot}(1)}$ under $\mathsf{IZF+DCom}$. Vladimirov \cite{Vladimirov2018} investigated absoluteness of certain arithmetical formulas over $\mathsf{IZF+DCom}$ and its extensions. 

Hahanyan's works would be most prominent among studies about the Axiom of Double Complement. 
He examined the relationship between $\mathsf{DCom}$, other axioms of $\mathsf{IZF}$, and non-classical axioms. For example, he proved in \cite{Hahanyan1981} that $\mathsf{IZF+DCom}$ is consistent with some Brouwnian principles and Church's thesis.%
\footnote{We need to notice that Hahanyan uses formal systems that are different from what is usually called $\mathsf{IZF}$. For example. Hahanyan's $\mathsf{IZF}$ takes natural numbers and real numbers as urelements, not sets. However, methods in \cite[Chapter VII, \S1]{Beeson1985} seem to provide a way to translate Hahanyan's result to results for the standard $\mathsf{IZF}$.}

Hahanyan \cite{Hahanyan1992} claimed in 1992 that he proved $\mathsf{DCom}$ is independent of $\mathsf{IZF}$, but the author fails to find a full proof of this result. Six years later, Hahanyan \cite{Hahanyan1998} published a proof of a weakening of the previous independence result: he proved that $\mathsf{DCom}$ is not provable from $\mathsf{IZF- Pow}$, Intuitionistic set theory without Powerset.
Unfortunately, Hahayan's method does not seem to be adequate to derive the independence of $\mathsf{DCom}$ over $\mathsf{IZF}$. His proof employs functional realizability, and functional realizability satisfies the negation of $\mathsf{Pow}$. In fact, functional realizability satisfies the Axiom of Anti-Double Complement $\mathsf{ADCom}$ as we will see in \autoref{Section: FunctionalRealizability}, and $\mathsf{ADCom}$ implies the negation of Powerset.

The purpose of this paper is to clarify the relation between $\mathsf{DCom}$ and constructive set theories like $\mathsf{CZF}$ and $\mathsf{IZF}$.
Here is a brief description of each section:
\begin{itemize}
	\item In \autoref{Section:Prelim}, we will introduce some preliminaries, including $\mathsf{IZF}$ and $\mathsf{CZF}$, some variations of power set operations, and a fully inductive relation, a stronger version of a well-founded relation.
	
	\item In \autoref{Section:DCom}, we will examine basic properties of $\mathsf{DCom}$ and  stronger forms of its negation called $\mathsf{NDCom}$ and $\mathsf{ADCom}$. We will also examine double complements of some simple sets like $2$ and $\omega$. Interestingly, the double complement of $2$ is $\mathcal{P}(1)$. Hence $\mathsf{DCom}$ implies the Axiom of Power Set $\mathsf{Pow}$ over $\mathsf{CZF}$. We also discuss the consistency strength of $\mathsf{DCom}$ and $\mathsf{NDCom}$ over $\mathsf{CZF}$.
	
	\item In \autoref{Section:KripkeModel}, we will develop the theory of Kripke sets inspired by Lubarsky \cite{Lubarsky2005, HendtlassLubarsky2016}. We want to work over $\mathsf{CZF}$, so we will make his Kripke models fit on $\mathsf{CZF}$.
	
	\item In \autoref{Section:DComAndKripke}, we will derive some results on $\mathsf{DCom}$ by employing Kripke models. We will show that the Kripke universe $V^\mathbb{P}$ satisfies $\mathsf{DCom}$ if $V$ is a model of $\mathsf{ZF}$ and $\mathbb{P}$ is linear.
	We will also see that our proof could not work well when $\mathbb{P}$ is not linear. We will see that there is a Kripke frame $\mathbb{P}$ such that $V^{\mathbb{P}}$ satisfies $\mathsf{NDCom}$, even when we start with $V\vDash \mathsf{ZFC}$.
	
	\item In \autoref{Section:ADCom}, we will construct a Kripke model of $\mathsf{CZF+ADCom}$ by modifying the construction of Lubarsky's first model for $\mathsf{CZF+\lnot Pow}$ in \cite{Lubarsky2005}. The whole construction will be performed over $\mathsf{CZF}$, so it also establishes the equiconsistency between $\mathsf{CZF}$ and $\mathsf{CZF+ADCom}$.
	
	\item In \autoref{Section:MRrealizability}, we show that $\mathsf{DCom}$, $\mathsf{NDCom}$, and $\mathsf{ADCom}$ are persistent under realizability (see \cite{McCartyPhD, Rathjen2003Realizability} for details of realizability). We need $\Sigma_1$-separation or the regular extension axiom to establish the persistence of $\mathsf{DCom}$, and $\mathsf{Pow}$ for the persistence of $\mathsf{NDCom}$.
	
	\item In \autoref{Section: FunctionalRealizability}, we define functional realizability that resembles the important feature of Hahanyan's model in \cite{Hahanyan1998} and show that functional realizability satisfies $\mathsf{ADCom}$.
\end{itemize}

\section{Preliminaries}
\label{Section:Prelim}
\subsection{Intuitionistic and Construcive Set theories}
$\mathsf{ZF}$ provides a satisfactory formulation of classical set theory. However, there are nonequivalent constructive formulations of set theory corresponding to $\mathsf{ZF}$.

For example, Intuitionistic ZF ($\mathsf{IZF}$), which is obtained by replacing Replacement with Collection, Regularity with $\in$-induction, and uses intuitionistic logic instead of classical logic, is an example of set theory in constructive mathematics.

Another example is Constructive ZF ($\mathsf{CZF}$) introduced by Aczel (see \cite{Aczel1978, Aczel1982, Aczel1986}). $\mathsf{CZF}$ is obtained by replacing Collection with Strong Collection, and Powerset with Subset Collection from axioms of $\mathsf{IZF}$. 
Unlike $\mathsf{IZF}$, $\mathsf{CZF}$ allows type-theoretic interpretation. Moreover, $\mathsf{CZF}$ is equiconsistent with Kripke-Platek set theory $\mathsf{KP}$ and its intuitionistic version $\mathsf{IKP}$, which are dramatically weaker than $\mathsf{IZF}$. On the other hand, $\mathsf{IZF}$ is equiconsistent with $\mathsf{ZF}$. Despite their different consistency strength, both theories coincide if we add the law of excluded middle ($\mathsf{LEM}$) to each theory.

We will see some variations of $\mathsf{IZF}$ and $\mathsf{CZF}$ in this article.  $\mathsf{IZ}$ and $\mathsf{CZ}$ are theories obtained by dropping Collection or Strong Collection from $\mathsf{IZF}$ and $\mathsf{CZF}$ respectively. Similarly, $\mathsf{IZF}^-$ and $\mathsf{CZF}^-$ are theories that are obtained by forgetting Powerset or Subset Collection from $\mathsf{IZF}$ and $\mathsf{CZF}$, respectively.

We do not make an effort to examine the details of intuitionistic and constructive set theory. Readers could consult with \cite{AczelRathjen2010} or \cite[Chapter 2]{ZieglerPhD} for basic facts on constructive set theory if needed. However, we will formulate and explain some notions to clarify their meaning.

Various notions over $\mathsf{CZF}$ are obtained by replacing functions with \emph{multi-valued functions}, which are relations with the fixed domain.
\begin{definition}
    A relation $R\subseteq A\times B$ is a \emph{multi-valued function from $A$ to $B$} if the $\dom R=A$ and $\ran R\subseteq B$.
    We use the notation $R\colon A\rrarrows B$ for this case.
    We use the reversed symbol $R\colon A\llarrows B$ if $\dom R\subseteq A$ and $\ran R=B$.
    If both of $R\colon A\rrarrows B$ and $R\colon A\llarrows B$ hold, then we write $R\colon A\lrlrarrows B$.
\end{definition}

Every function has a corresponding image. Similarly, we can think of an analogous notion called \emph{subimage}:
\begin{definition}
    Let $R\colon A\rrarrows B$ be a multi-valued function.
    A class $C$ is a \emph{subimage} of $R$ if $R\colon A\lrlrarrows C$.
\end{definition}

We can see that if $R$ is a function, then a subimage is unique and equals the image. However, a subimage of a given multi-valued function is not unique in general:
\begin{example}
    Let $A=B=3=\{0,1,2\}$ and consider
    \begin{equation*}
        R = \{\langle 0,0\rangle, \langle 0,1\rangle, \langle 1,1\rangle, \langle 2,2\rangle\}.
    \end{equation*}
    It is easy to see that both of $\{1,2\}$ and $\{0,1,2\}$ are subimages of $R$.
\end{example}

Various notions of $\mathsf{CZF}$ are obtained by replacing functions with multi-valued functions, and images with subimages.
\begin{example}
    Replacement states the following claim: for every set $a$ and a class function $F\colon a\to V$, we have an image of $F$.
    
    By substituting a function with a multi-valued function, and an image with a subimage, we have the statement of Strong Collection: for every set $a$ and a class \emph{multi-valued function} $R\colon a\rrarrows V$, we have a \emph{subimage} of $R$.
\end{example}

The case for Subset Collection requires additional elaboration. 
First, let us consider the following form of power set, which may not be obvious at first glance:
\begin{lemma}[$\mathsf{ZF}^-$]
    The Powerset is equivalent to the following claim: for every set $a$ and $b$, we can find a set $c$ such that if $f\colon a\to b$ is a function, then the image of $f$ is a member of $c$.
    
    Especially, $\{\ran f\mid f\colon a\to b\}$ exists.
\end{lemma}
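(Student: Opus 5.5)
Both directions can be handled directly over $\mathsf{ZF}^-$ (classical logic, no Powerset). Write $(\ast)$ for the claim: for all sets $a,b$ there is a set $c$ containing $\ran f$ for every function $f\colon a\to b$.

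\emph{From Powerset to $(\ast)$.} Assume Powerset and fix $a,b$. Take $c=\mathcal{P}(b)$. For any function $f\colon a\to b$, $\ran f\subseteq b$ is a set by Replacement, so $\ran f\in c$. For the ``especially'' clause, note that $b^a$ is a subset of $\mathcal{P}(a\times b)$, hence a set. Applying Replacement to $f\mapsto\ran f$ on $b^a$ gives $\{\ran f\mid f\colon a\to b\}$. Equivalently, once we have any $c$ as in $(\ast)$, Separation yields $\{x\in c\mid \exists f\colon a\to b\,(x=\ran f)\}$.

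\emph{From $(\ast)$ to Powerset.} Fix a set $b$. The goal is to see every subset of $b$ as the range of a function with a fixed domain $a$ into a fixed codomain $b'$.

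\begin{itemize}
\item The empty set is a problem: only the empty function has empty range. The fix is to enlarge the codomain to $b'=b\cup\{b\}$, where $b\notin b$ by Foundation. Any $x\subseteq b$ is then $(\ran f)\cap b$ for a suitable $f$.
\item Take $a=b'$ and, for $x\subseteq b$, define $f_x\colon b'\to b'$ by $f_x(y)=y$ if $y\in x$ and $f_x(y)=b$ otherwise. This definition by cases uses classical logic, which $\mathsf{ZF}^-$ provides.
\item Then $\ran f_x\cap b=x$, because $\ran f_x\subseteq x\cup\{b\}$ and $b\notin b$.
\item Apply $(\ast)$ to $a=b'$ and codomain $b'$ to obtain $c$. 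Let $P=\{z\cap b\mid z\in c\}$, which is a set by Replacement, and then $\{z\in P\mid z\subseteq b\}$ by Separation. This set contains every subset of $b$, since each $x\subseteq b$ equals $\ran f_x\cap b$ with $\ran f_x\in c$.
\item Separating $\{z\in P\mid z\subseteq b\}$ again gives exactly $\mathcal{P}(b)$.
\end{itemize}

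The only delicate point is the empty subset, handled by the extra point $b$. Everything else is routine bookkeeping with Replacement and Separation.
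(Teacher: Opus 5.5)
Your proof is correct. In the converse direction it takes a different route from the paper.

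The paper codes a subset of $a$ by its characteristic function $g\colon a\to 2$. It realizes the graph of $g$ as the range of $x\mapsto\langle x,g(x)\rangle$, a function $a\to a\times 2$. It then extracts ${}^a2$ from $c$ by Separation and gets $\mathcal{P}(a)$ from it (via $g\mapsto g^{-1}\{1\}$, left implicit). Because the graph of a total function on an inhabited $a$ is never empty, the empty subset causes no difficulty in that encoding.

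You instead make each $x\subseteq b$ itself, after intersecting with $b$, the range of a self-map of $b\cup\{b\}$, with the extra point $b\notin b$ absorbing the complement. This gives $\mathcal{P}(b)=\{z\cap b\mid z\in c\}$ in a single Replacement step. There is no detour through exponentiation. You also never need to cut $c$ down to single-valued total relations, a condition the paper's displayed set does not actually impose.

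The cost is that you need a point outside $b$. You use Foundation, but $\{y\in b\mid y\notin y\}$ would work without it, or you could adjoin $\varnothing$ by hand and use nonempty ranges. The paper's version has the advantage of producing ${}^a2$ along the way. That matches the later proposition on variants of the powerset, where $F(a)$ is recovered from functions $a\to F(1)$.

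Your last two Separation steps are redundant: $z\cap b\subseteq b$ holds automatically, so $P$ is already exactly $\mathcal{P}(b)$.
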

\begin{proof}
    Clearly, Powerset implies the above claim. Conversely, for given set $a$, consider $c=\{\ran f \mid f\colon a\to a\times 2\}$. Then we have
    \begin{equation*}
        \{g\in c \mid \forall x\in a\exists i\in 2 (\langle x,i\rangle\in g)\}
    \end{equation*}
    is the set of all functions from $a$ to $2$. Hence, Powerset holds.
\end{proof}

Now replace the terminologies appropriately so that we have the following assertion:
for every set $a$ and $b$, we can find a set $c$ such that if $r\colon a\rrarrows b$ is a multi-valued function from $a$ to $b$, then we can find a subimage $d\in c$ of $r$. Let us introduce the following terminology to describe the property of $c$ we get:
\begin{definition}
    Let $a$, $b$, and $c$ be sets. We say $c$ is \emph{full in subimages of multi-valued functions from $a$ to $b$} if $r\colon a\rrarrows b$, then we can find a subimage $d\in c$ of $r$.
\end{definition}

This statement itself is not the very statement of Subset Collection, but it turns out that this statement is equivalent to Subset Collection:
\begin{proposition}[$\mathsf{CZF}^-$]\label{Proposition:SubsetCollectionEquivalentFormulations}
    The following statements are equivalent:
    \begin{enumerate}
        \item Subset Collection: for every class family of relations $R_u\subseteq a\times b$ parameterized by $u$, we can find a set $c$ such that if $R_u\colon a\rrarrows b$, we can find $d\in c$ such that $d$ is a subimage of $R_u$.
        \item For every set $a$ and $b$, we can find a set $c$ which is full in subimages of multi-valued functions from $a$ to $b$.
    \end{enumerate}
\end{proposition}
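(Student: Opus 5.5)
The implication from (1) to (2) is immediate: apply Subset Collection to the family with a single parameter-free relation, or more precisely to the family $R_u = u\cap (a\times b)$ parameterized by $u$. For every multi-valued function $r\colon a\rrarrows b$ we have $R_r = r$, so the set $c$ given by (1) contains a subimage of $r$. Hence $c$ is full in subimages of multi-valued functions from $a$ to $b$.

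For (2) implies (1), fix a class family $R_u\subseteq a\times b$. The idea is to code the relation $R_u$ itself into the target of a multi-valued function, so that its subimage remembers which pairs come from $R_u$. Consider multi-valued functions $a\rrarrows a\times b$. Take $c'$ full in subimages of multi-valued functions from $a$ to $a\times b$, using (2). Given $u$ with $R_u\colon a\rrarrows b$, define $r_u=\{\langle x,\langle x,y\rangle\rangle \mid \langle x,y\rangle\in R_u\}$. This is a set by Replacement applied to $R_u$, which is a set by Bounded Separation only if $R_u$ is given by a bounded formula. So instead I use the relation $r_u\subseteq a\times(a\times b)$ as a class and argue via the following step. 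Since $R_u\colon a\rrarrows b$, we have $r_u\colon a\rrarrows a\times b$, and any subimage $e$ of $r_u$ is a subset of $a\times b$. Because $r_u$ pairs each $x$ only with pairs whose first coordinate is $x$, such an $e$ is a subrelation of $R_u$ with domain all of $a$.

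The step needing care is that (2) speaks about \emph{set} multi-valued functions, while $R_u$ may be a class. I would handle this as follows. The set $e$ found in $c'$ for $r_u$ satisfies $e\subseteq R_u$ and $\dom e=a$. Conversely, I want to produce from $e$ a subimage of $R_u$, namely $\ran e$. Indeed $\ran e\subseteq \ran R_u$, every $x\in a$ has some $y$ with $\langle x,y\rangle\in e\subseteq R_u$ and $y\in\ran e$, and every $y\in\ran e$ comes from some $\langle x,y\rangle\in e\subseteq R_u$. So $\ran e$ is a subimage of $R_u$. Set $c=\{\ran e\mid e\in c'\}$, which exists by Replacement.

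To avoid needing $R_u$ to be a set, I would apply (2) not to $R_u$ directly but to a set subrelation of it. Using Strong Collection on $R_u\colon a\rrarrows b$, which restricts to the class multi-valued function $x\mapsto\{y\in b\mid\langle x,y\rangle\in R_u\}$, we get a set $s\subseteq R_u$ with $s\colon a\rrarrows b$. Then $s$ is a genuine set multi-valued function. Take $c$ full in subimages from $a$ to $a\times b$ and apply it to the set $\{\langle x,\langle x,y\rangle\rangle\mid\langle x,y\rangle\in s\}$. The argument above then gives $\ran e$ as a subimage of $s$, and hence of $R_u$, since the subimage conditions only require containment in $R_u$. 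I expect this reduction from class families to set relations to be the main obstacle. Done this way, the coding through $a\times b$ becomes unnecessary: it suffices to take $c$ full for $a\to b$, find a subimage $d\in c$ of $s$, and note that $d$ is a subimage of $R_u$ because $s\subseteq R_u$.
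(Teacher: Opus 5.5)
Your final argument is correct and is essentially the paper's proof. For the forward direction you instantiate Subset Collection with a family that returns an arbitrary set relation; your $u\cap(a\times b)$ is a slightly more careful version of the paper's $R_u=u$. For the converse you use Strong Collection to shrink the class $R_u$ to a set $s\subseteq R_u$ with $s\colon a\rrarrows b$, and then take $c$ full in subimages from $a$ to $b$, exactly as the paper does. The one point to tighten is that Strong Collection must be applied to your pair-coded class $r_u=\{\langle x,\langle x,y\rangle\rangle\mid\langle x,y\rangle\in R_u\}$ (the paper's $\mathcal{A}^{a,b}(R_u)$), so that the collected set consists of pairs lying in $R_u$; collecting only the values $y$ would yield a subimage of $R_u$ for that single $u$, not the set subrelation $s$ you need.
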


Before going to the proof, let us examine the following lemma, which turns out to be useful for working on multi-valued functions.
\begin{lemma}\label{Lemma:PrelimAdjuectmentFtn}
    Let $R: A\rightrightarrows B$, $R\subseteq A\times B$ be a multi-valued function. Define $\mathcal{A}^{A,B}(R) = \{\langle a,\langle a,b\rangle\rangle \in A\times (A\times B) \mid \langle a,b\rangle \in R\}$. Then the following holds:
    \begin{enumerate}
        \item $\mathcal{A}^{A,B}(R) \colon A\rightrightarrows S\iff R\cap S:A\rightrightarrows B$,
        \item $\mathcal{A}^{A,B}(R) \colon A\leftleftarrows S\iff S\subseteq R$.
    \end{enumerate}
\end{lemma}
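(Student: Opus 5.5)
Both equivalences follow by unfolding the definitions of $\rrarrows$ and $\llarrows$ for the relation $\mathcal{A}^{A,B}(R)$. I will read $S$ as an arbitrary class or set; in (2) it should be a set of pairs in $A\times B$ for the statement to make sense, and the argument also handles a general $S$ directly. The guiding observation is that
\begin{equation*}
    \langle a,p\rangle\in\mathcal{A}^{A,B}(R)\iff a\in A\land p\in R\land \exists b\,(p=\langle a,b\rangle),
\end{equation*}
so $\mathcal{A}^{A,B}(R)$ is the ``graph'' of the map sending $a$ to the pairs of $R$ lying over $a$. Its domain is $\dom R=A$, and its range is exactly $R$. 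No excluded middle is needed: each step is a bounded rewriting of quantifiers, so it goes through in $\mathsf{CZF}^-$.

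For (1), $\mathcal{A}^{A,B}(R)\colon A\rrarrows S$ means two things. First, its domain is $A$, which holds automatically because $R\colon A\rrarrows B$. Second, its range is contained in $S$, that is, every pair $\langle a,b\rangle\in R$ belongs to $S$. Hence the condition reduces to $R\subseteq S$, i.e.\ $R\cap S=R$, and then $R\cap S\colon A\rrarrows B$ holds because $R$ does.

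The converse direction needs care, because it is where the stated form might be wrong. $R\cap S\colon A\rrarrows B$ only says that every $a\in A$ has some $b$ with $\langle a,b\rangle\in R\cap S$. Under the literal reading, the domain condition on $\mathcal{A}^{A,B}(R)$ always holds, so (1) would assert that this totality implies $R\subseteq S$, which is false in general. So I expect the intended statement to be either $\mathcal{A}^{A,B}(R\cap S)\colon A\rrarrows S$, or equivalently that $\mathcal{A}^{A,B}(R)\cap(A\times S)$ has domain $A$. The latter is precisely the totality of $R\cap S$: for each $a$, a witness $p=\langle a,b\rangle\in S$ with $\langle a,p\rangle\in\mathcal{A}^{A,B}(R)$ exists if and only if some $b$ has $\langle a,b\rangle\in R\cap S$. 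I would prove that corrected form by exactly this quantifier unwinding. Pinning down which reading is intended is the main obstacle; once it is fixed, the computation is routine.

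For (2), $\mathcal{A}^{A,B}(R)\colon A\llarrows S$ means $\dom\subseteq A$, which is automatic, together with $\ran\mathcal{A}^{A,B}(R)=S$. Since the range is $R$, this says $S=R$, and in particular $S\subseteq R$. If $\llarrows$ is read only as the surjectivity condition onto $S$ (``every element of $S$ is hit''), it becomes $S\subseteq R$ directly. For the converse I would use the same reading: given $s\in S\subseteq R$, write $s=\langle a,b\rangle$ and note that $\langle a,s\rangle\in\mathcal{A}^{A,B}(R)$.
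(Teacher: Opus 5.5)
Your proposal is correct. Once you fix the totality/surjectivity reading ($X\colon A\rrarrows S$ meaning $\forall a\in A\,\exists s\in S\,\langle a,s\rangle\in X$, and $X\colon A\llarrows S$ meaning $\forall s\in S\,\exists a\in A\,\langle a,s\rangle\in X$), your argument is exactly the paper's proof, which unwinds the definition of $\mathcal{A}^{A,B}(R)$ in the same way. Your diagnosis is also right: the paper adopts this reading silently in the first line of its proof, and it is the reading its later uses of subimages require. Under the literal ``$\dom=A$, $\ran\subseteq S$'' and ``$\ran=S$'' clauses of the paper's definition, both converse directions fail.
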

\begin{proof}
    For notational convenience, let us omit superscripts from $\mathcal{A}^{A,B}$ and write $\mathcal{A}$.
    We can see that $\mathcal{A}(R)\colon A\rightrightarrows S$ is equivalent to
    \begin{equation*}
    \forall a\in A \exists s\in S \bigl(\langle a,s\rangle \in \mathcal{A}(R)\bigr).
    \end{equation*}
    We can see that this is equivalent to
    \begin{equation}\label{Formula:Mvaluedftn_eq00}
        \forall a\in A \exists s\in S [\exists b\in B (s=\langle a,b\rangle \land \langle a,b\rangle\in R)].
    \end{equation}
    by the definition of $\mathcal{A}$.
    Hence the above statement is equivalent to $\forall a\in A\exists b\in B[ \langle a,b\rangle\in R\cap S]$, which means $R\cap S\colon A\rightrightarrows B$.
    The proof of the second statement is similar: $\mathcal{A}(R)\colon A\leftleftarrows S$ is equivalent to
    \begin{equation*}
        \forall s\in S \exists a\in A [\langle a,s\rangle \in \mathcal{A}(R)].
    \end{equation*}
    and by unpacking $\mathcal{A}$, we can see that the previous formula is equivalent to
    \begin{equation*}
        \forall s\in S \exists a\in A \bigl[\exists b\in B \bigl(s=\langle a,b\rangle \land \langle a,s\rangle\in \mathcal{A}(R)\bigr)\bigr],
    \end{equation*}
    which is readily equivalent to
    \begin{equation*}
        \forall s\in S \exists a\in A\exists b\in B \bigl[ s =\langle a,b\rangle \in R\bigr],
    \end{equation*}
    which is equivalent to $S\subseteq R$ since $R\subseteq A\times B$.
\end{proof}

\begin{proof}[Proof of \autoref{Proposition:SubsetCollectionEquivalentFormulations}]
    For the forward direction, let us apply the Subset Collection to $R_u=u$. 
    For the other direction, let $R_u\colon a\rrarrows b$ be a class multi-valued function with a parameter $u$. Here we can assume without loss of generality that $R_u\subseteq a\times b$.
    
    Then $\mathcal{A}^{a,b}(R_u)\colon a\rrarrows a\times b$. Now apply Strong Collection to $\mathcal{A}^{a,b}(R_u)$ to get a subimage $r$ of $\mathcal{A}^{a,b}(R_u)$. Hence by \autoref{Lemma:PrelimAdjuectmentFtn}, we have that $r\subseteq R_u$ and $r\colon a\rrarrows b$.
    Thus, if $c$ is full in subimages of multi-valued functions from $a$ to $b$, and $d\in c$ is a subimage of $r$, then we can see that $d$ is also a subimage of $R_u$. Thus $c$ witnesses Subset Collection for $R_u$.
\end{proof}

There is another form of Subset Collection known as \emph{Fullness}:

\begin{definition}
	\emph{Fullness} is the following statement:
	\begin{equation}\label{Axiom:Fullness}
		\forall A,B\exists \mathcal{C}\forall R (R\colon A\rightrightarrows B)\implies \exists S\in \mathcal{C} (S\subseteq R\land S\colon A\leftleftarrows B).
	\end{equation}
	In other words, Fullness states that for given sets $A$ and $B$, we can find a collection $\mathcal{C}$ of multi-valued functions such that every multi-valued function $R\colon A\rrarrows B$ is weakly uniformized by an element of $\mathcal{C}$.
	We call $\mathcal{C}\subseteq \mv(A,B)$ is \emph{full in $\mv(A,B)$} if $\mathcal{C}$ satisfies the condition in \eqref{Axiom:Fullness}.
\end{definition}

It is well-known that Subset Collection implies Fullness. Moreover, $\mathsf{CZF}^-$ can prove that Subset Collection and Fullness are equivalent. We introduce its proof for later reference. The following lemma is useful to prove the equivalence:

\begin{proposition}[$\mathsf{CZF}^-$] \phantom{a}
\begin{enumerate}
    \item Subset Collection implies Fullness.
    \item Fullness implies Subset Collection.
\end{enumerate}
\end{proposition}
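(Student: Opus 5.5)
Both directions go through the reformulation of Subset Collection in \autoref{Proposition:SubsetCollectionEquivalentFormulations}: Subset Collection is equivalent to saying that for all sets $a,b$ there is a set full in subimages of multi-valued functions from $a$ to $b$. So it is enough to convert between a family full in $\mv(A,B)$ and a set full in subimages. The adjoint construction $\mathcal{A}^{A,B}$ of \autoref{Lemma:PrelimAdjuectmentFtn} handles the conversion.

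For (1), fix $A,B$ and use the equivalent form to get a set $c$ full in subimages of multi-valued functions from $A$ to $A\times B$. Set $\mathcal{C}=\{S\in c\mid S\subseteq A\times B\}$, which is a set by Bounded Separation. Now let $R\colon A\rrarrows B$, and assume without loss of generality that $R\subseteq A\times B$. Then $\mathcal{A}^{A,B}(R)\colon A\rrarrows A\times B$, because every $a$ has some $b$ with $\langle a,b\rangle\in R$. Fullness of $c$ gives $S\in c$ with $\mathcal{A}^{A,B}(R)\colon A\lrlrarrows S$. By \autoref{Lemma:PrelimAdjuectmentFtn}(2) we get $S\subseteq R$, so $S\in\mathcal{C}$. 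By part (1) of the same lemma, $R\cap S=S$ satisfies $S\colon A\rrarrows B$, so $\dom S=A$. Since $S$ is a multi-valued function contained in $R$ with domain $A$, it weakly uniformizes $R$, which is what \eqref{Axiom:Fullness} asks for. The shape of that conclusion ($S\colon A\llarrows B$ versus $S\colon A\rrarrows B$) should be read as the latter, i.e.\ $S\in\mv(A,B)$ with $S\subseteq R$.

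For (2), fix $a,b$ and let $\mathcal{C}$ be full in $\mv(a,b)$. Replacement on $\mathcal{C}$ gives the set $c=\{\ran S\mid S\in\mathcal{C}\}$. Given $r\colon a\rrarrows b$, choose $S\in\mathcal{C}$ with $S\subseteq r$ and $S\colon a\rrarrows b$. Then $d=\ran S\in c$, and we need $r\colon a\lrlrarrows d$. Totality holds because each $x\in a$ has $\langle x,y\rangle\in S\subseteq r$ with $y\in d$. Surjectivity holds because each $y\in d$ comes from some $\langle x,y\rangle\in S\subseteq r$. So $c$ is full in subimages, and \autoref{Proposition:SubsetCollectionEquivalentFormulations} yields Subset Collection.

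All of these steps are routine. The only care needed is in (1): restricting $c$ to subsets of $A\times B$, and checking that the Lemma's two clauses combine to give exactly a total sub-relation of $R$. That bookkeeping is the main point to get right; there is no real obstacle.
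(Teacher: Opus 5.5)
Your proof is correct and is essentially the paper's argument. In (1), the paper applies Subset Collection directly to the family $R_u=\mathcal{A}(u)$ and then reads off $S\subseteq R$ and $S\colon A\rrarrows B$ from \autoref{Lemma:PrelimAdjuectmentFtn}, exactly as you do. In (2), it uses the same witness $\{\ran S\mid S\in\mathcal{C}\}$ and simply inlines the Strong Collection step that you delegate to \autoref{Proposition:SubsetCollectionEquivalentFormulations}. Your reading of the conclusion of Fullness as $S\colon A\rrarrows B$ also matches what the paper's proof actually establishes.
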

\begin{proof}\pushQED{\qed}
\begin{enumerate}
    \item Let $A$ and $B$ be sets. 
    Apply Subset collection to $A$, $A\times B$ and $R_u= \mathcal{A}(u)$.
    Then we can find $\mathcal{C}$ such that
    \begin{equation*}
        \forall R (\mathcal{A}(R)\colon A\rightrightarrows A\times B)\implies
        \exists S\in \mathcal{C} (\mathcal{A}(R)\colon A\lrlrarrows S).
    \end{equation*}
    \autoref{Lemma:PrelimAdjuectmentFtn} implies $\mathcal{A}(R)\colon A\rightrightarrows A\times B$ is equivalent to $R\colon A\rightrightarrows B$, 
    and $\mathcal{A}(R)\colon A\lrlrarrows S$ is equivalent to
    $S\subseteq R \land R\cap S\colon A\rightrightarrows B$.
    By \autoref{Lemma:PrelimAdjuectmentFtn}, we have
    \begin{equation*}
	    \forall R (R\colon A\rightrightarrows B) \implies \exists S\in \mathcal{C} (S\subseteq R \land S\colon A\rightrightarrows B),
    \end{equation*}
    Therefore, $\mathcal{C}$ witnesses Fullness.
    
    \item Take $\mathcal{C}$ which is full in $\mv(A,B)$.
    Let $\langle R_u \mid u\in V\rangle$ be a class family and $R_u\colon A\rightrightarrows B$.
    Then $\mathcal{A}(R_u)\colon A\rightrightarrows A\times B$.
    By Strong Collection (see (2.2.15) of \cite{ZieglerPhD}), there is $S$ such that $A(\mathcal{R}_u)\colon A\lrlrarrows S$, which is equivalent to $S\subseteq R_u$ and $S\colon A\rightrightarrows B$.
    
    Since $\mathcal{C}$ is full in $\mv(A,B)$, there is $S'\in\mathcal{C}$ such that $S'\subseteq S$ and $S'\colon A\rightrightarrows B$. 
    From $S'\subseteq R_u$ and $S'\colon A\rightrightarrows B$, we can conclude $R_u\colon A\rightrightarrows \ran S'$ and $R_u\colon \ran S'\rightrightarrows A$. Therefore, the set $\{\ran S \mid S\in\mathcal{C}\}$ witnesses Subset Collection.
    \qedhere
\end{enumerate}
\end{proof}

\subsection{Variations of Powersets}
In this subsection, we discuss some special subclasses of the powerclass of a given set. 
We will see later that $\mathsf{DCom}$ for certain types of sets implies the existence of these subclasses.
Conversely, we will also assert that the existence of a certain subclass of a powerclass implies $\mathsf{DCom}$ for some sets.
\begin{definition} Let $a$ be a set.
\begin{itemize}
    \item The \emph{powerclass of $a$}, $\mathcal{P}(a)$ is the collection of
    all subsets of $a$. That is, $\mathcal{P}(a) = \{b\mid b\subseteq a\}.$
    \item The \emph{negative powerclass of $a$}, $\mathcal{P}^{\lnot\lnot}(a)$
    is the class $\{b\mid \forall x\in a[ \lnot\lnot(x\in a)\to x\in a]\}$.
    \item The \emph{class of decidable subsets} $\Dec(a)$ is the class
    $\Dec(a) = \{b\mid \forall x\in a[ x\in b\lor \lnot (x\in b)]\}$.
    \item The \emph{class of weakly decidable subsets} $\WDec(a)$ is the class
    $\WDec(a) = \{b\mid \forall x\in a[ \lnot(x\in b)\lor \lnot\lnot(x\in b)]\}$.
\end{itemize}
If each class is a set, we call each of them \emph{power set}, \emph{negative power set}, and \emph{set of (weakly) decidable sets}, respectively.
\end{definition}
Negative powerclass was introduced by Gambino \cite{GambinoThesis} to describe the negative variation of $\mathsf{IZF}$ named $\mathsf{IZF}^{\lnot\lnot}$. 
We reduce the existence of a variation of power sets to the existence of a variation of the power set of 1 under the presence of exponential sets ${}^ab = \{f\subseteq a\times b\mid f\colon a\to b\}$. 
In particular, we prove that $\Dec(a)$ is always a set since $\Dec(1)=2$ is a set:
\begin{proposition}[$\mathsf{CZF^- + Exp}$]\label{Proposition:VariationPower}
    Let $F$ be the one of $\mathcal{P}$, $\mathcal{P}^{\lnot\lnot}$, $\Dec$ and $\WDec$.
    If $F(1)$ exists, then $F(a)$ exists for every $a$.
\end{proposition}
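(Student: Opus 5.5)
We prove the statement uniformly for all four choices of $F$, using the idea that a subset $b$ of $a$ is determined by its characteristic ``truth-value function'' $\chi_b\colon a\to \mathcal{P}(1)$, given by $\chi_b(x)=\{0\mid x\in b\}$. Each of the four classes should correspond exactly to the functions $a\to F(1)$. Concretely, $b\in F(a)$ if and only if $\chi_b(x)\in F(1)$ for every $x\in a$. (Here the definition of $\mathcal{P}^{\lnot\lnot}(a)$ is read as the intended $\{b\subseteq a\mid \forall x\in a[\lnot\lnot(x\in b)\to x\in b]\}$, and $\Dec(a)$, $\WDec(a)$ as classes of subsets of $a$.)

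The key step is this pointwise characterization. We check that for $x\in a$ the relevant property of ``$x\in b$'' transfers to the property of ``$0\in\chi_b(x)$'', since $0\in\chi_b(x)\leftrightarrow x\in b$ by Bounded Separation. Decidability, weak decidability, and $\lnot\lnot$-stability are each preserved under this equivalence, and membership $\chi_b(x)\in\mathcal{P}(1)$ is automatic. Conversely, given any function $f\colon a\to F(1)$, the set $b_f=\{x\in a\mid 0\in f(x)\}$, which exists by Bounded Separation, lies in $F(a)$ for the same reason. Moreover $\chi_{b_f}=f$ by extensionality, because $0\in f(x)\leftrightarrow x\in b_f$ and $f(x)\subseteq 1$.

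Now assume $F(1)$ is a set. By Exponentiation, ${}^a F(1)$ is a set. By Replacement, which follows from Strong Collection in $\mathsf{CZF}^-$, the image
\begin{equation*}
F(a)=\{\,\{x\in a\mid 0\in f(x)\}\;\mid\; f\in {}^aF(1)\,\}
\end{equation*}
is a set. This image contains every element of $F(a)$, since $b=b_{\chi_b}$, and by the previous paragraph every element of the image belongs to $F(a)$. So the image equals $F(a)$.

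The main obstacle is a subtle point: $\chi_b$ must genuinely be a set function $a\to F(1)$. It has to be total and single-valued, which follows from Bounded Separation for each $x$ together with Replacement to form the graph as a subset of $a\times\mathcal{P}(1)$. But $\mathcal{P}(1)$ itself need not be a set, so we form the graph as $\{\langle x,\{0\mid x\in b\}\rangle\mid x\in a\}$ directly by Replacement rather than by separating from a product. The remaining issue is to make sure the intuitionistic logic is handled correctly in the equivalences for $\WDec$ and $\mathcal{P}^{\lnot\lnot}$. These are routine, since they only substitute the equivalent proposition $0\in\chi_b(x)$ for $x\in b$.
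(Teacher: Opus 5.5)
Your proof is correct and follows the paper's argument: you send each $b\in F(a)$ to its characteristic function $\chi_b\colon a\to F(1)$ via the equivalence $x\in b\leftrightarrow 0\in\chi_b(x)$, then apply Exponentiation and Replacement. The only difference is that the paper proves just the inclusion $F(a)\subseteq\{f^{-1}\{1\}\mid f\colon a\to F(1)\}$ and then carves out $F(a)$ by $\Delta_0$-Separation, whereas you also check the reverse inclusion ($b_f\in F(a)$ for every $f$) and so obtain equality directly.
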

\begin{proof}
    The main idea of the proof is the same regardless of $F$. We can see that if $F(a)$ is a $\Delta_0$-subclass of a set, then we can define $F(a)$ by using $\Delta_0$-Separation.
    Therefore, it suffices to prove 
    \begin{equation*}
    	F(a)\subseteq \{f^{-1}\{1\}\mid f:a\to F(1)\}
    \end{equation*}
    because the right-hand side is a set by Replacement and Exponentiation.
    
    For each $b\in F(a)$, consider its characteristic function $\chi_b:a\to\mathcal{P}(1)$ defined by $\chi_b(x) = \{0\mid x\in b\}$.
    We can see that $b=\chi_b^{-1}\{1\}$. 
    It remains to show that $\ran \chi_b \subseteq F(1)$, which will ensure $\chi_b$ is a function of codomain $F(1)$. Its proof depends on $F$, but all proofs employ the following equivalence:
    
    \begin{equation}\label{Formula:CharFtnEquivalence}
    	x\in b\iff 0\in \chi_b(x).
    \end{equation}
    Let us examine its proof case by case.
    \begin{enumerate}
        \item Case $F=\mathcal{P}$: Obvious.
        
        \item Case $F=\mathcal{P}^{\lnot\lnot}$: Let $b\in \mathcal{P}^{\lnot\lnot}(a)$. Then we have $\lnot\lnot(x\in b)\to x\in b$ for all $x\in a$, and this is equivalent to
        \begin{equation*}
            \lnot\lnot(0\in \chi_b(x))\to 0\in \chi_b(x)
        \end{equation*}
        by \eqref{Formula:CharFtnEquivalence}. Hence $\chi_b(x)\in \mathcal{P}^{\lnot\lnot}(1)$.

        \item Case $F=\WDec$: Similarly, we will show $\ran \chi_b \subseteq \WDec(1)$ if $b\in \WDec(a)$. By the assumption on $b$, we have
        \begin{equation*}
        	\lnot(x\in b) \lor \lnot\lnot(x\in b),
        \end{equation*}
        for all $x\in a$, which turns out to be equivalent to
        \begin{equation*}
        	\lnot(0\in \chi_b(x)) \lor \lnot\lnot(0\in \chi_b(x))
        \end{equation*}
        by \eqref{Formula:CharFtnEquivalence}. Therefore, $\chi_b(x) \in \WDec(1)$.

        \item Case $F=\Dec$: Similar to the case $\WDec$, so we omit it. \qedhere 
    \end{enumerate}
\end{proof}

Note that \autoref{Proposition:VariationPower} still holds even if we replace $F(1)$ with $F(b)$ for any singleton $b$. Moreover, if $a\subseteq b$ and $F(b)$ exists, then $F(a)$ also exists by $\Delta_0$-Separation. Hence the assertion `$F(a)$ exists for all $a$' is equivalent to `$F(a)$ exists for some \emph{inhabited} $a$'.

We will describe $\WDec(1)$ in detail for future use. Suppose that $x\in \WDec(1)$.
Then either $0\notin x$ or $\lnot\lnot(0\in x)$. In the former case, $x$ must be
empty since 0 is the only possible element of $x$. 
If  $\lnot\lnot(0\in x)$, we have $\lnot\lnot(x=1)$ since $\lnot\lnot(x\subseteq 1)$ holds. 
Conversely, it is easy to check that the empty set $0$ and a set $x\subseteq 1$ satisfying $\lnot\lnot(x=1)$ are members of $\WDec(1)$. Therefore, we have the following equation:
\begin{lemma}\label{Formula:WDec1}\pushQED{\qed}
	\begin{equation}
    	\WDec(1) = \{x\subseteq 1 \mid x=0\lor \lnot\lnot(x=1)\}. \qedhere
	\end{equation}
\end{lemma}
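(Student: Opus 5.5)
The plan is to prove the equation $\WDec(1) = \{x\subseteq 1 \mid x=0\lor \lnot\lnot(x=1)\}$ by showing both inclusions directly. Only intuitionistic logic and Extensionality are needed. The key facts are that $0$ is the only possible element of a subset of $1$, and that for $x\subseteq 1$ we have $x=1\iff 0\in x$.

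For the inclusion from left to right, take $x\in \WDec(1)$. We read $x\subseteq 1$ as part of membership in $\WDec(1)$, since the intended reading of the definition is as a class of subsets of $1$. Instantiating the defining condition at $0\in 1$ gives $\lnot(0\in x)\lor\lnot\lnot(0\in x)$, and we split into these two cases.
\begin{enumerate}
\item If $\lnot(0\in x)$, then every $y\in x$ satisfies $y\in 1$, so $y=0$, which forces $0\in x$, a contradiction. Hence $x$ has no elements, and $x=0$ by Extensionality.
\item If $\lnot\lnot(0\in x)$, we use that $0\in x$ implies $1\subseteq x$, hence $x=1$ since $x\subseteq 1$. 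Negation is contravariant, so applying it twice shows that implications are preserved under $\lnot\lnot$. Therefore $\lnot\lnot(0\in x)$ yields $\lnot\lnot(x=1)$.
\end{enumerate}

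For the inclusion from right to left, take $x\subseteq 1$ with $x=0\lor\lnot\lnot(x=1)$. We must show $\lnot(y\in x)\lor\lnot\lnot(y\in x)$ for every $y\in 1$, that is, for $y=0$.
\begin{enumerate}
\item If $x=0$, then $\lnot(0\in x)$ holds trivially.
\item If $\lnot\lnot(x=1)$, note that $x=1$ implies $0\in x$. Double negation again preserves the implication, so $\lnot\lnot(0\in x)$ holds.
\end{enumerate}
In either case the disjunction holds, so $x\in\WDec(1)$.

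There is no serious obstacle; the whole argument is routine. The one point requiring care is that we never convert $\lnot\lnot(x=1)$ into $x=1$ or a decision about $0\in x$. We only transport implications under $\lnot\lnot$, which is intuitionistically valid, together with the fact that $\bot$ implies everything.
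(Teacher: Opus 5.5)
Your proof is correct and follows the paper's argument essentially verbatim: the same case split on $\lnot(0\in x)\lor\lnot\lnot(0\in x)$ for the forward inclusion, using $\lnot\lnot$ to transport $0\in x\to x=1$, and a direct check of both disjuncts for the reverse inclusion. Your explicit remark that membership in $\WDec(1)$ includes $x\subseteq 1$ matches the reading the paper uses implicitly when it says $0$ is the only possible element of $x$.
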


\subsection{Fully inductive relations}
Well-founded relations are useful in $\mathsf{ZF}$ or its extensions because they allow recursive definition. However, this is not true for weaker theories that do not admit Full separation. For example, we do not know whether $\mathsf{CZF}$ proves the transitive collapsing function
    \begin{equation*}
    	\pi(x) := \{\pi(y) \mid y\prec x\}
    \end{equation*}
on a well-founded set $\langle A,\prec\rangle$ is a set function.
Hence $\mathsf{CZF}$ cannot define subsets of $A$ like 
\begin{equation}\label{Formula:ExampleSet000}
	\{x\in A\mid \exists f\in {^x}\pi(x) (\text{$f$ is bijective})\},
\end{equation}
due to the lack of Full separation. It also means we cannot apply the $\prec$-induction scheme to the defining formula of \eqref{Formula:ExampleSet000}.
It motivates us to define a stronger notion of a well-founded set:
\begin{definition}
    Let $A$ be a class and $\prec$ be a binary relation over $A$.
    We call $\prec$ is \emph{fully inductive} if it is \emph{uniformly extensional} in the sense that $\ext_\prec(x) := \{y\in A : y\prec x\}$ is a set which is definable uniform to $x$ and satisfies the following $\prec$-induction scheme for formulas $\phi$:
        \begin{equation*} 
			[(\forall y\in A ( y\prec x \to \phi(y))\to \phi(x))]\to \forall x\in A \phi(x) 
        \end{equation*}
    We call $\prec$ is \emph{$\Delta_0$-definable fully inductive} if $\prec$ is moreover $\Delta_0$-definable.
\end{definition}
We can show that $\prec$ is $\Delta_0$-definable fully inductive if it satisfies $\prec$-induction scheme and $\ext_\prec(x)$ exists for all $x$. Thus we can drop the uniform definability of $\ext_\prec$ for $\Delta_0$-definable fully inductive relations.
\begin{lemma}
    A relation $\prec$ over a class $A$ is $\Delta_0$-definable fully inductive if and only if $\prec$ is $\Delta_0$-definable, $\ext_\prec(x)$ exists for all $x$, and the $\prec$-induction scheme is satsfied.
\end{lemma}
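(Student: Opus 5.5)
The forward direction is immediate from the definition. A $\Delta_0$-definable fully inductive relation is $\Delta_0$-definable by hypothesis. Uniform extensionality includes that $\ext_\prec(x)$ is a set for every $x\in A$. The $\prec$-induction scheme is part of the definition. So all the work lies in the converse. There we must show that if $\prec$ is $\Delta_0$-definable and $\ext_\prec(x)$ exists for every $x$, then $\ext_\prec(x)$ is definable uniformly in $x$.

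The plan is to exhibit a single formula $\psi(x,e)$ that defines $e=\ext_\prec(x)$ in terms of $x$, using the $\Delta_0$ definition of $\prec$. Write $\theta(y,x)$ for the $\Delta_0$ formula defining $y\prec x$, and let $\alpha(y)$ be the formula defining membership in $A$. Take
\begin{equation*}
	\psi(x,e) :\equiv \forall y\in e\,\bigl(\alpha(y)\land \theta(y,x)\bigr) \land \forall y\bigl(\alpha(y)\land\theta(y,x)\to y\in e\bigr).
\end{equation*}

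Extensionality gives that $\psi(x,e)$ holds for at most one $e$. The hypothesis that $\ext_\prec(x)$ exists gives that $\forall x\in A\,\exists! e\,\psi(x,e)$. Hence $\ext_\prec$ is a class function defined by the fixed formula $\psi$, which is exactly what uniform definability asks for.

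If $A$ is a $\Delta_0$-definable class, then $\psi$ is $\Pi_1$, since only the second conjunct contains an unbounded quantifier. This is harmless, because uniform definability is not required to be of any particular complexity. Combined with the assumed induction scheme, this shows $\prec$ is fully inductive, hence $\Delta_0$-definable fully inductive.

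The only real obstacle is interpretive. I must make sure the notion of ``definable uniform to $x$'' means nothing more than that a single formula defines $\ext_\prec(x)$ from the parameter $x$, rather than something like a $\Delta_0$ definition or a set-bounded search. If a stronger reading were intended, I would try to locate each $\ext_\prec(x)$ inside a bounding set obtained by Strong Collection and cut it out by $\Delta_0$-Separation. That approach requires a set-sized domain, so I would first check whether $A$ is assumed to be a set in the applications before committing to it.
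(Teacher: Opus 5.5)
Your proof is correct and takes the paper's approach: for the converse you write down a fixed formula that defines $\ext_\prec(x)$ from $x$ via the $\Delta_0$ definition of $\prec$, and you read ``definable uniform to $x$'' in the same weak sense as the paper. The only difference is cosmetic: the paper's formula first asserts a set $a$ containing all $\prec$-predecessors of $x$ and then cuts $z$ out of $a$, whereas your $\psi(x,e)$ characterizes $e$ directly, which is if anything cleaner.
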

\begin{proof}
   Let $\prec$ be a $\Delta_0$-definable relation which is extensional and satisfies the $\prec$-induction scheme.
   Then the following formula describes $z=\ext_\prec(x)$:
   \begin{equation*}
       \exists a [\forall y\in A (y\prec x \to y\in a)] \land [\forall y\in A (y\in z\leftrightarrow y\in a\land y\prec x)]. \qedhere 
   \end{equation*}
\end{proof}

We concentrate on $\Delta_0$-definable fully inductive relations in this article since every fully inductive relation that will appear in this paper is $\Delta_0$-definable.

$\in$-induction states that the membership relation $\in$ is fully inductive. Fully inductive relations are nicely behaved even when our background theory is weak, unlike well-founded relations.
For example, we can prove that fully inductive relations satisfy well-founded recursion for arbitrary class functions:
\begin{theorem}[Well-founded recursion, $\mathsf{CZF}^-$]
    If $\prec$ is a fully inductive relation over a class $A$ and
    $G\colon A\to A$ be a class function, then there is a class function $F\colon A\to V$
    such that $F(x)=G(F\upharpoonright\operatorname{ext}_\prec(x))$ for all $x\in A$.
\end{theorem}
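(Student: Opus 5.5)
We follow the standard proof of set recursion in $\mathsf{CZF}^-$, using $\prec$ in place of $\in$ and $\ext_\prec(x)$ in place of $x$. (We read $G$ as a class function defined on all sets, so that it can be applied to set functions $F\restricts\ext_\prec(x)$.) Call a set function $f$ an \emph{attempt} if $\dom f\subseteq A$ is $\prec$-closed, meaning $y\prec x\in\dom f$ implies $y\in\dom f$, and $f(x)=G(f\restricts\ext_\prec(x))$ for all $x\in\dom f$. This is well-defined because $\ext_\prec(x)$ is a set uniformly definable from $x$ and $\ext_\prec(x)\subseteq\dom f$. We then put $F(x)=v$ iff there is an attempt $f$ with $x\in\dom f$ and $f(x)=v$.

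First we prove uniqueness: if $f,g$ are attempts, then $f(x)=g(x)$ for all $x\in\dom f\cap\dom g$. We apply the $\prec$-induction scheme to the formula $x\in\dom f\cap\dom g\to f(x)=g(x)$. The inductive step is immediate, since $\ext_\prec(x)$ lies in both domains and the two restrictions therefore coincide. Because the induction scheme holds for arbitrary formulas, no separation is needed here. It follows that $F$ is single-valued.

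Next, and this is the main step, we prove totality: for every $x\in A$ there is an attempt $f$ with $x\in\dom f$. We argue by $\prec$-induction on this (unbounded) formula, again allowed by full inductiveness. Fix $x$ and assume that every $y\in\ext_\prec(x)$ lies in the domain of some attempt. Apply Strong Collection to the set $\ext_\prec(x)$ and the relation ``$f$ is an attempt with $y\in\dom f$'' to obtain a set $S$ of attempts covering $\ext_\prec(x)$. Let $h=\bigcup S$. By uniqueness, $h$ is a function; its domain is $\prec$-closed as a union of $\prec$-closed sets; and each $z\in\dom h$ satisfies the recursion equation because $h\restricts\ext_\prec(z)=f\restricts\ext_\prec(z)$ for any $f\in S$ containing $z$. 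So $h$ is an attempt. Now set $f=h\cup\{\langle x,G(h\restricts\ext_\prec(x))\rangle\}$. This is a set by Pairing and Union, since $G$ applied to the set $h\restricts\ext_\prec(x)$ yields a value. Then $f$ is an attempt: if $x\in\dom h$, uniqueness shows the new pair agrees with $h$; otherwise the recursion equation at $x$ holds by construction. Note that we cannot decide which case occurs constructively, but both cases yield the same equation, so we can verify it for the given pair directly.

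Finally, $F$ is a total class function on $A$. For any attempt $f$ with $x\in\dom f$ we have $F\restricts\ext_\prec(x)=f\restricts\ext_\prec(x)$ by uniqueness, and therefore $F(x)=f(x)=G(F\restricts\ext_\prec(x))$. I expect the main obstacle to be the totality step. There we must make sure that Strong Collection (rather than Replacement or Full Separation, which are unavailable) suffices to gather the attempts. We also must avoid any use of excluded middle when adding the point $x$, and we must respect the requirement that the restriction $h\restricts\ext_\prec(x)$ is a set, which uses $\Delta_0$-separation together with the fact that $\ext_\prec(x)$ is a set.
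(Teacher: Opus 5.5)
Your proposal is correct and follows the paper's proof step for step. Your attempts are exactly the functions satisfying the paper's formula $\Phi(f)$, and you prove the same two lemmas: attempts agree on common domains by $\prec$-induction, and every $x\in A$ lies in the domain of some attempt, obtained by $\prec$-induction, Strong Collection, taking the union $h$ and adjoining $\langle x, G(h\restricts\ext_\prec(x))\rangle$. You then take $F$ to be the union of all attempts, as the paper does, and your constructive handling of the adjoined pair (the case split comes from membership in a union, not from excluded middle) is slightly more explicit than the paper's.
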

Before describing the proof, the readers are reminded that $F\upharpoonright u=\{\langle y,F(y)\rangle :y\in u\}$, which exists by Replacement.
\begin{proof}
Let $\Phi(f)$ be the following formula:
    \begin{equation*}
        [\text{$f$ is a function}]\land 
        \bigl[\forall x\in\dom f \bigl( \ext_\prec(x)\subseteq \dom f\bigr)\bigr]
        \land \bigl[\forall z\in\dom f \bigl( f(z) = G(f\restricts \ext_\prec(z) )\bigr)\bigr]
    \end{equation*}
    We will prove the following two lemmas before constructing the
    desired $F$.
    \begin{lemma}\label{Lemma:RecursionYamada} 
        If $\Phi(f)$ and $\Phi(g)$ then
        \begin{equation*}
        	\forall x\in A \bigl( x\in\dom f\cap\dom g\to f(x)=g(x)\bigr).\qedhere
        \end{equation*}
    \end{lemma}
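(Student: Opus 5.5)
We prove the statement by $\prec$-induction, which is available for arbitrary formulas because $\prec$ is fully inductive. Fix $f$ and $g$ with $\Phi(f)$ and $\Phi(g)$, and take as induction formula
\begin{equation*}
	\phi(x) :\equiv x\in\dom f\cap\dom g\to f(x)=g(x).
\end{equation*}
By the induction scheme, it suffices to show for every $x\in A$ that if $\phi(y)$ holds for all $y\prec x$, then $\phi(x)$ holds.

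So assume the induction hypothesis and let $x\in\dom f\cap\dom g$. We first use the second conjunct of $\Phi$, applied to both $f$ and $g$. It gives $\ext_\prec(x)\subseteq\dom f$ and $\ext_\prec(x)\subseteq\dom g$. Hence every $y\in\ext_\prec(x)$ lies in $\dom f\cap\dom g$, and the induction hypothesis yields $f(y)=g(y)$ for each such $y$. Since $f$ and $g$ are functions, extensionality then gives
\begin{equation*}
	f\restricts\ext_\prec(x)=\{\langle y,f(y)\rangle : y\in\ext_\prec(x)\}=\{\langle y,g(y)\rangle: y\in\ext_\prec(x)\}=g\restricts\ext_\prec(x).
\end{equation*}

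Next we use the third conjunct of $\Phi$, which gives $f(x)=G(f\restricts\ext_\prec(x))$ and $g(x)=G(g\restricts\ext_\prec(x))$. Because $G$ is a class function, equal arguments produce equal values, so $f(x)=g(x)$. This establishes $\phi(x)$ and completes the induction.

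None of these steps should be a real obstacle. The only point needing care is that $\phi$ is not $\Delta_0$, since it mentions the class function $G$ only through $f$ and $g$ and involves equality of possibly complex sets. This is exactly why we need the full $\prec$-induction scheme rather than a restricted one, and full inductivity supplies it.

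We also rely on two simple facts. First, $\ext_\prec(x)$ is a set, which is part of the definition of fully inductive, so the restrictions $f\restricts\ext_\prec(x)$ and $g\restricts\ext_\prec(x)$ exist. Second, the equality of the restrictions follows constructively from pointwise agreement. No excluded middle is used anywhere, so the argument goes through in $\mathsf{CZF}^-$.
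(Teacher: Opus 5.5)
Your proof is correct and is the same $\prec$-induction argument as the paper's. You also make explicit a step the paper uses silently: the second conjunct of $\Phi$ gives $\ext_\prec(x)\subseteq\dom f\cap\dom g$, which is what lets the induction hypothesis yield $f\restricts\ext_\prec(x)=g\restricts\ext_\prec(x)$. One small correction to your commentary: with $f$ and $g$ as set parameters, $\phi(x)$ is actually expressible by a $\Delta_0$-formula, so your remark that it is not $\Delta_0$ is inaccurate, though harmless since full $\prec$-induction is available anyway.
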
 
	\begin{proof}
		We will use the $\prec$-induction on $x$, which is possible as $\prec$ is
		a fully inductive relation on $A$.
		Assume that our theorem holds for all $y\prec x$. Now assume that $x\in \dom f\cap \dom g$.
		By the inductive assumption, we have $f\restricts \ext_\prec(x) = g\restricts \ext_\prec(x)$.
		Since $\Phi(f)$ and $\Phi(g)$ holds, we have $f(x)=G(f\restricts \ext_\prec(x))=G(g\restricts \ext_\prec(x)) = g(x)$.
	\end{proof}
    
    \begin{lemma}
        For each $x\in A$ there is a function $f$ such that $\Phi(f)$ and
        $x\in\dom f$.
    \end{lemma}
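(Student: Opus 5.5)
We prove the claim by $\prec$-induction on $x$, applied to the formula $\psi(x) \equiv \exists f\,[\Phi(f)\land x\in\dom f]$. Since $\prec$ is fully inductive, the induction scheme is available for arbitrary formulas, in particular for this unbounded $\Sigma$-like formula. So assume that for every $y\prec x$ there is some $f$ with $\Phi(f)$ and $y\in\dom f$; we must produce such an $f$ for $x$.

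\emph{Step 1: collecting witnesses.} Consider the class relation $R=\{\langle y,f\rangle \mid y\in\ext_\prec(x)\land \Phi(f)\land y\in\dom f\}$. By the inductive hypothesis, $R\colon \ext_\prec(x)\rrarrows V$, and $\ext_\prec(x)$ is a set. Strong Collection (or even plain Collection, which is all we need) gives a set $S$ such that every $y\prec x$ lies in $\dom g$ for some $g\in S$ with $\Phi(g)$, and every $g\in S$ satisfies $\Phi(g)$.

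\emph{Step 2: gluing.} Let $h=\bigcup S$, which exists by Union. By \autoref{Lemma:RecursionYamada}, any two members of $S$ agree on their common domain, so $h$ is a function. Its domain is $\bigcup_{g\in S}\dom g$, which is closed under $\prec$-predecessors because each $\dom g$ is. For $z\in\dom h$, choose $g\in S$ with $z\in\dom g$. Then $\ext_\prec(z)\subseteq\dom g$ and $h\restricts\ext_\prec(z)=g\restricts\ext_\prec(z)$, so $h(z)=g(z)=G(h\restricts\ext_\prec(z))$. Hence $\Phi(h)$ holds, and $\ext_\prec(x)\subseteq\dom h$.

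\emph{Step 3: extending to $x$.} Set $f=h\cup\{\langle x,G(h\restricts\ext_\prec(x))\rangle\}$. If $x\notin\dom h$, it is clear that $f$ is a function satisfying $\Phi$. Constructively, however, we cannot decide whether $x\in\dom h$, and this is the main subtle point. It is handled by functionality: if $\langle x,v\rangle\in h$, then $v=h(x)=G(h\restricts\ext_\prec(x))$ because $\Phi(h)$ holds. So the added pair agrees with $h$ whenever $x$ is already in the domain, and $f$ is a function without any case split.

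It remains to verify $\Phi(f)$ at each point $z\in\dom f$, which means $z\in\dom h$ or $z=x$. In the case $z=x$, we have $\ext_\prec(x)\subseteq\dom h\subseteq\dom f$ and $f\restricts\ext_\prec(x)=h\restricts\ext_\prec(x)$, so the recursion equation holds. In the case $z\in\dom h$, we use $f\restricts\ext_\prec(z)=h\restricts\ext_\prec(z)$. This again holds because $f$ and $h$ agree on $\dom h$, so once more no decision about $x\in\dom h$ is required. The closure condition $\ext_\prec(z)\subseteq\dom f$ is immediate in both cases. This completes the induction step.
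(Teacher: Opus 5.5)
Your proof is correct and follows the paper's argument: $\prec$-induction, Strong Collection to collect a set of partial solutions that all satisfy $\Phi$, gluing them via \autoref{Lemma:RecursionYamada}, and adjoining $\langle x,G(h\restricts\ext_\prec(x))\rangle$, with functionality at $x$ secured by $\Phi(h)$ as in the paper (your verification of $\Phi(f)$ is more explicit than the paper's ``easy to check''). One small correction: your parenthetical claim that plain Collection would suffice is not justified over $\mathsf{CZF}^-$, because Collection alone does not make every member of $S$ satisfy $\Phi$, and $\Phi$ is not $\Delta_0$ (it involves the class function $G$), so you cannot cut $S$ down by Bounded Separation --- the second conjunct of Strong Collection is exactly what your gluing step relies on.
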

    \begin{proof}
		We will prove it by $\prec$-induction on $x$. Suppose that for each 
		$y\prec x$ there is a function $f$ such that $\Phi(f)$. 
		By Strong Collection, we can find a set $A$ such that
		\begin{equation*}
			\bigl[\forall y\prec x\exists f\in A \bigl( \Phi(f) \land y\in \dom f\bigr)\bigr]
            \land \bigl[\forall f\in A \exists y\prec x \bigl( \Phi(f)\land y\in\dom f\bigr)\bigr].
		\end{equation*}
		Especially, every $f\in A$ satisfies $\Phi(f)$.
		Let $f_0 := \bigcup A$. We will check that $\Phi(f_0)$ holds.
		By \autoref{Lemma:RecursionYamada}, 
		$f_0$ is a function. Moreover, it is easy to check the remaining 
		conditions of $\Phi(f_0)$. Therefore $\Phi(f_0)$ holds.

		Now take 
		\begin{equation*}
			f = f_0\cup \bigl\{\bigl\langle x, G(f_0\restricts \ext_\prec(x) )\bigr\rangle\bigr\}.
		\end{equation*}
		The only non-trivial part to prove $\Phi(f)$ is to verify 
		$f$ is a function, especially the well-definedness of $f(x)$. 
		If $\langle x,y\rangle,\langle x,z\rangle\in f$, then 
		either both of them are in $f_0$, or one of them is in 
		$\bigl\{\bigl\langle x, G(f_0\restricts \ext_\prec(x) )\bigr\rangle\bigr\}$.
		In the latter case, we can see
		$y = z = G (f_0\upharpoonright \ext_\prec(x))$. Hence $f$ is a function.
    \end{proof}
    
    The main theorem follows by letting $F=\bigcup\{f : \Phi(f)\}$.
\end{proof}

Is there an example of a fully inductive relation? $\in$-induction states $\in$ over $V$ is fully inductive. It is also known that $\in$ over
the class of ordinals $\Ord$ is fully inductive. (See \cite[Lemma 9.4.3.]{AczelRathjen2010}.) 
We will see further examples later.

\subsection{Inductive definitions}
Some definitions in mathematics are \emph{impredicative} in the sense that the definition of some object refers to itself, either directly or indirectly. Some impredicative definitions take the following form: 

An impredicative description justifies the following definition for a property $P$: an object $H$ is the smallest object that satisfies $P$. 
Predicative constructive systems like $\mathsf{CZF}$ do not allow impredicative definitions. Fortunately, we can define some types of `smallest objects' over $\mathsf{CZF}$

Let $C$ be a class and $\Gamma$ be a monotone operation.
A collection of classes $\langle C^\alpha \mid \alpha\in\Ord\rangle$ is a \emph{$\Gamma$-hierarchy for $C$} if it satisfies $C = \bigcup_{\alpha\in\Ord}$ and $C^\alpha = \Gamma(C^{\in\alpha})$, where $C^{\in\alpha} = \bigcup_{\beta\in\alpha} C^\beta$.

The proof of the theorem is available in \cite{AczelRathjen2010}, so we omit it:
\begin{theorem}[Class Inductive Definition Theorem, $\mathsf{CZF}^-$]
    \label{Theorem:ClassInductiveDefThm}
    Let $\Phi$ be an inductive definition. Then we can find a smallest $\Phi$-closed class $I_{\Phi}$ and a $\Gamma_\Phi$-hierarchy $\langle I_\Phi^\alpha \mid\alpha\in\Ord\rangle$ for $I_\Phi$.
    Furthermore, each $I^\alpha_\Phi$ is a set if $\Gamma_\Phi(X)$ is a set for every set $X$.
\end{theorem}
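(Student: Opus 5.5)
We prove the Class Inductive Definition Theorem in two stages. First we build the stages $I^\alpha_\Phi$ by transfinite recursion on $\Ord$. Then we take $I_\Phi=\bigcup_{\alpha\in\Ord} I^\alpha_\Phi$ and verify closure and minimality.

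Recall that an inductive definition $\Phi$ is a class of pairs $(X,a)$ with $X$ a set, and that $\Gamma_\Phi(Y)=\{a \mid \exists X\subseteq Y\,((X,a)\in\Phi)\}$. This $\Gamma_\Phi$ is monotone, and a class $Y$ is $\Phi$-closed iff $\Gamma_\Phi(Y)\subseteq Y$.

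\textbf{Building the hierarchy.} We cannot simply apply the Well-founded recursion theorem above: when $\Gamma_\Phi$ of a set is merely a class, the values $I^\alpha_\Phi$ are classes, not sets. So we instead define a single class $J\subseteq \Ord\times V$ by an explicit formula, setting $I^\alpha_\Phi=\{a\mid (\alpha,a)\in J\}$.
\begin{itemize}
\item Let $\mathcal{G}$ be the class of \emph{good} functions $g$: those with $\dom g$ a transitive set of ordinals such that for every $\beta\in\dom g$, each element $g(\beta)$ is a \emph{set} $X\subseteq\bigcup_{\gamma\in\beta}g(\gamma)$.
\item Put $(\alpha,a)\in J$ iff there are a set $X$ and a good $g$ with $\dom g=\alpha$, $(X,a)\in\Phi$, and $X\subseteq \bigcup_{\beta\in\alpha} g(\beta)$.
\end{itemize}
The idea is that $g(\beta)$ is a set of witnesses drawn from stage $\beta$.

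The main work is showing that $I^\alpha_\Phi=\Gamma_\Phi(I^{\in\alpha}_\Phi)$. The inclusion $\subseteq$ is proved by $\in$-induction on $\Ord$. The inclusion $\supseteq$ is the main obstacle: given $X\subseteq I^{\in\alpha}_\Phi$ with $(X,a)\in\Phi$, each $x\in X$ lies in some stage $\beta_x\in\alpha$ with its own good witness function. We will apply Strong Collection to $X$ to collect a set of such witnesses. By the induction hypothesis the witnesses can be glued along ordinals, as in Lemma~\ref{Lemma:RecursionYamada}. Their union then gives one good $g$ with domain $\alpha$ that covers $X$.

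Gluing requires some care. The witnesses need not agree, so we define the glued function pointwise by $g(\beta)=\{\,\text{the union of the collected sets}\,\}$, using Union and Strong Collection, rather than requiring uniqueness. Since the goodness conditions are monotone under unions, the glued $g$ remains good.

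\textbf{Closure and minimality.} For closure, suppose $(X,a)\in\Phi$ and $X\subseteq I_\Phi$. By Strong Collection, the stages of the elements of $X$ form a set of ordinals, and its union is an ordinal $\alpha$ with $X\subseteq I^{\in\alpha+1}_\Phi$. Then $a\in\Gamma_\Phi(I^{\in\alpha+1}_\Phi)=I^{\alpha+1}_\Phi$. For minimality, if $Y$ is $\Phi$-closed, an $\in$-induction on $\alpha$ gives $I^\alpha_\Phi\subseteq Y$, using monotonicity of $\Gamma_\Phi$.

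\textbf{The set case.} Assume $\Gamma_\Phi(X)$ is a set for every set $X$. Then we can use the Well-founded recursion theorem on the fully inductive relation $\in$ over $\Ord$ with $G(f)=\Gamma_\Phi(\bigcup\ran f)$. This produces a set-valued class function whose values satisfy the same recursion equations as the $I^\alpha_\Phi$. By $\in$-induction these values agree with the $I^\alpha_\Phi$, so each $I^\alpha_\Phi$ is a set.
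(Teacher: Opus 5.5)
The paper does not prove this theorem; it defers to Aczel--Rathjen. Your overall strategy is the standard route: set-sized approximations of the hierarchy, collected with Strong Collection and merged by unions using only the monotonicity of $\Gamma_\Phi$.

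\textbf{The definition of a good function is wrong as stated.} Your condition on $g(\beta)$ never mentions $\Phi$, so good functions carry no information about the inductive definition. Read literally, it says $g(\beta)\subseteq\bigcup_{\gamma\in\beta}g(\gamma)$, and by induction this forces $g(\beta)=\varnothing$ for every $\beta$. Then $(\alpha,a)\in J$ iff $(\varnothing,a)\in\Phi$, so $I^\alpha_\Phi=\Gamma_\Phi(\varnothing)$ for every $\alpha$, and the hierarchy equation fails. The condition you need is $g(\beta)\subseteq\Gamma_\Phi\bigl(\bigcup_{\gamma\in\beta}g(\gamma)\bigr)$ for each $\beta\in\dom g$.

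With this definition, $I^\alpha_\Phi\subseteq\Gamma_\Phi(I^{\in\alpha}_\Phi)$ needs no induction: an element of $g(\beta)$ is witnessed at stage $\beta$ by the good function $g\restricts\beta$. The converse needs neither an induction hypothesis nor any agreement of witnesses as in \autoref{Lemma:RecursionYamada}.

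\textbf{The gluing step omits the points of $X$.} Gluing the witnesses $g_x$ only covers their premise sets $X_x$, not $X$ itself. Collect quadruples $(x,\beta_x,g_x,X_x)$ and set $g(\beta)=\bigcup\{g_x(\beta)\mid \beta\in\beta_x\}\cup\{x\mid \beta_x=\beta\}$ for $\beta\in\alpha$. Inserting $x$ at stage $\beta_x$ is legitimate because $X_x\subseteq\bigcup_{\gamma\in\beta_x}g(\gamma)$. Then $g$ is good with domain $\alpha$ and $X\subseteq\bigcup\ran g$.

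\textbf{The closure step needs a different justification.} Taking $\alpha=\bigcup B$ only gives $\beta\subseteq\alpha$ for $\beta\in B$. Constructively, $\beta\subseteq\alpha$ does not imply $\beta\in\alpha+1$: for $\beta=\{0\mid\varphi\}\subseteq 1$ that would decide $\varphi$. Your claim $X\subseteq I^{\in\alpha+1}_\Phi$ is still true, but for a different reason. From $\beta\subseteq\alpha$ you get $I^{\in\beta}_\Phi\subseteq I^{\in\alpha}_\Phi$, hence $I^\beta_\Phi\subseteq I^\alpha_\Phi$ by monotonicity of $\Gamma_\Phi$, and $I^\alpha_\Phi\subseteq I^{\in\alpha+1}_\Phi$. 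Alternatively, use the ordinal $\bigcup_{\beta\in B}(\beta\cup\{\beta\})$ instead of $\bigcup B$.

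The minimality argument and the set case are fine. The set case also follows by a direct induction showing that each $I^{\in\alpha}_\Phi$ is a set via Replacement and Union.
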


\section{Introducing the Axiom of Double Complement}
\label{Section:DCom}

In this section, we will examine basic properties of the double complement operation. We will define the Axiom of Double Complement $\mathsf{DCom}$ and its opposites, named $\mathsf{NDCom}$ and $\mathsf{ADCom}$ that will be analyzed through the rest of the paper.

\begin{definition}
\begin{itemize}
    \item $\mathsf{DCom}$, the \emph{Axiom of Double Complement} states every set has a double complement:
    \begin{equation*}
        \forall x\exists y\forall z \bigl( \lnot\lnot(z\in x)\to z\in y\bigr).
    \end{equation*}
    
    \item $\mathsf{NDCom}$, the \emph{Axiom of Non-Double Complement} states there is a set whose double complement does not exist:
    \begin{equation*}
        \exists x\forall y \Bigl[ \lnot\bigl[\forall z\bigl( \lnot\lnot(z\in x)\to z\in y\bigr)
        \bigr]\Bigr]
    \end{equation*}
    
    \item $\mathsf{ADCom}$, the \emph{Axiom of Anti-Double-Complement} states every set which has a double complement is a subset of 1:
    \begin{equation*}
        \forall x  \Bigl[\bigl[ \exists y\forall z \bigl(\lnot\lnot z\in x\to z\in y\bigr)\bigr]\to x\subseteq 1\Bigr].
    \end{equation*}
\end{itemize}
\end{definition}

$\mathsf{ADCom}$ is motivated by \autoref{Proposition: 1has-a-DCom}, which claims $\mathsf{CZF}^-$ proves subsets of $1$ have a double complement. $\mathsf{ADCom}$ claims subsets of 1 are the only sets that have a double complement. Thus, we may view $\mathsf{ADCom}$ as a `maximal' form of the opposition of $\mathsf{DCom}$.

We will establish the existence of a double complement for some sets and find their representation as much as possible. It turns out that we can easily find and describe the double complement of subsets of 1. However, the situation is convoluted for subsets of more complicated sets.
We can prove the existence of the double complement for elements of $V_\omega$ under Powerset, and the existence of $V_\omega^\dcom$ under a semi-classical background.
Describing a double complement of a given set is more challenging, but shows an intriguing aspect: it turns out that the double complement of $2$ is the powerset of 1. 
We also discuss the consistency and compatibility of $\mathsf{DCom}$ and their opposites with constructive set theories.

\subsection{Finding and describing a double complement}
In this subsection, we will establish the existence of a double complement for some simple sets. Also, we discuss how to represent a double complement of a given set if possible.

Many natural examples that have a double complement do not only have a double complement, but also they coincide with their double complement. Thus we introduce a terminology for these types of sets:
\begin{definition}
    A set $x$ is \emph{stable} if $x=x^\dcom$.
\end{definition}

We will use the following basic properties of double complement frequently:
\begin{proposition}[$\mathsf{CZF}^-$]\label{Proposition:BasicsofDCom}
\begin{enumerate}
    \item If $x\subseteq y$ then $x^\dcom \subseteq y^\dcom$.
    \item $x\subseteq x^\dcom$.
    \item $\mathcal{P}(x)^\dcom \subseteq \mathcal{P}(x^\dcom)$. In particular, if $x$ is stable, then so is $\mathcal{P}(x)$.
\end{enumerate}
\end{proposition}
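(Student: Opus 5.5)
The three parts are elementary manipulations of intuitionistic logic, since the double complement need not exist as a set. I will read $x^\dcom$ as the class $\{z : \lnot\lnot(z\in x)\}$, so that each inclusion is a statement about classes, and \emph{stable} for a set $x$ means the class $x^\dcom$ equals $x$. Throughout I use that $\lnot\lnot$ is monotone: if $A\to B$ then $\lnot\lnot A\to\lnot\lnot B$, because $\lnot B\to\lnot A$. I also use that $\lnot\lnot$ commutes with implication in one direction: $\lnot\lnot(A\to B)\to(\lnot\lnot A\to\lnot\lnot B)$.

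For (1), assume $x\subseteq y$ and $\lnot\lnot(z\in x)$. Since $z\in x\to z\in y$, monotonicity gives $\lnot\lnot(z\in y)$, so $z\in y^\dcom$. For (2), if $z\in x$ then $\lnot\lnot(z\in x)$, since $A\to\lnot\lnot A$ holds intuitionistically.

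For (3), let $z\in\mathcal{P}(x)^\dcom$, so that $\lnot\lnot(z\subseteq x)$; I must show that every $w\in z$ satisfies $\lnot\lnot(w\in x)$. Fix $w\in z$. The implication $z\subseteq x\to w\in x$ holds because $w\in z$, so monotonicity gives $\lnot\lnot(w\in x)$. Hence $z\subseteq x^\dcom$. When $x^\dcom$ is only a class, I read $\mathcal{P}(x^\dcom)$ as the class of sets that are subclasses of $x^\dcom$. For the "in particular" part, suppose $x$ is stable. Then $\mathcal{P}(x)^\dcom\subseteq\mathcal{P}(x^\dcom)=\mathcal{P}(x)$, and (2) gives the reverse inclusion, so $\mathcal{P}(x)^\dcom=\mathcal{P}(x)$. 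If $\mathcal{P}(x)$ is only a class in $\mathsf{CZF}^-$, I would state this as an equality of classes, which is the intended sense.

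No step poses a real obstacle. The only point needing care is (3): there the double negation sits outside a bounded universal statement, and the argument is to pass it inside after fixing the element $w$ (the easy direction $\lnot\lnot\forall\to\forall\lnot\lnot$) rather than attempting the invalid converse.
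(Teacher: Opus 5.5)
Your proposal is correct and follows essentially the same route as the paper. For (3), the paper likewise moves the double negation inside the universal quantifier and applies $\lnot\lnot(p\to q)\to(\lnot\lnot p\to\lnot\lnot q)$, which amounts to your monotonicity step after fixing $w\in z$; your treatment of (1), (2), and the stability consequence, which the paper leaves to the reader, is also correct.
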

\begin{proof}
We only give a proof for the last statement. Let $y\in \mathcal{P}(x)^\dcom$, that is,
\begin{equation*}
    \lnot\lnot(\forall z: z\in y\to z\in x).
\end{equation*}
Since we have $\lnot\lnot\forall z\phi(z)\to \forall z[\lnot\lnot\phi(z)]$ and $\lnot\lnot(p\to q)\to (\lnot\lnot p\to\lnot\lnot q)]$, we can derive $\forall z\in y: \lnot\lnot(z\in x)$. That is, we have $y\subseteq x^\dcom$.
\end{proof}

It is natural to ask if there is a set that has a double complement.
Moreover, it would be better if we could find a concrete representation of the double complement of a given set. Unfortunately, evaluating the double complement for arbitrary sets is usually hard, unless we have an additional axiom like $\mathsf{\Delta_0\mhyphen LEM}$. 
Despite that, we can find a concrete representation of the double complement for some simple sets.
For example, it is easy to see that $0^\dcom = 0$. In general, we can evaluate the double complement of subsets of 1:
\begin{proposition}[$\mathsf{CZF}^-$]\label{Proposition: 1has-a-DCom}  
    If $x\subseteq 1$ then $x^\dcom = \{0\mid \lnot\lnot(0\in x)\}$.
\end{proposition}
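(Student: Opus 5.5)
The plan is to show that the set $y := \{0 \mid \lnot\lnot(0\in x)\}$ exists and then check that it has exactly the defining property of $x^\dcom$, namely $\forall z\,[z\in y \leftrightarrow \lnot\lnot(z\in x)]$.

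Existence is immediate. The set $y$ is obtained from $1$ by $\Delta_0$-Separation, since the condition $\lnot\lnot(0\in x)$ is $\Delta_0$. So $y\subseteq 1$, and $z\in y$ holds iff $z=0$ and $\lnot\lnot(0\in x)$.

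For the inclusion $y\subseteq x^\dcom$, take $z\in y$. Then $z=0$ and $\lnot\lnot(0\in x)$, so $\lnot\lnot(z\in x)$ follows by substituting the equality.

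The converse is the only step needing care. Suppose $\lnot\lnot(z\in x)$; we want $z\in y$, that is, $z=0$ together with $\lnot\lnot(0\in x)$. The obstacle is that double negation only gives $\lnot\lnot(z=0)$ at first.
\begin{itemize}
\item Since $x\subseteq 1$, the implication $z\in x\to z=0$ holds. Double negation is monotone, so $\lnot\lnot(z=0)$.
\item Similarly, $z\in x\to 0\in x$ (because $z\in x$ forces $z=0$), hence $\lnot\lnot(0\in x)$.
\item To upgrade $\lnot\lnot(z=0)$ to $z=0$, I will use that equality with $0$ is stable. Indeed, $z=0$ is equivalent to $\forall w\in z\,(w\neq w)$, that is, $\forall w\in z\,\bot$.
\item Given $w\in z$: from $z=0$ we would derive $\bot$, so $\lnot(z=0)$, which contradicts $\lnot\lnot(z=0)$. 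Hence $z$ has no elements, and $z=0$ by Extensionality.
\end{itemize}
With $z=0$ and $\lnot\lnot(0\in x)$ in hand, $z\in y$.

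This shows $y$ satisfies $\forall z\,(\lnot\lnot(z\in x)\leftrightarrow z\in y)$, so $x^\dcom$ exists and equals $y$, using only Extensionality and $\Delta_0$-Separation, both available in $\mathsf{CZF}^-$.
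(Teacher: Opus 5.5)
Your proof is correct and takes essentially the same route as the paper. Both proofs get $\lnot\lnot(z=0)$ and $\lnot\lnot(0\in x)$ from $\lnot\lnot(z\in x)$, then use that $z=0$, being a negative statement, is stable; the reverse inclusion is immediate. You also make explicit, via $\Delta_0$-Separation, that $\{0\mid\lnot\lnot(0\in x)\}$ is a set, which the paper leaves implicit.
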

\begin{proof}
    If $y\in x^\dcom$, then $\lnot\lnot(y=0 \land 0\in x)$ holds, 
    which is equivalent to
    \begin{equation*}
        \lnot\lnot(y=0) \land \lnot\lnot(0\in x).
    \end{equation*}
    Since $y=0$ is a negation of the statement `$y$ is inhabited', 
    $\lnot\lnot(y=0)$ is just $(y=0)$. 
    Therefore $x^\dcom \subseteq \{0\mid \lnot\lnot(0\in x)\}$.
    The reverse inclusion can be shown easily.
\end{proof}
In particular, we can see that $1^\dcom = 1$. 
Unfortunately, subsets of 1 are the only examples in which we can find their double complement over $\mathsf{CZF}^-$ without additional assumptions, since $\mathsf{CZF + ADCom}$ is consistent. (See \autoref{Section:ADCom} for its proof.)
However, we can find more sets that have a double complement if we allow Powerset.
\begin{proposition}\label{Proposition:StabilityOfVn} \phantom{a}
\begin{enumerate}
    \item \emph{($\mathsf{CZF^-}$)} For each $n\in\omega$, $V_n$ is stable. That is $V_n^\dcom=V_n$.
    \item \emph{($\mathsf{CZF+Pow}$)} If $x\in V_\omega$ then $x^\dcom$ exists.
\end{enumerate}
\end{proposition}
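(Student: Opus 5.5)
For (1) I will argue by induction on $n\in\omega$. This uses only induction on $\omega$ for a fixed statement (or meta-induction on the numeral $n$), which is available in $\mathsf{CZF}^-$. Here $V_n$ means the finite set obtained by iterating the powerset from $V_0=0$; I will check that each $V_{n+1}=\mathcal{P}(V_n)$ exists in $\mathsf{CZF}^-$, since $V_n$ is finite.

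The base case is $V_0=0$, and we already noted $0^\dcom=0$. For the inductive step, assume $V_n^\dcom=V_n$ and let $y\in V_{n+1}^\dcom$, i.e.\ $\lnot\lnot(y\subseteq V_n)$. By the argument of \autoref{Proposition:BasicsofDCom}(3) we get $\forall z\in y\,\lnot\lnot(z\in V_n)$, so by the induction hypothesis $y\subseteq V_n$, i.e.\ $y\in\mathcal{P}(V_n)=V_{n+1}$. The reverse inclusion is \autoref{Proposition:BasicsofDCom}(2).

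The subtle point is that \autoref{Proposition:BasicsofDCom}(3) is phrased with the class $\mathcal{P}(x)$. In the absence of Powerset I will apply it to the class: $y\in\mathcal{P}(V_n)^\dcom$ means $\lnot\lnot(y\subseteq V_n)$, and the proof given there goes through verbatim. Then $\mathcal{P}(V_n)=\{y\mid y\subseteq V_n\}$, so the conclusion $y\subseteq V_n$ gives $\mathcal{P}(V_n)^\dcom=\mathcal{P}(V_n)$ as classes. This identity does not depend on whether $V_{n+1}$ is a set. If it is a set, it is stable, and if we merely speak of the class $V_{n+1}$, the statement still reads as $\{z\mid\lnot\lnot z\in V_{n+1}\}=V_{n+1}$.

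For (2), I use $\mathsf{Pow}$ to make every $V_n$ a set, and $\mathsf{CZF}$ proves that $V_\omega=\bigcup_n V_n$ exists. Let $x\in V_\omega$, so $x\subseteq V_n$ for some $n$. By \autoref{Proposition:BasicsofDCom}(1) and part (1), $x^\dcom\subseteq V_n^\dcom=V_n$. Hence the class $x^\dcom$ is the separation
\[
x^\dcom=\{z\in V_n\mid \lnot\lnot(z\in x)\}.
\]
The defining formula is $\Delta_0$ with parameter $x$, so $\Delta_0$-Separation gives a set.

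The main obstacle I expect is the bookkeeping in (1) about whether $V_n$ is a set in $\mathsf{CZF}^-$ without Powerset. I will handle it either by interpreting stability at the level of classes, as above, or by noting that each $V_n$ is a finite class of hereditarily finite sets and so exists in $\mathsf{CZF}^-$. Either way, the double-negation pushing argument is the only real content.
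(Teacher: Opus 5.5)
Your argument is essentially the paper's: for part (1), induction on $n$ via \autoref{Proposition:BasicsofDCom}(2),(3) read at the level of classes, and for part (2), $x^\dcom\subseteq V_n^\dcom=V_n$ followed by $\Delta_0$-Separation. Two corrections are needed.

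First, the fallback you offer for the set/class issue is false and should be dropped. $V_2=\mathcal{P}(1)$ is neither finite nor provably a set in $\mathsf{CZF}^-$. Over $\mathsf{CZF}$ its existence already yields $\mathsf{Pow}$ by \autoref{Proposition:VariationPower}, while $\mathsf{CZF}+\lnot\mathsf{Pow}$ is consistent. A finite $\mathcal{P}(1)$ would also have decidable equality, which gives $\Delta_0$-$\mathsf{LEM}$. So your opening claim that each $V_{n+1}=\mathcal{P}(V_n)$ exists as a set in $\mathsf{CZF}^-$ is wrong. Part (1) must be read as the class identity $\{z\mid\lnot\lnot(z\in V_n)\}=V_n$. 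This is what the paper implicitly does, and it is what your main argument actually proves.

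Second, you take $V_{n+1}=\mathcal{P}(V_n)$ as a definition. The paper instead defines $V_\alpha=\bigcup_{\beta<\alpha}\mathcal{P}(V_\beta)$, and most of its proof checks $V_{n+1}=\mathcal{P}(V_n)$ by induction. The $V_k$ increase and $V_n$ is transitive, so $V_n\subseteq\mathcal{P}(V_n)$ and hence $V_{n+1}=V_n\cup\mathcal{P}(V_n)=\mathcal{P}(V_n)$. You should include that short verification so that your $V_n$ is the one in the statement.
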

\begin{proof}
    Let us remind that von Neumann hierarchies are defined by $V_\alpha = \bigcup_{\beta<\alpha}\mathcal{P}(V_\beta)$. We claim $V_{n+1}= \mathcal{P}(V_n)$ for a natural number $n$ by induction on $n$. Then the desired theorem follows from \autoref{Proposition:BasicsofDCom}.
    
    This is obvious if $n=0$. If $V_{k+1} = \mathcal{P}(V_k)$ for all $k<n$, then
    \begin{equation*}
    \begin{array}{lll}
        V_{n+1} &= \bigcup_{k<n}\mathcal{P}(V_k) \cup \mathcal{P}(V_n) & \\
        &= \bigcup_{k<n}V_{k+1}\cup \mathcal{P}(V_n) 
        & (\because \text{ Inductive assumptions.})\\
        &= V_n \cup \mathcal{P}(V_n)
        & (\because \text{ $\langle V_k\mid k\in n\rangle$ 
        is increasing under $\subseteq$.})\\
        &= \mathcal{P}(V_n)
        & (\because \text{ $V_n$ is transitive so $V_n\subseteq\mathcal{P}(V_n)$.})
    \end{array}
    \end{equation*}
    Now the stability of $V_n$ easily follows from \autoref{Proposition:BasicsofDCom} and  induction on $n$.
    
    For the second assertion, if $x\in V_\omega=\bigcup_{n\in\omega} \mathcal{P}(V_n)$, then $x$ is a subset of some $V_n$. By Powerset, each $V_n$ is a set. In addition, we have $x^\dcom\subseteq V_n^\dcom=V_n$, and thus $x^\dcom$ exists by Bounded Separation.
    \qedhere
\end{proof}
As a corollary, we have
\begin{corollary}[$\mathsf{CZF}^-$]
    $\mathsf{ADCom}$ is incompatible with Powerset. \qedhere
\end{corollary}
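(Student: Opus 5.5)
We argue inside $\mathsf{CZF}^-$ and assume both $\mathsf{ADCom}$ and Powerset, aiming for a contradiction. By \autoref{Proposition:StabilityOfVn}(2), Powerset together with $\mathsf{CZF}^-$ is enough to show that every $x\in V_\omega$ has a double complement. One might worry that part (2) is stated over $\mathsf{CZF+Pow}$ rather than $\mathsf{CZF^-+Pow}$. However, Powerset implies Subset Collection over $\mathsf{CZF}^-$: given $a,b$, the set $\mathcal{P}(\mathcal{P}(a\times b))$-style collections, or more simply $\{\ran S\mid S\in\mathcal{P}(a\times b)\}$, is full in subimages. Moreover, the proof of part (2) uses only that each $V_n$ is a set together with Bounded Separation. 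So this step is harmless.

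The first step is to pick a concrete element of $V_\omega$ that is not a subset of $1$. The natural choice is $x=2=\{0,1\}$, which lies in $V_2=\mathcal{P}(V_1)$ because $V_1=1$, so that $V_2=\mathcal{P}(1)\ni 0,1$. Alternatively, we can take $x=V_2$ itself, since $V_2$ is stable by part (1) without even needing Powerset; but we still need $V_2$ to be a set. That follows from Powerset, as $V_2=\mathcal{P}(V_1)=\mathcal{P}(1)$.

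The second step is to apply $\mathsf{ADCom}$. Since $2^\dcom$ exists, $\mathsf{ADCom}$ gives $2\subseteq 1$, hence $1\in 1$, which contradicts $\in$-induction (or directly $1=\{0\}$ and $1\neq 0$). Therefore $\lnot(\mathsf{ADCom}\land\mathsf{Pow})$.

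I do not expect any real obstacle. The only point needing care is the bookkeeping of which base theory suffices for \autoref{Proposition:StabilityOfVn}(2). The cleanest route avoids it entirely: use $x=V_2=\mathcal{P}(1)$, which is a set by Powerset, and is stable by part (1) (proved in $\mathsf{CZF}^-$). Then $x^\dcom=x$ exists, $\mathsf{ADCom}$ forces $\mathcal{P}(1)\subseteq 1$, and so $1\in 1$, a contradiction.
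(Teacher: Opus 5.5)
Your proposal is correct, and your ``cleanest route'' is exactly the paper's proof: $V_2=\mathcal{P}(1)$ is a set by Powerset, it is its own double complement by \autoref{Proposition:StabilityOfVn}(1), and it contains $1$, so $\mathsf{ADCom}$ would force $1\in 1$. The detour through part (2) also works, since Powerset does yield Subset Collection over $\mathsf{CZF}^-$, but it contains a small slip: $2=\{0,1\}\subseteq V_2$ gives $2\in V_3$, not $2\in V_2$, which still places $2$ in $V_\omega$.
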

\begin{proof}
	$V_2$ is a double complement of itself, but not a subset of $1$.
\end{proof}

We may ask if we can find a concrete representation of a double complement for simple sets other than stable sets, for example, for natural numbers and $\omega$. The bad news is that expressing a double complement of 3 under simple terms is unlikely to be possible. Despite that, we can find a precise description of the double complement for 2 and $\{1\}$. As promised, their double complements are connected with power sets:
\begin{theorem}[$\mathsf{CZF}^-$]\label{Theorem:DComof2}
    The double complement of 2 is $\mathcal{P}(1)$. Moreover, the double complement
    of $\{1\}$ is $\{x\subseteq 1\mid \lnot\lnot(x=1)\}$.
\end{theorem}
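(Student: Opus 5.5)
We prove both equalities by double inclusion, unfolding the definition $z\in x^\dcom \iff \lnot\lnot(z\in x)$. The central fact is that membership in $2$ or in $\{1\}$ is a disjunction or equation that, once double-negated, only says something about the elements of $z$. Those elements live inside $0$, so negative information about them is already positive information.

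For $2^\dcom\subseteq\mathcal{P}(1)$, take $z$ with $\lnot\lnot(z=0\lor z=1)$. To show $z\subseteq 1$, let $w\in z$; we need $w=0$. Since $w=0$ is the negative statement $\forall u\in w\,\lnot(u=u)$, it is $\lnot\lnot$-stable, so it suffices to prove $\lnot\lnot(w=0)$. This follows from $\lnot\lnot(z=0\lor z=1)$ together with the implications $(z=0\to\bot)$ (because $w\in z$) and $(z=1\to w=0)$. Conversely, take $z\subseteq 1$ and suppose $\lnot(z=0\lor z=1)$. From this we get $\lnot(z=1)$, hence $\lnot(0\in z)$, since $0\in z$ together with $z\subseteq1$ gives $z=1$. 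But then $z=0$, because every element of $z$ is $0$, which contradicts $\lnot(z=0)$. Hence $\lnot\lnot(z\in 2)$, i.e. $z\in 2^\dcom$.

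For $\{1\}$, take $z$ with $\lnot\lnot(z=1)$. As before, every $w\in z$ satisfies $\lnot\lnot(w=0)$ and hence $w=0$, so $z\subseteq1$ and $z\in\{x\subseteq1\mid\lnot\lnot(x=1)\}$. The reverse inclusion is immediate from the definition.

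There is no real obstacle. The only delicate point is to avoid deriving $z\subseteq 1$ directly from a double negation. Instead, we apply $\lnot\lnot$ pointwise to each $w\in z$ and use stability of $w=0$, exactly as in the proof of \autoref{Proposition: 1has-a-DCom}. Everything uses only $\Delta_0$-Separation, and $\mathcal{P}(1)$ need not be a set: the statement is an equality of classes. This is consistent with the later remark that $\mathsf{DCom}$ for $2$ yields Powerset via \autoref{Proposition:VariationPower}.
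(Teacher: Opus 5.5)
Your proof is correct and is essentially the paper's. The paper gets $2^\dcom\subseteq\mathcal{P}(1)^\dcom=\mathcal{P}(1)$ from monotonicity of $(\cdot)^\dcom$ and stability of $\mathcal{P}(1)$ (\autoref{Proposition:BasicsofDCom}), which is your pointwise use of the stability of $w=0$ in packaged form, and it gets the reverse inclusion from $z\subseteq 1$ and $\lnot\lnot(0\in z\lor 0\notin z)$, just as you do. (One small correction to your closing remark: that same pointwise argument proves $\lnot\lnot(z\subseteq 1)\to z\subseteq 1$, so $z\subseteq 1$ \emph{can} be derived from its double negation. You also write out the $\{1\}$ case, which the paper leaves as analogous.)
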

\begin{proof}
	 We only give a proof for the former statement, as the proof of the latter statement is analogous.
	Since $2\subseteq\mathcal{P}(1)$, we have $2\subseteq \mathcal{P}(1)^\dcom = \mathcal{P}(1)$. It remains to show the reverse inclusion. 
	Let $x\subseteq 1$. We must show that
		\begin{equation*}
			\lnot\lnot(x=0 \lor x=1).
		\end{equation*}
	But this follows from $\lnot\lnot(0\in x\lor 0\notin x)$ and $\lnot\lnot(x\subseteq 1)$.
\end{proof}

That is, $\mathsf{DCom}$ shows $2^\dcom=\mathcal{P}(1)$ exists. Combining with Subset Collection, we have the following corollary:
\begin{corollary}[$\mathsf{CZF}$]  \label{Corollary:DComToPow}
    $\mathsf{DCom}$ implies $\mathsf{Pow}$.
\end{corollary}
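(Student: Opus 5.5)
By \autoref{Theorem:DComof2}, $\mathsf{DCom}$ applied to the set $2$ produces a set $y$ with $\mathcal{P}(1)\subseteq y$. By the same theorem, $\{z \mid \lnot\lnot(z\in 2)\}=\mathcal{P}(1)$, so Bounded Separation cuts $y$ down to $\{z\in y\mid z\subseteq 1\}=\mathcal{P}(1)$. Hence $\mathcal{P}(1)$ is a set. The plan is to reduce full Powerset to this instance, using \autoref{Proposition:VariationPower} with $F=\mathcal{P}$: if $\mathcal{P}(1)$ exists, then $\mathcal{P}(a)$ exists for every $a$.

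That proposition is proved in $\mathsf{CZF^-+Exp}$, so we must check that Exponentiation is available. This is where Subset Collection is used. $\mathsf{CZF}$ proves Exponentiation: for sets $a,b$, Fullness (equivalent to Subset Collection, as shown above) gives a set $\mathcal{C}$ full in $\mv(a,b)$. Every function $f\colon a\to b$ is a multi-valued function, so some $S\in\mathcal{C}$ satisfies $S\subseteq f$ and $S\colon a\rrarrows b$. Since $f$ is single-valued and $S$ is total on $a$, we get $S=f$. Therefore ${}^ab=\{S\in\mathcal{C}\mid S \text{ is a function from } a \text{ to } b\}$, which is a set by $\Delta_0$-Separation. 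This standard fact could also simply be cited from \cite{AczelRathjen2010}.

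With Exponentiation in hand, \autoref{Proposition:VariationPower} applies to show that $\mathcal{P}(a)\subseteq\{f^{-1}\{1\}\mid f\colon a\to\mathcal{P}(1)\}$, which is a set by Replacement. Then $\mathcal{P}(a)$ itself is obtained by Bounded Separation, and this is $\mathsf{Pow}$.

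No step here is a real obstacle. The only point needing care is that double-complement existence for $2$ alone is enough, and that the separation used to isolate $\mathcal{P}(1)$ from the witness $y$ is bounded, namely $z\subseteq 1$. One subtlety in \autoref{Proposition:VariationPower} is that it uses $f^{-1}\{1\}$ where the characteristic function takes values in $\mathcal{P}(1)$. Preimages should be read as $\{x\in a\mid 0\in f(x)\}$, and this is again a set by Bounded Separation.
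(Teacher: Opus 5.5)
Your proof is correct and follows the paper's route: \autoref{Theorem:DComof2} yields $\mathcal{P}(1)$, which you rightly cut out of the $\mathsf{DCom}$ witness by bounded separation since the axiom only gives a superset. \autoref{Proposition:VariationPower} with $F=\mathcal{P}$ then gives $\mathsf{Pow}$, with Subset Collection supplying the Exponentiation that proposition requires, so you are just spelling out what the paper's one-line proof leaves implicit (and your preimage caveat is harmless but unnecessary, since for $f(x)\subseteq 1$ one has $f(x)=1$ iff $0\in f(x)$).
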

\begin{proof}
	\autoref{Theorem:DComof2} implies $\mathcal{P}(1)$ exists.
	Moreover, \autoref{Proposition:VariationPower} states that the existence of $\mathcal{P}(1)$ implies $\mathsf{Pow}$.
\end{proof}
It is natural to ask whether Powerset proves the Axiom of Double Complement. It will turn out that $\mathsf{IZF+NDCom}$ is consistent, so the answer is negative.

Let us discuss the possible size of the double complement of sets before concluding this subsection. A double complement of a given set may not exist and could be a proper class. We know that classical set theories prove all proper classes are quite large in the sense that they are \emph{inexhaustible}.
\begin{definition}
    A class $A$ is \emph{inexhaustible} if we can find $y\in A$ such that $y\notin x$ for any set $x$. 
\end{definition}

We can show that $\mathsf{CZF}^-$ proves the class of all sets $V$, and the class of all ordinals $\Ord$ is inexhaustible: See \cite[Example 6.24]{ZieglerPhD} for its proof.
We may ask whether the double complement of a set can be inexhaustible, and the answer is negative:

\begin{proposition}[$\mathsf{CZF^-}$]\label{Proposition:NonExhausability}
    If $A$ is a set, then $A^\dcom$ is not inexhaustible.
\end{proposition}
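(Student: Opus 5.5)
Assume towards a contradiction that $A^\dcom$ is inexhaustible, and apply the definition to the set $A$ itself. I read the definition of inexhaustibility in the way that makes the cited fact about $V$ and $\Ord$ meaningful: for every set $x$ there is a member $y$ of the class with $y\notin x$. Since the conclusion is a negation, I may freely assume this hypothesis and only need to derive $\bot$, so everything is intuitionistically unproblematic.

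The key step is to take $x:=A$. Inexhaustibility then yields some $y$ with $y\in A^\dcom$ and $y\notin A$. Unfolding the class $A^\dcom=\{z:\lnot\lnot(z\in A)\}$, membership $y\in A^\dcom$ just means $\lnot\lnot(y\in A)$. Meanwhile $y\notin A$ is literally $\lnot(y\in A)$. Applying the first to the second gives $\bot$. Hence $A^\dcom$ is not inexhaustible. Only the fact that $A$ is a set is used, to make it an admissible choice of $x$; no axiom beyond those of $\mathsf{CZF}^-$ is needed, and the double complement itself need not exist as a set.

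If one instead reads the definition literally as ``some $y\in A^\dcom$ lies in no set $x$'', the argument is even easier. Take $x=\{y\}$, which exists by Pairing; then $y\notin\{y\}$ is immediately contradictory.

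There is no real obstacle here. The only point needing care is which quantifier order the definition intends. The argument with $x=A$ covers the intended reading, and the trivial argument with $x=\{y\}$ covers the literal one.
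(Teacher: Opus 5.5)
Your proof is correct and is essentially the paper's own argument: apply inexhaustibility to the set $A$ itself to obtain some $y$ with $\lnot\lnot(y\in A)$ and $\lnot(y\in A)$, which is contradictory. One small addition: if the definition quantifies only over subsets of the class, as in Ziegler, then $A$ is still an admissible choice, because $A\subseteq A^\dcom$.
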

\begin{proof}
    No $y$ can satisfy both $y\in x^\dcom$ and $y\notin x$, which have to exist if $x^\dcom$ is inexhaustible.
\end{proof}
Note that it resolves an open question stated in Ziegler \cite[Open Question 6.25]{ZieglerPhD}, which asks the consistency of existence of two sets $A$ and $B$ such that $\mathcal{P}(A)$ is inexhaustible while $\mathcal{P}(B)$ is not:
\begin{corollary}
    Working over $\mathsf{CZF}$ with the Axiom of Subcountability, which states every set is an image of a subset of $\omega$, 
    $\mathcal{P}(\omega)$ is inexhaustible but $\mathcal{P}(1)$ is not.
\end{corollary}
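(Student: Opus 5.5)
The corollary has two halves, and I will handle them separately. The half saying that $\mathcal{P}(1)$ is not inexhaustible comes straight from earlier results. By \autoref{Theorem:DComof2}, $\mathcal{P}(1)=2^\dcom$. Since $2$ is a set, \autoref{Proposition:NonExhausability} says $2^\dcom$ is not inexhaustible. Concretely, a witness $y\in\mathcal{P}(1)$ with $y\notin x$ for every set $x$ would in particular satisfy $y\notin 2$, so $\lnot(y=0\lor y=1)$. But the proof of \autoref{Theorem:DComof2} gives $\lnot\lnot(y=0\lor y=1)$ for every $y\subseteq 1$, which is a contradiction. This step needs only $\mathsf{CZF}^-$.

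The substantive half is that $\mathcal{P}(\omega)$ is inexhaustible under Subcountability. The plan is a diagonal argument. Given an arbitrary set $x$, I must produce some $y\subseteq\omega$ with $y\notin x$. Subcountability gives $D\subseteq\omega$ and a surjection $f\colon D\to x$. I then define
\begin{equation*}
y=\{n\in D\mid n\notin f(n)\}.
\end{equation*}
This is a set by $\Delta_0$-Separation, because $f$ is a set parameter. It is a subset of $\omega$. If $y\in x$, then $y=f(n)$ for some $n\in D$, and then $n\in y\leftrightarrow n\notin y$. That yields $\lnot(n\in y)$ and hence $n\in y$, a contradiction, all intuitionistically valid. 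So $y\notin x$.

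A single $y$ that escapes every set simultaneously cannot be intended, since that is impossible for a set $y$. So I will read ``inexhaustible'' in the intended sense: for every set $x$ there is $y\in A$ with $y\notin x$. This is also how the $V$ and $\Ord$ examples from \cite{ZieglerPhD} should be read. For $\mathcal{P}(1)$ the same reading applies, and the argument above still works: any $y\in\mathcal{P}(1)\setminus x$ would give $\lnot\lnot(y\in 2)$, so the statement ``for every set $x$ there is $y\in\mathcal{P}(1)$ with $y\notin x$'' fails, because it already fails for $x=2$.

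The main obstacle is keeping the diagonal construction inside $\mathsf{CZF}$. I need to confirm that membership in $f(n)$ is bounded, so that Bounded Separation applies, and that no decidability of $D\subseteq\omega$ is needed. Both hold, since the separating formula is $\exists z\in x\,(\langle n,z\rangle\in f\land n\notin z)$.
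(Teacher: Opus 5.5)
Your proof is correct and follows the paper's. The $\mathcal{P}(1)$ half is exactly the paper's argument via $\mathcal{P}(1)=2^{\dcom}$ (\autoref{Theorem:DComof2}) and \autoref{Proposition:NonExhausability}, and you read ``inexhaustible'' as $\forall x\,\exists y\in A\,(y\notin x)$, which is the reading that proposition's own proof relies on. The one difference is that the paper only cites Ziegler's Example 6.24 for the inexhaustibility of $\mathcal{P}(\omega)$ under Subcountability, whereas you supply the intuitionistically valid Cantor diagonal argument yourself, and it needs only $\Delta_0$-Separation.
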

\begin{proof}
	The Axiom of Subcountability proves $\mathcal{P}(\omega)$ is inexhaustible (See \cite[Example 6.24]{ZieglerPhD}.) However, \autoref{Proposition:NonExhausability} proves $\mathcal{P}(1)=2^\dcom$ is never inexhaustible.
\end{proof}

\subsection{Consistency and compatibility of $\mathsf{DCom}$ over $\mathsf{CZF}$}
\autoref{Corollary:DComToPow} shows that $\mathsf{DCom}$ implies $\mathsf{Pow}$, so the consistency of $\mathsf{CZF+DCom}$ implies that of $\mathsf{CZF+Pow}$. The following result shows the precise consistency strength of $\mathsf{CZF+DCom}$ and $\mathsf{CZF+NDCom}$:

\begin{proposition} \label{Proposition:DComOverCZF}
    \begin{enumerate} 
        \item $\mathsf{CZF+DCom}$ is equiconsistent with $\mathsf{CZF+Pow}$.
        \item $\mathsf{CZF}$ is equiconsistent with $\mathsf{CZF + NDCom}$.
    \end{enumerate}
\end{proposition}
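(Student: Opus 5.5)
For (1), one direction is immediate from what we already have. By \autoref{Corollary:DComToPow}, $\mathsf{CZF+DCom}$ proves $\mathsf{Pow}$, so a model of (or an inconsistency in) $\mathsf{CZF+Pow}$ transfers trivially, and $\mathrm{Con}(\mathsf{CZF+DCom})\to\mathrm{Con}(\mathsf{CZF+Pow})$.

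For the converse, I do not expect to build a model of $\mathsf{DCom}$ directly inside $\mathsf{CZF+Pow}$. Instead I would pass through a classical theory of the same strength. The natural candidate is Rathjen's power Kripke--Platek theory $\mathsf{KP}(\mathcal{P})$, which is known to be equiconsistent with $\mathsf{CZF+Pow}$. Over such a classical background, Rathjen's realizability universe $V(\mathcal{K}l)$ satisfies $\mathsf{CZF+Pow}$. It remains to check $\mathsf{DCom}$ there.

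The key observation is that in realizability a negated statement $\lnot\psi$ is realized (by any index) iff $\psi$ has no realizer. Hence $\lnot\lnot(z\in x)$ is realized iff, classically in the background, some realizer of $z\in x$ exists. So the realizability double complement of a name $x$ is, in effect, the collection of all names $z$ that occur as $\in$-witnesses in $x$, taken with the trivial realizer. Concretely, I would take
\[
y=\{\langle e,z\rangle : \exists e'\,(\langle e',z\rangle\in x)\}
\]
for a fixed index $e$. This is a set by bounded (classical) separation and replacement available in the background, and one checks that it realizes $\forall z(\lnot\lnot z\in x\to z\in y)$. Alternatively, I would simply invoke the persistence of $\mathsf{DCom}$ under realizability proved in \autoref{Section:MRrealizability}, since $\mathsf{DCom}$ holds trivially in any classical background where $x^{\lnot\lnot}=x$.

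The main obstacle is exactly the hypotheses of that persistence result, namely $\Sigma_1$-separation or the regular extension axiom in the background. I would need to verify that the chosen classical theory ($\mathsf{KP}(\mathcal{P})$, possibly augmented by $V=L$ or enough $\Sigma_1$-separation while keeping the strength unchanged) supplies them. If it does not, I would redo the direct computation above carefully, since it seems to need only $\Delta_0$-separation relative to the name $x$.

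For (2), again one direction is trivial. For the other, I would use the model of $\mathsf{CZF+ADCom}$ constructed over $\mathsf{CZF}$ in \autoref{Section:ADCom}, together with the observation that $\mathsf{CZF}^-+\mathsf{ADCom}$ proves $\mathsf{NDCom}$ with witness $x=2$. Indeed, suppose $y$ satisfied $\forall z(\lnot\lnot z\in 2\to z\in y)$. Then by $\Delta_0$-separation $\{z\in y:\lnot\lnot(z\in 2)\}$ would be a double complement of $2$, so $\mathsf{ADCom}$ would give $2\subseteq 1$, which is absurd. Hence $\forall y\,\lnot[\forall z(\lnot\lnot z\in 2\to z\in y)]$ holds. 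Since that construction is carried out in $\mathsf{CZF}$, $\mathrm{Con}(\mathsf{CZF})\to\mathrm{Con}(\mathsf{CZF+NDCom})$ follows.
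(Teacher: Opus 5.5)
Part (2) of your proposal is correct. The converse direction of (1), $\mathrm{Con}(\mathsf{CZF+Pow})\to\mathrm{Con}(\mathsf{CZF+DCom})$, has a genuine gap: neither of your two routes works as stated.

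\textbf{The realizability route.} The soundness of $V(\mathcal{A})$ for $\mathsf{CZF}$ and the persistence result \autoref{Theorem:PreservationDComOrig} are proved over $\mathsf{CZF}$. Both use Strong Collection in the background for formulas $e\Vdash\phi$ of arbitrary complexity. A \emph{classical} background satisfying $\mathsf{CZF}$ is $\mathsf{ZF}$, which is too strong. Classical $\mathsf{KP}(\mathcal{P})$ has only $\Delta_0^{\mathcal{P}}$-Collection, so you cannot simply quote these results over it. Whatever interpretation of $\mathsf{CZF+Pow}$ in $\mathsf{KP}(\mathcal{P})$ you use, you would have to re-verify $\mathsf{DCom}$ for it from scratch.

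\textbf{The fallback computation.} This one is actually false. With $y=\{\langle e,z\rangle:\exists e'(\langle e',z\rangle\in x)\}$, a realizer of $z\in y$ must contain a realizer of $z=w$ for some $w$ occurring in $x$. But a realizer of $\lnot\lnot(z\in x)$ carries no information, so that realizer would have to be uniform in $z$. It cannot be. Take $x=\{\langle\mathbf{0},\check{1}\rangle\}$ and $z_n=\{\langle\bar{n},\check{0}\rangle\}$. Every $z_n\in x$ is realized. However, any realizer of $\check{1}\subseteq z_n$, applied to $\mathbf{0}$, must return a pair with first component $\bar{n}$. So no single index realizes $z_n\in y$ for all $n$. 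This is exactly why \autoref{Theorem:PreservationDComOrig} puts \emph{all} names of bounded rank into $y$ (classically, $\{\mathbf{0}\}\times V(\mathcal{A})_\alpha$). That requires the closure lemma and power set, not merely $\Delta_0$-separation relative to $x$.

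\textbf{The missing idea.} No realizability is needed. You observed yourself that $\mathsf{DCom}$ is trivial once every set is stable. What you need is a \emph{semi}-classical extension of $\mathsf{CZF}$ of the right strength, not a fully classical background. Under $\Delta_0$-$\mathsf{LEM}$ we have $z\in x\lor z\notin x$, so $x^{\dcom}=x$ for every $x$, and hence $\mathsf{CZF}+\Delta_0$-$\mathsf{LEM}\vdash\mathsf{DCom}$. Rathjen showed that $\mathsf{CZF}+\Delta_0$-$\mathsf{LEM}$ is equiconsistent with $\mathsf{CZF+Pow}$ \cite{Rathjen2012Power}. That is the paper's proof.

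\textbf{Part (2).} Your argument is sound but differs from the paper's. The paper uses the Axiom of Subcountability, which refutes $\mathsf{Pow}$, so $2^{\dcom}=\mathcal{P}(1)$ does not exist. It combines this with Rathjen's equiconsistency of $\mathsf{CZF}$ and $\mathsf{CZF}$ + Subcountability \cite{Rathjen2002Choice}. You instead combine $\mathsf{ADCom}\vdash\mathsf{NDCom}$ (witness $2$) with the paper's own Kripke model of $\mathsf{CZF+ADCom}$ from \autoref{Section:ADCom}. There is no circularity, since that construction does not use this proposition. Your route even yields the stronger principle $\mathsf{ADCom}$, at the cost of relying on the heavier Kripke-model machinery rather than a citation. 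Your separation step is unnecessary, since the paper's $\mathsf{ADCom}$ already applies to any $y$ that merely contains $x^{\dcom}$, but it is harmless.
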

\begin{proof}
\begin{enumerate}
	\item One direction directly follows from \autoref{Corollary:DComToPow}.
    The other direction follows from the equiconsistency of $\mathsf{CZF+\Delta_0\mhyphen LEM}$ and $\mathsf{CZF+Pow}$ in \cite{Rathjen2012Power} since $\mathsf{\Delta_0\mhyphen LEM}$ proves every set is stable.
    \item In fact, the Axiom of Subcountability proves $\lnot\mathsf{Pow}$. In $\mathsf{CZF}$, $\mathsf{Pow}$ holds if and only if $\mathcal{P}(1)$ exists. 
    Since $2^\dcom=\mathcal{P}(1)$, 2 is an instance of $\mathsf{NDCom}$.
    Moreover, \cite{Rathjen2002Choice} proves $\mathsf{CZF}$ and $\mathsf{CZF}$ + Axiom of Subcountability are equiconsistent.
    \qedhere
\end{enumerate}
\end{proof}
What if we weaken the background theory from $\mathsf{CZF}$ to $\mathsf{CZF^-}$?
We know that $\mathsf{CZF}$ proves $\mathsf{DCom}$ implies $\mathsf{Pow}$ by \autoref{Proposition:VariationPower}, but Subset Collection has a critical role for its proof.
Indeed, adding $\mathsf{DCom}$ into $\mathsf{CZF}^-$ does not increase the consistency strength.
\begin{proposition}
    $\mathsf{CZF^-}$, $\mathsf{CZF^-+DCom}$, $\mathsf{CZF^-+NDCom}$ are all equiconsistent.
\end{proposition}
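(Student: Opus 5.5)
Since $\mathsf{CZF^-+DCom}$ and $\mathsf{CZF^-+NDCom}$ both extend $\mathsf{CZF^-}$, consistency of either yields consistency of $\mathsf{CZF^-}$. So the plan is to prove the converse for each extension separately. In both cases we reduce to a model of $\mathsf{CZF}$ or of a theory known to be equiconsistent with $\mathsf{CZF}^-$.

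For $\mathsf{NDCom}$ we reuse the argument of \autoref{Proposition:DComOverCZF}(2). It is known that $\mathsf{CZF^-}$ and $\mathsf{CZF}$ are equiconsistent: both are equiconsistent with $\mathsf{KP}$, or one can observe that the type-theoretic interpretation of $\mathsf{CZF^-}$ already validates Subset Collection. Hence $\mathsf{CZF}+$ Subcountability is consistent relative to $\mathsf{CZF^-}$ by \cite{Rathjen2002Choice}. That theory proves $\lnot\mathsf{Pow}$, and by \autoref{Theorem:DComof2} together with \autoref{Proposition:VariationPower} it then proves that $2^\dcom=\mathcal{P}(1)$ is not a set. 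This is an instance of $\mathsf{NDCom}$, and it is in particular a model of $\mathsf{CZF^-+NDCom}$.

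For $\mathsf{DCom}$ the idea is that without Subset Collection, $\mathsf{DCom}$ no longer forces $\mathsf{Pow}$ by \autoref{Corollary:DComToPow}. So we want a classical model, where every set is stable, but of a weak theory. Concretely, $\mathsf{CZF^-}+\mathsf{LEM}$ is essentially $\mathsf{ZF}$ without Powerset, with Collection. Under $\mathsf{LEM}$, $\lnot\lnot(z\in x)\leftrightarrow z\in x$, so $x^\dcom=x$ for every $x$ and $\mathsf{DCom}$ holds trivially. It then remains to find a theory of the strength of $\mathsf{CZF}^-$, i.e.\ of $\mathsf{KP}$, that validates $\mathsf{DCom}$. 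Full $\mathsf{ZF^-}$ is far too strong, so instead we use $\mathsf{CZF^-}+\Delta_0\mhyphen\mathsf{LEM}$, which suffices since $\Delta_0\mhyphen\mathsf{LEM}$ makes every set stable ($z\in x$ is $\Delta_0$). We then invoke the known result that $\mathsf{CZF^-}+\Delta_0\mhyphen\mathsf{LEM}$ is interpretable in $\mathsf{KP}$, equivalently in $\mathsf{CZF^-}$. This works because $\mathsf{KP}$ proves $\Delta_0$-$\mathsf{LEM}$ classically and, with $\Delta_0$ Collection, yields the required fragments. Strong Collection for arbitrary formulas can be handled by working in an admissible-type model such as $L_{\omega_1^{CK}}$ or via ordinal analysis results \`a la Rathjen.

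The main obstacle is the last step: justifying that $\mathsf{CZF^-}+\Delta_0\mhyphen\mathsf{LEM}$ has the strength of $\mathsf{CZF^-}$. Strong Collection for unbounded formulas together with full $\in$-induction, in a classical-for-$\Delta_0$ setting, is not literally $\mathsf{KP}$. I would first try to cite Rathjen's analysis, which shows that $\mathsf{CZF}+\Delta_0\mhyphen\mathsf{LEM}$ is still of the strength of $\mathsf{KP}$ and hence so is its sub-theory without Subset Collection. Failing that, I would build a realizability or Kripke-style interpretation over $\mathsf{CZF^-}$ in which bounded formulas are decided.
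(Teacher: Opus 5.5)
Your decomposition matches the paper's. The $\mathsf{NDCom}$ half is exactly the paper's argument: $\mathsf{CZF^-}$ is equiconsistent with $\mathsf{CZF}$, then pass to $\mathsf{CZF}$ + Subcountability, where $2^\dcom=\mathcal{P}(1)$ cannot be included in any set (otherwise $\Delta_0$-Separation and \autoref{Proposition:VariationPower} would give $\mathsf{Pow}$). For $\mathsf{DCom}$ the paper likewise passes through $\mathsf{CZF^-+\Delta_0\mhyphen LEM}$, in which every set is stable.

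The gap is the one step you flag yourself, $\mathrm{Con}(\mathsf{CZF^-})\Rightarrow\mathrm{Con}(\mathsf{CZF^-+\Delta_0\mhyphen LEM})$. The paper simply cites this from \cite[Theorem 4.2]{Gambino2006}, and neither of your concrete justifications for it works.
\begin{itemize}
\item \emph{A classical structure such as $L_{\omega_1^{CK}}$ cannot serve.} Classically, apply Strong Collection to ``$y=\{x\}$ if $\phi(x)$, $y=\varnothing$ if $\lnot\phi(x)$'' and then take the union. This gives full Separation, so any classical model of $\mathsf{CZF^-}$ satisfies $\mathsf{ZF^-}$ with Collection, which is the theory you yourself call far too strong. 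An admissible set that is not a model of $\mathsf{ZF^-}$ is useless here.
\item \emph{Your fallback premise about $\mathsf{CZF+\Delta_0\mhyphen LEM}$ is false.} That theory is not of the strength of $\mathsf{KP}$. $\Delta_0$-$\mathsf{LEM}$ makes every set stable, so $\mathsf{DCom}$ holds and \autoref{Corollary:DComToPow} gives $\mathsf{Pow}$. This is exactly why \autoref{Proposition:DComOverCZF}(1) can use that $\mathsf{CZF+\Delta_0\mhyphen LEM}$ is equiconsistent with $\mathsf{CZF+Pow}$. That theory is far beyond $\mathsf{KP}$: once $\mathcal{P}(\omega)$ is a set, every instance of second-order arithmetical comprehension is an instance of $\Delta_0$-Separation. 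So ``hence so is its sub-theory'' rests on a false premise.
\end{itemize}

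Only your last fallback can succeed: a genuinely non-classical interpretation over $\mathsf{CZF^-}$ that decides bounded formulas while leaving unbounded quantifiers intuitionistic. That is the kind of result Gambino's theorem provides. Any such interpretation must fail to validate Subset Collection, since otherwise, by the above, it would yield the consistency of $\mathsf{CZF+Pow}$ relative to $\mathsf{CZF^-}$. This is the real content of the $\mathsf{DCom}$ half: without Subset Collection, $\Delta_0$-$\mathsf{LEM}$ (and hence $\mathsf{DCom}$) no longer brings $\mathsf{Pow}$ with it. With that citation replacing your sketches, your proof coincides with the paper's. Your justification via $\mathsf{KP}$ that $\mathsf{CZF^-}$ and $\mathsf{CZF}$ are equiconsistent is fine in place of the paper's reference to \cite{Rathjen2012EP}.
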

\begin{proof}
    By \cite[Lemma 2.4 and Theorem 4.2]{Rathjen2012EP}, $\mathsf{CZF}^-$ and $\mathsf{CZF}$ are equiconsistent. Therefore $\mathsf{CZF^-}$ + $\mathsf{NDCom}$ and $\mathsf{CZF^-}$ are equiconsistent.
    Moreover, \cite[Theorem 4.2]{Gambino2006} proves $\mathsf{CZF}^-$ and $\mathsf{CZF}^-$ + $\mathsf{\Delta_0\mhyphen LEM}$ are equiconsistent.
    From the fact that $\mathsf{\Delta_0\mhyphen LEM}$ proves every set is stable, we have the equiconsistency between $\mathsf{CZF}^-$ and $\mathsf{CZF^-+DCom}$.
\end{proof}

We have not established the consistency strength of $\mathsf{ADCom}$.
We will show in \autoref{Section:ADCom} that $\mathsf{CZF+ADCom}$ and $\mathsf{CZF}$ are equiconsistent.
The proof of \autoref{Proposition:DComOverCZF} would establish the consistency strength of $\mathsf{CZF+ADCom}$ if the Axiom of Subcountability implies $\mathsf{ADCom}$ over $\mathsf{CZF}$.
We do not know whether the Axiom of Subcountability proves $\mathsf{ADCom}$ or not:
\begin{question}\label{Question:Subcountability}
    Does the combination of $\mathsf{CZF}$ and the Axiom of Subcountability prove $\mathsf{ADCom}$?
\end{question}

\autoref{Proposition:DComOverCZF} shows that in $\mathsf{CZF}$, the existence of $2^\dcom$ implies Powerset.
What happens if we assume the existence of the double complement of another set? The next proposition says that the existence of $a^\dcom$ for certain $a$ implies the existence of variants of power sets:
\begin{proposition}[$\mathsf{CZF}^-$]\label{Proposition:SomeSetsWithoutDCom}
     Let $a$ be a set, and suppose that $a^\dcom$ exists.
    \begin{enumerate}
        \item If $a$ has two \emph{apart} elements, that is, 
        if there are $x,y\in A$ such that
        \begin{equation*}
        	\exists z \bigl[ (z\in x \land z\notin x)\lor (z\notin x\land z\in y)\bigr]
        \end{equation*}
        then $\mathcal{P}(1)$ exists.
        \item If $a$ has two \emph{nonequal} elements, that is, if $a$ contains $x$ and $y$
        such that $x\neq y$, then $\mathcal{P}^{\lnot\lnot}(1)$ exists.
        \item If $a$ has an inhabited element, then $\WDec(1)$ exists.
    \end{enumerate}
\end{proposition}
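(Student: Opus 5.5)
The common strategy is to build a map from $a^\dcom$, or from a $\Delta_0$-definable subclass of it, onto the relevant variant of $\mathcal{P}(1)$. Replacement and $\Delta_0$-Separation then make that variant a set.

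The basic construction is as follows. Fix $x,y\in a$ and, for $p\subseteq 1$, put
\[
w_p=\{z\in x\cup y \mid (0\in p\land z\in x)\lor(0\notin p\land z\in y)\}.
\]
This is not quite right, because it uses $0\notin p$. The better choice is $w_p=\{z\in x\cup y\mid (0\in p \land z\in x)\lor z\in y_p\}$, which is awkward, so instead I would work directly with the double negation. Every $p\subseteq 1$ satisfies $\lnot\lnot(0\in p\lor 0\notin p)$. The set $w_p := \{z\in x\mid 0\in p\}\cup\{z\in y\mid \lnot(0\in p)\}$ then satisfies $\lnot\lnot(w_p=x\lor w_p=y)$, hence $\lnot\lnot(w_p\in a)$, and so $w_p\in a^\dcom$. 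Conversely, we need to recover $p$ from $w_p$. Apartness gives a witness $z$; say $z\in x$ and $z\notin y$, the other case being symmetric. Then $0\in p\iff z\in w_p$, so $p=\{0\mid z\in w_p\}$.

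Hence, for item (1),
\[
\mathcal{P}(1)=\bigl\{\{0\mid z\in w\}\mid w\in a^\dcom\bigr\}\cap\mathcal{P}(1),
\]
where the left-hand inclusion uses $p=\{0\mid z\in w_p\}$. The right-hand side is a set by Replacement over the set $a^\dcom$, and every element of it is a subset of $1$, so we get equality. The main check is that $z\in w_p\leftrightarrow 0\in p$: if $z\in w_p$ via the $y$-part, then $z\in y$, contradicting $z\notin y$. Here the apartness is exactly what is used.

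For item (2), with only $x\neq y$, I would take $q\in\mathcal{P}^{\lnot\lnot}(1)$ and set $w_q=\{u\in\{x,y\}\mid (u=x\land 0\in q)\lor(u=y\land \lnot(0\in q))\}$. The aim is to map $w\in a^\dcom$ to $\{0\mid x\in w\}$, or better to $\{0\mid \lnot\lnot(x\in w)\}$, and restrict to those elements $w$ that are subsets of $\{x,y\}$. The key identity is $\lnot\lnot(0\in q)\leftrightarrow \lnot\lnot(x\in w_q)$. The forward direction is clear. For the converse, note that $x\in w_q$ gives $x=x\land 0\in q$ or $x=y\land\dots$, and the second case is excluded by $x\neq y$; so $x\in w_q\to 0\in q$. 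The same then holds under $\lnot\lnot$, and stability of $q$ gives $0\in q$. Double-negation stability is thus precisely what lets nonequality, rather than apartness, suffice. Membership $w_q\in a^\dcom$ follows as before: we have $\lnot\lnot(0\in q\lor\lnot 0\in q)$, which yields $\lnot\lnot(w_q=\{x\}\lor w_q=\{y\})$. Strictly, we need $w_q\in a$ up to double negation, so I would instead define $w_q$ to be literally $x$ or $y$ under double negation, for instance as $\bigcup$ of the displayed set. This is the step I expect to need the most care.

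For item (3), let $x\in a$ with some $z\in x$, and use $w_r=\{u\in x\mid \lnot\lnot(0\in r)\}$ for $r\in\WDec(1)$. By \autoref{Formula:WDec1}, either $r=0$ and $w_r=0$, or $\lnot\lnot(r=1)$ and $\lnot\lnot(w_r=x)$. The construction requires $w_r\in a^\dcom$; this is fine in the second case, but not obviously in the first, which is the obstacle. I would therefore instead apply item-(1)-style reasoning to $\{x\}^\dcom\subseteq a^\dcom$, which contains $\{u\in x\mid \lnot\lnot(0\in r)\}$ whenever $\lnot\lnot(r=1)$. This yields $\{r\subseteq1\mid \lnot\lnot(r=1)\}=\{\{0\mid z\in w\}\mid w\in a^\dcom\}\cap\{r\subseteq 1 \mid \lnot\lnot(r=1)\}$, the latter condition being $\Delta_0$. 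The final step is to adjoin $0$ and apply \autoref{Formula:WDec1}, which gives $\WDec(1)$ as a set.
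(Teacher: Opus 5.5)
Your item (1) is correct and is essentially the paper's proof. You use the same $w_p$; the only difference is that you decode with $\{0\mid t\in w\}$ for the apartness witness $t$, where the paper uses $\{0\mid w=x\}$.

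Item (2) is not finished. You prove the identity $\lnot\lnot(0\in q)\leftrightarrow\lnot\lnot(x\in w_q)$ for $w_q\subseteq\{x,y\}$, which, as you notice, is not in $a^\dcom$ in general. You then pass to $W_q=\bigcup w_q=\{u\in x\mid 0\in q\}\cup\{u\in y\mid\lnot(0\in q)\}$. This is exactly the paper's $w_c$, and it does satisfy $\lnot\lnot(W_q\in a)$. But for $W_q$ your decoding $w\mapsto\{0\mid x\in w\}$ is useless, since $x\in W_q$ reduces (as $x\notin x$) to $x\in y\land\lnot(0\in q)$.

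What is missing is a new decoding, together with the argument that actually uses the hypotheses:
\begin{itemize}
\item Take $\{0\mid w=x\}$.
\item Since $\lnot(0\in q)$ forces $W_q=y$, the equation $W_q=x$ together with $x\neq y$ gives $\lnot\lnot(0\in q)$.
\item Only at this point does stability of $q$ yield $0\in q$.
\item Then cut $\mathcal{P}^{\lnot\lnot}(1)$ out of the image by $\Delta_0$-Separation. Alternatively, decode by $\{0\mid\lnot(w=y)\}$, whose values are automatically stable.
\end{itemize}
Note that in your singleton version one already has $q=\{0\mid x\in w_q\}$ for every $q\subseteq 1$, so that version never needed stability. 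Your account of where stability enters is therefore attached to a construction that cannot be used.

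Item (3) contains an actual error. The set $w_r=\{u\in x\mid\lnot\lnot(0\in r)\}$ depends on $r$ only through $\lnot\lnot(0\in r)$. For every $r$ with $\lnot\lnot(r=1)$ it is literally $x$, so $\{0\mid z\in w_r\}=1$ rather than $r$. Your displayed equality is therefore not established; you have only shown that $1$ lies in the image.

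The fix is to remove the double negation and take $w_r=\{u\in x\mid 0\in r\}$:
\begin{itemize}
\item You get $z\in w_r\leftrightarrow 0\in r$, hence $r=\{0\mid z\in w_r\}$.
\item $\lnot\lnot(0\in r)$ gives $\lnot\lnot(w_r=x)$, and thus $w_r\in a^\dcom$.
\item Now separate $\{c\in I\mid\lnot\lnot(0\in c)\}$ from the image $I$, and adjoin $0$ via \autoref{Formula:WDec1}.
\end{itemize}

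With this repair your argument is cleaner than the paper's, for two reasons. First, adjoining $0$ by hand really is needed. The paper's $w_0=x\setminus\{y\}$ need not lie in $a^\dcom$: for $a=\{1\}$ and $x=1$, the image $\{\{0\mid w=x\}\mid w\in a^\dcom\}$ does not contain $0$. Second, $\{u\in x\mid 0\in r\}$ avoids the paper's $(x\setminus\{y\})\cup\{y\mid 0\in c\}$, which equals $x$ when $0\in c$ only if $u=y$ is decidable for $u\in x$.
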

\begin{proof}
    \begin{enumerate}
        \item Without loss of generality, assume that there is $t$ such that
        $t\in x$ but $t\notin y$. Define $w_c$ by
            \begin{equation*}
            	w_c = \{z\in x \mid 0\in c\}\cup \{z\in y \mid 0\notin c\}.
            \end{equation*}
        Then $\lnot\lnot(w_c\in a)$. Now we will prove that
        $c=\{0\mid w_c=x\}$ for all $c\subseteq 1$: one inclusion is obvious.
        For the reverse inclusion, let $w_c=x$. Then $t\in w_c$, which is
        equivalent to
            \begin{equation*}
            	(t\in x\land 0\in c)\lor (t\in y\land 0\notin c).
            \end{equation*}
        We can exclude the latter case as $t\notin y$. Hence $0\in c$ and we have
        $\{0\mid w_c=x\}\subseteq c$. Therefore we have
        $\mathcal{P}(1)\subseteq \{\{0\mid z=x\}\mid z\in a^\dcom\}$.
        
        \item For each $c\subseteq 1$ define $w_c$ as before.
        We can deduce $\lnot\lnot (w_c\in a)$ from $\lnot\lnot(0\in c\lor 0\notin c)$.
        We claim that $\mathcal{P}^{\lnot\lnot}(1) \subseteq \{\{0\mid z=x\}\mid 
        z\in a^\dcom\}$. This would follow from $c = \{0\mid w_c=x\}$ for stable
        subset $c$ of 1. $c\subseteq \{0\mid w_c= z\}$ is obvious.
        For the reverse inclusion, suppose that $w_c=x$. Since $x\neq y$, 
        $w_c\neq y$ holds so we have $\lnot\lnot(0\in c)$.  
        Therefore $0\in c$ by stability of $c$.
        
        \item Suppose that $x\in a$ and $y\in x$. For each $c\in \WDec(1)$, 
        define $w_c = (x\setminus\{y\})\cup \{y\mid 0\in c\}$.
        We can show that $\WDec(1) \subseteq \{\{0\mid z=x\} : z\in a^\dcom\}$ by verifying $c=\{0\mid w_c=x\}$ for each $c\in \WDec(1)$. The proof is similar to previous cases, so we omit the details. \qedhere
    \end{enumerate}
\end{proof}

We may regard the first part of \autoref{Proposition:SomeSetsWithoutDCom} as a generalization of \autoref{Theorem:DComof2}. As a corollary of \autoref{Proposition:SomeSetsWithoutDCom}, we have
\begin{corollary}
    The following theories are equiconsistent.
    \begin{enumerate}
        \item \label{Corollary:OtherDcom1} $\mathsf{CZF}+$`There is a set $a$ such that $a$ has two apart elements, and $a^\dcom$ exists,'
        \item \label{Corollary:OtherDcom2} $\mathsf{CZF}+$`There is a set $a$ such that $a$ has two nonequal elements, and $a^\dcom$ exists,'
    	
        \item \label{Corollary:OtherDcom3} $\mathsf{CZF+Pow}$
        \item \label{Corollary:OtherDcom4} $\mathsf{CZF}+\mathcal{P}^{\lnot\lnot}(1)\text{ exists}$.
    \end{enumerate}
\end{corollary}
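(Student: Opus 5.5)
We will prove the equiconsistencies by a cycle of implications between the theories, supplemented by one consistency transfer. The only genuinely new ingredient will be that $\mathcal{P}^{\lnot\lnot}(1)$ already yields $\mathsf{Pow}$ in consistency strength, and we expect that transfer to be the main obstacle.

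\textbf{Direct implications.} \eqref{Corollary:OtherDcom1} implies \eqref{Corollary:OtherDcom2}, because two apart elements are nonequal: if $z\in x$ and $z\notin y$, then $x=y$ would give a contradiction. \eqref{Corollary:OtherDcom2} implies \eqref{Corollary:OtherDcom4} outright, by part (2) of \autoref{Proposition:SomeSetsWithoutDCom}. For \eqref{Corollary:OtherDcom3} to \eqref{Corollary:OtherDcom1}, we use that $\mathsf{CZF+DCom}$ is equiconsistent with $\mathsf{CZF+Pow}$ by \autoref{Proposition:DComOverCZF}. In $\mathsf{CZF+DCom}$ the set $a=2$ has the apart elements $0$ and $1$ (witness $0\in 1$, $0\notin 0$), and $2^\dcom$ exists. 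So $\mathsf{CZF+DCom}$ proves \eqref{Corollary:OtherDcom1}, and hence $\operatorname{Con}(\mathsf{CZF+Pow})$ implies $\operatorname{Con}$ of \eqref{Corollary:OtherDcom1}.

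\textbf{Closing the cycle.} It remains to show that $\operatorname{Con}$ of \eqref{Corollary:OtherDcom4} implies $\operatorname{Con}(\mathsf{CZF+Pow})$. We use the fact that $\mathsf{CZF+Pow}$ is equiconsistent with $\mathsf{CZF+\Delta_0\mhyphen LEM}$ \cite{Rathjen2012Power}, so it suffices to interpret one of these in $\mathsf{CZF}+\mathcal{P}^{\lnot\lnot}(1)$. In $\mathsf{CZF}$ we have Exponentiation, so \autoref{Proposition:VariationPower} gives that $\mathcal{P}^{\lnot\lnot}(a)$ is a set for every $a$. This is the negative powerset axiom of Gambino's $\mathsf{IZF}^{\lnot\lnot}$ setting.

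The plan is to build, inside $\mathsf{CZF}+\mathcal{P}^{\lnot\lnot}(1)$, an inner class of ``hereditarily stable'' sets via the double-negation translation. Using the Class Inductive Definition Theorem (\autoref{Theorem:ClassInductiveDefThm}), define the class $H$ of sets $x$ with $x\subseteq H$ such that membership in $x$ among elements of $H$ is $\lnot\lnot$-stable. Powerset should hold in $H$ because $\mathcal{P}^{\lnot\lnot}(x)\cap H$ serves as its power set. We would then check that the $\lnot\lnot$-translation of each $\mathsf{CZF}$ axiom holds in $H$, so that $H$ interprets $\mathsf{CZF+Pow}$, or even classical $\mathsf{ZF}^-$ with Powerset, giving at least $\mathsf{CZF+Pow}$ strength.

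\textbf{Main obstacle.} The hardest step is verifying Strong Collection and Subset Collection under the translation, since Collection does not commute with $\lnot\lnot$. If this fails, we would try the simpler route of citing Gambino \cite{GambinoThesis}, whose analysis of the negative powerset axiom gives an interpretation of $\mathsf{CZF}$ plus negative powerset into a theory of $\mathsf{Pow}$ strength. Either route yields $\operatorname{Con}$\eqref{Corollary:OtherDcom4} $\Rightarrow \operatorname{Con}(\mathsf{CZF+Pow})$. Combined with the implications above, this closes the cycle and all four theories are equiconsistent.
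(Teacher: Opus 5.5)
Your implications (1)$\Rightarrow$(2) and (2)$\Rightarrow$(4) match the paper's. Your passage from $\mathsf{CZF+Pow}$ to (1) is also valid, but the detour through $\mathsf{DCom}$ and \autoref{Proposition:DComOverCZF} (and hence through Rathjen's $\mathsf{\Delta_0\mhyphen LEM}$ result) is unnecessary. Under $\mathsf{Pow}$ the class $\mathcal{P}(1)$ is a set, and by \autoref{Theorem:DComof2} it equals $2^\dcom$, so $\mathsf{CZF+Pow}$ proves (1) outright with $a=2$.

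The genuine gap is the step you flag yourself, $\mathrm{Con}(4)\Rightarrow\mathrm{Con}(\mathsf{CZF+Pow})$. It carries all the nontrivial content of the corollary, and neither of your routes establishes it.
\begin{itemize}
\item As described, $H$ is not well defined. The clause $\forall y\in H\,(\lnot\lnot(y\in x)\to y\in x)$ refers to $H$ negatively, so the operator is not monotone and \autoref{Theorem:ClassInductiveDefThm} does not apply. If you instead relativize the clause to the premise set $a$ of an inductive definition, it becomes vacuous (take $a=x$), and you get $H=V$.
\item Even granting some definition, you leave Strong and Subset Collection unverified.
\item Your hope that $H$ might model classical $\mathsf{ZF}^-$ plus Powerset, i.e.\ $\mathsf{ZF}$, cannot be realized. $\mathsf{CZF+Pow}$ already proves (4) (apply $\Delta_0$-Separation to $\mathcal{P}(1)$), and $\mathsf{CZF+Pow}$ is known to be much weaker than $\mathsf{ZF}$. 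So the construction must break somewhere, and you would have to pin down exactly what survives and why it still has the strength of $\mathsf{Pow}$.
\item The fallback to \cite{GambinoThesis} does not point to any specific result giving this bound; the paper uses that reference only for introducing the negative power class in the setting of $\mathsf{IZF}^{\lnot\lnot}$.
\end{itemize}

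The paper closes the cycle simply by citing \cite[Theorem 1.1]{Rathjen2012Power}, which gives that $\mathsf{CZF}$ plus `$\mathcal{P}^{\lnot\lnot}(1)$ exists' has the same consistency strength as $\mathsf{CZF+Pow}$. This is the same source you already invoke for $\mathsf{CZF+\Delta_0\mhyphen LEM}$, and citing it is what your argument is missing.
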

\begin{proof}
    \eqref{Corollary:OtherDcom1} obviously implies \eqref{Corollary:OtherDcom2}. \eqref{Corollary:OtherDcom2} implies \eqref{Corollary:OtherDcom4} by \eqref{Proposition:SomeSetsWithoutDCom}. By \cite[Theorem 1.1]{Rathjen2012Power}, \eqref{Corollary:OtherDcom4} and \eqref{Corollary:OtherDcom3} have the same consistency strength.
    Moreover, \eqref{Corollary:OtherDcom3} implies \eqref{Corollary:OtherDcom1} by \autoref{Theorem:DComof2}.
\end{proof}

It is unclear that the last part of \autoref{Proposition:SomeSetsWithoutDCom} implies any consistency strength results, since we do not know the consistency strength of $\mathsf{CZF}$ + `$\WDec(1)$ exists'.
However, it gives a hint to characterize $\mathsf{ADCom}$ in terms of weakly decidable sets, which will turn out to be useful:

\begin{theorem}($\mathsf{CZF}$)\label{Theorem:ADComAndWDec}
    $\mathsf{ADCom}$ is equivalent to the non-existence of $\WDec(1)$.
\end{theorem}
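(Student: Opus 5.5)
The two directions are handled separately. For ``$\mathsf{ADCom}$ implies $\WDec(1)$ does not exist'' I exhibit a concrete set that is not a subset of $1$ but has a double complement. For the converse I use part (3) of \autoref{Proposition:SomeSetsWithoutDCom} contrapositively.

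\emph{$\mathsf{ADCom}\Rightarrow$ $\WDec(1)$ does not exist.} Assume $\WDec(1)$ is a set; since $\mathsf{ADCom}$ will be contradicted, this proves the negative statement. By \autoref{Theorem:DComof2}, $\{1\}^\dcom=\{x\subseteq 1\mid \lnot\lnot(x=1)\}$. By \autoref{Formula:WDec1} this class is contained in $\WDec(1)$. It is cut out of $\WDec(1)$ by the $\Delta_0$ condition $\lnot\lnot(0\in x)$, because for $x\subseteq 1$ we have $\lnot\lnot(x=1)\leftrightarrow\lnot\lnot(0\in x)$. Hence $\{1\}^\dcom$ exists by $\Delta_0$-Separation, and $\mathsf{ADCom}$ gives $\{1\}\subseteq 1$. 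Then $1\in 1$, i.e.\ $1=0$, which is absurd.

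\emph{$\WDec(1)$ does not exist $\Rightarrow\mathsf{ADCom}$.} Let $a^\dcom$ exist and let $x\in a$. I must show $x=0$. This is a negative statement, so it suffices to refute an element $y\in x$. But if $y\in x$, then $a$ has an inhabited element, and part (3) of \autoref{Proposition:SomeSetsWithoutDCom} yields that $\WDec(1)$ exists, contradicting the hypothesis. Hence every element of $a$ is empty, i.e.\ $a\subseteq 1$.

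The main obstacle is that the proof of part (3) is only sketched in the paper, and as written it looks problematic. With $w_c=(x\setminus\{y\})\cup\{y\mid 0\in c\}$, the case $c=0$ gives $w_c=x\setminus\{y\}$, and there is no reason for $\lnot\lnot(w_c\in a)$ to hold. I would therefore reprove part (3) using the decomposition from \autoref{Formula:WDec1}, namely $\WDec(1)=\{0\}\cup\{c\subseteq 1\mid\lnot\lnot(0\in c)\}$, and only encode the second piece through $a^\dcom$.

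For $c\subseteq 1$ with $\lnot\lnot(0\in c)$ we get $\lnot\lnot(w_c=x)$, hence $\lnot\lnot(w_c\in a)$, i.e.\ $w_c\in a^\dcom$. Decoding via $y$ rather than via equality with $x$, we have $y\in w_c\leftrightarrow 0\in c$, so $c=\{0\mid y\in w_c\}$. Consequently
\begin{equation*}
\WDec(1)=\{0\}\cup\bigl\{\{0\mid y\in z\}\bigm| z\in a^\dcom\land\lnot\lnot(y\in z)\bigr\}.
\end{equation*}
The right-hand side is a set by $\Delta_0$-Separation and Replacement. For the inclusion ``$\supseteq$'', note that each $\{0\mid y\in z\}$ with $\lnot\lnot(y\in z)$ is a subset of $1$ that is double-negation equal to $1$, hence lies in $\WDec(1)$ by \autoref{Formula:WDec1}. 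This repaired version of part (3) is all the second direction needs, and it uses only $\mathsf{CZF}^-$.
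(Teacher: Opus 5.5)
Your overall structure is the paper's. The direction $\mathsf{ADCom}\Rightarrow$ ``$\WDec(1)$ is not a set'' goes via $\{1\}^\dcom=\{x\subseteq 1\mid\lnot\lnot(x=1)\}$ and $\Delta_0$-Separation; this is exactly the paper's argument and it is fine. The converse goes through part (3) of \autoref{Proposition:SomeSetsWithoutDCom}, also as in the paper. You are right that the paper's sketch of part (3) breaks at $c=0$. For $a=\{1\}$ the inclusion $\WDec(1)\subseteq\{\{0\mid z=x\}\mid z\in a^\dcom\}$ claimed there is in fact refutable, since every $z\in\{1\}^\dcom$ satisfies $\lnot\lnot(z=1)$.

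However, your repair keeps the same $w_c=(x\setminus\{y\})\cup\{y\mid 0\in c\}$, and then the step ``$\lnot\lnot(0\in c)$ gives $\lnot\lnot(w_c=x)$'' fails, even for $c=1$. From $0\in c$ you only get $w_c=(x\setminus\{y\})\cup\{y\}$, and this equals $x$ only if $\forall u\in x\,(u=y\lor u\neq y)$. What you can derive is $\forall u\in x\,\lnot\lnot(u\in w_c)$. Passing to $\lnot\lnot\forall u\in x\,(u\in w_c)$ is an instance of the double-negation shift, which is not available. This is the same defect that breaks the paper's decoding $c=\{0\mid w_c=x\}$ in the direction $0\in c\Rightarrow w_c=x$. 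Your decoding via $y\in w_c$ avoids it there, but not in the encoding step.

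The failure is real. Work in the linear Kripke model $V^\omega$ over a $\mathsf{ZFC}$ ground, suppressing transition maps. Let $u_n\subseteq 1$ be empty before stage $n$ and equal to $\check{1}$ from stage $n$ on. Put $x(r)=\{\check{1}\}\cup\{u_n\mid n\in\omega\}$ and $y=\check{1}$. At each stage $r$, the element $u_{r+1}$ is neither forced equal nor forced unequal to $y$, so $u_{r+1}\notin w_1(r)$. Hence no stage forces $w_1=x$, and $0\VDash\lnot(w_1=x)$. Now take $a=\{x\}$, which has a double complement there by \autoref{Theorem:KripkeModelsDCom}; then $w_1$ is forced not to lie in $a^\dcom$.

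The fix is a simpler encoding: take $w_c=\{u\in x\mid 0\in c\}$.
\begin{itemize}
\item Then $0\in c$ gives $w_c=x$ outright. So $\lnot\lnot(0\in c)$ yields $\lnot\lnot(w_c\in a)$, i.e.\ $w_c\in a^\dcom$.
\item Since $y\in x$, you still have $y\in w_c\leftrightarrow 0\in c$, and therefore also $\lnot\lnot(y\in w_c)$.
\item In fact $w_c=x\leftrightarrow 0\in c$ as well, so even the paper's decoding would work with this $w_c$.
\end{itemize}
With this change, your displayed description of $\WDec(1)$ holds, including the separate treatment of $0$. The rest of your argument then goes through in $\mathsf{CZF}^-$, and the theorem follows as you planned.
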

\begin{proof}
    By \autoref{Proposition:SomeSetsWithoutDCom}, the absence of $\WDec(1)$ implies the absence of the double complement of sets that contain an inhabited element.
    Hence if $a$ has a double complement, then every element of $a$ must be empty. That is, we have $a\subseteq \{0\} = 1$. This is the very definition of $\mathsf{ADCom}$.

    Conversely, assume that $\mathsf{ADCom}$ holds. Then $\{1\}$ does not have a double complement. Therefore, $\{x\subseteq 1 \mid \lnot\lnot(x=1)\}$ is not a set. Hence $\WDec(1) = \{0\}\cup \{x\subseteq 1 \mid \lnot\lnot(x=1)\}$ is also not a set.
\end{proof}

\section{Lubarsky's Kripke models}
\label{Section:KripkeModel}
In this section, we will develop basic facts on Kripke models over $\mathsf{CZF}$ and $\mathsf{IZF}$.
There are many possible definitions of Kripke models we can take: For example, \cite{Friedman1985, Iemhoff2010} employ their own Kripke model. We follow Lubarsky's Kripke models that appear in \cite{Lubarsky2005, HendtlassLubarsky2016}.
Our Kripke models may be a special case of Heyting-valued models that were introduced by \cite{Gambino2006, Bell2014}, but we prefer to develop the theory of Kripke models separately. 
In this section, we work over $\mathsf{CZF}^-$ unless specified.

\subsection{Kripke models over set frames}
We need a \emph{frame}, which is a partially ordered set with the least element $\bot$, to construct a Kripke model.
We will denote frames as $(\mathbb{P},\le)$, or just $\mathbb{P}$ if the order relation is clear. We use variables $p$, $q$, $r$, $s$, $\cdots$ for elements of $\mathbb{P}$. For each $p\in\mathbb{P}$, $\uparrow p$ denotes the upper set $\{q\in\mathbb{P} \mid q\ge p\}$ of $p$.
\begin{definition}[Kripke models]
\label{Definition:KripkeInformal}
    Let $V$ be a model of $\mathsf{CZF}$ (or $\mathsf{IZF}$) and $\mathbb{P}\in V$ be a frame. 
	The domain of the Kripke model $V^\mathbb{P} =\bigcup_{p\in\mathbb{P}} V^\mathbb{P}(p)$ on $\mathbb{P}$ with transition functions $\tau_{pq} : V^\mathbb{P}(p)\to V^\mathbb{P}(q)$ for $p\le q$ satisfy the following conditions: for ordinals $\beta\subseteq \alpha$ and $p\le q\le r$,
	\begin{enumerate}[label=(\alph*)]
    	\item \label{Labeling:SetKripke1}
    	$V^\mathbb{P}_\beta(p)\subseteq V^\mathbb{P}_\alpha(p)$ for all $p\in\mathbb{P}$.
    	\item \label{Labeling:SetKripke2}
    	For $x\in V^\mathbb{P}_\alpha(p)$, $\tau_{pq}(x)=x\restricts (\uparrow q)$, and 
   	 	$\tau_{pq}\restricts V_\alpha^\mathbb{P}(p)$ is a function from $V_\alpha^\mathbb{P}(p)$ to $V_\alpha^\mathbb{P}(q)$.
    	\item \label{Labeling:SetKripke3}
    	$\tau_{pr}=\tau_{qr}\circ\tau_{pq}$ and $\tau_{pp}$ is the identity function.
	    \item \label{Labeling:SetKripke4}
    	$V^\mathbb{P}_\alpha(p)$ is a class of all functions $x$ of domain $\uparrow p$ such that 
    	\begin{itemize}
        	\item $x(q)\subseteq \bigcup_{\beta\in\alpha} V^\mathbb{P}_\beta(q)$ for all $q\ge p$ and
        	\item $\tau_{qr}''[x(q)]\subseteq x(r)$ for all $p\le q\le r$.
    	\end{itemize}
    (We will call the last condition for $x$ the \emph{monotonicity} condition.)
	\end{enumerate}
\end{definition}

We call elements of $V^\mathbb{P}$ a \emph{Kripke sets over $\mathbb{P}$} or \emph{$\mathbb{P}$-names}. We view $\mathbb{P}$ as the set of \emph{nodes of worlds}, which describes the possible flow of the future of the set-theoretic world. 
We regard $V^\mathbb{P}(p)$ as a set-theoretic world at node $p$, and we want to assume that every set in the Kripke model grows as $p$ flows; that is, if $x\in V^\mathbb{P}(p)$ and $p\le q$, then $x(p)$ is `contained in' $x(q)$.
We will codify $x\in V^\mathbb{P}(p)$ as a function of domain $\uparrow p$, so that $x$ has a determined shape at node $q$ for $q\ge p$.
Furthermore, every member of $x(p)$ is also a member of $V^\mathbb{P}(p)$. However, such a definition makes the direct comparison between $x(p)$ and $x(q)$ for $p\le q$ by the inclusion relation impossible, because members of $x(p)$ and those of $x(q)$ are functions of different domains.
The transition functions $\tau_{pq}$ mend this disparity, by sending $x\in V^\mathbb{P}(p)$ to its naturally corresponding Kripke sets $\tau_{pq}(x) \in V^\mathbb{P}(q)$. 
Now we can describe the idea `$x(q)$ is larger than $x(p)$' in terms of transition functions as $\tau^"_{pq}[x(p)]\subseteq x(q)$, which is the very statement of the monotonicity condition.

We need to justify that a sequence of classes satisfying \autoref{Definition:KripkeInformal} exists. With Powerset, we can easily see that not only $V^\mathbb{P}_\alpha(p)$ exists for each ordinal $\alpha$, but also each $V^\mathbb{P}_\alpha(p)$ is a set.
However, we often work over $\mathsf{CZF}^-$, and $V^\mathbb{P}_\alpha(p)$ need not be a set. Thus, we appeal to inductive definitions for the existence of the hierarchy we defined in \autoref{Definition:KripkeInformal}, which we will do by the following lemma:

\begin{lemma}
    There is a sequence of classes $\langle V^\mathbb{P}_\alpha(p) \mid \alpha\in\Ord \land p\in\mathbb{P} \rangle$ and a class function $\tau$ satisfying the conditions in \autoref{Definition:KripkeInformal}.
    Furthermore, if we have $\mathsf{Pow}$, then $V^\mathbb{P}_\alpha(p)$ is a set for each $\alpha$ and $p$.
\end{lemma}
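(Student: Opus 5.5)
We obtain the hierarchy from the Class Inductive Definition Theorem (\autoref{Theorem:ClassInductiveDefThm}). The key is to define all stages at once as a single class of tagged names. Let $I$ be the smallest class of pairs $\langle p,x\rangle$, with $p\in\mathbb{P}$, closed under the following rule. Suppose $x$ is a function with domain $\uparrow p$, and suppose that for each $q\ge p$ the value $x(q)$ is a set all of whose elements $y$ satisfy $\langle q,y\rangle\in I$. Suppose further that $x(q)\restricts(\uparrow r)$ makes sense, i.e.\ $\{y\restricts(\uparrow r)\mid y\in x(q)\}\subseteq x(r)$ for $q\le r$. Then $\langle p,x\rangle\in I$.

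This is an inductive definition $\Phi$ whose premises are sets: a rule instance has as premise the set $\{\langle q,y\rangle\mid q\ge p,\ y\in x(q)\}$, which exists by Union and Replacement. The monotonicity clause is $\Delta_0$ in the parameters, so it is a side condition on the conclusion. The theorem then gives $I_\Phi$ together with a $\Gamma_\Phi$-hierarchy $\langle I^\alpha_\Phi\mid\alpha\in\Ord\rangle$. We set $V^\mathbb{P}_\alpha(p)=\{x\mid \langle p,x\rangle\in\bigcup_{\beta\le\alpha}I^\beta_\Phi\}$, or directly $I^{\alpha}$-indexed with the convention that the stage-$\alpha$ names have elements from stages $\in\alpha$. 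We define $\tau_{pq}(x)=x\restricts(\uparrow q)$.

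Next we verify the clauses. Clause \ref{Labeling:SetKripke1}, monotonicity in $\alpha$, is immediate from the cumulative hierarchy. For clause \ref{Labeling:SetKripke4}, the identity $C^\alpha=\Gamma(C^{\in\alpha})$ unpacks exactly to the stated characterization of $V^\mathbb{P}_\alpha(p)$. Clause \ref{Labeling:SetKripke3} is trivial for restrictions of functions.

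The real content is clause \ref{Labeling:SetKripke2}: if $x\in V^\mathbb{P}_\alpha(p)$ and $q\ge p$, then $x\restricts(\uparrow q)\in V^\mathbb{P}_\alpha(q)$. The restriction again has domain $\uparrow q$, its values are unchanged, and monotonicity is inherited. Hence it satisfies the closure rule with the same premises and lands at the same stage. No induction is needed, because the elements of $x(r)$ are not restricted. The one subtlety is that the monotonicity condition refers to $\tau$ itself. This is harmless, since $\tau$ is defined outright as restriction; we do not have to define it simultaneously by recursion. So the main obstacle is just phrasing $\Phi$ so that it is a legitimate inductive definition in the sense required by \autoref{Theorem:ClassInductiveDefThm}. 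In particular, the pairs (premise set, conclusion) must form a class, and "elements of $x(q)$ belong to level $q$" must be expressed through the premise set rather than via quantification over $I$.

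Finally, assume $\mathsf{Pow}$. By the last part of \autoref{Theorem:ClassInductiveDefThm}, it suffices to show that $\Gamma_\Phi(X)$ is a set whenever $X$ is a set. Given a set $X$ of pairs $\langle q,y\rangle$, any conclusion $\langle p,x\rangle$ from premises in $X$ has $x$ a function from $\uparrow p$ into $\mathcal{P}(\{y\mid\exists q\,\langle q,y\rangle\in X\})$. These functions form a subset of $\mathcal{P}(\mathbb{P}\times\mathcal{P}(\bigcup\ran X))$, and $\Gamma_\Phi(X)$ is carved out of it by $\Delta_0$-Separation. Hence each $I^\alpha_\Phi$, and therefore each $V^\mathbb{P}_\alpha(p)$, is a set.
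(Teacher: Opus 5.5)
Your proposal is correct and follows essentially the paper's route: you use an inductive definition fed into the Class Inductive Definition Theorem, read $V^\mathbb{P}_\alpha(p)$ off the stages $I^\alpha$, and define $\tau_{pq}$ outright as restriction. Your tag $p$ is redundant, since $p$ is recovered from $\dom x=\uparrow p$, and the paper instead builds ``each $y\in x(q)$ has domain $\uparrow q$'' directly into the rule. Your treatment of $\mathsf{Pow}$ via the last clause of that theorem, which the paper leaves implicit, is fine, except that the bound should read $\mathcal{P}(\mathbb{P}\times\mathcal{P}(\ran X))$ rather than $\mathcal{P}(\mathbb{P}\times\mathcal{P}(\bigcup\ran X))$, as your preceding sentence already says.
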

\begin{proof}
    Let $\Phi$ be an inductive definition defined as follows: $\langle a,x\rangle \in\Phi$ if and only if
    $x$ is a function of domain $\uparrow p$ for some $p\in\mathbb{P}$ which satisfies the following conditions:
    \begin{enumerate}[label=(\roman*)]
        \item \label{Labeling:DefFormulaPhi1} $x(q)\subseteq a$ for every $q\ge p$,
        \item \label{Labeling:DefFormulaPhi2}
        For each $q\ge p$ and $y\in x(q)$, $y$ is a function of domain $\uparrow q$ and
        \item \label{Labeling:DefFormulaPhi3}
        If $p\le q\le r$ then $y\restricts (\uparrow r) \in x(r)$ for every $y\in x(q)$.
    \end{enumerate}
    By Class Inductive Definition Theorem,  \autoref{Theorem:ClassInductiveDefThm}, there is a smallest $\Phi$-closed class $I$ and a $\Gamma_\Phi$-hierarchy $\langle I^\alpha \mid \alpha\in\Ord\rangle$.
    Now define 
    \begin{itemize}
        \item $V^\mathbb{P}(p) = \{x\in I\mid \dom x=(\uparrow p)\}$,
        \item $V_\alpha^\mathbb{P}(p) = \{x\in I^\alpha\mid \dom x=(\uparrow p)\}$, and
        \item $\tau_{pq}(x) = x\restricts (\uparrow q)$ for $x\in V^\mathbb{P}(p)$.
    \end{itemize}
    
    We will show that the conditions in \autoref{Definition:KripkeInformal} hold for $V_\alpha^\mathbb{P}(p)$ and $\tau_{pq}$. Condition \ref{Labeling:SetKripke1}, \ref{Labeling:SetKripke2} and \ref{Labeling:SetKripke3} are easy to check. Hence let us check the condition \ref{Labeling:SetKripke4} holds.
    
    Assume inductively that \ref{Labeling:SetKripke4} holds for every $\beta\in\alpha$.
    For the one direction, let $x\in V^\mathbb{P}_\alpha(p)$, which is equivalent to $\dom x=\uparrow p$ and $x\in I^\alpha = \Gamma_\Phi(I^{\in\alpha})$.
    Hence, there is $a$ such that $a\subseteq I^{\in\alpha}$ and $\langle a,x\rangle \in \Phi$.
    Condition \ref{Labeling:DefFormulaPhi1} and \ref{Labeling:DefFormulaPhi2} of the defining formula of $\Phi$, combining with the inductive definition, implies $x(q)\subseteq \bigcup_{\beta\in\alpha} V^\mathbb{P}_\beta(q)$ for every $q\ge p$. 
    Moreover, \ref{Labeling:DefFormulaPhi3} is just another way to state $\tau_{qr}''[x(q)]\subseteq x(r)$ for every $p\le q\le r$.
    
    For the remaining direction, assume that $x$ is a function with the domain $\uparrow p$ which satisfies conditions in \ref{Labeling:SetKripke4}.
    We can see that the first condition implies \ref{Labeling:DefFormulaPhi1} and \ref{Labeling:DefFormulaPhi2}.
    We have observed that \ref{Labeling:DefFormulaPhi3} is equivalent to the second condition of \ref{Labeling:SetKripke4}.
    This completes the proof.
\end{proof}

The following lemma is useful to show a given function is a $\mathbb{P}$-name:
\begin{lemma}[The Closure lemma]\label{Lemma:KripkeNameLemma}
    Let $x$ be a function of domain $\uparrow p$ which satisfies $x(q)\subseteq V^\mathbb{P}(q)$ for all $q\ge p$ and monotonocity condition: for any $p\le q\le r$, $\tau_{qr}^"[x(q)]\subseteq x(r)$.
    Then $x\in V^\mathbb{P}(p)$.
\end{lemma}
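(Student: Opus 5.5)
The plan is to reduce the claim to the closure property of the smallest $\Phi$-closed class $I$ built in the preceding lemma. By construction, $V^\mathbb{P}(p)=\{x\in I\mid \dom x=\uparrow p\}$, so it suffices to show $x\in I$. Since $I$ is $\Phi$-closed, it is enough to find a set $a\subseteq I$ with $\langle a,x\rangle\in\Phi$; then $x\in\Gamma_\Phi(I)\subseteq I$.

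The natural choice is $a=\bigcup_{q\ge p}x(q)$. This is a set by Replacement (indexing over the set $\uparrow p$) and Union. We then check the three clauses of $\Phi$ in turn.
\begin{enumerate}[label=(\roman*)]
	\item $x(q)\subseteq a$ for every $q\ge p$ holds by the definition of $a$.
	\item Every $y\in x(q)$ lies in $V^\mathbb{P}(q)$ by hypothesis, and hence is a function of domain $\uparrow q$ by the definition of $V^\mathbb{P}(q)$.
	\item For $p\le q\le r$ and $y\in x(q)$, we have $\tau_{qr}(y)=y\restricts(\uparrow r)$. So the monotonicity hypothesis $\tau_{qr}''[x(q)]\subseteq x(r)$ gives $y\restricts(\uparrow r)\in x(r)$.
\end{enumerate}
Moreover, $a\subseteq I$, because each $x(q)\subseteq V^\mathbb{P}(q)\subseteq I$. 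Hence $\langle a,x\rangle\in\Phi$ with $a\subseteq I$, and closure gives $x\in I$. Since $\dom x=\uparrow p$, we conclude $x\in V^\mathbb{P}(p)$.

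The only mildly delicate point is being sure that closure of $I$ under $\Phi$ applies to arbitrary set premises $a\subseteq I$. This is exactly the meaning of ``$\Phi$-closed'' in the Class Inductive Definition Theorem (\autoref{Theorem:ClassInductiveDefThm}), so it should not be a real obstacle.

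If one prefers to argue via the hierarchy rather than raw closure, there is an alternative. Using Strong Collection on the multi-valued assignment $y\mapsto\{\alpha\mid y\in V^\mathbb{P}_\alpha(q)\}$ over the set $a$, collect ordinals $\alpha_y$, and let $\alpha$ be the supremum plus one. Then $x(q)\subseteq\bigcup_{\beta\in\alpha}V^\mathbb{P}_\beta(q)$ for all $q$, and clause \ref{Labeling:SetKripke4} of \autoref{Definition:KripkeInformal} directly gives $x\in V^\mathbb{P}_\alpha(p)$. Here one also uses the monotonicity in $\alpha$ from clause \ref{Labeling:SetKripke1}.

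This collection step, which bounds the ranks uniformly using Strong Collection over the set $\bigcup_{q\ge p}x(q)$, is the main thing to get right in this version. The first approach avoids it entirely, so I would present that one.
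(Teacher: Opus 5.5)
Your proof is correct and takes the same approach as the paper. The paper justifies the lemma only as ``direct from the inductive definition of $V^\mathbb{P}$'', and your argument spells that out: take $a=\bigcup_{q\ge p}x(q)$, check $\langle a,x\rangle\in\Phi$ and $a\subseteq I$, and invoke $\Phi$-closure of $I$. This is also word for word how the paper proves the analogous \autoref{Lemma:ClosureLemmaForK} for $K^m$.
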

\begin{proof}
    Direct from the inductive definition of the universe of Kripke sets $V^\mathbb{P}$.
\end{proof}

The forcing relation $\VDash$ over $V^\mathbb{P}$ is defined inductively for formulas. In the following definition, assume that $x,y\in V^\mathbb{P}(p)$.
\begin{itemize}
    \item $p\VDash x\in y \iff x\in y(p)$,
    \item $p\VDash x = y \iff \text{For each } q\ge p, x(q) = y(q)$,
    \item $p\VDash \phi\land \psi \iff p\VDash \phi \text{ and } p\VDash\psi$,
    \item $p\VDash \phi\lor\psi \iff p\VDash\phi \text{ or } p\VDash\psi$,
    \item $p\VDash \phi\to\psi \iff \text{For each $q\ge p$, if $q\VDash \phi$
    then $q\VDash \psi$}$,
    \item $p\VDash \forall x\phi(x) \iff \text{For each $q\ge p$ and 
    $x\in V^\mathbb{P}(q)$, $q\VDash\phi(x)$}$, and
    \item $p\VDash \exists x \phi(x) \iff \text{There is $x\in V^\mathbb{P}(p)$ 
    such that $p\VDash \phi(x)$}$.
\end{itemize}
We assume that every parameter in this definition is a member of $V^\mathbb{P}(p)$.
In many cases, however, we can face parameters that appear in a given formula which is not a member of $V^\mathbb{P}(p)$, but a member of $V^\mathbb{P}(r)$ for some $r\le p$.
In this case, we take an appropriate transition function to the parameters, so the parameters  belong to $V^\mathbb{P}(p)$. For example, if $r,s\le p$, $x\in V^\mathbb{P}(r)$ and $y\in V^\mathbb{P}(s)$ then $p\VDash\phi(x,y)$ is \emph{defined} by $p\VDash \phi(\tau_{rp}(x),\tau_{sp}(y))$.

The following lemma states $\in$ and $=$ are persistent up to the transition map. It also justifies the mentioned convention for parameters:
\begin{lemma}
The transition function respects = and $\in$. Formally, for each $x,y\in V^\mathbb{P}(p)$, if $p\VDash x=y$ or $p\VDash x\in y$ then $q\VDash \tau_{pq}(x) = \tau_{pq}(y)$ or $q\VDash \tau_{pq}(x)\in\tau_{pq}(y)$ respectively.
\end{lemma}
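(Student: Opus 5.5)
The plan is to unfold the definitions of $\VDash$ for atomic formulas and use the monotonicity condition of $\mathbb{P}$-names (clause \ref{Labeling:SetKripke4} of \autoref{Definition:KripkeInformal}) together with the composition law $\tau_{pr}=\tau_{qr}\circ\tau_{pq}$ from clause \ref{Labeling:SetKripke3}. No induction on formulas or on rank is needed, since the forcing clauses for $=$ and $\in$ are defined directly in terms of the values of the names. Throughout I fix $p\le q$ and $x,y\in V^\mathbb{P}(p)$.

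First, I record the identity $\tau_{pq}(z)(r)=z(r)$ for every $r\ge q$ and every $z\in V^\mathbb{P}(p)$. This is immediate from $\tau_{pq}(z)=z\restricts(\uparrow q)$ in clause \ref{Labeling:SetKripke2}. It also shows that $\tau_{pq}(z)\in V^\mathbb{P}(q)$, so that the forcing relation at $q$ makes sense for the transported parameters.

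For equality, suppose $p\VDash x=y$, so that $x(r)=y(r)$ for all $r\ge p$. For $r\ge q$ we have $r\ge p$, and hence $\tau_{pq}(x)(r)=x(r)=y(r)=\tau_{pq}(y)(r)$. This is exactly $q\VDash\tau_{pq}(x)=\tau_{pq}(y)$.

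For membership, suppose $p\VDash x\in y$, that is, $x\in y(p)$. Monotonicity of $y$ gives $\tau_{pq}(x)\in\tau_{pq}''[y(p)]\subseteq y(q)$. By the identity above, $y(q)=\tau_{pq}(y)(q)$, so $\tau_{pq}(x)\in\tau_{pq}(y)(q)$, which is precisely $q\VDash\tau_{pq}(x)\in\tau_{pq}(y)$.

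The only point needing care, which I regard as the main (mild) obstacle, is the bookkeeping of domains. One must check that the values $x(r)$ for $r\ge q$ are literally the same sets before and after restriction, so that no further transition maps need to be applied to elements of $x(r)$. This holds because restriction only cuts the domain $\uparrow p$ down to $\uparrow q$ and leaves the values untouched, and the elements of $x(r)$ already lie in $V^\mathbb{P}(r)$. With this observation, the convention of transporting parameters along $\tau$ is consistent: by $\tau_{pr}=\tau_{qr}\circ\tau_{pq}$, forcing at $r$ via $p$ or via $q$ yields the same names.
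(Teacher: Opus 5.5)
Your proposal is correct and matches the paper's proof, which consists only of the remark that the lemma follows directly from the forcing clauses for $=$ and $\in$ together with the monotonicity condition on $\mathbb{P}$-names; you have written out those two unfoldings explicitly. The composition law $\tau_{pr}=\tau_{qr}\circ\tau_{pq}$ is not needed for the lemma itself, only for your closing remark about the convention for parameters.
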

\begin{proof}
Direct from the definition of $=$, $\in$ of the Kripke language, and the monotonicity condition for Kripke sets.
\end{proof}

Observe that interpreting $\to$ and $\forall$ introduces a new frame variable. We will handle a series of universal quantifiers and a combination of $\forall$ and $\to$, so there are possibilities for introducing lots of frame variables.
Fortunately, we can prove that introducing new frame variables is unnecessary in these special cases:
\begin{lemma}\label{Lemma:QuantifierWithConditions} \pushQED{\qed}
    Let $p\in\mathbb{P}$ and $y_0,\cdots, y_m\in V^\mathbb{P}(p)$ be parameters of a formula $\phi$.
    \begin{enumerate}
        \item $p\VDash \forall x_0\cdots \forall x_n \phi(x_0,\cdots, x_n,y_0,\cdots y_m)$
        if and only if for each $q\ge p$ and $x_0,\cdots,x_n\in V^\mathbb{P}(q)$ we have $q\VDash \phi(x_0,\cdots, x_n,\tau_{pq}(y_0),\cdots, \tau_{pq}(y_m))$.
        
        \item $p\VDash \forall x[\phi(x,y_0,\cdots, y_m)\to\psi(x,y_0,\cdots, y_m)]$ if and only if for each $q\ge p$ and $x\in V^\mathbb{P}(q)$, we have
    	\begin{equation*}
    		q\VDash \phi(x,\tau_{pq}(y_0),\cdots, \tau_{pq}(y_m)) \implies q\VDash \psi(x,\tau_{pq}(y_0),\cdots, \tau_{pq}(y_m)). \qedhere 
    	\end{equation*}
    \end{enumerate}
\end{lemma}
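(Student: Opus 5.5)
The plan is to unfold the forcing clauses directly and use persistence of the forcing relation (monotonicity). The one subtle point is whether the lemma already relies on a general persistence lemma, stating that $p\VDash\phi(\vec y)$ and $p\le q$ imply $q\VDash\phi(\tau_{pq}(\vec y))$. I will first prove this by induction on $\phi$, since it is used implicitly in both parts.

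\textbf{Persistence.} The atomic cases are exactly the preceding lemma on $=$ and $\in$. The cases $\land$ and $\lor$ are immediate. For $\to$ and $\forall$, I use $\uparrow q\subseteq\uparrow p$ together with the parameter convention and $\tau_{qr}\circ\tau_{pq}=\tau_{pr}$, so that the quantification over $r\ge q$ is a restriction of the one over $r\ge p$. For $\exists$, a witness $x\in V^\mathbb{P}(p)$ is transported to the witness $\tau_{pq}(x)$.

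\textbf{Part (1).} I argue by induction on $n$. For $n=0$ the claim is just the definition of $p\VDash\forall x_0\phi$ under the parameter convention.

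For the inductive step, $p\VDash\forall x_0\,\forall\vec x\,\phi$ means: for every $q\ge p$ and $x_0\in V^\mathbb{P}(q)$, we have $q\VDash\forall x_1\cdots\forall x_n\phi$. By the induction hypothesis, this is equivalent to: for every $r\ge q$ and $x_1,\dots,x_n\in V^\mathbb{P}(r)$, we have $r\VDash\phi(\tau_{qr}(x_0),x_1,\dots)$. It remains to show this matches the single-node form.

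\emph{Single-node form implies the nested form.} Given $q\le r$, $x_0\in V^\mathbb{P}(q)$ and $x_i\in V^\mathbb{P}(r)$, apply the single-node hypothesis at node $r$ with $\tau_{qr}(x_0)\in V^\mathbb{P}(r)$ in place of $x_0$. The parameters match because $\tau_{qr}\tau_{pq}=\tau_{pr}$.

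\emph{Nested form implies the single-node form.} Take $r=q$ and use that $\tau_{qq}$ is the identity.

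\textbf{Part (2).} Unfold $p\VDash\forall x(\phi\to\psi)$. It says: for all $q\ge p$ and $x\in V^\mathbb{P}(q)$, and for all $r\ge q$, if $r\VDash\phi(\tau_{qr}x,\dots)$ then $r\VDash\psi(\tau_{qr}x,\dots)$.

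\emph{The nested form implies the stated form.} Specialize to $r=q$.

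\emph{The stated form implies the nested form.} Given $q\le r$ and $x\in V^\mathbb{P}(q)$, apply the hypothesis at node $r$ to the element $\tau_{qr}(x)\in V^\mathbb{P}(r)$, again using the composition law for the parameters.

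\textbf{Main obstacle.} The only delicate point is bookkeeping of the parameter convention. I must make sure that ``$r\VDash\phi(x)$ with $x\in V^\mathbb{P}(q)$'' is read as $r\VDash\phi(\tau_{qr}x)$, and that condition \ref{Labeling:SetKripke3} makes all such transports coherent. Everything else is routine unfolding of definitions, so the first thing I will do is fix this convention explicitly.
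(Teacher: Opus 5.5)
Your proposal is correct and matches the paper's omitted ``direct calculation'': you unfold the $\forall$ and $\to$ clauses, specialize to $r=q$ in one direction, and in the other transport $x$ along $\tau_{qr}$ and use $\tau_{qr}\circ\tau_{pq}=\tau_{pr}$. One small correction: the persistence lemma you prove first is true but is never actually used in either part, since only the composition law, $\tau_{qq}=\mathrm{id}$, and $\tau_{qr}\colon V^\mathbb{P}(q)\to V^\mathbb{P}(r)$ are needed, so you can drop it.
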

A proof of the previous lemma follows from a direct calculation, so we omit it.
The next lemma states the forcing relation $p\VDash \phi(x)$ is $\Delta_0$ when $\phi$ is $\Delta_0$. This fact is useful when verifying $V^\mathbb{P}$ satisfies $\Delta_0$-separation if our $V$ only satisfies $\Delta_0$-Separation. Its proof follows from direct calculation, so we omit it.
\begin{lemma}\pushQED{\qed} 
    If $q\in\mathbb{P}$ and $x\in V^\mathbb{P}(q)$ then
    \begin{itemize}
        \item $q\VDash \forall y\in x\phi(y) \iff \text{For each $r\ge q$ and 
        $y\in x(r)$, $r\VDash \phi(y)$}$ and
        \item $q\VDash \exists y\in x \phi(x) \iff \text{For some $y\in x(q)$, 
        $q \VDash \phi(y)$}$. 
    \end{itemize}
    In particular, if $\phi$ is a $\Delta_0$-formula then $p\VDash \phi$ is also $\Delta_0$.
    \qedhere
\end{lemma}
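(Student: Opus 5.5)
We prove the two bullet items by unfolding the forcing clauses, reading $\forall y\in x\,\phi(y)$ as $\forall y\,(y\in x\to\phi(y))$ and $\exists y\in x\,\phi(y)$ as $\exists y\,(y\in x\land\phi(y))$. The $\Delta_0$ claim then follows by induction on the formula.

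For the bounded universal, apply part (2) of \autoref{Lemma:QuantifierWithConditions}. It gives that $q\VDash\forall y\in x\,\phi(y)$ holds iff for every $r\ge q$ and every $y\in V^\mathbb{P}(r)$, if $r\VDash y\in\tau_{qr}(x)$ then $r\VDash\phi(y)$. By the atomic clause, $r\VDash y\in\tau_{qr}(x)$ means $y\in\tau_{qr}(x)(r)=x(r)$, since $\tau_{qr}(x)=x\restricts(\uparrow r)$. Moreover $x(r)\subseteq V^\mathbb{P}(r)$ by condition \ref{Labeling:SetKripke4} of \autoref{Definition:KripkeInformal}, so the quantification over $y\in V^\mathbb{P}(r)$ restricted to $y\in x(r)$ is exactly quantification over $y\in x(r)$. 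This gives the first item.

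For the bounded existential, the clauses for $\exists$ and $\land$ give that $q\VDash\exists y\,(y\in x\land\phi(y))$ holds iff there is $y\in V^\mathbb{P}(q)$ with $y\in x(q)$ and $q\VDash\phi(y)$. Again $x(q)\subseteq V^\mathbb{P}(q)$, so this is the second item. (The statement's $\phi(x)$ should read $\phi(y)$.)

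For the last assertion, fix a $\Delta_0$-formula $\phi(\vec{v})$ and show by induction on $\phi$ that the relation ``$p\in\mathbb{P}$, $\vec{v}\in V^\mathbb{P}(p)$ and $p\VDash\phi(\vec v)$'' is expressed, uniformly, by a $\Delta_0$ formula in $p,\vec v,\mathbb{P},\le$. The cases are as follows.
\begin{itemize}
\item Atomic $\in$: the clause $x\in y(p)$ is bounded.
\item Atomic $=$: the clause $\forall q\in\mathbb{P}\,(q\ge p\to x(q)=y(q))$ is bounded, with $x(q)$ accessed as a bounded quantifier over $x$.
\item $\land$ and $\lor$: immediate from the induction hypothesis.
\item $\to$: the clause quantifies over $q\in\mathbb{P}$, a set parameter, so it stays bounded.
\item Bounded quantifiers: by the two items, these become quantifiers over $r\in\mathbb{P}$ and over $y\in x(r)$, and $x(r)$ is reachable by bounded quantification through $x$, which is a set of pairs.
\end{itemize}
Transition maps $\tau_{pq}(v)=v\restricts(\uparrow q)$ enter only through restricted evaluation $v(r)$, so they add no unbounded quantifiers.

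The main subtlety is that $\VDash$ is defined by metatheoretic recursion on formulas rather than as a single class relation. The statement must therefore be read schematically: for each $\Delta_0$ formula $\phi$ we exhibit a corresponding $\Delta_0$ formula. We also must confirm that the only unbounded quantifiers in the clauses, namely $\forall$ and $\exists$ over $V^\mathbb{P}(q)$, never arise for $\Delta_0$ formulas, since those are eliminated by the two items.
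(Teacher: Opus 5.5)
Your proposal is correct and is essentially the direct calculation the paper has in mind; the paper omits the proof. Like that calculation, you unfold $\forall y\in x$ through part (2) of \autoref{Lemma:QuantifierWithConditions}, use $\tau_{qr}(x)(r)=x(r)$ and $x(r)\subseteq V^\mathbb{P}(r)$, and then induct on $\Delta_0$ formulas. Your remark that the transition maps enter only through evaluations $v(r)$ with $r\ge q$ is what keeps the $\to$ and bounded-quantifier cases $\Delta_0$. It is worth building into the induction hypothesis: the same $\Delta_0$ formula that defines $q\VDash\phi(\vec v)$ also defines $q\VDash\phi(\tau_{pq}(\vec v))$.
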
\popQED

We will show that $V^\mathbb{P}$ is a model of $\mathsf{CZF}$. Before checking that, we have to verify that $V^\mathbb{P}$ satisfies intuitionistic logic. 
In addition, we also need to verify the equality axioms are valid in $V^\mathbb{P}$: 
\begin{proposition}
$V^\mathbb{P}$ satisfies intuitionistic logic and equality axioms.
\end{proposition}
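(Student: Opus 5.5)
The first ingredient is persistence (monotonicity) of forcing: if $p\VDash\phi(\vec x)$ and $q\ge p$, then $q\VDash\phi(\tau_{pq}(\vec x))$. I will prove this by induction on the complexity of $\phi$.
\begin{itemize}
\item The atomic cases are exactly the preceding lemma that transition functions respect $=$ and $\in$. That lemma rests on the monotonicity condition in \autoref{Definition:KripkeInformal}\ref{Labeling:SetKripke4} and on $\tau_{qr}\circ\tau_{pq}=\tau_{pr}$ from \ref{Labeling:SetKripke3}.
\item The cases $\land$ and $\lor$ are immediate from the induction hypothesis.
\item For $\exists$, a witness $x\in V^\mathbb{P}(p)$ is transported to $\tau_{pq}(x)\in V^\mathbb{P}(q)$.
\item For $\to$ and $\forall$, the clauses already quantify over all $r\ge p$. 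Since $\uparrow q\subseteq\uparrow p$ and the transition maps compose, nothing needs to be done.
\end{itemize}
Persistence also justifies the parameter convention stated before the proposition, since the composites $\tau_{rp}$ are coherent.

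Next I check soundness of a standard Hilbert-style intuitionistic calculus by induction on derivations. The claim is that if $\vdash\phi$, then $p\VDash\phi$ for every $p$ and every assignment of parameters in $V^\mathbb{P}(p)$.
\begin{itemize}
\item Axioms such as $\phi\to(\psi\to\phi)$ use persistence.
\item Axioms such as $(\phi\to(\psi\to\chi))\to((\phi\to\psi)\to(\phi\to\chi))$ and the $\lor$- and $\land$-axioms are routine unfoldings of the clauses, using transitivity of $\le$.
\item $\bot\to\phi$ holds because $\bot$ is forced nowhere.
\item Modus ponens uses reflexivity $p\le p$.
\item For the quantifier axioms $\forall x\phi\to\phi(t)$ and $\phi(t)\to\exists x\phi$, the term $t$ is a variable assigned a name in $V^\mathbb{P}(q)$, so these follow from the clauses and persistence.
\item The generalization rules $\psi\to\phi(x)\,/\,\psi\to\forall x\phi$ and $\phi(x)\to\psi\,/\,\exists x\phi\to\psi$, with $x$ not free in $\psi$, follow because the induction hypothesis holds for all nodes and all assignments.
\end{itemize}
One subtlety needs care. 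Several clauses (the $\forall$ clause, and $\in$ via $y(p)$) range over classes, so the ``induction'' over formulas is a metatheoretic schema. That is fine: each instance is a single formula of $\mathsf{CZF}^-$.

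Finally, the equality axioms. Reflexivity, symmetry and transitivity of $=$ are immediate because forced equality $p\VDash x=y$ is literal equality of $x(q)$ and $y(q)$ for all $q\ge p$, that is, $x=y$ as functions on $\uparrow p$. The substitution axioms $x=y\land\phi(x)\to\phi(y)$ then become trivial: if $q\VDash x=y$, then $\tau_{pq}(x)=\tau_{pq}(y)$ literally, so any forcing statement about one is a statement about the other. It suffices to check this for atomic $\phi$, namely $x\in z\to y\in z$ and $z\in x\to z\in y$, and both are obvious once the names coincide.

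I expect the main obstacle to be bookkeeping rather than ideas: keeping track of the transition maps on parameters in the $\to$ and $\forall$ cases of persistence and soundness. \autoref{Lemma:QuantifierWithConditions} is exactly the tool for collapsing nested node variables there. I would first state carefully that $\tau_{qr}\circ\tau_{pq}=\tau_{pr}$ on parameters. After that, every case is a one-line unfolding.
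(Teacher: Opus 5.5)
Your proposal is correct and follows essentially the same route as the paper. The paper omits the logical part by deferring to the standard Kripke soundness argument of Troelstra--van Dalen, which is persistence plus induction on derivations, just as you set it up; it obtains the equality axioms by unfolding the clauses for $=$ and $\in$ with \autoref{Lemma:QuantifierWithConditions}, which amounts to your observation that $p\VDash x=y$ is literal equality of names. One small slip: the $\in$ clause quantifies over the set $y(p)$, not a class, so only the $\forall$ and $\exists$ clauses, which range over the classes $V^\mathbb{P}(q)$, force the schematic, metatheoretic reading you rightly adopt.
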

\begin{proof}
	The proof for intuitionistic logic is similar to \cite[Ch. 2, \S5, Theorem 5.10]{TroelstraDalen}, so we omit its proof.
	Equality axioms directly follow from the definition of $=$ and $\in$, with \autoref{Lemma:QuantifierWithConditions}. (See \cite[Lemma 2.0.2]{Lubarsky2005} for the list of equality axioms.)
\end{proof}

We can construct some canonical names from given names. We will use the following names to show that the axioms of $\mathsf{CZF}$ are valid in $V^\mathbb{P}$:
\begin{definition}
Let $p\in\mathbb{P}$ and $x,y,y_0,\cdots,y_n\in V^\mathbb{P}(p)$. 
Define $\mathbb{P}$-names $\up(x,y)$, $\op(x,y)$, $\Union(x)$, $\Power(x)$, $\Sep_\phi(x;y_0,\cdots,y_n)$ with domain $\uparrow p$ as follows:
\begin{itemize}
    \item $\up(x,y)(q) = \{\tau_{pq}(x), \tau_{pq}(y)\}$,
    \item $\op(x,y)(q) = \up(\up(x,x)(q),\up(x,y)(q))(q)$,
    \item $\Union(x)(q) = \bigcup \{z(q) \mid z\in x(q)\}$,
    \item $\Power(x)(q) = \{z \in V^\mathbb{P}(q) \mid \forall r\ge q : z(r)\subseteq y(q)\}$ and
    \item $\Sep_\phi(x;y_0,\cdots,y_n)(q) = \{z\in x(q) \mid q\VDash \phi(z, \tau_{pq}(y_0), \cdots, \tau_{pq}(y_n) )\}$
\end{itemize}
\end{definition}
Note that $\Power(x)$ is a set only when Powerset is valid in $V$. Thus, we refer to $\Power(x)$ only when Powerset is valid.
Like canonical names for sets in $V$ under a classical forcing, we can define canonical $\mathbb{P}$-names for sets in $V$:
\begin{definition}
Let $x$ be a set. Define its canonical name $\check{x}$ with domain $\mathbb{P}$ as follows:
	\begin{equation*}
		\check{x}(p) = \{\tau_{\bot p}(\check{y}) \mid y\in x\}.
	\end{equation*}
\end{definition}
We can see that $\VDash \text{``$\check{\alpha}$ is an ordinal''}$ if $\alpha$ is an ordinal. In particular, we can see that $\check{\omega}$ witnesses Infinity.

\begin{theorem}\label{Theorem:KripkeModelsCZF}
Every axiom of $\mathsf{CZF}^-$ is valid in $V^\mathbb{P}$. 
Moreover, if $V$ satisfies either  Subset Collection, $\mathsf{Sep}$ or $\mathsf{Pow}$ then $V^\mathbb{P}$ also satisfies Subset collection, $\mathsf{Sep}$ or $\mathsf{Pow}$ respectively.
\end{theorem}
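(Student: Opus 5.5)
We verify the axioms one at a time, each with the canonical names just defined. The forcing clauses for bounded quantifiers show that $p\VDash\phi$ is $\Delta_0$ whenever $\phi$ is, so every name built by restricted separation in $V$ is a set.

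\textbf{Extensionality, Pairing, Union, Infinity, $\Delta_0$-Separation.}
\begin{itemize}
\item Extensionality: suppose $p\VDash\forall z(z\in x\leftrightarrow z\in y)$. By \autoref{Lemma:QuantifierWithConditions}, for every $q\ge p$ and $z\in V^\mathbb{P}(q)$ we get $z\in x(q)\iff z\in y(q)$, hence $x(q)=y(q)$, which is exactly $p\VDash x=y$.
\item Pairing and Union: these are witnessed by $\up(x,y)$ and $\Union(x)$. Monotonicity follows from $\tau_{qr}\circ\tau_{pq}=\tau_{pr}$, so the Closure lemma (\autoref{Lemma:KripkeNameLemma}) makes them names.
\item Infinity: this is witnessed by $\check\omega$.
\item $\Delta_0$-Separation: use $\Sep_\phi(x;\vec y)$. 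Since $q\VDash\phi$ is $\Delta_0$, each $\Sep_\phi(x;\vec y)(q)$ exists by $\Delta_0$-Separation in $V$. Monotonicity is persistence of forcing along $\tau_{qr}$, which holds for all formulas by an easy induction.
\item Full Separation: if $V$ satisfies full Separation, the same name works for every $\phi$.
\end{itemize}

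\textbf{$\in$-induction.} Assume $p\VDash\forall x[\forall y\in x\,\phi(y)\to\phi(x)]$. We prove in $V$, by $\in$-induction on the name $x$ (equivalently by induction on the rank $\alpha$ with $x\in V^\mathbb{P}_\alpha$), that $q\VDash\phi(x)$ for all $q\ge p$ and $x\in V^\mathbb{P}(q)$. The elements of $x(r)$ lie in earlier stages, so the induction hypothesis gives $\forall y\in x\,\phi(y)$ at every $r\ge q$, and the assumption then yields $q\VDash\phi(x)$. The induction is performed on the class formula ``$\forall q\ge p\,\forall x\in V^\mathbb{P}(q)\ldots$'', which is legitimate in $V$ because $\in$-induction is a scheme.

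\textbf{Strong Collection.} This is the main obstacle, since we must build a name uniformly across all nodes. Suppose $p\VDash\forall u\in a\,\exists v\,\phi(u,v)$. Then for each $q\ge p$ and $u\in a(q)$ there is $v\in V^\mathbb{P}(q)$ with $q\VDash\phi(u,v)$. Apply Strong Collection in $V$ to the set $\{\langle q,u\rangle\mid q\ge p,\ u\in a(q)\}$. This yields a set $S$ of triples $\langle q,u,v\rangle$ with $q\VDash\phi(u,v)$ that covers every pair $\langle q,u\rangle$ and contains only such witnesses. Define
\begin{equation*}
  b(r)=\{\tau_{qr}(v)\mid \langle q,u,v\rangle\in S,\ p\le q\le r\}.
\end{equation*}
The union over $q\le r$ is what makes $b$ monotone, so by the Closure lemma $b\in V^\mathbb{P}(p)$. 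Persistence of forcing then gives both directions of the subimage condition at every $r\ge p$:
\begin{itemize}
\item each $u\in a(r)$ has a witness in $b(r)$, namely the one coming from $\langle r,u,v\rangle\in S$;
\item each element $\tau_{qr}(v)$ of $b(r)$ is a witness for $\tau_{qr}(u)\in a(r)$.
\end{itemize}

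\textbf{Subset Collection and Powerset.} For Subset Collection we use the Fullness form, equivalently the formulation in \autoref{Proposition:SubsetCollectionEquivalentFormulations}. Given names $A,B$ at $p$, any multi-valued function forced at some $q\ge p$ is determined by its values $R(r)$ for $r\ge q$. Roughly, we apply Fullness in $V$ to the sets $\{\langle r,u\rangle\mid r\ge p,\ u\in A(r)\}$ and $\{\langle r,w\rangle\mid r\ge p,\ w\in\bigcup_{r'}B(r')\}$, then close the resulting sets of relations under monotone closure as in the Strong Collection step. We expect the bookkeeping here to require care with upward closure. For Powerset, $\Power(x)$ is a set-sized name when $\mathsf{Pow}$ holds in $V$. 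Its monotonicity comes from restriction, and $p\VDash z\subseteq x\to z\in\Power(x)$ holds by definition.
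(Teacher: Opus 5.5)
Your treatment of Extensionality, Pairing, Union, Infinity, Separation, $\in$-induction, Strong Collection and Powerset is essentially the paper's. For Strong Collection you collect triples $\langle q,u,v\rangle$ rather than bare witnesses whose node is read off from $\dom v$, which is a cosmetic difference. Subset Collection, however, is only gestured at, and the plan you describe does not work as stated.

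Take names $a,b\in V^\mathbb{P}(p)$ (your $A,B$). A name $x\in V^\mathbb{P}(q)$ with $q\VDash x\colon a\rightrightarrows b$ yields the relation $R_x=\{\langle\langle r,u\rangle,v\rangle \mid \dom v={\uparrow}r,\ \op(u,v)\in x(r)\}$. This relation is total only on $A_q=\bigcup_{r\ge q}\{r\}\times a(r)$. It is not total on the set $\{\langle r,u\rangle\mid r\ge p,\ u\in a(r)\}$, which is where you propose to apply Fullness once. You cannot extend $R_x$ to a multi-valued function on that larger domain without deciding whether $r\ge q$, and $\mathsf{CZF}$ does not provide this for an arbitrary frame. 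Padding every argument with a dummy value does not help either: the full family may then just return the dummy relation. So a single full family indexed at $p$ does not see multi-valued functions that appear at later nodes. Moreover, \emph{closing under monotone closure} does not actually specify the witness name $c$.

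What is missing is a per-node quantification followed by a collection step, as in the paper.
\begin{itemize}
\item For every $q\ge p$ you need a family full in $\mv(A_q,B)$, where $B=\bigcup_{r\ge p}b(r)$.
\item Such a family is not determined by $q$, so you must use Strong Collection in $V$ over $q\in{\uparrow}p$ to gather one full family per node into a single set $\mathbb{C}$.
\item Each relation $C\in\bigcup\mathbb{C}$ with $C\in\mv(A_q,B)$ is turned into the name $z_{q,C}\in V^\mathbb{P}(q)$ given by $z_{q,C}(r)=\{\tau_{sr}(\op(u,v))\mid q\le s\le r,\ \langle\langle s,u\rangle,v\rangle\in C,\ \dom v={\uparrow}s\}$.
\item Finally $c(q)$ collects all $\tau_{sq}(z_{s,C})$ with $p\le s\le q$; this is what makes $c$ monotone.
\end{itemize}
For the verification, given $x$ as above, choose such a $C\subseteq R_x$ with $C\colon A_q\rightrightarrows B$. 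Monotonicity of $x$ gives $q\VDash z_{q,C}\subseteq x$. Totality of $C$ on $A_q$, together with $q\VDash x\subseteq a\times b$, gives $q\VDash z_{q,C}\colon a\rightrightarrows b$. Since $z_{q,C}\in c(q)$, this is the required witness. Without these steps, the claim that $V^\mathbb{P}$ inherits Subset Collection from $V$ is not established.
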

\begin{proof}
\begin{enumerate}[label=(\roman*)]
    \item Extensionality: This directly follows from the definition of $\in$ and $=$ of Kripke models.
    
    \item Pairing: We will see that $\up(x,y)$ witnesses Pairing for each $x,y\in V^\mathbb{P}(p)$. We have to check the following holds:
    \begin{equation*}
        p\VDash \forall z [z\in\up(x,y)\leftrightarrow (z=x\lor z=y)].
    \end{equation*}
    In one direction, take $q\ge p$, $z\in V^\mathbb{P}(q)$ and suppose that $r\VDash z\in \up(x,y)$ holds for $r\ge q$. 
    Then $\tau_{qr}(z) \in \{\tau_{pr}(x),\tau_{pr}(y)\}$, so $\tau_{qr}(z) = \tau_{pr}(x)$ or $\tau_{qr}(z) = \tau_{pr}(y)$. 
    By the definition of $\tau$, it implies
    \begin{itemize}
        \item $z(s) = x(s)$ for every $s\ge r$ or
        \item $z(s) = y(s)$ for every $s\ge r$.
    \end{itemize}
    Therefore $r\VDash x=z\lor y=z$. The other direction follows from reversing our previous proof.
    
    \item Union: We can prove that $\Union(x)$ witnesses Union. The details are left to the readers.
    
    \item $\in$-Induction: We have to show that 
	\begin{equation*}
		\bot \VDash \forall x(\forall y\in x \phi(y)\to\phi(x)) \to \phi(x).
	\end{equation*}
    Suppose that 
    \begin{equation}\label{Formula:InInductionKrikpe00}
        p\VDash \forall x (\forall y\in x\phi(y)\to\phi(x))
    \end{equation}
    holds. 
    We shall verify $p\VDash \forall x\phi(x)$, which is equivalent to
    \begin{equation}\label{Formula:InInductionKrikpe01}
        \forall \beta\in\Ord\bigl[\forall q\ge p \forall x\in V^\mathbb{P}_\beta(q)[q\VDash \phi(x)]\bigr].
    \end{equation}
    Note that the equivalence follows from the tautology 
    $[(\exists x\phi(x)) \to \psi]\leftrightarrow [\forall x (\phi(x)\to\psi)]$
    of intuitionistic logic, where $x$ does not occur free in $\psi$.

    Now assume inductively that \eqref{Formula:InInductionKrikpe01} holds for every $\beta\in \alpha$.
    Fix $q\ge p$ and let $x\in V^\mathbb{P}_\alpha(q)$. If $y\in x(r)$ for $r\ge q$, then $y\in V^\mathbb{P}_\beta(r)$ for some $\beta\in\alpha$.
    Therefore $r\VDash \phi(y)$ for all $r\ge q$ and $y\in x(r)$. This implies $q\VDash \forall y\in x \phi(y)$. 
    By \eqref{Formula:InInductionKrikpe00}, $q\VDash \forall y\in x \phi(y)$ implies $q\VDash \phi(x)$.
    By induction on ordinals over the ground model $V$, we have \eqref{Formula:InInductionKrikpe01} for every $\beta\in \Ord$.
    
    \item Infinity: We shall prove that $\check{\omega}$ witnesses Infinity. Formally, $\check{\omega}$ is closed under successor operator $x\mapsto x\cup\{x\}$ internally. 
    By induction on $n$, we can prove
    \begin{equation*}
    	\bot\VDash \widecheck{(n+1)} = \check{n}\cup \{\check{n}\}.
    \end{equation*}
    Moreover, $\check{\omega}(p) = \{\tau_{\bot p}(\check{n}) \mid n\in\omega\}$.
    Therefore, if $p\VDash x\in\check{\omega}$, which implies $x=\tau_{\bot p}(\check{n})$ for some $n\in\omega$, then we have $p\VDash x\cup\{x\} \in \check{\omega}$.
    
    \item ($\Delta_0$-)Separation: Let $x,y\in V^\mathbb{P}(p)$ be Kripke names.
    Since our ground model satisfies $\Delta_0$-separation,
    $\Sep_\phi(x;y_0,\cdots,y_n)$ is a set for a bounded formula 
    $\phi(x,y_0,\cdots,y_n)$. We can easily see that
    \begin{equation*}
        p \VDash \forall z [z\in \Sep_\phi(x;y_0,\cdots, y_n) 
        \leftrightarrow z\in x\land \phi(z,y_0,\cdots, y_n)]
    \end{equation*}
    by the definition of $\Sep_\phi(x;y_0,\cdots y_n)$.
    The same proof can be applied to show $V^\mathbb{P}$ satisfies Full separation in the case when $V$ satisfies Full separation.

    \item Strong Collection:
    Fix $p\in\mathbb{P}$ and $a\in V^\mathbb{P}(p)$. Suppose that
    $q\ge p$ satisfies $q\VDash \forall x\in a \exists y \phi(x,y)$, 
    which is equivalent to
    \begin{equation}\label{Formula:StrCollKripke00}
        \forall r\ge q\forall x\in a(r) \exists y \bigl[ y\in V^\mathbb{P}(r) \land
        r\VDash \phi(x,y)\bigr].
    \end{equation}
    Define $A=\bigcup_{r\ge q}\{r\}\times a(r)$. Then 
    \eqref{Formula:StrCollKripke00} can be restated as
    \begin{equation}\label{Formula:StrCollKripke01}
        \forall \pi \in A\exists y [y \in V^\mathbb{P}(\jmath_0\pi) 
        \land \jmath_0\pi\VDash \phi(\jmath_1\pi, y)].
    \end{equation}
    (where $\jmath_0$ and $\jmath_1$ are canonical projections of a  pairing.)
    By Strong Collection in $V$, we can find a set $C$ such that
    \begin{equation}\label{Formula:StrCollKripke02}
        \forall \pi \in A\exists y\in C [y \in V^\mathbb{P}(\jmath_0\pi) 
        \land \jmath_0\pi\VDash \phi(\jmath_1\pi, y)]
    \end{equation}
    and
    \begin{equation}\label{Formula:StrCollKripke03}
        \forall  y\in C\exists \pi \in A [y \in V^\mathbb{P}(\jmath_0\pi) 
        \land \jmath_0\pi\VDash \phi(\jmath_1\pi, y)].
    \end{equation}
    $C$ itself is not a $\mathbb{P}$-name, but a collection of $\mathbb{P}$-names. Now let us turn $C$ to a $\mathbb{P}$-name witnessing the instance of Strong Collection.
    Let us define $b\in V^\mathbb{P}(q)$ as follows:
    \begin{equation}\label{Formula:StrCollInstanceB}
        b(r) = \{\tau_{sr}(y) \mid y\in C\land q\le s\le r \land 
        \dom y = \uparrow s \}.\}
    \end{equation}
    $b$ is well-defined by Replacement and $\Delta_0$-Separation.
    We will show that $b\in V^\mathbb{P}(q)$ and $b$ witnesses Strong Collection.
    Note that we do not need the full $b(r)$ as an instance of a collection set. In fact, a subset $\{y\in C \mid \dom y = \uparrow s\}$ suffices to witness Strong Collection. However, other $\mathbb{P}$-names are essential for the monotonicity of $b$.
    
    We can easily show $b\in V^\mathbb{P}(q)$ by checking conditions in \autoref{Lemma:KripkeNameLemma}.
    It remains to show that $b$ witnesses Strong Collection. We shall prove
    \begin{equation*}
        q \VDash \forall x\in a \exists y\in b \phi(x,y) \land
        \forall y\in b \exists x\in a \phi(x,y).
    \end{equation*}
    However, this is a direct corollary of \eqref{Formula:StrCollKripke02},
    \eqref{Formula:StrCollKripke03} and the definition of $b$.

	\item Subset Collection: We will show that $V^\mathbb{P}$ validates Fullness if $V$ satisfies Subset Collection. We shall prove
	\begin{equation*}
		\bot\VDash \forall a, b\exists c [\forall x\colon a\rightrightarrows b
		\exists y\in c [y\colon a\rightrightarrows b\land\, y\subseteq x]].
	\end{equation*}
    Fix $p\in\mathbb{P}$ and $a,b\in V^\mathbb{P}(p)$. Notice that
    for $q\ge p$ and $x\in V^\mathbb{P}(q)$, we have
    \begin{align}
        q\VDash x\colon a\rightrightarrows b 
        &\iff 
        q\VDash \forall u\in a\exists v\in b [ \op(u,v)\in x] \\
        &\iff \forall r\ge q \forall u\in a(r) \exists v\in b(r)[ \op(u,v)\in x(r)].
    \end{align}
    For each $q\ge p$, let $A_q=\bigcup_{r\ge q} \{r\}\times a(r)$ and 
    $B = \bigcup_{r\ge p} b(r)$.
    Now consider the following relation with the parameter $x$:
    \begin{equation*}
        R_x = \{\langle \langle r,u\rangle, v\rangle \in A_q\times B \mid
        \dom v = \uparrow r\land \op(u,v) \in x(r).\}
    \end{equation*}
    We can see that $R_x\subseteq A_q\times B$ and $R_x\colon  A_q\rightrightarrows B$.
	Now apply Subset Collection to $\mathcal{A}^{A_q, B}(R_x)$, where
	$\mathcal{A}^{A_q, B}$ is defined in \autoref{Lemma:PrelimAdjuectmentFtn}. Then we have a family $\mathcal{F}$ of sets, depending on $q\ge p$, such that 
	\begin{equation*}
		\forall x \bigl[\bigl(\mathcal{A}^{A_q, B}(R_x)\colon A_q\rightrightarrows A_q\times B \bigr)
		\to \bigl(\exists C\in \mathcal{F} (\mathcal{A}^{A_q, B}(R_x) \colon A_q \lrlrarrows C)\bigr)\bigr].
	\end{equation*}
	By \autoref{Lemma:PrelimAdjuectmentFtn}, this is equivalent to
	\begin{equation}\label{Formula:SubCollKripke00}
		\forall x [ (R_x\colon A_q\rightrightarrows B) \to \exists C\in \mathcal{F} \colon C\subseteq R_x \land C\colon A_q\rightrightarrows B].
	\end{equation}
	In sum, we have for each $q\ge p$ there is $\mathcal{F}$ satisfying \eqref{Formula:SubCollKripke00}.
	Then apply Collection to the previous claim to get a set $\mathbb{C}$, which satisfies for each $q\ge p$ there is $\mathcal{F}\in\mathbb{C}$ such that \eqref{Formula:SubCollKripke00} holds.
    
    Now let us start the construction of a $\mathbb{P}$-name witnessing Fullness. 
    We first define a classification of relations given as follows: 
    \begin{equation}\label{Formula:SubCollKripke01}
        \mathcal{Z}_q = \{\mathcal{F}\cap \operatorname{mv}(A_q,B) \mid \mathcal{F}\in\mathbb{C}\}.
    \end{equation}
	Since the formula ``$R\in \mv(A_q,B)$'' is $\Delta_0$, $\mathcal{F}\cap\mv(A_q,B)$ is a set. Hence $\mathcal{Z}_q$ is a set for each $q\ge p$.
	
    Now for each $C\in \mathcal{Z}_q$, define
    \begin{equation}\label{Formula:SubCollKripke02}
        z_{q,C}(r) = \{ \tau_{sr}(\op(u,v)) \mid q\le s\le r \land 
        \langle \langle s,u\rangle, v\rangle \in C \land \dom v = \uparrow s\}.
    \end{equation}
    $z_{q,C}$ itself does \emph{not} witness Fullness, but is an element of a
    name witnessing Fullness. Note that $z_{q,C}\in V^\mathbb{P}(q)$ by
    \autoref{Lemma:KripkeNameLemma}.
    Now let us define a name $c$ with domain $\uparrow p$ as follows:
    \begin{equation*}
        c(q) = \{\tau_{sq}(z_{s,C}) \mid p\le s\le q \land C\in \mathcal{Z}_q\}.
    \end{equation*}
    We can see that $c$ is a name by \autoref{Lemma:KripkeNameLemma}.
    
    It remains to show that $c$ witnesses Fullness:
    For given $x\in V^\mathbb{P}(q)$ satisfying $q\VDash x\colon a\rightrightarrows b$, we can find $\mathcal{F}\in\mathbb{C}$ and $C\in\mathcal{F}$ such that the consequent of \eqref{Formula:SubCollKripke00} holds.
    By definition of $z_{q,C}$ and $C\subseteq R_x$, we have $q\VDash z_{q,C}\subseteq x$. Since $z_{q,C}\in c(q)$, $c$ witnesses Fullness.
    
	\item Powerset: Let us assume Powerset over $V$. We can show that $\Power(x)$ exists for $x\in V^\mathbb{P}$.  Moreover, we can prove that $\Power(x)$ witnesses Powerset. The full proof is left to the readers, but note that the following equivalence is useful:
	\begin{equation*}
		p\VDash y\subseteq x\iff \forall q\ge p : y(q)\subseteq x(q). \qedhere 
	\end{equation*}
\end{enumerate}
\end{proof}

\subsection{Kripke rank}
In this subsection, we define a rank for $\mathbb{P}$-names.
To define this, consider the following relation:
\begin{equation*}
	y\vartriangleleft x \iff \exists q\in \dom x [y\in x(q)].
\end{equation*}
We want to use an recursive definition on $\vartriangleleft$, which requires $\vartriangleleft$ to be fully inductive:
\begin{proposition}
    $\vartriangleleft$ is fully inductive.
\end{proposition}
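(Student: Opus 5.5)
We must show two things: $\ext_\vartriangleleft(x)$ is a set, uniformly definable from $x$, and the $\vartriangleleft$-induction scheme holds over the class $V^\mathbb{P}$. Since $\vartriangleleft$ is $\Delta_0$-definable (the quantifier $\exists q\in\dom x$ is bounded), the earlier lemma on $\Delta_0$-definable fully inductive relations says it suffices to prove that $\ext_\vartriangleleft(x)$ exists for each $x$ and that the induction scheme holds.

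\textbf{The extension is a set.} Fix $x\in V^\mathbb{P}(p)$. Then $\dom x=\uparrow p$ is a set, and each $x(q)$ is a set. Hence
\begin{equation*}
	\ext_\vartriangleleft(x)=\bigcup\{x(q)\mid q\in\dom x\}
\end{equation*}
is a set by Replacement and Union, and this expression defines it uniformly in $x$.

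\textbf{The induction scheme.} I would reduce $\vartriangleleft$-induction to induction on ordinals, following the $\in$-Induction clause of \autoref{Theorem:KripkeModelsCZF}. Assume
\begin{equation*}
	\forall x\in V^\mathbb{P}\bigl[\forall y\vartriangleleft x\,\phi(y)\to\phi(x)\bigr].
\end{equation*}
We prove by induction on $\alpha\in\Ord$ (fully inductive, \cite[Lemma 9.4.3]{AczelRathjen2010}) that
\begin{equation*}
	\forall q\in\mathbb{P}\ \forall x\in V^\mathbb{P}_\alpha(q)\ \phi(x).
\end{equation*}
Suppose this holds for all $\beta\in\alpha$, and let $x\in V^\mathbb{P}_\alpha(q)$. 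By condition \ref{Labeling:SetKripke4} of \autoref{Definition:KripkeInformal}, any $y\vartriangleleft x$ satisfies $y\in x(r)$ for some $r\ge q$, so $y\in V^\mathbb{P}_\beta(r)$ for some $\beta\in\alpha$. The induction hypothesis then gives $\phi(y)$ for all such $y$, and the assumption yields $\phi(x)$. Finally, $V^\mathbb{P}=\bigcup_{\alpha}\bigcup_{q}V^\mathbb{P}_\alpha(q)$, because $I=\bigcup_\alpha I^\alpha$ by \autoref{Theorem:ClassInductiveDefThm}. Therefore $\phi(x)$ holds for every $x\in V^\mathbb{P}$.

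\textbf{Main obstacle.} The delicate point is the step ``$y\in x(r)$ implies $y$ lies at a lower stage''. Membership of $x$ in $I^\alpha=\Gamma_\Phi(I^{\in\alpha})$ only provides some $a\subseteq I^{\in\alpha}$ with $x(r)\subseteq a$, so each element $y$ lies in some $I^\beta$ with $\beta\in\alpha$. This is exactly what \ref{Labeling:SetKripke4} records, but we must check that this stage information is obtained constructively, without choice. This works because we only need $\exists\beta\in\alpha$ pointwise for each $y$, not a uniform selection.
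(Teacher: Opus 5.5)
Your proposal is correct and takes essentially the same route as the paper: you note that $\vartriangleleft$ is $\Delta_0$-definable with $\ext_\vartriangleleft(x)=\bigcup\ran x$, and then verify the induction scheme by induction on the ordinal level $\alpha$ of the hierarchy $V^\mathbb{P}_\alpha$, just as in the $\in$-induction argument for $V^\mathbb{P}$. Your extra remarks, that $V^\mathbb{P}$ is the union of these levels and that the lower-stage bound for each $y\vartriangleleft x$ is obtained pointwise without choice, spell out details the paper leaves implicit.
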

\begin{proof}
	$\vartriangleleft$ has a $\Delta_0$-definition. Moreover, $\ext_\vartriangleleft(x)=\bigcup \ran x$. Hence, it is sufficient to show that the $\vartriangleleft$-induction scheme is valid.
  	The idea of our proof --- showing the induction schema is valid by applying induction on the level of hierarchy --- will be akin to that of showing $\in$-induction over $V^\mathbb{P}$.
	
    Suppose that
    \begin{equation}\label{Formula:InductionOnTriangle}
        \forall x\in V^\mathbb{P}[ (\forall y\in V^\mathbb{P}: y\vartriangleleft x\to \phi(y))\to \phi(x)]
    \end{equation}
	holds. Furthermore, assume inductively that $\forall x\in V^\mathbb{P}_\beta\phi(x)$ holds for all $\beta\in\alpha$.
    
    If $x\in V^\mathbb{P}_\alpha$ and $y\vartriangleleft x$, then $y\in V^\mathbb{P}_\beta$ for some $\beta\in\alpha$.
    Hence, we have $\phi(y)$ by the inductive hypothesis. Therefore, we can derive $\phi(x)$ from \eqref{Formula:InductionOnTriangle}.
\end{proof}

We are ready to define a rank for Kripke sets: take recursively that
\begin{equation*}
    \krk x = \sup\{\krk y+1 \mid y\vartriangleleft x\}.
\end{equation*}
Note that $\krk x$ coincides with $\sup_{q\in \dom x}\sup \{\krk y+1\mid y\in x(q)\}$.

The following proposition shows our Kripke rank behaves like the rank for actual sets:
\begin{proposition}\label{Proposition:KripkeRankandNameRel}
    Let $x\in V^\mathbb{P}(p)$. Then we have $x(q)\subseteq V^\mathbb{P}_{\krk x}(q)$
    for all $q\ge p$.
\end{proposition}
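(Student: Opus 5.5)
The plan is to prove, by $\vartriangleleft$-induction (legitimate because $\vartriangleleft$ is fully inductive), the slightly stronger claim that every $x\in V^\mathbb{P}(p)$ lies in $V^\mathbb{P}_{\krk x+1}(p)$, and that $x(q)\subseteq V^\mathbb{P}_{\krk x}(q)$ for all $q\ge p$. We will use the rank equation $\krk x=\sup_{q\in\dom x}\sup\{\krk y+1\mid y\in x(q)\}$, together with the convention that $V^\mathbb{P}_\gamma(q)$ means $\bigcup_{\beta\in\gamma}V^\mathbb{P}_\beta(q)$-closure as in condition \ref{Labeling:SetKripke4}. The intended reading is that $V^\mathbb{P}_\alpha(q)$ consists of names whose values lie in $\bigcup_{\beta\in\alpha}V^\mathbb{P}_\beta$, so that the monotonicity \ref{Labeling:SetKripke1} is available.

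For the inductive step, fix $x\in V^\mathbb{P}(p)$ and $q\ge p$, and let $y\in x(q)$. Then $y\vartriangleleft x$ and $y\in V^\mathbb{P}(q)$, so the induction hypothesis gives $y\in V^\mathbb{P}_{\krk y+1}(q)$. Since $\krk y+1\subseteq\krk x$ by the definition of $\krk$ as a supremum, condition \ref{Labeling:SetKripke1} yields $y\in V^\mathbb{P}_{\krk x}(q)$. This proves the main statement $x(q)\subseteq V^\mathbb{P}_{\krk x}(q)$.

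To close the induction we also need $x\in V^\mathbb{P}_{\krk x+1}(p)$. By condition \ref{Labeling:SetKripke4} with $\alpha=\krk x+1$, it suffices that $x(q)\subseteq\bigcup_{\beta\in\krk x+1}V^\mathbb{P}_\beta(q)$ for all $q\ge p$, which follows from the previous step with $\beta=\krk x$. The monotonicity condition on $x$ holds already, since $x$ is a $\mathbb{P}$-name.

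The only delicate point is ordinal arithmetic constructively. Here $\krk y+1=\krk y\cup\{\krk y\}$, and the supremum is a union, so $\krk y+1\subseteq\krk x$ holds literally by the definition. Moreover, $\beta\subseteq\alpha$ suffices for \ref{Labeling:SetKripke1} without any trichotomy. I expect no real obstacle beyond carrying the strengthened hypothesis through the $\vartriangleleft$-induction, which is needed because the bare statement does not directly give membership of $y$ itself in a level.
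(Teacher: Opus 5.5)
Your proof is correct and is essentially the paper's argument: $\vartriangleleft$-induction, where condition \ref{Labeling:SetKripke4} puts each $y\in x(q)$ into $V^\mathbb{P}_{\krk y+1}(q)$, and condition \ref{Labeling:SetKripke1} together with $\krk y+1\subseteq\krk x$ finishes the step. The only difference is that you carry $x\in V^\mathbb{P}_{\krk x+1}(p)$ as an extra conjunct, whereas the paper gets $y\in V^\mathbb{P}_{\krk y+1}(q)$ directly from the unstrengthened hypothesis $y(r)\subseteq V^\mathbb{P}_{\krk y}(r)$ via \ref{Labeling:SetKripke4}; so your strengthening is harmless but, contrary to your last remark, not actually needed.
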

\begin{proof}
	The proof uses induction on $\vartriangleleft$. Assume that the proposition holds for all $y\vartriangleleft x$, so we have
	\begin{equation*}
		\forall q\ge p\forall y\in x(q) \forall r\ge q: y(r)\subseteq V^\mathbb{P}_{\krk y}(r).
	\end{equation*}
	By the previous sentence and the conditions on $V^\mathbb{P}$ of  \ref{Labeling:SetKripke4}, we have $y\in V^\mathbb{P}_{\krk y+1}(q)\subseteq V^\mathbb{P}_{\krk x}(q)$ for all $y\in x(q)$.
	Hence $x(q)\subseteq V^\mathbb{P}_{\krk x}(q)$.
\end{proof}

Kripke rank can be changed if we change the domain of a given Kripke set by applying the transition functions. The following lemma shows that the Kripke rank decreases under the transition functions. Furthermore, if $\mathbb{P}$ is linear, then the Kripke rank is invariant under the transition functions.
\begin{lemma}\label{Lemma:KripkeRankAndRestriction}
    Let $x\in V^\mathbb{P}(p)$ and $p\le q$. Then $\krk x\ge \krk \tau_{pq}(x)$. 
    If $\mathbb{P}$ is linear then $\krk x = \krk \tau_{pq}(x)$.
\end{lemma}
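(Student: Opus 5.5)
We will prove both claims by $\vartriangleleft$-induction on $x$, which is legitimate because $\vartriangleleft$ is fully inductive, and the statement ranges over all $q\ge p$ uniformly. The key observation is the unfolding
\begin{equation*}
	\krk x=\sup_{r\ge p}\sup\{\krk y+1\mid y\in x(r)\},\qquad
	\krk \tau_{pq}(x)=\sup_{r\ge q}\sup\{\krk y+1\mid y\in x(r)\},
\end{equation*}
since $\tau_{pq}(x)=x\restricts(\uparrow q)$, so $\tau_{pq}(x)(r)=x(r)$ for $r\ge q$. Note also that the elements $y$ appearing here are literally the same names in both suprema.

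For the inequality, no induction is even needed: the index set $\{(r,y)\mid r\ge q,\ y\in x(r)\}$ is contained in $\{(r,y)\mid r\ge p,\ y\in x(r)\}$ because $\uparrow q\subseteq\uparrow p$. A supremum over a subset is at most the supremum over the superset, which gives $\krk\tau_{pq}(x)\le\krk x$.

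For the linear case we must show the reverse inequality, $\krk x\le\krk\tau_{pq}(x)$. It suffices to show that for each $r\ge p$ and $y\in x(r)$ we have $\krk y+1\le\krk\tau_{pq}(x)$.
\begin{itemize}
	\item If $r\ge q$, then $y$ already occurs in the second supremum, and we are done.
	\item Otherwise, linearity of $\mathbb{P}$ gives $r\le q$. By monotonicity of $x$, $\tau_{rq}(y)\in x(q)$, so $\krk\tau_{rq}(y)+1\le\krk\tau_{pq}(x)$.
\end{itemize}
In the second case, since $y\vartriangleleft x$, the induction hypothesis applied to $y$ (with $r\le q$) gives $\krk\tau_{rq}(y)=\krk y$, which yields the bound.

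The main point to handle carefully is exactly this last step. The induction hypothesis must be the full statement ``for all $q'\ge\dom$-root, $\krk$ is preserved'', quantified over all nodes, so that it can be applied to $y\in V^\mathbb{P}(r)$ at the node $q$. We must also check that this formula is admissible in the $\vartriangleleft$-induction scheme. Since $\vartriangleleft$ is fully inductive, the scheme applies to arbitrary formulas, so this is fine. Finally, we note that linearity is used only to compare $r$ and $q$.
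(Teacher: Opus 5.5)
Your proof is correct and follows the paper's argument. The inequality comes from $\uparrow q\subseteq\uparrow p$ (equivalently, $y\vartriangleleft\tau_{pq}(x)\Rightarrow y\vartriangleleft x$). The linear case is the same $\vartriangleleft$-induction, splitting into $r\ge q$ versus $r\le q$ and using monotonicity $\tau_{rq}(y)\in x(q)$ together with the induction hypothesis $\krk\tau_{rq}(y)=\krk y$.

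Quantifying the induction hypothesis explicitly over all nodes is slightly more careful than the paper, which states it only for the fixed pair $p,q$. Since the setting is constructive, read your case split as a case analysis on the disjunction $r\ge q\lor r\le q$ supplied by linearity, not as an ``otherwise''.
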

\begin{proof}
    $\krk x\ge \krk \tau_{pq}(x)$ follows from that if $y\vartriangleleft \tau_{pq}(x)$ then $y\vartriangleleft x$. 
    Now suppose that $\mathbb{P}$ is linear. We will use the induction on
    $\vartriangleleft$: assume that $\krk y = \krk \tau_{pq}(y)$ holds
    for all $y\vartriangleleft x$.
    Then
    \begin{equation*}
    \begin{array}{lcl}
        \krk \tau_{pq}(x) &=& \sup \{\krk y + 1 \mid y\vartriangleleft x\} \\
        &=& \sup_{r\ge q}\sup \{\krk y + 1 \mid y\in x(r)\}.
    \end{array}
    \end{equation*}
    We want to show that the last one is greater than or equal to $\krk x=\sup_{r\ge p}\sup \{\krk y + 1 \mid y\in x(r)\}$.
    If $r\ge p$, then either $r\ge q$ or $q\ge r$. If $q\ge r$, then by the inductive assumption,
    \begin{equation*}
        \sup \{\krk y+1\mid y\in x(r)\} = \sup\{\krk \tau_{rq}(y) + 1\mid y\in x(r)\}.
    \end{equation*}
    and the latter is less than or equal to $\sup\{\krk y+1\mid y\in x(q)\}$ by the monotonicity of Kripke names. Thus 
    \begin{equation*}
        \sup_{p\le r\le q}\sup\{\krk y+1\mid y\in x(r)\}\le \sup\{\krk y+1\mid y\in x(q)\}
    \end{equation*}
    and we have $\krk x\le \krk \tau_{pq}(x)$.
\end{proof}

\subsection{Slicing function}
The construction of a model of $\mathsf{NDCom}$ that we will see needs examining the structure of Krikpe sets at a fixed stage. We will examine them by considering the membership relation at the given stage, formally given by
\begin{equation*}
	x\in_p y\qquad\text{if and only if}\qquad x\in y(p)
\end{equation*}
The idea yields the following definition of \emph{slicing function at stage $p$}:
\begin{definition}
	Let $x\in V^\mathbb{P}$ and $p\in \dom x$. Define a \emph{slicing function at stage $p$} $\mathbb{s}_p(x)$, is defined by
	\begin{equation*}
		\mathbb{s}_p(x) = \{\mathbb{s}_p(y) \mid y\in x(p)\}.
	\end{equation*}
\end{definition}
The definition of $\mathbb{s}_p$ uses recursion on $\in_p$.
If we work over $\mathsf{IZF}$, we only need to check that $\in_p$ is well-founded to justify our definition. We work over $\mathsf{CZF}^-$ in general, however, and it requires us to check that $\in_p$ is not just well-founded, but fully inductive:
\begin{lemma}[$\mathsf{CZF}^-$]
    $\in_p$ is fully inductive.
\end{lemma}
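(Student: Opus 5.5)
We need to check two things: that $\in_p$ is $\Delta_0$-definable with $\ext_{\in_p}(x)$ a set, and that the $\in_p$-induction scheme holds. By the lemma characterizing $\Delta_0$-definable fully inductive relations, these suffice.

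The relation $x\in_p y$, meaning $x\in y(p)$, is $\Delta_0$ with parameter $p$. Its field is the class of names $y\in V^\mathbb{P}$ with $p\in\dom y$. For such $y$, $\ext_{\in_p}(y)=y(p)$ is a set, since $y$ is a function.

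For induction, the key observation is that $\in_p$ is a subrelation of $\vartriangleleft$: if $x\in y(p)$ then $x\vartriangleleft y$ by definition, witnessed by $q=p\in\dom y$. We have already shown that $\vartriangleleft$ is fully inductive, so we can deduce the $\in_p$-induction scheme from $\vartriangleleft$-induction. Suppose
\[
\forall y\,[\forall x\,(x\in_p y\to\phi(x))\to\phi(y)].
\]
Set $\psi(y):\equiv (p\in\dom y\to\phi(y))$ and prove $\forall y\,\psi(y)$ by $\vartriangleleft$-induction.

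Assume $\psi(x)$ for all $x\vartriangleleft y$, and suppose $p\in\dom y$. Every $x\in y(p)$ satisfies $x\vartriangleleft y$. Moreover $\dom x=\uparrow p$, because $y(p)\subseteq V^\mathbb{P}(p)$ by condition \ref{Labeling:SetKripke4}, so $p\in\dom x$. The inductive hypothesis then gives $\phi(x)$. So $\forall x\in_p y\,\phi(x)$, and the assumed premise yields $\phi(y)$. This proves $\psi(y)$, and hence $\phi$ holds on the whole field of $\in_p$.

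The only point needing care is this relativization to names whose domain contains $p$, so that the inductive hypothesis transfers along $\vartriangleleft$. It goes through because the elements of $y(p)$ have domain $\uparrow p$. Otherwise the proof reduces directly to the full inductiveness of $\vartriangleleft$.
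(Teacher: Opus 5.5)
Your proof is correct, but it takes a different route from the paper's. The paper does not use the full inductiveness of $\vartriangleleft$. It reruns the stratification argument directly, proving $\forall\alpha\in\Ord\,\forall x\in V^{\mathbb{P}}_\alpha(p)\,\phi(x)$ by induction on ordinals. There, condition \ref{Labeling:SetKripke4} shows that any $y\in x(p)$ with $x\in V^{\mathbb{P}}_\alpha(p)$ lies in some $V^{\mathbb{P}}_\beta(p)$ with $\beta\in\alpha$.

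You instead observe that $\in_p\subseteq\vartriangleleft$ and transfer the already established $\vartriangleleft$-induction through the relativized formula $p\in\dom y\to\phi(y)$. The one extra ingredient is that the field is closed under $\in_p$-predecessors, and you correctly obtain this from the same condition \ref{Labeling:SetKripke4}.

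What each route buys:
\begin{itemize}
\item Your argument is more modular. It is an instance of the general principle that a subrelation of an inductive relation is inductive on any subclass closed under predecessors.
\item Your field, the names $y$ with $p\in\dom y$, is exactly the domain on which $\mathbb{s}_p$ is defined. The paper's proof literally covers only $V^{\mathbb{P}}(p)$.
\item The paper's version is self-contained and independent of the $\vartriangleleft$ result. Both proofs ultimately rest on the same closure property of the hierarchy.
\end{itemize}

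You also check explicitly that $\in_p$ is $\Delta_0$ and that $\ext_{\in_p}(y)=y(p)$ is a set. The paper passes over both with ``it suffices.''
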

\begin{proof}
    The main idea of our proof is the same as that of showing the induction scheme for $\vartriangleleft$.
    It suffices to show that the $\in_p$-induction schema is valid.
    Suppose that
        \begin{equation}\label{Formula:InPInduction01}
            \forall y\in V^\mathbb{P}(p) : [y\in_p x \to \phi(y)]\to \phi(x)
        \end{equation}
    holds for all $x\in V^\mathbb{P}(p)$. Now we will show that
    \begin{equation}\label{Formula:InPInduction00}
        \forall \alpha\in \Ord \forall x\in V^\mathbb{P}_\alpha (p) : \phi(x)
    \end{equation}
    by induction on $\alpha$. 
    
    Suppose that $\forall x\in V^\mathbb{P}_\beta (p) : \phi(x)$ holds for all $\beta\in\alpha$.
    If $x\in V^\mathbb{P}_\alpha(p)$ and $y\in x(p)$, then $y\in V^\mathbb{P}_\beta (p)$ for some $\beta\in\alpha$. Therefore $\phi(y)$ holds for all $y\in_p x$. 
    By \eqref{Formula:InPInduction01}, $\phi(x)$ holds. Therefore $\phi(x)$ holds for all $x\in V^\mathbb{P}_\alpha(p)$.
    Hence \eqref{Formula:InPInduction00} follows by the induction on ordinals.
\end{proof}

\begin{lemma}[$\mathsf{CZF}^-$]\label{Lemma:PropertyofCollFtn} 
    Let $\mathbb{P}$ be a frame, $p\in\mathbb{P}$ and $x,y\in V^\mathbb{P}(p)$.
    \begin{enumerate}
        \item $x\in y(p)$ implies $\mathbb{s}_p(x)\in\mathbb{s}_p(y)$.
        \item $\mathbb{s}_p(x) = \mathbb{s}_p''[x(p)]$.
        \item $\mathbb{s}_p(\up(x,y)) = \{\mathbb{s}_p(x), \mathbb{s}_p(y)\}$.
        \item $\mathbb{s}_p(\Union(x)) = \bigcup\mathbb{s}_p(x)$.
        \item $\mathbb{s}_p(\Power(x))\subseteq \mathcal{P}(\mathbb{s}_p(x))$.
    \end{enumerate}
\end{lemma}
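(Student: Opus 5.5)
Each of the five clauses follows almost immediately from the defining equation $\mathbb{s}_p(x) = \{\mathbb{s}_p(y) \mid y\in x(p)\}$, which is legitimate because $\in_p$ is fully inductive (previous lemma) and well-founded recursion therefore applies. I would take clauses (1) and (2) first and reduce the rest to them.

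For (1), if $x\in y(p)$, then $\mathbb{s}_p(x)$ is one of the elements listed in $\mathbb{s}_p(y)$. For (2), $\mathbb{s}_p''[x(p)]$ is just the image of $x(p)$ under $\mathbb{s}_p$, and this exists by Replacement since $x(p)$ is a set; it is literally the right-hand side of the recursion. No induction is needed for either.

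For (3), I would unfold $\up(x,y)(p)=\{\tau_{pp}(x),\tau_{pp}(y)\}=\{x,y\}$, using that $\tau_{pp}$ is the identity. Clause (2) then gives $\mathbb{s}_p(\up(x,y))=\{\mathbb{s}_p(x),\mathbb{s}_p(y)\}$.

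For (4), I would compute using $\Union(x)(p)=\bigcup\{z(p)\mid z\in x(p)\}$:
\begin{equation*}
	\mathbb{s}_p(\Union(x)) = \{\mathbb{s}_p(w)\mid \exists z\in x(p)\, (w\in z(p))\} = \bigcup_{z\in x(p)} \mathbb{s}_p''[z(p)] = \bigcup_{z\in x(p)}\mathbb{s}_p(z) = \bigcup \mathbb{s}_p(x).
\end{equation*}
Each equality is an extensional check using only intuitionistically valid exchanges of bounded existential quantifiers, so nothing classical is needed.

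For (5), which presupposes $\mathsf{Pow}$ so that $\Power(x)$ is a name, take $z\in\Power(x)(p)$. By definition $z(r)\subseteq x(r)$ for all $r\ge p$; I read the paper's $y(q)$ in the definition of $\Power$ as a typo for $x(r)$, which is what the stated equivalence for $\subseteq$ requires. In particular $z(p)\subseteq x(p)$, so $\mathbb{s}_p(z)=\mathbb{s}_p''[z(p)]\subseteq \mathbb{s}_p''[x(p)]=\mathbb{s}_p(x)$. Hence every element of $\mathbb{s}_p(\Power(x))$ is a subset of $\mathbb{s}_p(x)$.

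The only real obstacle is bookkeeping. One must confirm that the recursion for $\mathbb{s}_p$ yields a class function on all of $V^\mathbb{P}(p)$, which is the well-founded recursion theorem applied to the fully inductive $\in_p$. One must also be careful about the definition of $\Power$. Only an inclusion is claimed in (5), since a subset of $\mathbb{s}_p(x)$ need not arise from a name.
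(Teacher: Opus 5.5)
Your proof is correct and matches the paper's essentially step for step. The paper likewise treats (1)--(3) as immediate from the definitions of $\mathbb{s}_p$ and $\up$, proves (4) by the same computation through $\bigcup\{\mathbb{s}_p''[z(p)]\mid z\in x(p)\}$, and gets (5) from $z(p)\subseteq x(p)\Rightarrow\mathbb{s}_p(z)\subseteq\mathbb{s}_p(x)$, reading $\Power$ exactly as you do.

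Only your closing aside is wrong. For $S\subseteq\mathbb{s}_p(x)$, take the name $z$ with $z(q)=\tau_{pq}''[\{w\in x(p)\mid \mathbb{s}_p(w)\in S\}]$; this is a set by Replacement and $\Delta_0$-Separation, it lies in $\Power(x)(p)$, and $\mathbb{s}_p(z)=S$. So (5) in fact holds with equality, though this does not affect your proof of the stated inclusion.
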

\begin{proof}
The initial three statements directly follow from the definition of $\mathbb{s}_p$ and $\up$. Hence we only give proof for the last two statements:
    \begin{enumerate}
    \setcounter{enumi}{3}
        \item For the one inclusion, we have
        \begin{equation*}
        \begin{array}{lll}
            \mathbb{s}_p(\Union(x)) 
            &= \mathbb{s}_p''[{\small\bigcup} \{z(p) \mid z\in x(p)\}]
            &= {\small\bigcup} \{\mathbb{s}_p''[z(p)] \mid z\in x(p)\}\\
            &= {\small\bigcup}\{\mathbb{s}_p(z) \mid z\in x(p)\}
            &\subseteq {\small\bigcup}\mathbb{s}_p''[x(p)] \\
            &= {\small\bigcup}\mathbb{s}_p(x).&
        \end{array}
        \end{equation*}
        For the remaining inclusion, observe that
        $w\in \mathbb{s}_p(x)$ if and only if $w=\mathbb{s}_p(y)$ for some $y\in x(p)$. Hence 
        $\bigcup \mathbb{s}_p(x) \subseteq \bigcup\{\mathbb{s}_p(y) \mid y\in x(p)\}
        = \mathbb{s}_p(\Union(x))$.
        
        \item By a direct calculation, we have
        \begin{equation*}
        \begin{array}{ll}
            \mathbb{s}_p(\Power(x)) &= 
            \mathbb{s}_p''[\{z\in V^\mathbb{P}(p) \mid \forall q\ge p : 
            z(q)\subseteq x(q)\}] \\
            &= \{\mathbb{s}_p(z)\mid z\in V^\mathbb{P}(p) \text{ and }
            \forall q\ge p : z(q)\subseteq x(q)\}\\
            &\subseteq \{\mathbb{s}_p(z)\mid \mathbb{s}_p(z)\subseteq 
            \mathbb{s}_p(x)\}\\
            &\subseteq \mathcal{P}(\mathbb{s}_p(x))
        \end{array}
        \end{equation*}
        as $z(p)\subseteq x(p)$ implies $\mathbb{s}_p(z)\subseteq \mathbb{s}_p(x)$.
        \qedhere
    \end{enumerate}
\end{proof}

\section{Double Complement over Kripke models}
\label{Section:DComAndKripke}
\subsection{Kripke models and $\mathsf{DCom}$}
In this subsection, we prove the following result:
\begin{theorem}\label{Theorem:KripkeModelsDCom}
    Let $\mathbb{P}$ be a linear frame. If $V$ satisfies $\mathsf{ZFC}$ then $V^\mathbb{P}$ satisfies $\mathsf{DCom}$.
\end{theorem}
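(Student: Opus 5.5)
The plan is to give, for each $x\in V^\mathbb{P}(p)$, an explicit name $y\in V^\mathbb{P}(p)$ with
\begin{equation*}
p\VDash \forall z\bigl(\lnot\lnot(z\in x)\to z\in y\bigr).
\end{equation*}
The witness will be a rank-bounded slice of $V^\mathbb{P}$. Powerset in $V$ makes each $V^\mathbb{P}_\alpha(q)$ a set (by the lemma establishing the hierarchy). Linearity of $\mathbb{P}$ keeps ranks from dropping under transition maps (\autoref{Lemma:KripkeRankAndRestriction}). Choice should not actually be needed; only $\mathsf{ZF}$, in fact only Powerset and Separation, will be used.

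\textbf{Step 1: unfold the double negation.} By \autoref{Lemma:QuantifierWithConditions}, we must show the following for every $q\ge p$ and $z\in V^\mathbb{P}(q)$: if $q\VDash\lnot\lnot(z\in x)$, then $z\in y(q)$. Unwinding the forcing clauses for $\to$ and $\bot$, the hypothesis $q\VDash\lnot\lnot\phi$ means that for every $r\ge q$ there is $s\ge r$ with $s\VDash\phi$. Taking $r=q$, we obtain some $s\ge q$ with $\tau_{qs}(z)\in x(s)$.

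\textbf{Step 2: bound the rank of $z$.} From $\tau_{qs}(z)\in x(s)$ we have $\tau_{qs}(z)\vartriangleleft x$, so $\krk\tau_{qs}(z)<\krk x$. Since $\mathbb{P}$ is linear, \autoref{Lemma:KripkeRankAndRestriction} gives $\krk z=\krk\tau_{qs}(z)$, and hence $\krk z<\alpha:=\krk x$. By \autoref{Proposition:KripkeRankandNameRel}, $z(r)\subseteq V^\mathbb{P}_{\krk z}(r)$ for all $r\ge q$. Clause \ref{Labeling:SetKripke4} of \autoref{Definition:KripkeInformal} then yields $z\in V^\mathbb{P}_{\krk z+1}(q)\subseteq V^\mathbb{P}_\alpha(q)$.

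\textbf{Step 3: define the witness.} Set $y(q)=V^\mathbb{P}_\alpha(q)$ for $q\ge p$; each of these is a set by Powerset in $V$. Alternatively, to obtain the exact double complement, take
\begin{equation*}
y(q)=\{z\in V^\mathbb{P}_\alpha(q)\mid q\VDash\lnot\lnot(z\in x)\},
\end{equation*}
which uses full Separation in $V$. Monotonicity holds because $\tau_{qr}$ maps $V^\mathbb{P}_\alpha(q)$ into $V^\mathbb{P}_\alpha(r)$ by clause \ref{Labeling:SetKripke2}; in the refined version one also uses persistence of forcing. Clause \ref{Labeling:SetKripke4}, or \autoref{Lemma:KripkeNameLemma}, then shows that $y\in V^\mathbb{P}(p)$ (in fact $y\in V^\mathbb{P}_{\alpha+1}(p)$). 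Steps 1 and 2 show that every $z$ forced to be in $x^{\dcom}$ at a node $q$ lies in $y(q)$, which completes the argument.

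The main obstacle is Step 2. Without linearity, $\krk\tau_{qs}(z)$ could be much smaller than $\krk z$, because $z$ may have large elements only at nodes incomparable with $s$. In that case no rank bound on $z$ is available, and this is exactly where the non-linear $\mathsf{NDCom}$ example arises. So the care goes into applying the rank-invariance lemma correctly and checking that the forcing clause for $\lnot\lnot$ really supplies a node $s\ge q$, rather than merely a node comparable to $q$.
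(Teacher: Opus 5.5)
Your proposal is correct and is essentially the paper's proof. The crux is the paper's Lemma~\ref{Lemma:LinearKrikpeBound}: linearity makes $\krk$ invariant under transition maps, so $q\VDash\lnot\lnot(z\in x)$ yields $\krk z<\krk x$ and hence $z\in V^\mathbb{P}_{\krk x}(q)$, with the witness cut out of the sets $V^\mathbb{P}_{\krk x}(q)$ that Powerset supplies. The only difference is that the paper takes $y(q)=\{z\in V^\mathbb{P}_{\krk x}(q)\mid \exists s\ge q\,(\tau_{qs}(z)\in x(s))\}$ (equivalent by linearity to your refined version) and checks both directions, whereas your bare choice $y(q)=V^\mathbb{P}_{\krk x}(q)$ already suffices for $\mathsf{DCom}$ as stated, since the exact double complement can then be recovered by $\Delta_0$-Separation.
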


The following lemma has a critical role in the proof of \autoref{Theorem:KripkeModelsDCom}:
\begin{lemma}\label{Lemma:LinearKrikpeBound}
Let $\mathbb{P}$ be a linear order. If $p\in \mathbb{P}$, $z,x\in V^\mathbb{P}(p)$ and $p\VDash \lnot\lnot(z\in x)$ then $\krk z<\krk x$ holds.
\end{lemma}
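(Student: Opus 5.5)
The proof just unfolds the forcing clause for double negation, extracts one witness node where $z$ actually enters $x$, and then uses the invariance of Kripke rank along linear frames (\autoref{Lemma:KripkeRankAndRestriction}) to transfer the rank bound back to $z$.

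\textbf{Step 1: unfold double negation.} Treat $\lnot\phi$ as $\phi\to\bot$, where $\bot$ is never forced. Then $q\VDash\lnot\phi$ holds iff no $r\ge q$ satisfies $r\VDash\phi$. Consequently $p\VDash\lnot\lnot\phi$ holds iff for every $q\ge p$ it is \emph{not} the case that $q\VDash\lnot\phi$. Passing from this to ``for every $q\ge p$ there is $r\ge q$ with $r\VDash\phi$'' uses classical reasoning in the metatheory. That is harmless here, since \autoref{Theorem:KripkeModelsDCom} assumes $V\vDash\mathsf{ZFC}$; if I wanted to work over $\mathsf{CZF}$, this is the one step I would have to revisit. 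Apply this with $q=p$ and $\phi$ being $z\in x$. This yields some $r\ge p$ with $r\VDash z\in x$, which by the parameter convention and the definition of $\in$ means $\tau_{pr}(z)\in x(r)$.

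\textbf{Step 2: bound the rank.} Since $r\ge p$, $r\in\dom x$, so $\tau_{pr}(z)\in x(r)$ gives $\tau_{pr}(z)\vartriangleleft x$. The recursive definition $\krk x=\sup\{\krk y+1\mid y\vartriangleleft x\}$ then gives $\krk\tau_{pr}(z)+1\le\krk x$, that is, $\krk \tau_{pr}(z)<\krk x$.

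\textbf{Step 3: transport back to $z$.} Because $\mathbb{P}$ is linear, \autoref{Lemma:KripkeRankAndRestriction} gives $\krk z=\krk\tau_{pr}(z)$. Hence $\krk z<\krk x$.

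The only delicate point is Step 1, namely extracting an actual witness node $r$ from $\lnot\lnot$. Linearity is not needed there; it is used only in Step 3, and without it we would get only $\krk\tau_{pr}(z)<\krk x$, which says nothing about $\krk z$.
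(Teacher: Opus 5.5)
Your proof is correct and follows the paper's own argument: you extract a node $r\ge p$ with $\tau_{pr}(z)\in x(r)$ from the double negation, bound $\krk\tau_{pr}(z)<\krk x$, and then use linearity via \autoref{Lemma:KripkeRankAndRestriction} to get $\krk z=\krk\tau_{pr}(z)$. You are also right that extracting the witness node needs classical reasoning in the metatheory; the paper leaves this implicit and relies on the $\mathsf{ZFC}$ background of \autoref{Theorem:KripkeModelsDCom}.
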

\begin{proof}
    Let $\mathbb{P}$ be a linear order. 
    $p\VDash\lnot\lnot(z\in x)$ implies there is $q\ge p$ such that
    $\tau_{pq}(z)\in x(q)$. Thus $\krk \tau_{pq}(z)< \krk x$.
    By \autoref{Lemma:KripkeRankAndRestriction}, 
    $\krk z = \krk\tau_{pq}(z)$, so we have the desired result.
\end{proof}

\begin{proof}[Proof of \autoref{Theorem:KripkeModelsDCom}] 
Let $x\in V^\mathbb{P}(p)$. Define a function $y$ of domain $\uparrow p$ as
    \begin{equation*}
        y(q) =\{z\in V_{\krk x}^\mathbb{P}(q) \mid \exists s\ge q : \tau_{qs}(z)\in x(s)\}.
    \end{equation*}
We can show $y\in V^\mathbb{P}(p)$ by \autoref{Lemma:KripkeNameLemma}. 
We claim that $y$ is a double complement of $x$: that is,
    \begin{equation*}
    	p\VDash \forall z : \lnot\lnot (z\in x)\leftrightarrow z\in y.
    \end{equation*}
	Let $q\ge p$ and $z\in V^\mathbb{P}(q)$. For the one direction, assume that $r\VDash \lnot\lnot( z\in x)$ holds for some $r\ge q$. By \autoref{Lemma:LinearKrikpeBound}, we have $\krk z<\krk x$. Thus we have $z\in V^\mathbb{P}_{\krk x}(q)$ by \autoref{Proposition:KripkeRankandNameRel} and the Closure lemma (\autoref{Lemma:KripkeNameLemma}).
	Furthermore, $r\VDash \lnot\lnot( z\in x)$ implies the existence of $s\ge q$ such that $\tau_{qs}(z)\in x(s)$. Hence $z\in y(q)$.
	
	For the remaining direction, let $q\VDash z\in y$, so that $z\in y(q)$. By definition, we have $s\ge q$ such that $\tau_{qs}(z)\in x(s)$. Since $s$ is comparable with any element of $\mathbb{P}$ due to linearity, we have $\forall r\ge q \exists s\ge s: \tau_{qs}(z)\in x(s)$. Thus $q\VDash \lnot\lnot (z\in x)$.
\end{proof} 

Here, assuming Powerset is necessary to ensure each $V^\mathbb{P}_{\krk x}(q)$ is a set.
The previous theorem uses the bound of $\krk z$ for names $z$ satisfying $z\in x^\dcom$. This bound follows from the linearity of $\mathbb{P}$.

\subsection{Models of $\mathsf{IZF + NDCom}$}
\label{Subsection:IZFNDCom_Model}

In this section, we show that some Kripke frames do not preserve Double Complement even if we start with $V\vDash \mathsf{ZFC}$.
In fact, we will see that $\mathbb{P}=2^{<\omega}$, the complete binary tree, forces $\mathsf{NDCom}$. Throughout this subsection, we work over $\mathsf{ZFC}$.
We will denote members of $2^{<\omega}$ as finite binary sequences. $0^n$ is the sequence of length $n$ whose entries are all $0$, and for two $p,q\in 2^{<\omega}$, $p^\frown q$ is the new finite sequence obtained by concatenating the two. $\bot$ denotes the empty sequence.

\begin{definition}
    Let $C$ be the class of $V^{2^{<\omega}}$-names $x$ satisfying the following: For each $n<\omega$, there is $a \subseteq V_n$ such that $\tau_{\bot,0^n1}(x) = \tau_{\bot,0^n1}(\check{a})$.
\end{definition}
Informally speaking, $x$ is a name that can grow when we move through conditions with 0 only. When extending the condition adds 1 to the condition $p$, then $p$ thinks the name to be equal to the check name. 
Note that if $x\in C$, then $\bot \VDash \lnot \lnot (x\in \check{V}_\omega)$. 
\begin{theorem}
    $C$ is a proper class.
\end{theorem}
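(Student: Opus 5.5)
To show that $C$ is a proper class, I will show that $C$ cannot be a subset of any set. It suffices to exhibit, for each ordinal $\alpha$, some $x\in C$ with $\krk x\ge\alpha$. If $C$ were a set, then by Replacement $\sup\{\krk x\mid x\in C\}$ would be an ordinal, which would be a contradiction. (We work over $\mathsf{ZFC}$, so Replacement and rank arguments are freely available.) The key freedom is that the definition of $C$ only constrains $x$ on the cones $\uparrow 0^n1$. On the spine $\{0^n\mid n<\omega\}$ the values $x(0^n)$ are unconstrained, except that $x$ must be a legitimate name satisfying monotonicity.

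The construction goes as follows. Given an ordinal $\alpha$, pick any name $w\in V^{2^{<\omega}}(\bot)$ with $\krk w\ge\alpha$; for instance $w=\check{\alpha}$ works, since by induction $\krk\check{\beta}\ge\beta$. I then want to place $w$ into $x$ only at spine nodes, so that it disappears as soon as a $1$ is appended. Monotonicity forces $\tau_{0^n,0^n1}''[x(0^n)]\subseteq x(0^n1)$, so anything in $x(0^n)$ must survive into the cone at $0^n1$. The fix is to add elements \emph{later} along the spine in such a way that, restricted to each $\uparrow 0^n1$, they become check names of subsets of $V_n$. A concrete choice is to let $x(q)=\tau_{\bot q}(\check{a}_n)''\dots$ on the cones. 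More simply, let the element added at $0^n$ be a name $e_n\in V(0^n)$ whose restriction to $0^m1$ (for $m\ge n$) equals $\tau(\check{\emptyset})$ or another check name of an element of $V_m$, while $\krk e_n$ is large because $e_n(0^k)$ contains high-rank names along the spine. This is recursive: the high rank must itself live on the spine. So I will define by recursion on $\beta$ names $d_\beta\in V^{2^{<\omega}}(\bot)$ with $d_\beta(0^k)=\{\tau_{\bot,0^k}(d_\gamma)\mid\gamma<\beta\}$ and $d_\beta(q)=\check{0}(q)=\emptyset$ for every $q$ off the spine, i.e.\ every $q$ containing a $1$.

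Monotonicity of $d_\beta$ then needs checking. Along the spine the values are constant. From $0^k$ into $0^k1$ we need $\tau(d_\gamma)\in d_\beta(0^k1)=\emptyset$, which fails whenever $\beta>0$. The repair is to let the spine value at $0^k$ use only ordinals $\gamma<\beta$ that satisfy a growth condition, or else to let $d_\beta(q)$ for $q\supseteq 0^k1$ be $\check{V}_k$-restricted values: on the cone at $0^k1$, set $d_\beta$ equal to the check name of $\operatorname{rank}$-truncation, namely $\check{\min(\beta,k)}$. I will set $d_\beta(0^k)=\{\tau(d_\gamma)\mid\gamma<\beta\}$ and $d_\beta\restricts\uparrow 0^k1=\tau(\check{\min(\beta,k)})$. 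For this to be monotone, the requirement is that $\tau_{0^k,0^k1}(d_\gamma)=\tau(\check{\min(\gamma,k)})$ belong to $\check{\min(\beta,k)}(0^k1)$. This holds when $\min(\gamma,k)<\min(\beta,k)$, but it fails when $\gamma\ge k$. That failure is the main obstacle, and I will fix it by truncating at $k+1$ instead, since $\min(\gamma,k)\le k<\min(\beta,k+1)$ is automatic for $\beta>k$. Finally, $\check{\min(\beta,k+1)}$ is the check name of a subset of $V_{k+1}$, so I take the witnessing $n$ for membership in $C$ to be $k+1$ up to an index shift, adjusting the definition of $d_\beta$ so that the cone at $0^n1$ receives $\check{\min(\beta,n)}$ and the elements present at $0^n$ have cone values below $n$.

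With these choices, the Closure lemma (\autoref{Lemma:KripkeNameLemma}) shows that each $d_\beta$ is a name, and the cone condition shows $d_\beta\in C$. Since $d_\gamma\vartriangleleft d_\beta$ for all $\gamma<\beta$, we get $\krk d_\beta\ge\beta$. Hence $C$ has names of unbounded Kripke rank, and therefore $C$ is a proper class.
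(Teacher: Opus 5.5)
Your overall strategy is sound and differs from the paper's. If $C$ were a set, Replacement would bound $\{\krk x\mid x\in C\}$, so names in $C$ of unbounded Kripke rank suffice; the paper instead builds $C_\alpha\subseteq C$ with $|C_\alpha|=\beth_\alpha$. The gap is that your names $d_\beta$ are not in $C$, and this cannot be repaired by a different truncation or an index shift.

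The $n=0$ clause of the definition of $C$ already gives $x(\langle 1\rangle)=\check{\varnothing}(\langle 1\rangle)=\varnothing$ for every $x\in C$. Monotonicity from $\bot$ to $\langle 1\rangle$ then forces $x(\bot)=\varnothing$. But $d_\beta(\bot)=\{d_\gamma\mid\gamma<\beta\}\neq\varnothing$ for $\beta\ge 1$.

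More generally, write $a^{(k)}_\gamma\subseteq V_k$ for the set with $\tau_{\bot,0^k1}(d_\gamma)=\tau_{\bot,0^k1}(\check{a}^{(k)}_\gamma)$. Your construction has $\tau_{\bot,0^k}(d_\gamma)\in d_\delta(0^k)$ for all $\gamma<\delta\le\beta$. Monotonicity from $0^k$ to $0^k1$, together with injectivity of $c\mapsto\tau_{\bot,q}(\check{c})$, then gives $a^{(k)}_\gamma\in a^{(k)}_\delta$. This is a strictly $\in$-increasing chain of length $\beta+1$, so $a^{(k)}_\beta$ has set-theoretic rank at least $\beta$. That is impossible for a subset of $V_k$ once $\beta>k$.

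Your inequality $\min(\gamma,k)\le k<\min(\beta,k+1)$ hides this, because it truncates $d_\gamma$ at $k$ but $d_\beta$ at $k+1$. With a single uniform rule the requirement is $\min(\gamma,k+1)<\min(\beta,k+1)$, which fails for $k+1\le\gamma<\beta$. Placing the same names $d_\gamma$ at a fixed spine node is exactly what cannot work.

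The missing idea is the paper's shift $f$, which hereditarily prepends a $0$ to every node. The copy of $d_\gamma$ placed at $0^{n+1}$ is then restricted at $0^{n+1}1$ to $d_\gamma$'s restriction at $0^n1$. So each level of nesting costs one step along the spine and the $V_n$ bounds are respected. Concretely, set
\begin{equation*}
d_\beta(\bot)=\varnothing,\qquad d_\beta(1^\frown p)=\varnothing,\qquad d_\beta(0^\frown p)=\{\tau_{0,0^\frown p}(f(d_\gamma))\mid\gamma<\beta\}.
\end{equation*}
This is the paper's $z$ for $Z=\{d_\gamma\mid\gamma<\beta\}$. The paper's computation uses only $Z\subseteq C$, and it shows $d_\beta\in C$ with cone set $\{a^{(n)}_\gamma\mid\gamma<\beta\}\subseteq V_{n+1}$ at $0^{n+1}1$.

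For the rank, note that $w\vartriangleleft f(x)$ iff $w=f(y)$ for some $y\vartriangleleft x$. By $\vartriangleleft$-induction this gives $\krk f(x)=\krk x$. Since $f(d_\gamma)\in d_\beta(\langle 0\rangle)$, induction yields $\krk d_\beta\ge\beta$.

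With this repair your rank argument is a valid and somewhat leaner alternative to the paper's proof. It avoids both the $\beth_\alpha$ count and the ``equal near $0^\omega$'' bookkeeping that the paper needs to show that distinct $Z$ give distinct names.
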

\begin{proof}
    We are going to construct a sequence of sets of $2^{<\omega}$-names $\langle C_\alpha \mid \alpha\in\Ord \rangle$ such that $C_\alpha\subseteq C$ and $|C_\alpha|=\beth_\alpha$ for every $\alpha$. 
    First, let us define $C_0\subseteq C$ as the set of names $x\in V^{2^{<\omega}}(\bot)$ satisfying the following: There is $n<\omega$ and $a\in V_{n+1}\setminus V_n$ such that
    \begin{itemize}
        \item $x(p) = \varnothing$ if $p\ngeq 0^n$, and
        \item $\tau_{\bot,0^n}(x) = \tau_{\bot,0^n}(\check{a})$.
    \end{itemize}
    $n$ and $a$ uniquely specifies $x$, and there are countably many such $x$. Hence, $|C_0| = \omega = \beth_0$.
    Now say two names $x,y\in V^{2^{<\omega}}(0^n)$ are \emph{equal near $0^\omega$} if there is $m\ge n$ such that $\tau_{\bot,0^m}(x)=\tau_{\bot,0^m}(y)$. It is clear that if $x,y\in C_0$, then $x=y$ iff $x$ and $y$ are equal near $0^\omega$. 

    Now, let us assume that we are given $C_\alpha$. We also inductively assume that for two $x,y\in C_\alpha$, $x=y$ iff $x$ and $y$ are equal near $0^\omega$.
    For $x\in V^{2^{<\omega}}(p)$, let $f(x)\in V^{2^{<\omega}}(0^\frown p)$ be the new name recursively defined by
    \begin{equation*}
        f(x)(0^\frown q) = \{f(y) \mid y\in x(q)\} \text{ for }q\ge p.
    \end{equation*}
    One can see that $f(x)$ also satisfies the monotonicity condition, so $f(x)$ is also a $2^{<\omega}$-name.  Also, note that $x$ and $y$ are equal near $0^\omega$ iff $f(x)$ and $f(y)$ are equal near $0^\omega$.
    Given $C_\alpha$, let us define $C_{\alpha+1}$ to be the set of $z\in V^{2^{<\omega}}(\bot)$ such that there is $Z\subseteq C_\alpha$, we have
    \begin{itemize}
        \item $z(\bot)=\varnothing$.
        \item $z(1^\frown p) = \varnothing$ for every $p\in 2^{<\omega}$.
        \item $z(0^\frown p) = \bigl\{\tau_{0,0^\frown p}\bigl(f(x)\bigr) \bigm| x\in Z\bigr\}$.
    \end{itemize}

    We need the following lemma in a later calculation:
    \begin{lemma*}
        For a set $a$ and $p\in 2^{<\omega}$, we have $f\circ \tau_{\bot,p}(\check{a}) = \tau_{\bot,0^\frown p}(\check{a})$.
    \end{lemma*}
    \begin{proof}
        We prove it by induction on $a$: Suppose that for $b\in a$, we have $f\circ \tau_{\bot,p}(\check{b}) = \tau_{0,0^\frown p}(\check{b})$.
        Then for a given $q\ge p$,
        \begin{align*}
            f\circ\tau_{\bot,p}(\check{a}) (0^\frown q)  & = 
            \{f(w) \mid w\in \tau_{\bot,p}(\check{a})(q)\}
            =\{ f(w)\mid w\in \check{a}(q)\} \\
            &= \{f\circ \tau_{\bot,q}(\check{b}) \mid b\in a\} 
            = \{\tau_{\bot,0^\frown q}(\check{b}) \mid b\in a\} = \check{a}(0^\frown q)\\
            & = \tau_{\bot,0^\frown p}(\check{a})(0^\frown q).
        \end{align*}
        so we have $ f\circ\tau_{\bot,p}(\check{a}) = \tau_{\bot,0^\frown p}(\check{a})$.
    \end{proof}

    Now we claim that $C_{\alpha+1}\subseteq C$: Fix $Z\subseteq C_\alpha$ and consider the corresponding name $z$. We want to show that $z\in C$. To see this, we show for each $m<\omega$ there is $b\subseteq V_m$ such that $\tau_{\bot,0^m 1}(z) = \tau_{\bot,0^m1}(\check{a})$.
    We clearly have $\tau_{\bot,1}(z) = \tau_{\bot,1}(\check{\varnothing})$, so let us consider the case $m=n+1$.

    For each $y\in Z$ and $n<\omega$, let us fix $b_y \subseteq V_n$ such that $\tau_{\bot,0^n1}(y) = \tau_{\bot,0^n1}(\check{b}_y)$. Then we have for $p\ge 0^n 1$,
    \begin{align*}
        \tau_{0,0^{n+1}1}\bigl(f(y)\bigr) (0^\frown p) & = \{f(z)\mid z\in y(p)\} = \{f(z)\mid z\in \check{b}_y(p)\} \\
        &= \bigl\{ f\bigl( \tau_{\bot,p}(\check{c})\bigr) \bigm| c\in b_y\bigr\}  = \{\tau_{\bot,0^\frown p}(\check{c}) \mid c\in b_y\}
        \\ &= \check{b}_y(0^\frown p),
    \end{align*}
    so we have $\tau_{0,0^{n+1}1}\bigl(f(y)\bigr) = \tau_{\bot,0^{n+1}1}(\check{b}_y)$. This implies for $p\ge 0^n 1$, $\tau_{0,0^\frown p}\bigl(f(y)\bigr) = \tau_{\bot,0^\frown p}(\check{b}_y)$.
    
    If we take $a = \{b_y\mid y\in Z\}$, then $a\subseteq V_{n+1}$ and for $p\ge 0^n1$,
    \begin{equation*}
        z(0^\frown p) = \bigl\{ \tau_{0,0^\frown p}\bigl(f(y)\bigr) \bigm| y\in Z\bigr\} 
        =  \bigl\{ \tau_{\bot,0^\frown p}(\check{b}_y) \bigm| y\in Z\bigr\} = \check{a}(0^\frown p).
    \end{equation*}
    This shows $\tau_{\bot,0^{n+1}1}(z) = \tau_{\bot,0^{n+1}1}(\check{a})$, as desired.

    Now we claim that if $z_0,z_1\in C_{\alpha+1}$ are different names, then $z_0$ and $z_1$ are different near $0^\omega$. Let $Z_0,Z_1\subseteq C_\alpha$ be the subsets corresponding to $z_0$ an $z_1$ respectively.
    Without loss of generality, let us assume that there is $x\in Z_1\setminus Z_0$.
    Since $x$ is not equal to any element of $Z_0$ near $0^\omega$, we have $z_0(0^n)\neq z_1(0^n)$ for all $n<\omega$, which implies the desired result. Clearly, $|C_{\alpha+1}| = 2^{|C_\alpha|} =\beth_{\alpha+1}$.

    For a limit $\alpha$, we take $C_\alpha = \bigcup_{\beta<\alpha} C_\beta$. Then $C_\alpha\subseteq C$ and two distinct names of $C_\alpha$ are not equal near $0^\omega$. Again, we have $|C_\alpha| = \beth_\alpha$ for a limit $\alpha$.
\end{proof}

Now we will use $C$ to show that $\bot$ does not force $\mathsf{DCom}$:
\begin{theorem}
    For every $n<\omega$, $0^n\nVDash \exists x \forall y [\lnot\lnot (y\in \check{V}_\omega) \to y\in x]$. 
\end{theorem}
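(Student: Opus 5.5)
Argue by contradiction. Suppose $0^n\VDash \exists x\forall y[\lnot\lnot(y\in\check V_\omega)\to y\in x]$ for some $n$. By the clause for $\exists$ there is a single set $x\in V^{2^{<\omega}}(0^n)$ such that $0^n\VDash\forall y[\lnot\lnot(y\in\check V_\omega)\to y\in x]$. By \autoref{Lemma:QuantifierWithConditions}(2), for every $q\ge 0^n$ and every $y\in V^{2^{<\omega}}(q)$ with $q\VDash\lnot\lnot(y\in\check V_\omega)$ we get $y\in x(q)$. Specialise to $q=0^n$. It then suffices to produce a \emph{proper class} of pairwise distinct names $y\in V^{2^{<\omega}}(0^n)$ with $0^n\VDash\lnot\lnot(y\in\check V_\omega)$. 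All of them would lie in the set $x(0^n)$, which is absurd.

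The supply of such names comes from the class $C$, which the previous theorem shows is a proper class. For $w\in C$ put $y_w=\tau_{\bot,0^n}(w)\in V^{2^{<\omega}}(0^n)$.

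\textbf{Step 1: each $y_w$ is forced to be $\lnot\lnot$-in $\check V_\omega$.} I first check the remark after the definition of $C$ in the restricted form: $0^n\VDash\lnot\lnot(y_w\in\check V_\omega)$. Take any $r\ge 0^n$. If $r$ contains a $1$, write $r\ge 0^m1$ with $m\ge n$. Otherwise $r=0^k$, and we pass to $0^k1\ge r$. In either case we reach some $s=0^m1\ge r$. At $s$ the name $\tau_{\bot,s}(w)$ equals $\tau_{\bot,s}(\check a)$ for some $a\subseteq V_m$. Then $a\in V_\omega$, so $\tau_{\bot,s}(\check a)\in\check V_\omega(s)$, and hence $s\VDash y_w\in\check V_\omega$. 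Every $r$ therefore has an extension forcing membership, which is exactly $0^n\VDash\lnot\lnot(y_w\in\check V_\omega)$.

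\textbf{Step 2: the map $w\mapsto y_w$ is injective on a proper class.} This is the step I expect to be the main obstacle. We need $\tau_{\bot,0^n}(w)\neq\tau_{\bot,0^n}(w')$ for distinct $w,w'$. I will use the construction inside the previous proof rather than just its statement. There, each $C_\alpha$ was built so that distinct members of $C_\alpha$ are \emph{not equal near $0^\omega$}: for every $m$, $\tau_{\bot,0^m}(w)\neq\tau_{\bot,0^m}(w')$. Take $m=n$. Then the restriction map is injective on each $C_\alpha$, hence on $\bigcup_\alpha C_\alpha$, since the $C_\alpha$ form an increasing chain. This is also the natural reason the proof tracked ``equal near $0^\omega$''. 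So $\{y_w\mid w\in\bigcup_\alpha C_\alpha\}$ has size at least $\beth_\alpha$ for every $\alpha$.

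\textbf{Conclusion.} By the first paragraph, $\{y_w\mid w\in\bigcup_\alpha C_\alpha\}\subseteq x(0^n)$. But $x(0^n)$ is a set in $V\vDash\mathsf{ZFC}$, so its cardinality cannot exceed every $\beth_\alpha$. This contradiction shows $0^n\nVDash\exists x\forall y[\lnot\lnot(y\in\check V_\omega)\to y\in x]$.
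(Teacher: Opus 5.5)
Your overall strategy is the paper's: every name forced to lie $\lnot\lnot$-in $\check V_\omega$ must land in the one set $x(0^n)$, and $C$ supplies a proper class of such names. You also correctly spot what the paper's two-line proof skips. Members of $C$ live at $\bot$, so for $n\ge 1$ you need a proper class of names \emph{at} $0^n$.

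Step 1 is fine, with one slip: if $r\ge 0^m1$, then $s=0^m1$ lies below $r$, not above it. Persistence then makes $r$ itself force $y_w\in\check V_\omega$.

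The gap is in Step 2. The $C_\alpha$ are \emph{not} an increasing chain. Moreover, the invariant ``distinct members of $C_\alpha$ are not equal near $0^\omega$'' already fails at $\alpha=\omega$. The paper asserts it for $C_\alpha=\bigcup_{\beta<\alpha}C_\beta$ at limits without proof.

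Concretely, take $y,x\in C_0$ given by $(n,a)=(1,\{\varnothing\})$ and $(2,\{1\})$, and let $z\in C_1$ be built from $Z=\{y\}$.
\begin{itemize}
\item $z(0^1)=\{f(y)\}\neq\varnothing=x(0^1)$, so $x\neq z$.
\item Every element of $C_1$ has the form $z_{Z'}$ with $z_{Z'}(0^1)=f''[Z']$. Since $x(0^2)\neq\varnothing$, $x\neq\check 0$, and hence $x\notin C_1$.
\item By the paper's lemma, $\tau_{0^1,0^2}(f(y))=f(\tau_{\bot,0^1}(\check 1))=\tau_{\bot,0^2}(\check 1)$. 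So $\tau_{\bot,0^2}(z)=\tau_{\bot,0^2}(\check{\{1\}})=\tau_{\bot,0^2}(x)$.
\end{itemize}
So for $n=2$ restriction is not injective on $C_\omega=\bigcup_{k<\omega}C_k$, and the successor steps beyond $\omega$ lose their hypothesis. As written, your argument only yields subsets of $x(0^n)$ of size $\beth_k$ for $k<\omega$, which gives no contradiction.

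The simplest repair avoids germs altogether. Replace $\tau_{\bot,0^n}$ by the $n$-fold shift $f^n\colon V^{2^{<\omega}}(\bot)\to V^{2^{<\omega}}(0^n)$ from the paper's proof.
\begin{itemize}
\item $f$ is injective, by induction on $\vartriangleleft$.
\item $f$ commutes with restriction: $\tau_{0^\frown p,0^\frown q}(f(w))=f(\tau_{pq}(w))$.
\item Hence, by the paper's lemma, $\tau_{0^n,0^{n+m}1}(f^n(w))=\tau_{\bot,0^{n+m}1}(\check a)$ with $a\subseteq V_m$ whenever $w\in C$.
\end{itemize}
Your Step 1 then applies verbatim to $f^n(w)$, and $w\mapsto f^n(w)$ injects $C$ into $x(0^n)$.

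The fact that $C$ is a proper class survives the limit problem. The map $Z\mapsto z$ is injective, since $z(0^1)=f''[Z]$ and $f$ is injective. Also $C_{\alpha+1}\subseteq C$ needs only $Z\subseteq C$. Together these give $|C_\alpha|\ge\beth_\alpha$.

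Alternatively, keep your restriction argument but redefine $C_\alpha$ at limits: choose one representative from each ``equal near $0^\omega$'' class of $\bigcup_{\beta<\alpha}C_\beta$.
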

\begin{proof}
    If $0^n\VDash \exists x \forall y [y\in x\leftrightarrow \lnot\lnot (y\in \check{V}_\omega)]$, then we can find $x\in V^{2^{<\omega}}(0^n)$ such that for every $p\ge 0^n$ and $y\in V^{2^{<\omega}}(p)$ such that $p\VDash \lnot\lnot (y\in \check{V}_\omega)$, we have $y\in x(p)$. However, we know that $p\VDash \lnot\lnot(y\in\check{V}_\omega)$ for every $y\in C$, so $C\subseteq x(p)$. This is impossible since $C$ is a proper class.
\end{proof}

We can apply a similar argument to any $p\in 2^{<\omega}$. That is, by replacing every argument for $0^n$ with that for $p^\frown 0^n$, we can see that $p$ does not force there is $x\in V^{2^{<\omega}}(p)$ such that $x$ contains the double complement of $\check{V}_\omega$. This shows the following:
\begin{theorem} \pushQED{\qed}
    $V^{2^{<\omega}}\vDash \mathsf{NDCom}$. In fact, $V^{2^{<\omega}}\vDash \text{``No set can be a double complement of $\check{V}_\omega$.''}$ \qedhere 
\end{theorem}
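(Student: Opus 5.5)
The plan is to show that for every $p\in 2^{<\omega}$, $p$ forces that no set contains the double complement of $\check{V}_\omega$. Then $\bot$ forces $\forall y\lnot[\forall z(\lnot\lnot(z\in\check{V}_\omega)\to z\in y)]$. Indeed, by the clause for $\to$, it suffices that no $q\ge\bot$ forces $\forall z(\lnot\lnot(z\in\check V_\omega)\to z\in y)$ for any $y\in V^{2^{<\omega}}(q)$. Hence $\check V_\omega$ witnesses $\mathsf{NDCom}$, and this also gives the stronger statement that no set is a double complement of $\check V_\omega$.

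First I will relativize the class $C$ to an arbitrary node $p$. Define $C^p$ as the class of names $x\in V^{2^{<\omega}}(p)$ such that for each $n$ there is $a\subseteq V_n$ with $\tau_{p,p^\frown 0^n1}(x)=\tau_{\bot,p^\frown 0^n1}(\check a)$. The cone $\uparrow p$ is isomorphic to $2^{<\omega}$ via $q\mapsto p^\frown q$. This isomorphism transports names over $2^{<\omega}$ to names in $V^{2^{<\omega}}(p)$ by a recursive map $g_p$ analogous to $f$, defined by $g_p(x)(p^\frown q)=\{g_p(y)\mid y\in x(q)\}$. I will check, by the same $\in$-induction as in the starred lemma, that $g_p(\tau_{\bot,q}(\check a))=\tau_{\bot,p^\frown q}(\check a)$. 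Then $g_p$ sends $C$ into $C^p$. It is injective because it is defined recursively and is a bijection of domains, so its inverse is also definable. Thus $C^p$ is a proper class, since $C$ is (it contains the sets $C_\alpha$ of size $\beth_\alpha$).

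Second, I will verify that every $x\in C^p$ satisfies $p\VDash\lnot\lnot(x\in\check V_\omega)$. Take any $q\ge p$. Some extension of $q$ has the form $r=p^\frown s^\frown 1$ with $s$ either of the form $0^n$, or extending a sequence that already contains a $1$. At $r$, $\tau_{pr}(x)=\tau_{\bot r}(\check a)$ for some $a\subseteq V_n\subseteq V_\omega$. Here I use that once we pass $p^\frown 0^n1$ the name is frozen to the check name, and that transitions of check names are check names. So $r\VDash x=\check a$. Since $a\in V_\omega$, we have $\tau_{\bot r}(\check a)\in\check V_\omega(r)$, hence $r\VDash x\in\check V_\omega$. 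As every $q\ge p$ has such an extension, $p\VDash\lnot\lnot(x\in\check V_\omega)$.

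Third, suppose $p\VDash\forall z(\lnot\lnot(z\in\check V_\omega)\to z\in y)$ for some $y\in V^{2^{<\omega}}(p)$. By Lemma~\ref{Lemma:QuantifierWithConditions}, taking $q=p$, every $z\in C^p$ then lies in $y(p)$. So $C^p\subseteq y(p)$, contradicting that $y(p)$ is a set while $C^p$ is a proper class.

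The main obstacle is the first step, namely carefully checking that the shift map $g_p$ preserves the defining property of $C$ and is injective. I expect this to be a routine copy of the starred lemma together with the bookkeeping done for $C_{\alpha+1}$. Alternatively, I can directly rebuild the $C_\alpha$ hierarchy above $p$, replacing every $0^n$ by $p^\frown0^n$, as the remark before the statement suggests.
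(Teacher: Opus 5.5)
Your proposal is correct and follows the paper's argument. At each node $p$ you take a proper class of names that freeze to a check name $\check a$ with $a\in V_\omega$ past every $p^\frown 0^n1$, so each is forced $\lnot\lnot$ into $\check{V}_\omega$ and must therefore lie in the set $y(p)$. Transporting $C$ along the shift $g_p$ is a clean way to carry out the paper's remark about replacing $0^n$ by $p^\frown 0^n$, and it avoids relying on the restriction $\tau_{\bot p}$ being injective on $C$, which fails outright once $p$ contains a $1$.
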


\section{Metamathematics of $\mathsf{ADCom}$}
\label{Section:ADCom}
Lubarsky \cite{Lubarsky2005} constructed a Kripke model over the frame $\mathbb{P}=\Ord$ to construct a model of $\mathsf{CZF}$+$\mathsf{Sep}$+ $\mathsf{\lnot Pow}$. We will call this model \emph{Lubarsky's first model}. 
One can prove that Lubarsky's first model also satisfies $\mathsf{ADCom}$, and this is the reason why we explain the whole construction of the model.

Lubarsky mentioned in \cite{Lubarsky2005} that he could construct a similar model by choosing $\mathbb{P}=\omega$ alternatively and requiring Kripke sets to be eventually constant. He also stated that this approach could be construed as `taking a cofinal $\omega$-sequence through $\Ord$ and cutting the full model down to those nodes.%
\footnote{We can find this description from the arXiv version of \cite{Lubarsky2005} and not from the journal version.}' He does not construct his Kripke model in this way, as he expected neither of the two constructions is essentially harder than the other.
In this paper, however, we prefer the latter one (taking $\mathbb{P}=\omega$ and restricting Kripke sets to be hereditarily eventually constant) over the former one (namely, assuming $\mathbb{P}=\Ord$) to get a better consistency strength result. We want to require $\mathbb{P}$ a linearly ordered class, but $\Ord$ is not provably linearly ordered over $\mathsf{CZF}$: Assuming $\Ord$ is a linearly ordered class yields $\mathsf{\Delta_0\mhyphen LEM}$, and we want to avoid it.

Consider the Kripke model $V^\omega$, whose frame is the set of natural numbers $\omega$ with the usual ordering $\le$. For each stage $p$ and $m$, we want to define a subclass $K^m(p)\subseteq V^\omega(p)$ satisfying
\begin{equation}\label{Formula:PropertyOnKmp}
    K^m(p) = \bigl\{x\in V^\omega(p)\bigm| \forall r, q\ge m\bigl[ r\ge q\to x(r) = \tau_{qr}''[x(q)] \bigr]\land
    \forall q\ge p\bigl[ x(q)\subseteq K^m(q)\bigr]\bigr\}.
\end{equation}
Informally, $K^m(p)$ is a class of Kripke names in $V^\omega(p)$ that does not grow up hereditarily after Stage $m$.
We need an inductive definition to provide a precise definition of $K^m(p)$:
\begin{definition}
    Let $\Phi_m$ be an inductive definition defined by, $\langle a,x\rangle\in\Phi_m$ if and only if
    \begin{enumerate}
        \item $x\in V^\omega$,
        \item $\forall p\in\dom x [x(p)\subseteq a]$ and
        \item \label{Item: Phi m for K - Condition 3} $\forall q\ge p\ge m \bigl[p,q\in\dom x\to x(q) = \tau_{pq}''[x(p)]\bigr]$.
    \end{enumerate}
    Note that \ref{Item: Phi m for K - Condition 3} is equivalent to the following:
    \begin{enumerate}[label = (\arabic*$'$), start=3]
        \item $\forall k\in\omega\Bigl[(\dom x=\uparrow k) \to 
        \forall p\ge\max\{m,k\}\bigl[x(p) = \tau''_{\max\{m,k\},p}[x(\max\{m,k\})]\bigr]\Bigr]$.
    \end{enumerate}
    By the Class Inductive Definition theorem, each $\Phi_m$ defines the least
    $\Gamma_{\Phi_m}$-closed class $K^m$. Now take 
    $K^m(p) = \{x\in K^m\mid \dom x=\uparrow p\}$. Finally, let 
    $K(p) = \bigcup_{m\in\omega}K^m(p)$.
\end{definition}

We may expect $K^m(p)$ to satisfy \eqref{Formula:PropertyOnKmp} and behave like the usual Kripke structures. The following sequence of lemmas proves our expectation:
\begin{lemma}[Closure lemma for $K$]\label{Lemma:ClosureLemmaForK}
    Let $x$ be a function whose domain is $\uparrow p$ for some $p\in\omega$.
    If $x$ satisfies the defining formula in \eqref{Formula:PropertyOnKmp}, then
    $x\in K^m(p)$.
\end{lemma}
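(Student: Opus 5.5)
This is a direct unfolding of the inductive definition of $K^m$ as the least $\Phi_m$-closed class. It mirrors the proof of \autoref{Lemma:KripkeNameLemma} for $V^\mathbb{P}$.

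Suppose $x$ has domain $\uparrow p$ and satisfies the defining formula in \eqref{Formula:PropertyOnKmp}. Then three facts hold:
\begin{itemize}
\item $x\in V^\omega(p)$;
\item for all $r\ge q\ge m$ we have $x(r)=\tau_{qr}''[x(q)]$;
\item for every $q\ge p$ we have $x(q)\subseteq K^m(q)$.
\end{itemize}
Put $a=\bigcup_{q\ge p}x(q)$, which is a set by Union and Replacement since $\dom x=\uparrow p$ is a set. I will check the three clauses of $\Phi_m$ for the pair $\langle a,x\rangle$.
\begin{enumerate}
\item $x\in V^\omega$ holds by hypothesis.
\item $x(q)\subseteq a$ for each $q\in\dom x$, by the choice of $a$.
\item For $q'\ge q\ge m$ in $\dom x$, we have $x(q')=\tau_{qq'}''[x(q)]$, which is exactly the first conjunct of \eqref{Formula:PropertyOnKmp}.
\end{enumerate}
Hence $\langle a,x\rangle\in\Phi_m$.

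Moreover $a\subseteq\bigcup_{q\ge p}K^m(q)\subseteq K^m$, because each $K^m(q)$ is by definition the set of elements of $K^m$ with domain $\uparrow q$. Since $K^m$ is $\Phi_m$-closed (\autoref{Theorem:ClassInductiveDefThm}), it follows that $x\in K^m$. As $\dom x=\uparrow p$, we conclude $x\in K^m(p)$.

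The only delicate point is that \eqref{Formula:PropertyOnKmp} refers to $K^m(q)$ on its right-hand side. The lemma is therefore not circular only because we read $K^m(q)$ there as the class obtained from the inductive definition, not as an independently given object. With that reading, the argument uses only closure of $K^m$ under $\Phi_m$ and never minimality. No bound on ranks is needed, since $\Phi_m$-closure places no restriction on the stage at which $a$ appears. The hypothesis $x\in V^\omega(p)$ also need not be rederived: it is either assumed as part of the defining formula, or, if only the hereditary condition is given, it follows from \autoref{Lemma:KripkeNameLemma}, because $K^m(q)\subseteq V^\omega(q)$ and the monotonicity condition is implied by the stabilization clause together with $x$ being a name.
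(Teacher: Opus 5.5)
Your proof is correct and follows the paper's argument: take $a=\bigcup_{q\ge p}x(q)$, observe that $\langle a,x\rangle\in\Phi_m$ and $a\subseteq K^m$, and conclude $x\in K^m(p)$ from the $\Phi_m$-closedness of $K^m$. Your closing aside is slightly off, because the stabilization clause only gives monotonicity at stages $\ge m$, so below $m$ you genuinely need $x\in V^\omega(p)$; since that is part of the defining formula in \eqref{Formula:PropertyOnKmp}, your main argument does not depend on the aside.
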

\begin{proof}
	Suppose that $x(q)\subseteq K^m(q)$ for all $q\ge p$. Take $a=\bigcup_{q\ge p} x(q)$. Then we can see that $\langle a, x\rangle \in \Phi_m$. Since $a\subseteq K^m$, we have $x\in K^m(p)$.
\end{proof}
We cannot weaken the condition of the above lemma to $x(q)\subseteq K(q)$ for all $q\ge p$, and we will see a counterexample against this generalization in \autoref{Theorem:Lubersky1modelADCom}.
The following lemma ensures some basic properties of Kripke models are also valid over $K(p)$:
\begin{lemma}
    \begin{enumerate}
        \item $K(p)\subseteq V^\omega(p)$ for all $p\in\omega$.
        \item $K^m(p)\subseteq K^n(p)$ if $m\ge n$.
        \item The transition map $\tau_{pq}$ restricted over $K^m(p)$ is a
        map from $K^m(p)$ to $K^m(q)$.
        \item If $a$ is a set, then $\check{a}\in K^0(0)$.
    \end{enumerate}
\end{lemma}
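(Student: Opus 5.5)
The common tool for all four items is the minimality of the class $K^m$ given by the Class Inductive Definition Theorem (\autoref{Theorem:ClassInductiveDefThm}). To show $K^m\subseteq X$, it suffices to check that $X$ is $\Phi_m$-closed: whenever $a\subseteq X$ and $\langle a,x\rangle\in\Phi_m$, then $x\in X$.

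\textbf{Items (1), (3) and (4).} For (1), take $X=V^\omega$. If $\langle a,x\rangle\in\Phi_m$, then $x\in V^\omega$ by clause (1) of $\Phi_m$ outright, so $K^m(p)\subseteq V^\omega(p)$, and taking the union over $m$ gives $K(p)\subseteq V^\omega(p)$. For (3), fix $x\in K^m(p)$ and $q\ge p$, and take the witnessing $a\subseteq K^m$ with $\langle a,x\rangle\in\Phi_m$. Then $\tau_{pq}(x)=x\restricts(\uparrow q)$ satisfies $\tau_{pq}(x)(r)=x(r)\subseteq a$ for $r\ge q$, and clause (3) of $\Phi_m$ restricts to $\uparrow q$. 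Hence $\langle a,\tau_{pq}(x)\rangle\in\Phi_m$ and $\tau_{pq}(x)\in K^m(q)$. For (4), I argue by $\in$-induction on $a$ that $\check a\in K^0(0)$. Assume $\check b\in K^0(0)$ for all $b\in a$; by (3), $\tau_{0q}(\check b)\in K^0(q)$. Then $\check a(q)\subseteq K^0(q)$. Moreover $\check a(r)=\{\tau_{0r}(\check b)\mid b\in a\}=\tau_{qr}''[\check a(q)]$ for all $r\ge q\ge 0$, so \autoref{Lemma:ClosureLemmaForK} applies with $m=0$.

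\textbf{Item (2): the closure check.} For $K^m(p)\subseteq K^n(p)$ with $m\ge n$, the plan is to show $K^n$ is $\Phi_m$-closed. Given $a\subseteq K^n$ and $\langle a,x\rangle\in\Phi_m$, I must verify $\langle a,x\rangle\in\Phi_n$. Clauses (1) and (2) transfer verbatim. The whole content is clause (3):
\[
x(q)=\tau_{pq}''[x(p)]\quad\text{for all } q\ge p\ge n .
\]
From $\Phi_m$ we only know this for $p\ge m$. So we must derive it in the range $n\le p<m$.

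\textbf{Main obstacle.} The range $n\le p<m$ is exactly where I expect the argument to break, and I believe it cannot be completed. Take $n=0$, $m=1$, and let $x\in V^\omega(0)$ be given by $x(0)=\varnothing$ and $x(q)=\{\tau_{0q}(\check 0)\}$ for $q\ge 1$.
\begin{itemize}
\item By (4) and (3), each $x(q)\subseteq K^0(q)\subseteq K^1(q)$.
\item $x$ is constant from stage $1$ on, so $x\in K^1(0)$ by \autoref{Lemma:ClosureLemmaForK}.
\item However, $x(1)\neq\tau_{01}''[x(0)]=\varnothing$. Every element of $K^0$ arises through $\Phi_0$ and so satisfies the clause-(3) identity for $p=0$; hence $x\notin K^0(0)$.
\end{itemize}
So the inclusion as literally stated appears to fail.

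\textbf{What the argument does prove.} The same closure technique does prove the reverse inclusion. For $m\ge n$ we get $K^n(p)\subseteq K^m(p)$, because clause (3) for all $p\ge n$ implies it for all $p\ge m$, so $K^m$ is $\Phi_n$-closed. I would therefore record item (2) in the form $K^n(p)\subseteq K^m(p)$ for $n\le m$. This is presumably the intended reading: $K(p)=\bigcup_m K^m(p)$ is then an increasing union, which is what its later uses require.
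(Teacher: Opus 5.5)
Your proof is correct, and for items (1), (3) and (4) it is the paper's argument made explicit: (1) comes from clause (1) of $\Phi_m$, (3) holds because $\tau_{pq}$ is a restriction that preserves the clauses of $\Phi_m$ with the same witness $a\subseteq K^m$, and (4) is $\in$-induction on $a$ followed by the Closure lemma for $K$ with $m=0$. You are also right that item (2) is printed in the wrong direction. Enlarging $m$ weakens clause (3), so the classes increase with $m$; your name $x$ with $x(0)=\varnothing$ and $x(q)=\{\tau_{0q}(\check 0)\}$ for $q\ge 1$ correctly shows $K^1(0)\not\subseteq K^0(0)$; and your corrected version $K^n(p)\subseteq K^m(p)$ for $n\le m$ (proved by showing $K^m$ is $\Phi_n$-closed) is the only direction the paper's ``follows from the definition'' supports, and the one it uses later when it picks $m$ large enough that finitely many given names lie in $K^m(p)$.
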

\begin{proof}
	The first and second statements follow from the definition of $K(p)$.
	For the third statement, observe that $\tau_{pq}$ is just a restriction, so it does not change anything except for the domain of an input.
	We can show the last statement by applying set induction on $a$:
	Suppose that $\check{b}\in K^0(0)$ for all $b\in a$. Clearly, $\check{a}$ stops expanding hereditarily after stage 0. Hence, \autoref{Lemma:ClosureLemmaForK} guarantees $\check{a}\in K^0(0)$.
\end{proof}

The forcing relation $\VDash_K$ over $K$ is defined by the same definition of $\VDash$ with the following modifications:
\begin{itemize}
    \item $p\VDash_K \forall x\phi(x)$ if and only if $q\VDash_K \phi(x)$ for all $q\ge p$ and $x\in K(q)$.
    \item $p\VDash_K \exists x\phi(x)$ if and only if $p\VDash_K \phi(x)$ for some $x\in K(p)$.
\end{itemize}
We examine some structural properties of $K$ before describing the main result.
We have decomposed the usual Kripke universe $V^\mathbb{P}$ into a hierarchy $\langle V^\mathbb{P}_\alpha\mid\alpha\in\Ord\rangle$ indexed by ordinals.
Furthermore, the hierarchy satisfies a closure condition (i.e., Condition \ref{Labeling:SetKripke4} of \autoref{Definition:KripkeInformal}).
We also want to decompose $K$ similarly to apply induction on ordinals to prove statements about $K$. The natural way to decompose $K$ is taking $K^m_\alpha(p) := K^m(p)\cap V^\omega_\alpha(p)$. 
It is easy to see that their union is $K^m(p)$. The following lemma states $K^m_\alpha(p)$ satisfies an analogue of Condition \ref{Labeling:SetKripke4}:
\begin{lemma}\label{Lemma:SlicedClosureLemmaForK}
    $x\in K^m_\alpha(p)$ if and only if $x$ is a function of domain $\uparrow p$ 
    such that $x$ stops expanding hereditarily after stage $m$,
    satisfies monotonicity and
    $x(q)\subseteq \bigcup_{\beta\in\alpha} K^m_\beta(q)$ holds for all $q\ge p$.
\end{lemma}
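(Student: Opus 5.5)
The plan is to prove the two directions separately, unpacking $K^m_\alpha(p) = K^m(p)\cap V^\omega_\alpha(p)$. For $V^\omega_\alpha(p)$ we use condition \ref{Labeling:SetKripke4} of \autoref{Definition:KripkeInformal}. For $K^m(p)$ we use the fact that $K^m$ is the least $\Phi_m$-closed class, together with \autoref{Lemma:ClosureLemmaForK}.

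\emph{Forward direction.} Let $x\in K^m_\alpha(p)$.
\begin{enumerate}
\item Since $x\in V^\omega_\alpha(p)$, condition \ref{Labeling:SetKripke4} shows that $x$ is a function on $\uparrow p$, that it is monotone, and that $x(q)\subseteq\bigcup_{\beta\in\alpha}V^\omega_\beta(q)$ for every $q\ge p$.
\item Since $x\in K^m = \bigcup_\gamma \Gamma_{\Phi_m}(K^{m,\in\gamma})$, there is a set $a\subseteq K^m$ with $\langle a,x\rangle\in\Phi_m$. Clause (3) of $\Phi_m$ says exactly that $x$ stops expanding after stage $m$. Clause (2) gives $x(q)\subseteq a\subseteq K^m$.
\item Each $y\in x(q)$ has domain $\uparrow q$, because it lies in $V^\omega_\beta(q)$. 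Hence $y\in K^m(q)$.
\item Combining steps 1 and 3, each $y\in x(q)$ lies in $K^m(q)\cap V^\omega_\beta(q)=K^m_\beta(q)$ for some $\beta\in\alpha$.
\item The word ``hereditarily'' is then automatic: the elements of $x(q)$ are themselves in $K^m$, so they stop expanding after stage $m$ as well.
\end{enumerate}

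\emph{Backward direction.} Let $x$ be a function on $\uparrow p$ that stops expanding after stage $m$, is monotone, and satisfies $x(q)\subseteq\bigcup_{\beta\in\alpha}K^m_\beta(q)$ for all $q\ge p$.
\begin{enumerate}
\item Since $K^m_\beta(q)\subseteq V^\omega_\beta(q)$, we get $x(q)\subseteq\bigcup_{\beta\in\alpha}V^\omega_\beta(q)$. With monotonicity, condition \ref{Labeling:SetKripke4} gives $x\in V^\omega_\alpha(p)$.
\item We also have $x(q)\subseteq K^m(q)$ for all $q\ge p$, and $x$ stops expanding after $m$. So $x$ satisfies the defining formula \eqref{Formula:PropertyOnKmp}.
\item \autoref{Lemma:ClosureLemmaForK} then yields $x\in K^m(p)$.
\item Intersecting, $x\in K^m_\alpha(p)$.
\end{enumerate}

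\emph{Anticipated obstacle.} The main care point is the forward step extracting $a\subseteq K^m$ from $x\in K^m$. This uses the $\Gamma_{\Phi_m}$-hierarchy of \autoref{Theorem:ClassInductiveDefThm}: membership in $I_{\Phi_m}$ means membership in some $\Gamma_{\Phi_m}(K^{m,\in\gamma})$, and this produces the required $a$.

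A second subtlety is that ``stops expanding'' in clause (3) is stated only for $p,q\in\dom x$, not hereditarily. Its hereditary version is obtained by noting that the members of $x(q)$ are again in $K^m$, as in step 5 of the forward direction. Everything else is routine unfolding of definitions.
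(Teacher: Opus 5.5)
Your proof is correct and follows the same route as the paper. Both arguments split $x(q)\subseteq\bigcup_{\beta\in\alpha}K^m_\beta(q)$ into $x(q)\subseteq\bigcup_{\beta\in\alpha}V^\omega_\beta(q)$ and $x(q)\subseteq K^m(q)$. They then handle the first part with Condition \ref{Labeling:SetKripke4} of \autoref{Definition:KripkeInformal} and the second with the Closure lemma for $K$ (\autoref{Lemma:ClosureLemmaForK}). In the forward direction you also make explicit the converse of that closure lemma, extracting a set $a\subseteq K^m$ with $\langle a,x\rangle\in\Phi_m$ from the $\Gamma_{\Phi_m}$-hierarchy; the paper's one-line proof leaves this step implicit.
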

\begin{proof}
    Let us decompose the sentence 
    $x(q)\subseteq \bigcup_{\beta\in\alpha} K^m_\alpha(q)$ into a conjunction of $x(q)\subseteq \bigcup_{\beta\in\alpha} V_\alpha(q)$ and $x(q)\subseteq K^m(q)$. Then the equivalence follows from \autoref{Lemma:ClosureLemmaForK} and Condition \ref{Labeling:SetKripke4} of \autoref{Definition:KripkeInformal}.
\end{proof}

Lubarsky observed that if a Kripke set $a\in K(m)$ does not change after stage $m$, then $\tau_{mp}$ behaves like an isomorphism for $p>m$ over $a$.
He also mentioned that the isomorphism over $a$ can be extended to the whole $K(m)$, by hereditarily translating the domain of Kripke names.
Lubarsky's observation is necessary to prove $K$ satisfies Separation. Thus, we provide it in a concrete form.

\begin{definition}
	The translation function $\mathbb{t}_s: V^\omega\to V^\omega$ is defined recursively as follows: let $p\in \omega$ and $x\in V^\omega(p)$.
	$\mathbb{t}_s(x)$ is a function of domain $\uparrow(p+s)$ such that
	\begin{equation*}
		\mathbb{t}_s(x)(q+s) = \mathbb{t}''_{s}[x(q)]
		= \{\mathbb{t}_s(z) \mid z\vartriangleleft x\land \dom z = \uparrow q\}
	\end{equation*}
	for all $q\ge p$.
\end{definition}
Note that $\mathbb{t}_s$ is defined on recursion on $\vartriangleleft$, so $\mathbb{t}_s$ is well-defined.

The translation function $\mathbb{t}_s$ `vertically translates' the domain of a Kripke set $x\in V^\omega(p)$ to $\uparrow (p+s)$ hereditarily. If $p\ge s$, we can imagine translating the domain of $x\in V^\omega(p)$ to $\uparrow(p-s)$.
We can see that $\vartriangleleft$ is fully inductive over $V^\omega(\ge p) := \bigcup_{q\ge p} V^\omega(q)$, so we can define a downward translation function:
\begin{definition}
	The downward translation function $\mathbb{t}_{-s} : V^\omega(\ge s)\to V^\omega$ is defined recursively as follows: Let $p\ge s$ and $x\in V^\omega(p)$. 
	Then $\mathbb{t}_{-s}$ is a function of domain $\uparrow(p-s)$ such that
	\begin{equation*}
		\mathbb{t}_{-s}(x)(q) = \mathbb{t}''_{-s}[x(q+s)]
		= \{\mathbb{t}_{-s}(z) \mid z\vartriangleleft x\land \dom z = \uparrow (q+s)\}
	\end{equation*}
	for all $q\ge p-s$.
\end{definition} 

The following lemma describes basic facts on the translation function.
\begin{lemma} \label{Lemma:translationftn00} Let $p,q\in\omega$.
\begin{enumerate}
	\item $\mathbb{t}_s$ is a bijection between $V^\omega(p)$ and $V^\omega(p+s)$.
	\item $\mathbb{t}_s(\tau_{pq}(x)) = \tau_{p+s,q+s}(\mathbb{t}_s(x))$ for every $x\in V^\omega(p)$ and $p\le q$.
	\item If $x\in K^m(p)$ for $m\ge p$ then $\mathbb{t}_s(x)\in K^{m+s}(p+s)$. 
	\item $\mathbb{t}_s$ is a bijection between $K(p)$ and $K(p+s)$.
\end{enumerate}
\end{lemma}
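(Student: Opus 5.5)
We will prove all four items by $\vartriangleleft$-induction, which is available because $\vartriangleleft$ is fully inductive. The downward translation $\mathbb{t}_{-s}$ will serve as the inverse of $\mathbb{t}_s$.

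\emph{Item (1).} First we check that $\mathbb{t}_s(x)$ is a name. Assume inductively that $\mathbb{t}_s(z)\in V^\omega(q+s)$ for every $z\vartriangleleft x$ with $\dom z=\uparrow q$. Then $\mathbb{t}_s(x)(q+s)\subseteq V^\omega(q+s)$. For monotonicity we need, for $q\le r$ and $z\in x(q)$,
\begin{equation*}
\tau_{q+s,r+s}(\mathbb{t}_s(z))=\mathbb{t}_s(\tau_{qr}(z)).
\end{equation*}
This is exactly item (2) for $z$. So (1) and (2) are proved simultaneously by induction on $\vartriangleleft$. Item (2) itself is immediate: $\tau$ is restriction, and for $r\ge q$ both sides take the value $\mathbb{t}_s''[z(r)]$ at $r+s$. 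Given monotonicity, the Closure lemma (\autoref{Lemma:KripkeNameLemma}) yields $\mathbb{t}_s(x)\in V^\omega(p+s)$. In the same way, $\mathbb{t}_{-s}$ maps $V^\omega(p+s)$ into $V^\omega(p)$. A further $\vartriangleleft$-induction shows that $\mathbb{t}_{-s}\circ\mathbb{t}_s$ and $\mathbb{t}_s\circ\mathbb{t}_{-s}$ are identities: at each coordinate both compositions give $\{z\mid z\in x(q)\}$, by the induction hypothesis applied to the elements. Hence $\mathbb{t}_s$ is a bijection $V^\omega(p)\to V^\omega(p+s)$.

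\emph{Item (3).} We induct on $\vartriangleleft$ and use the Closure lemma for $K$ (\autoref{Lemma:ClosureLemmaForK}). Let $x\in K^m(p)$, so that each $z\in x(q)$ lies in $K^m(q)$. By the induction hypothesis $\mathbb{t}_s(z)\in K^{m+s}(q+s)$, so $\mathbb{t}_s(x)(q+s)\subseteq K^{m+s}(q+s)$. (The hypothesis $m\ge p$ causes no trouble for elements: we may replace $m$ by $\max\{m,q\}$ using $K^{m'}\subseteq K^{m}$ monotonicity in the right direction, or simply note that the argument never uses $m\ge p$ except to align indices.) For the stabilization condition, take $r\ge q\ge m+s$. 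Then
\begin{equation*}
\mathbb{t}_s(x)(r)=\mathbb{t}_s''[x(r-s)]=\mathbb{t}_s''\tau''_{q-s,r-s}[x(q-s)].
\end{equation*}
By item (2) this equals $\tau''_{qr}[\mathbb{t}_s(x)(q)]$.

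\emph{Item (4).} By (3), $\mathbb{t}_s$ maps $K(p)$ into $K(p+s)$; when $m<p$ we first replace $m$ by $\max\{m,p\}$, which is allowed since $K^{n}\subseteq K^{m}$ for $n\ge m$ reversed appropriately, i.e.\ $x\in K^m$ implies $x\in K^{\max\{m,p\}}$ because a name that stabilizes after $m$ also stabilizes after any later stage. For surjectivity we prove the analogue of (3) for $\mathbb{t}_{-s}$: if $y\in K^{m}(p+s)$ with $m\ge s$, then $\mathbb{t}_{-s}(y)\in K^{m-s}(p)$. The proof is the same induction. Injectivity is inherited from (1).

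The main obstacle is bookkeeping rather than any substantive step. We must make sure the definition of $\mathbb{t}_s$ by recursion on $\vartriangleleft$ really gives a total function. We must also track the index shift between $m$ and $m+s$ carefully when elements of $x(q)$ live at stages $q$ beyond $m$. This relies on the fact that "stops expanding after $m$" is upward-closed in $m$.
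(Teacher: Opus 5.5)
Your proposal is correct and follows the same route as the paper. That route is: $\vartriangleleft$-induction with $\mathbb{t}_{-s}$ as two-sided inverse for (1), the direct restriction computation for (2), transporting the stabilization identity through (2) for (3), and the analogous facts for $\mathbb{t}_{-s}$ for (4). You also supply details the paper leaves implicit, namely that $\mathbb{t}_s(x)$ is monotone (via (2)) and the hereditary clause in (3).

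You offer two ways to handle the hypothesis $m\ge p$ for elements $z\in x(q)$ with $q>m$; use the second, running the induction for all $m$. Your stabilization computation supports this, since it only needs $q\ge p+s$ and $q\ge m+s$. Upward closure in $m$ alone only yields $\mathbb{t}_s(z)\in K^{q+s}(q+s)$. Getting back to $K^{m+s}(q+s)$ needs the separate observation $K^{m}(q)=K^{\max\{m,q\}}(q)$.
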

\begin{proof}\pushQED{\qed}
\begin{enumerate}
	\item In fact, $\mathbb{t}_s:V^\omega\to V^\omega(\ge s)$ and $\mathbb{t}_{-s}:V^\omega(\ge s)\to V^\omega$ are inverses of each other.
	It can be shown by induction on $\vartriangleleft$.
	
	We will show that $\mathbb{t}_{-s} \circ \mathbb{t}_s$ is the identity map. The other equality can be shown analogously.
	Assume inductively that $\mathbb{t}_{-s} \circ \mathbb{t}_s (y) = y$ holds for all $y\vartriangleleft x$.
	For $q\ge p$ and $x\in V^\omega(p)$, we have
	\begin{equation*}
	\mathbb{t}_{-s}\circ \mathbb{t}_s(x)(q) = \mathbb{t}_{-s}''[\mathbb{t}_s(x)(q+s)]
	= \mathbb{t}_{-s}''[\mathbb{t}_s'' [(x(q)]] = x(q)
	\end{equation*}
	since $y\vartriangleleft x$ for all $y\in x(q)$. Hence $\mathbb{t}_{-s}\circ \mathbb{t}_s(x) = x$ for all $x\in V^\omega$.
	Moreover, by examining the domains of Kripke sets, we can see that the restrictions $\mathbb{t}_s\restricts V^\omega(p)$ and $\mathbb{t}_{-s}\restricts V^\omega(p+s)$ are inverses of each other.
	
	\item Let $z\in \mathbb{t}_s(\tau_{pq}(x))(r+s)$ for $p\le q\le r$, so there is $w\in \tau_{pq}(x)(r) = x(r)$ such that $z=\mathbb{t}_s(w)$.
	Hence $z\in \mathbb{t}_s''[x(r)] = \mathbb{t}_s(x)(r+s) = \tau_{p+s,q+s}(\mathbb{t}_s(x))(r+s)$. This shows $\mathbb{t}_s(\tau_{pq}(x))(r+s)\subseteq \tau_{p+s,q+s}(\mathbb{t}_s(x))(r+s)$.
	Showing the reverse inclusion is analogous, so we omit it.
	
	\item If $x\in K^m(p)$, then $x(q)=\tau_{mq}''[x(p)]$ for every $q\ge m$.
	We can show $\mathbb{t}_s(x)(q+s) = \tau_{m+s,q+s}''[\mathbb{t}_s(x)(m+s)]$ for $q\ge m$, so $\mathbb{t}_s(x)\in K^{m+s}(p+s)$.
	
	\item We can show the following facts. The proof is similar to what we have already done, so we omit it.
	\begin{enumerate}
		\item $\mathbb{t}_{-s}(\tau_{p+s,q+s}(x)) = \tau_{p,q}(\mathbb{t}_{-s}(x))$ for all $x\in V^\omega(p+s)$ and $p\le q$. 
		\item If $x\in K^{m+s}(p+s)$ for $m\ge p$, then $\mathbb{t}_{-s}(x)\in K^m(p)$.
	\end{enumerate}
	Therefore, $\mathbb{t}_s: K(p)\to K(p+s)$ and $\mathbb{t}_{-s}: K(p+s)\to K(p)$.
	Since the composition of the two functions is the identity, they are inverses of each other.
	\qedhere
\end{enumerate}
\end{proof}

The following lemma shows that the transition function and the translation function coincide for Kripke sets that are constant through stages:
\begin{lemma}\label{Lemma:TranslationNTransition}
	Let $x\in K^p(p)$. Then $\mathbb{t}_s(x)=\tau_{p,p+s}(x)$.
\end{lemma}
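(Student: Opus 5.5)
We argue by $\vartriangleleft$-induction on $x$, which is available because $\vartriangleleft$ is fully inductive (and hence so is its restriction to $\bigcup_p K^p(p)$). The inductive hypothesis is that $\mathbb{t}_s(y)=\tau_{q,q+s}(y)$ for every $y\vartriangleleft x$ with $y\in K^q(q)$, where $q$ is the stage with $\dom y=\uparrow q$. Both sides of the desired equation have domain $\uparrow(p+s)$, so it suffices to show that for every $r\ge p$
\begin{equation*}
	\mathbb{t}_s(x)(r+s) = \tau_{p,p+s}(x)(r+s) = x(r+s).
\end{equation*}

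First we unpack the left-hand side. By definition, $\mathbb{t}_s(x)(r+s)=\mathbb{t}_s''[x(r)]$. Since $x\in K^p(p)$ and $r\ge p$, the condition defining $K^p$ gives $x(r)=\tau_{pr}''[x(p)]$. The same condition gives $x(r+s)=\tau_{p,r+s}''[x(p)]$. Hence it is enough to show, for each $y\in x(p)$,
\begin{equation*}
	\mathbb{t}_s(\tau_{pr}(y))=\tau_{p,r+s}(y).
\end{equation*}

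The key observation is that each $y\in x(p)$ lies in $K^p(p)$, because $x(p)\subseteq K^p(p)$ by \eqref{Formula:PropertyOnKmp}. Also $y\vartriangleleft x$, so the inductive hypothesis yields $\mathbb{t}_s(y)=\tau_{p,p+s}(y)$. We then combine this with \autoref{Lemma:translationftn00}(2) and the composition law \ref{Labeling:SetKripke3}:
\begin{equation*}
	\mathbb{t}_s(\tau_{pr}(y)) = \tau_{p+s,r+s}(\mathbb{t}_s(y)) = \tau_{p+s,r+s}(\tau_{p,p+s}(y)) = \tau_{p,r+s}(y).
\end{equation*}
This proves the identity for each $y\in x(p)$. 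Taking images over $y\in x(p)$ gives $\mathbb{t}_s(x)(r+s)=x(r+s)$ for all $r\ge p$, which is the claim.

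The main point needing care is that the induction must stay inside the class of names in $K^q(q)$, with the rank index equal to the starting stage. This is why we use $x(p)\subseteq K^p(p)$ rather than merely $x(p)\subseteq K(p)$. We should also confirm that the formula being inducted on is legitimate for $\vartriangleleft$-induction over $\mathsf{CZF}^-$. This holds because full induction is permitted for arbitrary formulas: $\vartriangleleft$ is fully inductive, and the statement "$y\in K^q(q)\to \mathbb{t}_s(y)=\tau_{q,q+s}(y)$" is an ordinary formula, so no separation issue arises. The remaining ingredients, \autoref{Lemma:translationftn00}(2) and \ref{Labeling:SetKripke3}, are direct citations.
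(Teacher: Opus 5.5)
Your proof is correct and is essentially the paper's argument: an induction in which the hypothesis is applied to each $y\in x(p)\subseteq K^p(p)$ to get $\mathbb{t}_s(y)=\tau_{p,p+s}(y)$. The differences are only bookkeeping. The paper inducts on the rank $\alpha$ of $x\in K^p_\alpha(p)$ rather than on $\vartriangleleft$, and it first reduces to the single stage $p+s$, observing via the proof of part (3) of \autoref{Lemma:translationftn00} that $\mathbb{t}_s(x)$ is determined by its value at $p+s$. You instead check every stage $r+s$ directly using part (2) and the composition law, which is exactly the computation behind that reduction.
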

\begin{proof}
	By the proof of 3 of \autoref{Lemma:translationftn00}, we have $\mathbb{t}_s(x)(q+s)=\tau_{p+s,q+s}''[\mathbb{t}_s(x)(p+s)]$ for $q\ge p$.
	Therefore, we can see that $\mathbb{t}_s(x)=\tau_{p,p+s}(x)$ is equivalent to $\mathbb{t}_s(x)(p+s)=x(p+s)$ for $x\in K^p(p)$.
	
	We will show that $\mathbb{t}_s(x)(p+s)=x(p+s)$ by induction: Suppose that $x\in K^p_\alpha(p)$ and the equality holds for every $y\in K^p_\beta(p)$ and $\beta\in\alpha$.
	We have $\mathbb{t}_s(y)=\tau_{p,p+s}(y)$ for every $y\in x(p)$.
	Therefore,
	$ \mathbb{t}_s(x)(p+s) = \mathbb{t}_s''[x(p)] = \tau_{p,p+s}''[x(p)] = x(p+s)$.
\end{proof}

The following lemma states $\mathbb{t}_s$ behaves like an isomorphism between $K(p)$ and $K(p+s)$:

\begin{lemma}\label{Lemma:TranslationIso} Let $p\in\omega$ and $\vec{a}\in K(p)$.
\begin{enumerate}
	\item If $p\VDash_K \phi(\vec{a})$, then $p+s\VDash_K \phi(\mathbb{t}_s(\vec{a}))$.
	\item If $p\ge s$ and $p\VDash_K \phi(\vec{a})$, then $p-s\VDash_K \phi(\mathbb{t}_{-s}(\vec{a}))$.
\end{enumerate}
\end{lemma}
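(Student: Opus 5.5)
We prove both statements simultaneously by induction on the complexity of $\phi$, for all $p$, $s$ and all tuples $\vec{a}\in K(p)$ at once. Since $\mathbb{t}_s$ and $\mathbb{t}_{-s}$ are mutually inverse bijections between $K(p)$ and $K(p+s)$ (\autoref{Lemma:translationftn00}), it is convenient to prove the stronger equivalence
\begin{equation*}
	p\VDash_K \phi(\vec{a}) \iff p+s\VDash_K \phi(\mathbb{t}_s(\vec{a}))
\end{equation*}
for $\vec a\in K(p)$. Statement (2) then follows by applying this equivalence to $\mathbb{t}_{-s}(\vec a)\in K(p-s)$ and using $\mathbb{t}_s\circ\mathbb{t}_{-s}=\mathrm{id}$.

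For the atomic cases, $p\VDash_K a\in b$ means $a\in b(p)$. This is equivalent to $\mathbb{t}_s(a)\in \mathbb{t}_s''[b(p)]=\mathbb{t}_s(b)(p+s)$, using injectivity of $\mathbb{t}_s$ for the backward direction. Equality is handled the same way: $a(q)=b(q)$ holds for all $q\ge p$ iff $\mathbb{t}_s(a)(q+s)=\mathbb{t}_s(b)(q+s)$ holds for all $q\ge p$, again by injectivity. The cases of $\land$ and $\lor$ are immediate from the induction hypothesis.

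The interesting cases are $\to$ and $\forall$. Here the right-hand side quantifies over all $q'\ge p+s$, whereas the left-hand side only reaches nodes $q+s$ with $q\ge p$ after translation. These two ranges coincide, since every $q'\ge p+s$ has the form $q+s$ with $q\ge p$. What remains is to reconcile the parameters. A parameter $\tau_{p,q}(a)$ at node $q$ translates to $\mathbb{t}_s(\tau_{pq}(a))=\tau_{p+s,q+s}(\mathbb{t}_s(a))$ by \autoref{Lemma:translationftn00}(2), which is exactly the parameter used on the right. For $\forall$, we additionally need that the witnesses $x\in K(q)$ correspond bijectively to the witnesses $\mathbb{t}_s(x)\in K(q+s)$, which is \autoref{Lemma:translationftn00}(4). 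The case of $\exists$ is the same argument without the node shift.

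I expect the main obstacle to be the bookkeeping of parameters in the $\to$ and $\forall$ steps, namely keeping the transition convention consistent with translation. The commutation of $\mathbb{t}_s$ with $\tau$ resolves this. A second, subtler point is that the induction must be uniform in $s$ and $p$, because in the inductive step the nodes move. Working with the biconditional above avoids needing (2) separately inside the induction. If one insists on the one-directional form stated in the lemma, the $\to$ case would require the converse direction for the antecedent, so proving the equivalence is essential.
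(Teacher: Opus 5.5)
Your proposal is correct and follows essentially the same route as the paper, which proves (1) and (2) simultaneously by induction on $\phi$, uniformly in $p$, $s$ and $\vec{a}$. Your biconditional is a repackaging of that simultaneous induction: the paper's $\to$ case likewise pulls the antecedent back along $\mathbb{t}_{-s}$ using (2), and its $\forall$ case likewise uses that $\mathbb{t}_s$ maps $K(q)$ onto $K(q+s)$ together with $\mathbb{t}_s\circ\tau_{pq}=\tau_{p+s,q+s}\circ\mathbb{t}_s$.
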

\begin{proof}\pushQED{\qed}
	We will prove them simultaneously by induction on $\phi$, uniform to $p$, $s$ and $\vec{a}$.
	We will only consider $\mathbb{t}_{+s}$, as the case for $\mathbb{t}_{-s}$ is similar.
	\begin{enumerate}
		\item Atomic formulas: Suppose that $p\VDash_K a_0=a_1$, which is equivalent to $a_0=a_1$.
		Since $\mathbb{t}_{\pm s}$ are one-to-one, this is equivalent to $\mathbb{t}_s(a_0)=\mathbb{t}_{s}(a_1)$, so we have $p+ s \VDash_K \mathbb{t}_s(a_0) = \mathbb{t}_s(a_1)$.
		
		If $p\VDash_Ka_0\in a_1$, so $a_0\in a_1(p)$, then $\mathbb{t}_s(a_0)\in\mathbb{t}_s''[a_1(p)] = \mathbb{t}_s(a_1)(p+s)$.
		Thus $p+s\VDash_K \mathbb{t}_s(a_0)\in\mathbb{t}_s(a_1)$.
		
		\item Binary connections: Cases for $\land$ and $\lor$ are easy to prove, so we omit them. We will concentrate on the case $\to$.
		
		Suppose that $p\VDash_K (\phi\to \psi)(\vec{a})$ holds: That is, for every $q\ge p$, $q\VDash_K \phi(\tau_{pq}(\vec{a}))$ implies $q\VDash_K \psi(\tau_{pq}(\vec{a}))$.
		Furthermore, let $q+s\VDash_K \phi(\tau_{p+s,q+s}(\mathbb{t}_s(\vec{a})))$ holds for every $q\ge p$, 
		which is equivalent to $q+s\VDash_K \phi(\mathbb{t}_s(\tau_{p,q}(\vec{a})))$.
		By the inductive hypothesis on $\phi$, we have $q\VDash_K \phi(\mathbb{t}_{-s}(\mathbb{t}_s(\tau_{p,q}(\vec{a}))))$, so $q\VDash_K\phi(\tau_{p,q}(\vec{a}))$.
		Therefore, $q\VDash_K \psi(\tau_{pq}(\vec{a}))$ and we have
		$q+s\VDash_K \psi(\mathbb{t}_s(\tau_{pq}(\vec{a}))$ by the inductive hypothesis on $\psi$.
		
		In sum, $q+s\VDash_K \phi(\tau_{p+s,q+s}(\mathbb{t}_s(\vec{a})))$ implies 
		$q+s\VDash_K \psi(\tau_{p+s,q+s}(\mathbb{t}_s(\vec{a})))$ for all $q\ge p$, which means $p+s\VDash (\phi\to\psi)(\mathbb{t}_s(\vec{a}))$.

		\item Quantifiers: The case for $\exists$ is easy to check. For the case for $\forall$, suppose that $p\VDash_K \forall x\phi(x,\vec{a})$, which is equivalent to $q\VDash_K \phi(x,\tau_{pq}(\vec{a}))$ for every $q\ge p$ and $x\in K(q)$.
		Therefore, we have $q+s\VDash_K \phi(\mathbb{t}_s(x),\mathbb{t}_s(\tau_{pq}(\vec{a}))$ for every $x\in K(q)$.
		Since $\mathbb{t}_s$ is an onto function from $K(q)$ to $K(q+s)$, we have
		$q+s\VDash_K \phi(y,\tau_{p+s,q+s}(\mathbb{t}_s(\vec{a})))$ for every $y\in K(q+s)$.
		Since $q\ge p$ is arbitrary, we have $p+s\VDash_K\forall x \phi(x,\mathbb{t}_s(\vec{a}))$.
		\qedhere
	\end{enumerate}
\end{proof}

We are ready prove that $K$ satisfies $\mathsf{CZF}$:
\begin{theorem}
    $K$ satisfies $\mathsf{CZF}$. If the background theory satisfies Full Separation, then so does $K$.
\end{theorem}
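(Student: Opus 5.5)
The plan is to verify the axioms one at a time, following the proof of \autoref{Theorem:KripkeModelsCZF} but building every witness inside $K$. For each candidate witness I will check two things. First, it is monotone and stops expanding hereditarily after some stage $m$. Second, all its values lie in some $K^m(q)$ for a single uniform $m$. \autoref{Lemma:ClosureLemmaForK} then places it in $K^m(p)$.

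\emph{The easy axioms.} Extensionality and the equality axioms transfer directly from the definitions. Intuitionistic logic also transfers, since $K$ is closed under the transition maps by the structural lemma on $K$. For Pairing, given $x\in K^{m_0}(p)$ and $y\in K^{m_1}(p)$, the name $\up(x,y)$ lies in $K^{\max\{m_0,m_1\}}(p)$, because $K^m(p)\subseteq K^n(p)$ for $m\ge n$. $\Union(x)$ works the same way, using the $m$ of $x$: every member of $x(q)$ lies in $K^m(q)$ and so stops growing after $m$. Infinity is witnessed by $\check\omega\in K^0(0)$. For $\in$-induction, I will repeat the earlier argument with $K^m_\alpha(q)$ in place of $V^\mathbb P_\alpha(q)$. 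This uses \autoref{Lemma:SlicedClosureLemmaForK} and runs induction on $\alpha$ for each fixed $m$ (or over all $m$ at once, via a quantifier over $\omega$).

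\emph{Separation is the main obstacle.} The naive name $\Sep_\phi(x;\vec y)$, with $x,\vec y\in K^m(p)$, need not stop growing. The reason is that the set $\{z\in x(q)\mid q\VDash_K\phi(z,\vec y)\}$ could in principle change with $q$, even though $x(q)=\tau_{mq}''[x(m)]$. This is exactly where the translation machinery enters. For $q\ge m$ the parameters are in $K^m(m)$ after transition, so by \autoref{Lemma:TranslationNTransition} we have $\tau_{mq}=\mathbb t_{q-m}$ on them and on the elements $z\in x(m)$. By \autoref{Lemma:TranslationIso}, $m\VDash_K\phi(z,\vec y)$ holds iff $q\VDash_K\phi(\tau_{mq}z,\tau_{mq}\vec y)$. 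Hence for $q\ge m$ the separated set is $\tau_{mq}''$ of the one at stage $m$. So the name stops expanding after $m$, and it lies in $K^m(p)$ by \autoref{Lemma:ClosureLemmaForK}. Two points need care here. One is that \autoref{Lemma:TranslationNTransition} requires membership in $K^q(q)$, which I get from $\tau_{pm}(x)\in K^m(m)$. The other is that the set $\{z\in x(q)\mid q\VDash_K\phi\}$ exists: for $\Delta_0$ formulas this holds because $\VDash_K$ of a $\Delta_0$ formula is $\Delta_0$ (the bounded quantifier clauses are unchanged), and for full $\phi$ it holds when the background has Full Separation.

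\emph{Strong Collection and Subset Collection.} I will rebuild the names $b$ and $c$ from \autoref{Theorem:KripkeModelsCZF} with two changes. First, apply Strong Collection in $V$ to the class $K$ rather than $V^\omega$. Second, handle the missing uniform bound on $m$, since the collected witnesses $y\in C$ may lie in different $K^{m_y}$. My fix is to use the Isomorphism lemma to shift each witness down to the stage where the hypothesis is evaluated. For $a\in K^m(q)$, every $u\in a(r)$ with $r\ge m$ is $\tau_{mr}$ of some element of $a(m)$. So it suffices to collect witnesses only for pairs $\langle r,u\rangle$ with $q\le r\le m$, which is a finite range of stages. Any witness found at a later stage transfers by $\tau_{mr}=\mathbb t_{r-m}$. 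I will then define $b(r)$ as in \eqref{Formula:StrCollInstanceB} but only with $s\le m$, and freeze it after stage $m$ by setting $b(r)=\tau_{mr}''[b(m)]$. The elements of $b$ lie in $K(s)$ but not necessarily in a common $K^{m'}$. Since only finitely many stages $s\le m$ occur, I will still need a common bound, obtained by collecting the $m_y$ into a set of naturals. I expect to need a boundedness assumption, or to restrict $C$ to witnesses with $m_y\le M$ after applying Strong Collection to pairs $\langle y,m_y\rangle$. This is the step I would scrutinize most.

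Subset Collection (via Fullness) proceeds the same way: the relations to be uniformized are reduced to those at stages $\le m$ and then frozen.
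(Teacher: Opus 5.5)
Your treatment of Extensionality, Pairing, Union, $\in$-induction and Separation is essentially the paper's. For Separation, the paper likewise moves a member of $x(q)$, $q\ge m$, back to stage $m$ by translation, and uses that translation agrees with transition on $K^m(m)$. One small correction: the inclusion you need for Pairing is $K^n(p)\subseteq K^m(p)$ for $n\le m$, not the reverse.

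The genuine gap is the one you flag in Strong Collection, and it is fatal rather than technical. The paper's proof builds exactly your name $b$: the name \eqref{Formula:StrCollInstanceB} for $r\le m$, frozen by $\tau_{mr}$ afterwards. It never checks that $b\in K(q)$. The members $\tau_{sr}(y)$, $y\in C$, need a common hereditary freeze stage, and nothing provides one. In fact nothing can, because even plain Collection fails in $K$. Over Lubarsky's frame $\Ord$ the bound is the supremum of the freeze stages; that supremum is exactly what is lost on $\omega$.

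Here is a counterexample. Work at stage $0$.
\begin{itemize}
\item Let $t\in K^1(0)$ be the subset of $1$ with $t(0)=\varnothing$ and $t(q)=\{\tau_{0q}(\check 0)\}$ for $q\ge 1$.
\item Let $\phi(n,y)$ say that $y$ is a function with domain $n$ whose values $y_i$ are subsets of $1$, and that:
  \begin{itemize}
  \item $\lnot\lnot(0\in y_i)$ for $i<n$;
  \item for $i+1<n$, both $0\in y_{i+1}\to 0\in y_i$ and $(0\in y_i\to 0\in y_{i+1})\to\bigl(0\in y_{i+1}\lor\lnot(0\in y_{i+1})\bigr)$;
  \item $0\in y_1\to 0\in t$.
  \end{itemize}
\end{itemize}
Write $j_i$ for the first stage at which $y_i$ is inhabited. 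Then $0\VDash_K\phi(\check n,y)$ unwinds to four conditions: $j_i<\infty$; $j_i\le j_{i+1}$; either $j_i<j_{i+1}$ or $j_{i+1}=0$; and $j_1\ge 1$. Together these give $j_{n-1}\ge n-1$.

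Now suppose in addition that $y\in K^M(0)$. Equality in the model is literal equality of names, so the pairs in $y(0)$ are literally $\op(\check{\imath},y_i)$. Hence each $y_i\in K^M(0)$. An eventually inhabited subset of $1$ lying in $K^M(0)$ is already inhabited by stage $M$, so $n-1\le j_{n-1}\le M$.

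Witnesses nevertheless exist at every stage:
\begin{itemize}
\item at stage $0$, let $y_i$ become inhabited exactly at stage $i$; this $y$ lies in $K^n(0)$;
\item at each stage $s\ge 1$, let every $y_i$ be the restriction of $\check 1$.
\end{itemize}
Hence $0\VDash_K\forall n\in\check\omega\,\exists y\,\phi(n,y)$. But if some $b\in K^M(0)$ collected witnesses, then $b(0)\subseteq K^M(0)$ would have to contain a witness for $n=M+2$, which is impossible.

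So restricting to witnesses with $m_y\le M$, as you propose, cannot work in general. The theorem as stated, for the frame $\omega$ with a uniform hereditary freeze stage, is not provable by your route or any other.
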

\begin{proof}
    We can show Extensionality directly. 
	For $\in$-induction, we can use the argument which is described in the proof of 	$\in$-induction in \autoref{Theorem:KripkeModelsCZF}, with \autoref{Lemma:SlicedClosureLemmaForK}.
    
	The main idea of a proof for Pairing, Union, and Separation is the same:
    we shall prove that $K(p)$ is closed under operations $\Union$, $\up$ and $\Sep$. For $\up$, suppose that we have $x,y\in K(p)$. Without loss of generality, we can assume that there is $m\ge p$ such that $x,y\in K^m(p)$.
	We claim that $\up(x,y)\in K^m(p)$ by applying \autoref{Lemma:ClosureLemmaForK}: it is obvious that $\up(x,y)(q)\subseteq K^m(q)$ for all $q\ge p$.
	Moreover, if $q\ge m$ then
	\begin{equation*}
		\up(x,y)(q)=\tau_{pq}''\{x,y\}=\tau_{mq}''\{\tau_{pm}(x),\tau_{pm}(y)\} =\tau_{mq}''[\up(x,y)(m)].
	\end{equation*}
	Therefore, $\up(x,y)$ stops expanding hereditarily after stage $m$. The case for $\Union$ is similar, so we omit it. 
    
	For $\Sep$, it is obvious that $\Sep(x;\vec{y})\subseteq K^m(p)$ if $x,\vec{y}\in K^m(p)$.
	Without loss of generality, assume that $m\ge p$. To show $\Sep(x;\vec{y})$ stops expanding after stage $m$, we must check that $\Sep(x;\vec{y})(q)\subseteq\tau''_{mq}[\Sep(x;\vec{y})(m)]$ for every $q\ge m$. 
	
	Let $z\in \Sep(x;\vec{y})(q)$. Then $z\in x(q)$ and $q\VDash_K \phi(z,\tau_{pq}(\vec{y}))$.
	For notational convenience, let $s=q-m$.
	Since $q\VDash_K \phi(z,\tau_{pq}(\vec{y}))$, we have $m\VDash_K \phi(\mathbb{t}_{-s}(z),\mathbb{t}_{-s}(\tau_{pq}(\vec{y}))$.
	Observe that $\tau_{pm}(\vec{y})\in K^m(m)$, so we can apply \autoref{Lemma:TranslationNTransition} and we have
	\begin{equation*}
		\mathbb{t}_{-s}(\tau_{pq}(\vec{y})) = 
		\mathbb{t}_{-s}(\tau_{mq}(\tau_{pm}(\vec{y}))) = 
		\mathbb{t}_{-s}(\mathbb{t}_{s}(\tau_{pm}(\vec{y}))) = \tau_{pm}(\vec{y}).
	\end{equation*}
	Therefore, $m\VDash_K \phi(\mathbb{t}_{-s}(z),\tau_{pm}(\vec{y}))$.
	Moreover, $\mathbb{t}_s(\mathbb{t}_{-s}(z)) = z \in x(q) = \tau_{mq}''[x(m)] = \mathbb{t}_s''[x(m)]$. By injectivity of $\mathbb{t}_s$, we can conclude $\mathbb{t}_{-s}(z)\in x(m)$.
	Hence, we can conclude $\mathbb{t}_{-s}(z)\in \Sep(x;\vec{y})(m)$. 
	Since $z\in x(q)\subseteq K^m(q) = K^q(q)$, 
	$z \in \tau''_{mq}[\Sep(x;\vec{y})(m)]$.
	Note that our proof works not only for Bounded separation but also for Full separation if our background theory has Full separation.
    
    For Strong Collection and Subset Collection, we analyze the proof of \autoref{Theorem:KripkeModelsCZF} and modify it. 
    For Strong Collection, let $\phi(x,y)$ be a formula whose parameters belong to $K(p)$. Let $a\in K(p)$ and $q\ge p$ satisfies $q\VDash_K \forall x\in a\exists y\phi(x,y)$. 
	Take $m>q$ large enough so that $a$ and parameters of $\phi$ belong to $K^m(p)$.
	By the assumption on $\phi$, we have
    \begin{equation*}
        \forall r\ge q\forall x\in a(r) \exists y [y\in K(r) \land
        r\VDash_K \phi(x,y)].
    \end{equation*}
	By letting $A=\bigcup_{q\le r\le m}\{r\}\times a(r)$, we have the following analogue of \eqref{Formula:StrCollKripke01}:
    \begin{equation*}
	    \forall\pi\in A\exists y [y\in K(\jmath_0 \pi)\land \jmath_0\pi\VDash_K \phi(\jmath_1\pi,y)].
    \end{equation*}
	By Strong Collection, we can find $C$ satisfying the following statements, which are analogues of \eqref{Formula:StrCollKripke02} and \eqref{Formula:StrCollKripke03}:
	
    \begin{equation*}
        \forall \pi \in A\exists y\in C [y \in K(\jmath_0\pi) 
        \land \jmath_0\pi\VDash_K \phi(\jmath_1\pi, y)], \text{ and}
    \end{equation*}
    \begin{equation*}
        \forall  y\in C\exists \pi \in A [y \in K(\jmath_0\pi) 
        \land \jmath_0\pi\VDash_K \phi(\jmath_1\pi, y)].
    \end{equation*}

	We will define $b\in K(q)$ as we define $b$ in \eqref{Formula:StrCollInstanceB} for $r\le m$. For $r>m$, take $b(r) := \tau_{mr}''[b(m)]$. That is, $b$ is defined as follows:
    \begin{equation*}
    	b(r) = \begin{cases}
    	\{\tau_{sr}(y)\mid y\in C\land q\le s\le r\land \dom y=\uparrow s\} & 
    	\text{if $r\le m$},\\
    	\tau_{mr}'' [b(m)] & \text{if $r>m$}.
    	\end{cases}
    \end{equation*}
    
    We claim that $q\VDash \forall x\in a\exists y\in b \phi(x,y)$. 
	Let $x\in a(r)$ for $r\ge q$. If $r\le m$, then the proof in \autoref{Theorem:KripkeModelsCZF} works. Now assume that $r>m$. Then $x=\tau_{mr}(x_0)$ for some $x_0\in a(m)$. Hence there is $y_0\in b(m)$ such that $m\VDash_K \phi(x_0,y_0)$. Therefore, $r\VDash_K \phi(x,\tau_{mr}(y_0))$.
	This proves our desired result as $\tau_{mr}(y_0)\in b(r)$.
        
    It remains to show that the Axiom of Subset Collection is valid in $K$. Since Subset Collection is equivalent to Fullness under Strong Collection, it is sufficient to check that Fullness is valid in $K$.
    
    Let $a,b\in K(p)$. Take a large $m\ge p$ satisfying $a,b\in K^m(p)$, and follow the proof of \autoref{Theorem:KripkeModelsCZF} with some necessary 
    modifications. Then define $A_q$ and $B$ as follows:
    \begin{equation*}
        A_q=\bigcup_{q\le r\le m}\{r\}\times a(r) \quad\text{ and }\quad B=\bigcup_{p\le r\le m}b(r).
    \end{equation*}
    By Collection, we can find $\mathbb{C}$ such that for each $q$ satisfying
    $p\le q\le m$, there is $\mathcal{F}\in\mathbb{C}$ which satisfies
	\eqref{Formula:SubCollKripke00}.
	For each $p\le q\le m$, define $\mathcal{Z}_q$ as in \eqref{Formula:SubCollKripke01}. 
	For given $p\le q\le m$ and $C\in\mathcal{Z}_q$, define $z_{q,C}(r)$ same as \eqref{Formula:SubCollKripke02} if $q\le r\le m$.
	For $r>m$, define $z_{q,C}(r)=\tau''_{mr}[z_{q,C}(m)]$.	Then we can see that $z_{q,C}\in K^m(q)$.
    We finally let
    \begin{equation*}
	    c(q)=\begin{cases}
		\{\tau_{sq}(z_{s,C})\mid p\le s\le q\land C\in\mathcal{Z}_q\} &
		\text{if $q\le m$,} \\
		\tau_{mq}''[c(m)] & \text{if $q>m$.}
	 	\end{cases}
    \end{equation*}
    
    We claim that $c$ witnesses Fullness. We will show that
    \begin{equation*}
        p\VDash_K \forall x\colon a\rightrightarrows b \exists y\in c [ 
        (y\colon a\rightrightarrows b)\land y\subseteq x].
    \end{equation*}
    
    Let $q\ge p$, $x\in K(q)$ and $q\VDash_K x\colon a\rightrightarrows b$.
    Observe that $a$ and $b$ are not expanding after the stage $m$.
    Consider $x'$ of domain $\uparrow q$ given by 
    \begin{equation*}
        x'(r) = \begin{cases}
        x(r) & \text{if $r\le m$}, \\
        \tau_{mr}''[x(m)] & \text{if $r>m$.}
        \end{cases}
    \end{equation*}
	We will see that $x'$ is a `support' of $x$ that does not expand hereditarily after stage $m$.
	
	By definition, $x'$ does not grow hereditarily after stage $m$.  Furthermore, every elements of $x$ is not expanding hereditarily after stage $m$ as $q\VDash x\subseteq a\times b$, and so do elements of $x'$.
	Therefore, $x'\in K^m(q)$. Moreover, it is easy to see that $q\VDash_K x'\subseteq x$. Before considering $x'$ instead of $x$, we need to see that $q\VDash_K x'\colon a\rightrightarrows b$ holds. We shall prove the following sentence:
    \begin{equation*}
	    q\VDash_K \forall u\in a\exists v\in b (\op(u,v)\in x').
    \end{equation*}
	Let $r\ge q$ and $u\in a(r)$. If $r\le m$, then this follows from $q\VDash_K x\colon a\rightrightarrows b$. If $r>m$, then $u=\tau_{mr}(u_0)$ for some $u_0\in a(m)$, and we can find $v_0\in b(m)$ such that $\op(u_0,v_0)\in x(m)$.
	Therefore, $\op(u,\tau_{mr}(v_0)) = \tau_{mr}(\op(u_0,v_0))\in \tau_{mp}''[x(m)]=x'(r)$.
    
    Hence, there is $\mathcal{F}\in\mathbb{C}$ and $C\in\mathcal{F}$ satisfying \eqref{Formula:SubCollKripke00} for $x'$.
	It remains to show that $q\VDash z_{q,C}\subseteq x'$, but this follows from $C\subseteq R_x$ and the fact that both $z_{q,C}$ and $x'$ do not grow after stage $m$.
\end{proof}

\begin{theorem}\label{Theorem:Lubersky1modelADCom}
    $K$ validates $\mathsf{ADCom}$.
\end{theorem}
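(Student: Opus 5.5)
By \autoref{Theorem:ADComAndWDec}, and since $K$ satisfies $\mathsf{CZF}$, it suffices to show that $K$ validates the non-existence of $\WDec(1)$. By the proof of that theorem, $\WDec(1)$ exists as soon as $\{1\}^\dcom=\{x\subseteq 1\mid \lnot\lnot(x=1)\}$ (\autoref{Theorem:DComof2}) is a set. So I will show that no stage $p$ forces the existence of a set containing $\{1\}^\dcom$. Concretely, I will refute the following: there are $p\in\omega$ and $y\in K(p)$ with
\begin{equation*}
	p\VDash_K \forall z\bigl[(z\subseteq \check{1}\land \lnot\lnot(z=\check{1}))\to z\in y\bigr].
\end{equation*}
The idea is the one behind Lubarsky's failure of $\mathsf{Pow}$. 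Every set at stage $p$ lies in some $K^m(p)$, so it stops growing hereditarily after stage $m$. Meanwhile, the double complement of $\{1\}$ must contain names of subsets of $1$ that only become $1$ arbitrarily late.

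Fix such $p$ and $y$, and choose $m\ge p$ with $y\in K^m(p)$. Pick $n>m$ and define a function $z$ with domain $\uparrow m$ by
\begin{equation*}
	z(r)=\begin{cases} \varnothing & \text{if } m\le r<n,\\ \{\tau_{0r}(\check{0})\} & \text{if } r\ge n.\end{cases}
\end{equation*}
Monotonicity is clear, and $z$ stops expanding hereditarily after stage $n$. Its elements are restrictions of $\check 0\in K^0(0)$, so \autoref{Lemma:ClosureLemmaForK} gives $z\in K^n(m)\subseteq K(m)$. I then check two forcing facts directly from the definition of $\VDash_K$:
\begin{enumerate}
	\item $m\VDash_K z\subseteq\check{1}$, which holds because $z(r)\subseteq\check 1(r)$ for all $r\ge m$;
	\item $m\VDash_K\lnot\lnot(z=\check 1)$. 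For this, every $r\ge m$ has an extension $s=\max\{r,n\}$ with $s\VDash_K \tau_{ms}(z)=\check 1$, because the restrictions of $z$ and $\check 1$ to $\uparrow s$ coincide. Linearity of $\omega$ turns this into exactly the forcing clause for $\lnot\lnot$.
\end{enumerate}
Hence the hypothesis at stage $m\ge p$ yields $m\VDash_K z\in\tau_{pm}(y)$, that is, $z\in y(m)$.

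The contradiction comes from the hereditary condition. Since $y\in K^m$, the inductive definition $\Phi_m$ places every element of $y(m)$ in $K^m$, so $z\in K^m(m)$. In particular, condition (3) of $\Phi_m$ requires $z(n)=\tau''_{mn}[z(m)]$. But $z(m)=\varnothing$, while $z(n)\neq\varnothing$. This refutes the hypothesis, so no stage forces that $\{1\}^\dcom$ is a set. By intuitionistic logic, $K$ validates that $\WDec(1)$ does not exist, and therefore validates $\mathsf{ADCom}$.

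The main point needing care is the claim that elements of $y(m)$ lie in $K^m(m)$ rather than merely in $K(m)$. This is exactly why the closure lemma cannot be weakened to $x(q)\subseteq K(q)$, as the paper remarks after \autoref{Lemma:ClosureLemmaForK}. I would verify it by unwinding the least fixed point: if $y\in K^m$, then $\langle a,y\rangle\in\Phi_m$ for some $a\subseteq K^m$, and $y(m)\subseteq a$.
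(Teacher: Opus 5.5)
Your proposal is correct and is essentially the paper's proof. The paper also uses names that are empty up to some stage and equal to $\check 1$ afterwards (its $\check{1}_p$), observes that they are forced to be $\lnot\lnot$-equal to $\check 1$, argues that no set in any $K^m$ can contain them all because its elements must stop growing after stage $m$, and concludes via \autoref{Theorem:ADComAndWDec}. Your version is more careful than the paper's, since it handles an arbitrary stage $p$ and justifies why elements of $y(m)$ lie in $K^m(m)$. The slips are cosmetic: the step you actually need is the trivial inclusion $\{1\}^\dcom\subseteq\WDec(1)$, not the converse implication you cite, and linearity is not needed for the forcing clause of $\lnot\lnot$.
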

\begin{proof}
    Consider the following name:
    \begin{equation*}
        \check{1}_p(q) = \begin{cases} 
            \varnothing & \text{if $p<q$}\\
            \{\check{0}\} & \text{if $p\ge q$}.
        \end{cases}
    \end{equation*}
    We can see that $\check{1}_p\in K^p(0)$ and
    $0\VDash_K \lnot\lnot(\check{1}_p=\check{1})$ holds. We prove the latter one: 
    $0\VDash_K \check{1}_p\subseteq\check{1}$ is obvious. For the double negation
    of the remaining inclusion, observe that $0\VDash_K \lnot\lnot\phi$ if and only if
    there is $p$ such that $p\VDash_K \phi$ due to linearity of the Kripke frame.
    Thus $K$ thinks the double complement of $\{1\}$ contains all of
    $\check{1}_p$ for all $p\in\omega$, if it exists.
    
    However, if a Kripke set contains all of $\check{1}_p$, then
    it is a subset of none of $K^m$ for all $m\in\omega$. Hence no
    Kripke sets contain all of $\check{1}_p$. Therefore, no Kripke name is a 
    double complement of $\{1\}$. This proves $\WDec(1)$ does not exist, and
    so $\mathsf{ADCom}$ holds by \autoref{Theorem:ADComAndWDec}.
\end{proof}

Notice that the whole construction is conveyed over $\mathsf{CZF}$, so we can derive the following equiconsistency result:
\begin{corollary}\pushQED{\qed} 
$\mathsf{CZF}$ and $\mathsf{CZF+ADCom}$ are equiconsistent.
\qedhere
\end{corollary}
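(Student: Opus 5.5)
The corollary is the usual one-line argument, and I will make it explicit. One direction is trivial: any model of $\mathsf{CZF+ADCom}$ is a model of $\mathsf{CZF}$, so $\mathrm{Con}(\mathsf{CZF+ADCom})$ implies $\mathrm{Con}(\mathsf{CZF})$ outright.

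For the converse I will formalize the construction of this section inside $\mathsf{CZF}$ as an interpretation. Every ingredient was defined in $\mathsf{CZF}^-$ or $\mathsf{CZF}$:
\begin{itemize}
\item the Kripke universe $V^\omega$, via the Class Inductive Definition Theorem \autoref{Theorem:ClassInductiveDefThm} (with no appeal to Powerset);
\item the subclasses $K^m(p)$, via the inductive definitions $\Phi_m$;
\item the forcing relation $\VDash_K$, defined schematically formula by formula.
\end{itemize}
The frame $\omega$ is provably linearly ordered in $\mathsf{CZF}$, so linearity costs no $\mathsf{\Delta_0\mhyphen LEM}$. This was the stated reason for replacing $\Ord$ by $\omega$.

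The translation is $\phi\mapsto$ ``$0\VDash_K\phi$''. For each axiom $\phi$ of $\mathsf{CZF}$, the preceding theorem shows $\mathsf{CZF}\vdash 0\VDash_K\phi$. \autoref{Theorem:Lubersky1modelADCom} gives $\mathsf{CZF}\vdash 0\VDash_K\mathsf{ADCom}$. The proposition that Kripke models validate intuitionistic logic, adapted to $K$ as in the soundness argument for $V^\mathbb{P}$, shows that $\VDash_K$ is closed under derivations. Hence any proof of $\bot$ in $\mathsf{CZF+ADCom}$ transforms into a proof in $\mathsf{CZF}$ of $0\VDash_K\bot$. But $p\VDash_K\bot$ holds for no $p$, so $\mathsf{CZF}$ itself proves $\bot$. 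Thus $\mathrm{Con}(\mathsf{CZF})\to\mathrm{Con}(\mathsf{CZF+ADCom})$.

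The only step needing care is uniformity, that is, whether the soundness argument is available inside $\mathsf{CZF}$ at all. The concern is that proving ``$0\VDash_K\phi$'' may seem to need Full Separation or Powerset. It does not: $\VDash_K$ is handled as a scheme, one formula at a time, exactly as in the proof of $K\vDash\mathsf{CZF}$, and that proof explicitly used only the axioms of $\mathsf{CZF}$ together with Strong Collection and Subset Collection in the ground model. Granting this, the corollary follows immediately.
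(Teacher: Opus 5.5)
Your argument follows the paper's route. The paper's entire proof is the remark that the construction of $K$ is carried out in $\mathsf{CZF}$, and you have unpacked it into the interpretation $\phi\mapsto(0\VDash_K\phi)$ plus soundness. The gap is the step you treat as settled: that the preceding theorem gives $\mathsf{CZF}\vdash 0\VDash_K\phi$ for every axiom $\phi$ of $\mathsf{CZF}$. For Strong Collection, and even plain Collection, this is false. So the real issue is not whether soundness can be formalized in $\mathsf{CZF}$, but whether $K\vDash\mathsf{CZF}$ at all.

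In the paper's verification of Strong Collection, the name $b$ is only asserted to lie in $K(q)$. Each collected witness $y\in C$ lies in some $K^{m_y}$, and nothing bounds the stages $m_y$. Over a frame like $\Ord$ one could take their supremum; over $\omega$ no such stage exists. The fact used to prove $\mathsf{ADCom}$, that no set hereditarily contains all the names that are empty before stage $p$ and equal to $\check 1$ afterwards, is exactly what defeats Collection.

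Here is a concrete counterexample. Let $c\in K^1(0)$ be the name with $c(0)=\varnothing$ and $c(r)=\{\tau_{0r}(\check 0)\}$ for $r\ge 1$. Let $\phi(n,y)$ say the following:
\begin{itemize}
\item $y$ is a function with domain $n+1$ and $y(0)=c$;
\item for all $i\le n$, $y(i)\subseteq 1$ and $\lnot\lnot(y(i)=1)$;
\item for all $i<n$, $y(i+1)\subseteq y(i)$ and $(y(i)\subseteq y(i+1))\to y(i+1)=1$.
\end{itemize}
Then $0\VDash_K\forall n\in\check\omega\,\exists y\,\phi(n,y)$. At stage $0$ and $n=\check k$, let the value at $i\le k$ be the name that is empty before stage $i+1$ and equals $\check 1$ from then on; this $y$ lies in $K^{k+1}(0)$. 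At stages $r\ge 1$, where $c$ already equals $\check 1$, use the constant function with value $\check 1$.

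Now let $y$ be any witness forced at stage $0$ for $n=\check k$, and let $u_i\in K(0)$ be its value at $i$. By induction on $i$, $u_i(r)=\varnothing$ for all $r\le i$. Suppose $u_{i+1}(i+1)$ were inhabited. By monotonicity, the induction hypothesis, and $u_i,u_{i+1}\subseteq\check 1$, we get $u_i(r)\subseteq u_{i+1}(r)$ for every $r$, so $0\VDash_K u_i\subseteq u_{i+1}$. Then $0\VDash_K u_{i+1}=\check 1$, contradicting $u_{i+1}(0)\subseteq u_0(0)=c(0)=\varnothing$.

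Suppose some $b\in K(0)$ collected witnesses for all $n\in\check\omega$. Then $b\in K^M(0)$ for a single $M$. The value $u_M$ of the witness for $n=\check M$ is reached from $b$ by a finite $\in_0$-chain, so $u_M\in K^M(0)$. It is empty up to stage $M$ and does not grow after $M$, so it is empty at every stage. This contradicts $0\VDash_K\lnot\lnot(u_M=\check 1)$.

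This reasoning is constructive, so $\mathsf{CZF}$ proves that this instance of Collection is not forced in $K$. Neither your argument nor the paper's one-line proof establishes the corollary as written; a different model or a different argument is needed.
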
\popQED

\section{Double Complement over realizability}
\label{Section:MRrealizability}
It is natural to ask what axioms and propositions are compatible with $\mathsf{DCom}$, $\mathsf{NDCom}$, and $\mathsf{ADCom}$. This subsection aims to establish the persistence of these principles under modest assumptions.
We will not devote ourselves to explaining basic facts and notations on realizability models. We will use notations and theorems that come from \cite{McCartyPhD} and \cite{Rathjen2003Realizability}. Therefore, readers who are not familiar with McCarty-style realizability need to consult the mentioned articles.
The following lemma shows our cumulative hierarchy on $\mathcal{A}$-names are closed
under the internal equality on $V(\mathcal{A})$. Its proof is available in \cite[Chapter 2, Lemma 6.2.]{McCartyPhD}:
\begin{lemma}[The closure lemma, $\mathsf{CZF}$] \label{Lemma:ClosureOrig} \pushQED{\qed} 
    Let $\mathcal{A}$ be a pca and $x,y\in V(\mathcal{A})$. Then the following holds for any ordinal $\alpha$:
    \begin{enumerate}
        \item If $V(\mathcal{A})\models z\in x$ and $x\in V(\mathcal{A})_\alpha$, 
        then there is $\beta<\alpha$ such that $z\in V(\mathcal{A})_\beta$.
        \item If $x\in V(\mathcal{A})_\alpha$ and $V(\mathcal{A})\models x=y$ then
        $y\in V(\mathcal{A})_\alpha$. \qedhere
    \end{enumerate}
\end{lemma}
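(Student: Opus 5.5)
We prove both clauses together by induction on the ordinal $\alpha$, using the inductive definition of the hierarchy $V(\mathcal{A})_\alpha$ as in \cite{McCartyPhD} and \cite{Rathjen2003Realizability}. There $x\in V(\mathcal{A})_\alpha$ means $x\subseteq \mathcal{A}\times\bigcup_{\beta\in\alpha}V(\mathcal{A})_\beta$. Realizability of $z\in x$ and $x=y$ is defined by the usual mutual recursion:
\begin{itemize}
\item $e\Vdash z\in x$ iff there are $a$ and $w$ with $\langle a,w\rangle\in x$, $(e)_0\simeq a$ and $(e)_1\Vdash z=w$;
\item $e\Vdash x=y$ iff $e$ sends each $\langle a,w\rangle\in x$ to a realizer of $w\in y$, and sends each $\langle a,w\rangle\in y$ to a realizer of $w\in x$.
\end{itemize}
The statement is read as ``some realizer exists''.

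\textbf{Step 1 (clause (1) from clause (2) at lower levels).} Suppose $e\Vdash z\in x$ and $x\in V(\mathcal{A})_\alpha$. Unfolding the definition gives $\langle a,w\rangle\in x$ with $(e)_1\Vdash z=w$. By the definition of $V(\mathcal{A})_\alpha$, the name $w$ lies in $V(\mathcal{A})_\beta$ for some $\beta\in\alpha$. Clause (2) at level $\beta$, which the induction hypothesis supplies, then gives $z\in V(\mathcal{A})_\beta$.

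\textbf{Step 2 (clause (2) at level $\alpha$).} Suppose $e\Vdash x=y$ and $x\in V(\mathcal{A})_\alpha$. We must show every element $\langle b,u\rangle\in y$ has $u\in\bigcup_{\beta\in\alpha}V(\mathcal{A})_\beta$. From $e$ we obtain a realizer of $u\in x$. Step 1 at level $\alpha$ then places $u$ in some $V(\mathcal{A})_\beta$ with $\beta\in\alpha$. Step 1 at level $\alpha$ uses only clause (2) below $\alpha$, so the induction is not circular. Hence $y\subseteq\mathcal{A}\times\bigcup_{\beta\in\alpha}V(\mathcal{A})_\beta$, which means $y\in V(\mathcal{A})_\alpha$.

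\textbf{Main obstacle.} The difficulty is making the induction legitimate over $\mathsf{CZF}$, since the $V(\mathcal{A})_\alpha$ may be classes and the statement quantifies over realizers. I would phrase the claim as a single formula $\Psi(\alpha)$: for all $x,y,z$ and realizers, both clauses hold at level $\alpha$. I would then apply set induction on ordinals, which is available in $\mathsf{CZF}$ for arbitrary formulas by $\in$-induction. Two smaller points need care. The element $u$ in Step 2 must come from $y$ itself, so there is no extensionality gap. The step ``$w\in V(\mathcal{A})_\beta$ for some $\beta\in\alpha$'' must follow directly from the definition of the hierarchy, as it does for the inductively defined $V(\mathcal{A})$.
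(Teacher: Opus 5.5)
Your argument is correct and is the standard simultaneous induction on $\alpha$, with clause (1) at $\alpha$ obtained from clause (2) below $\alpha$ and clause (2) at $\alpha$ from clause (1) at $\alpha$; the paper gives no proof of its own and defers to McCarty (Chapter 2, Lemma 6.2) for it. Two small points should be made explicit:
\begin{enumerate}
\item In Step 1 you need the symmetry of realized equality, obtained by swapping the two components of the realizer of $z=w$, before you can apply clause (2) to $w$ and $z$.
\item If the stages are indexed as $V(\mathcal{A})_\alpha=\bigcup_{\beta\in\alpha}\mathcal{P}(\mathcal{A}\times V(\mathcal{A})_\beta)$, the convention of the cited sources, rather than as you state them, then Step 2 must keep the single $\beta\in\alpha$ with $x\subseteq\mathcal{A}\times V(\mathcal{A})_\beta$. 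Your Step 1 already supplies this, because it places $u$ in the same $V(\mathcal{A})_\beta$ as the witness $w$.
\end{enumerate}
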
\popQED

The following lemma ensures we can apply separation for $\Vdash \phi(x)$ when $\phi(x)$ is a bounded formula. See \cite[Lemma 4.5.]{Rathjen2003Realizability} for its proof:
\begin{lemma}[$\mathsf{CZF}$]\label{Lemma:BoundedRealizeSeparation} \pushQED{\qed} 
	Let $\phi(x)$ be a bounded formula with parameters from $V(\mathcal{A})$.
	If $x\subseteq V(\mathcal{A})$ is a set, then
	\begin{equation*}
		\{\langle e,c\rangle : e\in \mathcal{A}\land c\in x\land e\Vdash \phi(c)\}
	\end{equation*}
	is a set.\popQED
\end{lemma}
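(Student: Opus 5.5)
The plan is to prove, by induction on the bounded formula $\phi$, that the relation $e\Vdash\phi(c)$ for $e\in\mathcal{A}$ and $c$ ranging over $V(\mathcal{A})$ is equivalent to a $\Delta_0$-formula. Its parameters will be the carrier of $\mathcal{A}$, the graph of its application function, and a few auxiliary sets determined by the parameters of $\phi$ and by $x$. Once this holds, the displayed class is a $\Delta_0$-definable subclass of the set $\mathcal{A}\times x$, so it is a set by $\Delta_0$-Separation. We may take the application graph $\{\langle a,b,d\rangle \mid ab\simeq d\}\subseteq \mathcal{A}^3$ to be a set, since $\mathcal{A}$ is a set-sized pca. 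Then ``$ef$ is defined and equals $d$'' is $\Delta_0$ in this parameter.

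\textbf{Atomic case (main obstacle).} In McCarty's realizability, $e\Vdash a\in b$ and $e\Vdash a=b$ are defined by a simultaneous recursion on the names $a,b$. This is where the work lies, because the recursion quantifies over elements of names, not over $\mathcal{A}$ alone. First, I would show by well-founded recursion (the Well-founded recursion theorem for $\in$ on pairs, using $\in$-induction) that the class function
\begin{equation*}
	(a,b)\mapsto \bigl\langle \{e\in\mathcal{A}\mid e\Vdash a=b\},\ \{e\in\mathcal{A}\mid e\Vdash a\in b\}\bigr\rangle
\end{equation*}
is a set-valued class function. At each step the value is obtained from the values on pairs of components of $a,b$ by a $\Delta_0$-condition over $\mathcal{A}$, the application graph, and the previously computed values, which are available as a set by Replacement. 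So $\Delta_0$-Separation applies at each stage.

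Next, I would fix a transitive set $T$ containing $x$, all parameters of $\phi$, and all names occurring hereditarily in them. Restricting the above function to $T\times T$ gives, by Replacement, a set $E$. For $a,b\in T$, the relations $e\Vdash a=b$ and $e\Vdash a\in b$ become $\Delta_0$ in the parameter $E$, via $\exists\langle a',b',u,v\rangle\in E\,(a'=a\land b'=b\land e\in u)$ and similarly for membership.

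\textbf{Inductive steps.} For $\land$ and $\lor$ I use the usual clauses with the pairing and projection combinators, which are bounded in $\mathcal{A}$ and the application graph. For $\to$, the clause $e\Vdash\psi\to\chi$ iff $\forall f\in\mathcal{A}\,(f\Vdash\psi\to ef\!\downarrow\land\, ef\Vdash\chi)$ is bounded since $f$ ranges over the set $\mathcal{A}$; $\lnot$ is a special case. For bounded quantifiers, $e\Vdash\forall y\in a\,\psi$ iff $\forall\langle f,b\rangle\in a\,(ef\!\downarrow\land\, ef\Vdash\psi(b))$, and $e\Vdash\exists y\in a\,\psi$ iff $\exists\langle f,b\rangle\in a\,(j_0e=f\land j_1e\Vdash\psi(b))$. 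Both quantify only over the set $a\in T$, and the witnesses $b$ stay in $T$ by transitivity, so the induction hypothesis applies.

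Combining these steps gives a single $\Delta_0$ definition of $\{\langle e,c\rangle\in\mathcal{A}\times x \mid e\Vdash\phi(c)\}$, and $\Delta_0$-Separation finishes the proof.
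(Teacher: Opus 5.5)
Your proposal is correct and is essentially the standard argument behind this lemma. The paper does not prove the lemma itself but cites it from Rathjen (Lemma 4.5), and that argument runs the same way: make the atomic realizability classes set-valued by recursion, induct on the bounded formula, and conclude by $\Delta_0$-Separation inside $\mathcal{A}\times x$.

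One detail needs tidying: your recursion is not on pairs of components, because the $\in$-clause at $(a,b)$ calls the $=$-clause at $(a,c)$, which is not a pair of components. To fix this, define $e\Vdash a=b$ alone by recursion on pairs consisting of a component of $a$ and a component of $b$ (in either order), and then read off $e\Vdash a\in b$ explicitly. Define it for arbitrary sets, so that restricting to $T\times T$ makes sense even though $T$ contains non-names.
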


We can prove that $\mathsf{DCom}$ is preserved under realizability by making use of the closure lemma. We need $\Sigma_1$-separation or Regular Extension Axiom ($\mathsf{REA}$) in our proof.
\begin{theorem}[$\mathsf{CZF + DCom + \Sigma\mhyphen Sep}$ or $\mathsf{CZF + DCom + REA}$]
\label{Theorem:PreservationDComOrig} Let $\mathcal{A}$ be a pca. Then $V(\mathcal{A})\models \mathsf{DCom}$.
\end{theorem}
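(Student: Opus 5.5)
Fix $x\in V(\mathcal{A})$ and aim to build a name $y\in V(\mathcal{A})$ that realizes being the double complement of $x$. In McCarty realizability, $e\Vdash\lnot\lnot(z\in x)$ is independent of $e$: some (equivalently every) $e$ realizes it iff for every $f$ some $g$ realizes $z\in x$. That is, $z$ is classically (in the metatheory sense of $\lnot\lnot$) realizably a member of $x$, namely $\lnot\lnot\exists g\,(g\Vdash z\in x)$. This suggests the candidate
\[
y=\{\langle 0,z\rangle : z\in V(\mathcal{A})_\alpha \land \lnot\lnot\exists g\,(g\Vdash z\in x)\},
\]
where $x\in V(\mathcal{A})_\alpha$. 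Two problems must be solved to make $y$ a set: bounding the candidates $z$, and separating by the non-bounded condition.

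For the bound, use \autoref{Lemma:ClosureOrig}. If $\exists g\,(g\Vdash z\in x)$, then by the closure lemma $z$ is internally equal to some element of $V(\mathcal{A})_\beta$ with $\beta<\alpha$. Better still, $g\Vdash z\in x$ unfolds to $\exists d\,(\langle (g)_0,d\rangle\in x\land (g)_1\Vdash z=d)$, and the names $d$ come from the set $\bigcup\ran x$. Rather than collecting all $z$ (which, like in the Kripke case, could be a proper class), it suffices to collect, for every $z$ with $\lnot\lnot(z\in x)$ realized, a name equal to it. So I would restrict $y$ to candidates drawn from the set $D=\{d: \exists a\,\langle a,d\rangle\in x\}$. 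The step showing that $\lnot\lnot$-membership transfers to a representative in $D$ is delicate: from $\lnot\lnot\exists g\,(g\Vdash z\in x)$ we cannot extract a $d$. Hence I instead take $y$ to consist of all $z$ in a suitable set-sized stage, using $\mathsf{DCom}$ in $V$.

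Concretely, I work with the set $S=\{\langle e,z\rangle: e\in\mathcal{A}, z\in V(\mathcal{A})_\alpha, e\Vdash z\in x\}$, which is a subclass of $\mathcal{A}\times V(\mathcal{A})_\alpha$. Under $\mathsf{DCom}$ in $V$, this requires $V(\mathcal{A})_\alpha$ to be a set (available since $\mathsf{DCom}$ implies $\mathsf{Pow}$ by \autoref{Corollary:DComToPow}). Then apply $\mathsf{DCom}$ in $V$ to the set $T=\{z:\exists e\,\langle e,z\rangle\in S\}$ to get $T^{\lnot\lnot}$, and intersect it with $V(\mathcal{A})_\alpha$. Both $S$ and $T$ must exist. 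The formula $e\Vdash z\in x$ is unbounded, since it contains the equality realizability defined by recursion, so $\Sigma_1$-Separation is used here. Alternatively, with $\mathsf{REA}$, it is known that realizability of atomic formulas is $\Delta$-definable inside a regular set, following Rathjen's treatment; this is the main technical obstacle, and I would handle it by appealing to the fact that $e\Vdash a\in b$ and $e\Vdash a=b$ are $\Sigma_1$ (and $\Delta_1$ relative to $V(\mathcal{A})_\alpha$ given inductive definitions). Then define $y=\{\langle 0,z\rangle : z\in T^{\lnot\lnot}\cap V(\mathcal{A})_\alpha\}$.

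Finally, verify $V(\mathcal{A})\models\forall z(\lnot\lnot(z\in x)\leftrightarrow z\in y)$. For the right-to-left direction, $z\in T^{\lnot\lnot}$ gives $\lnot\lnot\exists e\,(e\Vdash z\in x)$, which is exactly the realizability of $\lnot\lnot(z\in x)$; the realizer is independent of the input, and extensionality under internal equality of $z$ is routine. For the left-to-right direction, given $z$ with $\lnot\lnot(z\in x)$ realized, by the closure lemma $\lnot\lnot(z\in V(\mathcal{A})_\alpha)$ holds. Since membership in $V(\mathcal{A})_\alpha$ should be stable (it is a $\Delta_0$-like recursive condition, and by the second clause of the closure lemma it is invariant under equality), this gives $z\in V(\mathcal{A})_\alpha$ and $z\in T^{\lnot\lnot}$, so $\langle 0,z\rangle\in y$ and a fixed realizer (pairing $0$ with a realizer of $z=z$) works. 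The weak point is this stability claim: if it fails, I would replace $z$ by an equal name in $V(\mathcal{A})_\alpha$, using only $\lnot\lnot$ and exploiting that the conclusion $z\in y$ is realized by a combinatory term not depending on the witness.
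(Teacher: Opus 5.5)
\textbf{The gap is the stability step.} In the left-to-right direction you pass from $\lnot\lnot(z\in V(\mathcal{A})_\alpha)$ to $z\in V(\mathcal{A})_\alpha$, and nothing justifies this. Membership in $V(\mathcal{A})_\alpha$ unfolds to $\exists\beta\in\alpha\,(z\subseteq\mathcal{A}\times V(\mathcal{A})_\beta)$. Being $\Delta_0$ does not make a formula with an existential quantifier $\lnot\lnot$-stable. Invariance under realized equality (clause (2) of \autoref{Lemma:ClosureOrig}) is a different property altogether.

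In $\mathsf{CZF}$ stability genuinely fails for some levels. Take the ordinal $\alpha=3\cup\{3\mid P\}$ and $z=\{\langle\mathbf{0},w\rangle\}$ with $w\in V(\mathcal{A})_3\setminus V(\mathcal{A})_2$. Then $z\in V(\mathcal{A})_\alpha\leftrightarrow P$, so stability at this $\alpha$ amounts to $\lnot\lnot P\to P$.

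Your fallback cannot repair this. Realizing $z\in y$ requires an actual $d$ with $\langle (g)_0,d\rangle\in y$ and $(g)_1\Vdash z=d$. Every such $d$ lies in $V(\mathcal{A})_\alpha$, so clause (2) of the closure lemma forces $z\in V(\mathcal{A})_\alpha$ positively anyway. Hence your $y$ works only if $\alpha$ is chosen so that every $z\in V(\mathcal{A})$ with $\lnot\lnot(V(\mathcal{A})\models z\in x)$ already lies in $V(\mathcal{A})_\alpha$. You give no way to choose such an $\alpha$.

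There is also a warning sign: as written, your argument never really uses $\Sigma$-Separation or $\mathsf{REA}$. The set $S$ exists already by \autoref{Lemma:BoundedRealizeSeparation}, because $z\in x$ is bounded and $V(\mathcal{A})_\alpha$ is a set by $\mathsf{Pow}$. So all the difficulty sits in the unproved stability claim.

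\textbf{How the paper avoids this.} It intersects with $V(\mathcal{A})$ rather than with $V(\mathcal{A})_\alpha$, taking $y=\{\mathbf{0}\}\times(V(\mathcal{A})_\alpha^{\dcom}\cap V(\mathcal{A}))$. For $z$ in the range of the internal quantifier, $z\in V(\mathcal{A})$ is given positively. The hypothesis $\lnot\lnot(V(\mathcal{A})\models z\in x)$ together with the closure lemma gives $\lnot\lnot(z\in V(\mathcal{A})_\alpha)$, i.e.\ $z\in V(\mathcal{A})_\alpha^{\dcom}$. So $\langle\mathbf{0},z\rangle\in y$ itself, and a realizer independent of $z$ works, with no stability needed.

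The price is that ``$u\in V(\mathcal{A})$'' is a $\Sigma$-formula, and forming this intersection is exactly where $\Sigma$-Separation is used. Under $\mathsf{REA}$ one instead takes a regular $B\supseteq V(\mathcal{A})_\alpha^{\dcom}$ and uses that $B\cap V(\mathcal{A})$ is a set. Note also that $\mathsf{DCom}$ as stated only asks for the inclusion $\lnot\lnot(z\in x)\to z\in y$, so your auxiliary set $T$ and the converse direction are unnecessary.
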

\begin{proof}
    Let $x$ be an $\mathcal{A}$-name.
    By \autoref{Lemma:ClosureOrig}, we can find an ordinal $\alpha$ such that
    \begin{equation*}
        \forall z\in V(\mathcal{A}): V(\mathcal{A})\models z\in x 
        \implies z\in V(\mathcal{A})_\alpha.
    \end{equation*}
    Define $y=\{\mathbf{0}\} \times V(\mathcal{A})_\alpha^\dcom\cap V(\mathcal{A})$. 
    Before proceeding with the proof, we have to show that
	\begin{equation*}
		V(\mathcal{A})_\alpha^\dcom \cap V(\mathcal{A})
		= \{x \in V(\mathcal{A})_\alpha^\dcom \mid x\in V(\mathcal{A})\}
	\end{equation*} is a set. If $\mathsf{\Sigma\mhyphen Sep}$ holds, then we may use the fact that $x\in V(\mathcal{A})$ is a $\Sigma$-formula.
    We need the following fact when the case $\mathsf{REA}$ holds: if $B$ is a regular set, then $B\cap V(\mathcal{A})$ is a set. (See Lemma 6.1 of \cite{Rathjen2003Realizability} for its proof.) Take a regular set $B$ that contains $V(\mathcal{A})_\alpha^\dcom$. Then we have
    $V(\mathcal{A})_\alpha^\dcom \cap V(\mathcal{A}) = V(\mathcal{A})_\alpha^\dcom \cap B\cap V(\mathcal{A})$, so $V(\mathcal{A})_\alpha^\dcom \cap V(\mathcal{A})$ is a set.
    It remains to show that
    \begin{equation*}
        V(\mathcal{A})\models \forall z: \lnot\lnot (z\in x)\to z\in y.
    \end{equation*}
    Suppose that $e\Vdash \lnot\lnot (z\in x)$, which is equivalent to
    $\lnot\lnot(\exists f\in\mathcal{A} : f\Vdash z\in x)$. Since
    $\neg\neg(V(\mathcal{A})\models z\in x)$, we have $\neg\neg 
    (z\in V(\mathcal{A})_\alpha)$.
    Hence $z\in V(\mathcal{A})_\alpha^\dcom$. Therefore, we can see
    \begin{equation*}
        \mathbf{p00} \Vdash \lnot\lnot z\in y.
    \end{equation*}
    Now we can see that $\bblambda e. \mathbf{p00}$ is a realizer of $\mathsf{DCom}$.
\end{proof}
Note that we use $\mathsf{Pow}$ in this proof to ensure $V(\mathcal{A})_\alpha$ is a set. Since $\mathsf{CZF+DCom}$ implies $\mathsf{Pow}$, we can use the power sets.
The next theorem, which shows $\mathsf{NDCom}$ is absolute under McCarty-styled realizability, also uses $\mathsf{Pow}$. Unlike the previous result, we have to assume $\mathsf{Pow}$ separately:
\begin{theorem}[$\mathsf{CZF + Pow + NDCom}$]
    $V(\mathcal{A})\models \mathsf{NDCom}$.
\end{theorem}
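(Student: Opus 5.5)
The idea is to transfer a witness of $\mathsf{NDCom}$ in $V$ to its canonical name. Let $a$ be a set with no double complement, and work with the canonical name $\check{a}=\{\langle \mathbf{0},\check{b}\rangle \mid b\in a\}$ of McCarty-style realizability. We aim to show that some fixed realizer, say $\bblambda e.\mathbf{0}$, realizes
\begin{equation*}
\forall y\,\lnot\bigl[\forall z\bigl(\lnot\lnot(z\in \check{a})\to z\in y\bigr)\bigr].
\end{equation*}
Since a negation is realized by anything exactly when nothing realizes the negated formula, it suffices to prove the following. For no $y\in V(\mathcal{A})$ and no $f\in\mathcal{A}$ do we have $f\Vdash \forall z(\lnot\lnot(z\in\check{a})\to z\in y)$. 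We argue by contradiction and assume such $y$ and $f$ are given.

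The first step is to turn this $y$ into a double complement of $a$ in $V$, which $\mathsf{NDCom}$ forbids. Since $\mathsf{Pow}$ holds, $V(\mathcal{A})_\alpha$ is a set for each $\alpha$. By \autoref{Lemma:ClosureOrig} we can pick $\alpha$ such that every $z$ with $V(\mathcal{A})\models z\in y$ lies in $V(\mathcal{A})_\alpha$. By \autoref{Lemma:ClosureOrig}(2) we may also assume $\alpha$ bounds the ranks of the $\check{b}$ for $b\in a$. Now take $b$ with $\lnot\lnot(b\in a)$. Put $z=\check{b}$ and use $\lnot\lnot(b\in a)\to\lnot\lnot(\mathbf{0}\Vdash\check{b}\in\check{a})$. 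Since $\lnot\lnot\phi$ is realized by any index once $\lnot\lnot(\text{$\phi$ is realized})$ holds, we get $\mathbf{0}\Vdash\lnot\lnot(\check{b}\in\check{a})$. Hence $f\mathbf{0}\mathbf{0}$ (with the appropriate application pattern) is defined and realizes $\check{b}\in y$. This puts $\check{b}$ into a set depending only on $y$, namely
\begin{equation*}
Y=\{w\in V(\mathcal{A})_\alpha \mid \exists g\in\mathcal{A}\,(g\Vdash w\in y)\}.
\end{equation*}
Here I expect the main obstacle: $Y$ must actually be a set without full separation. I would first try bounding $Y$ by the set $\{w \mid \exists g\,\exists\langle g',u\rangle\in y\,(\ldots)\}$. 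Unfolding $g\Vdash w\in y$ as ``$\exists \langle h,u\rangle \in y$ with $\jmath_1 g\Vdash w=u$'' and using Strong Collection is not enough, because realizability of equality is unbounded. Instead, I would take $Y$ as a subset of the power set $\mathcal{P}(V(\mathcal{A})_\alpha)$ and avoid separation altogether. That is, I never form $Y$. Instead I define
\begin{equation*}
c=\{b\in \operatorname{TC}(\ldots)\mid \ldots\}
\end{equation*}
via the injection $b\mapsto\check{b}$, working inside the set $\mathcal{P}(V(\mathcal{A})_\alpha)$. A cleaner option is to note that $\mathsf{NDCom}$ in $V$ gives $\lnot$ of the existence of any set $c$ with $\forall b(\lnot\lnot b\in a\to b\in c)$. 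So it suffices to exhibit \emph{some} set $c$ containing the double complement, not necessarily equal to it. Take $c=\{b\mid \check{b}\in V(\mathcal{A})_\alpha\}$, obtained by Replacement/Separation on the check-map restricted to a set of ranks. Its existence follows by $\in$-recursion, since $\check{b}\in V(\mathcal{A})_\alpha$ iff $\operatorname{rank} b<\alpha$, giving $c=V_\alpha$, a set by $\mathsf{Pow}$. Then every $b$ with $\lnot\lnot(b\in a)$ yields $\check{b}\in V(\mathcal{A})_\alpha$, hence $b\in V_\alpha$. So $V_\alpha$ contains $a^\dcom$, contradicting $\mathsf{NDCom}$ for $a$.

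Finally, I would verify the small point that $\lnot\lnot(b\in a)$ suffices to get $\check{b}\in V(\mathcal{A})_\alpha$ outright, not merely doubly negated. This is why the bound should come from $y$, via the realized membership $f\mathbf{0}\mathbf{0}\Vdash\check{b}\in y$ and \autoref{Lemma:ClosureOrig}, which is a positive fact. It should not come from $a$. The realizer $\bblambda e.\mathbf{0}$ then witnesses $\mathsf{NDCom}$ in $V(\mathcal{A})$, with $\check{a}$ as the existential witness.
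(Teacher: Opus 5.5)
Your final argument is correct and is essentially the paper's proof. It uses the same ingredients: the canonical name $\check a$, the closure lemma bounding everything realized to lie in $y$, the $\in$-induction showing $\check b\in V(\mathcal{A})_\alpha$ forces $b\in V_\alpha$, and the observation that $\lnot\lnot(b\in a)$ makes $\lnot\lnot(\check b\in\check a)$ realized by anything. The one difference is that you take $V_\alpha$ itself as the set containing $a^{\lnot\lnot}$, whereas the paper first forms $c=\{x\mid\exists e\,(e\Vdash\check x\in b)\}\subseteq V_\alpha$ via \autoref{Lemma:BoundedRealizeSeparation}; your shortcut is legitimate since $\mathsf{NDCom}$ only forbids sets \emph{containing} the double complement, and the abandoned detours about $Y$ and $\operatorname{TC}(\ldots)$ can simply be deleted.
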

\begin{proof}
	Let $a$ be an instance of $\mathsf{NDCom}$, so no set is a double complement of $a$. Now assume that $V(\mathcal{A})$ believes $\check{a}$ has a double complement: formally, there is $b\in V(\mathcal{A})$ such that
    \begin{equation*}
        V(\mathcal{A})\models \forall x:\lnot\lnot(x\in\check{a})\to x\in b.
    \end{equation*}
    Take $c=\{x\mid \exists e\in\mathcal{A}: e\VDash \check{x}\in b\}$. 
	We will prove that $c$ is a set by showing the following general statement: for each ordinal $\alpha$ and a set $x$, if $\check{x}\in V(\mathcal{A})_\alpha$ then $x\subseteq V_\alpha$.
	
	Its proof uses induction on $x$: suppose that we have $\forall \beta\in\Ord[\check{y}\in V(\mathcal{A})_\beta\to y\subseteq V_\beta]$ for all $y\in x$. Now assume that $\check{x}=\{\langle\mathbf{0},\check{y} \rangle\mid y\in x\}\in V(\mathcal{A})_\alpha$.
	Then for each $y\in x$, there is $\beta\in \alpha$ such that $\check{y}\in V(\mathcal{A})_\beta$. By the inductive assumption,  $y\subseteq V_\beta$ for some $\beta\in\alpha$. Hence $x \subseteq \bigcup_{\beta\in\alpha} \mathcal{P}(V_\beta) = V_\alpha$.
    
    Therefore, $c$ is equal to $\{x\in V_\alpha \mid \exists e\in\mathcal{A} (e\Vdash \check{x}\in b)\}$ for some $\alpha\in\Ord$. (Take $\alpha$ such that $b\in V(\mathcal{A})_\alpha$.) By \autoref{Lemma:BoundedRealizeSeparation}, $c$ is a set.
	Moreover, $c$ includes a double complement of $a$ by the following calculation:
    \begin{equation*}
    \begin{array}{lcl}
        \lnot\lnot(x\in a) 
        &\implies& \lnot\lnot(V(\mathcal{A})\models\check{x}\in\check{a})\\
        &\iff& V(\mathcal{A})\models \lnot\lnot(\check{x}\in\check{a})\\
        &\implies& V(\mathcal{A})\models \check{x}\in b\\
        &\implies& x\in c,
    \end{array}
    \end{equation*}
    which contradicts the assumption that $a$ does not have a double complement.
\end{proof}

It remains to show that $\mathsf{ADCom}$ is persistent under realizability.
Since $\mathsf{ADCom}$ is incompatible with $\mathsf{Pow}$, the persistency must not rely on power sets.
This proof mimics the proof of \cite[Lemma 6.22]{ZieglerPhD}.
\begin{theorem}[$\mathsf{CZF}$]
    If $\WDec(1)$ does not exist in $V$, then it also does not exist in $V(\mathcal{A})$.
    In other words, $\mathsf{ADCom}$ is absolute under realizability.
\end{theorem}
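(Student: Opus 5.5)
We argue by contraposition: assuming $V(\mathcal{A})\models$ ``$\WDec(1)$ exists'', we extract from a witnessing name a set in $V$ which contains $\WDec(1)$. By $\Delta_0$-Separation that set then yields $\WDec(1)$ in $V$. Since \autoref{Theorem:ADComAndWDec} identifies $\mathsf{ADCom}$ with the non-existence of $\WDec(1)$ over $\mathsf{CZF}$, and $V(\mathcal{A})\models\mathsf{CZF}$, this gives the absoluteness claim. The template is the argument of \cite[Lemma 6.22]{ZieglerPhD} for $\mathcal{P}(1)$, adapted to weakly decidable subsets.

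First, fix $b\in V(\mathcal{A})$ and a realizer $e$ with
\[
e\Vdash \forall u\,\bigl[u\subseteq\check 1\land(\lnot(\check0\in u)\lor\lnot\lnot(\check0\in u))\to u\in b\bigr].
\]
To each $c\in\WDec(1)$ in $V$ we attach the name $u_c=\{\langle\mathbf 0,\check 0\rangle\mid 0\in c\}$. The key step is to check that $u_c$ is realizably weakly decidable, and that the realizer depends only on which disjunct holds.

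If $0\notin c$, then $u_c=\varnothing$, and $\mathbf{p}\mathbf{0}a$ realizes the disjunction for a trivial realizer $a$ of $\lnot(\check0\in u_c)$. If $\lnot\lnot(0\in c)$, we need a realizer of $\lnot\lnot(\check0\in u_c)$. Here I use that, in McCarty realizability, $f\Vdash\lnot\psi$ is equivalent to $\lnot\exists g\,(g\Vdash\psi)$. Hence any fixed element realizes $\lnot\lnot(\check0\in u_c)$ as soon as $\lnot\lnot\exists g\,(g\Vdash \check0\in u_c)$ holds, and this follows from $\lnot\lnot(0\in c)$ because $0\in c$ gives the realizer $\mathbf p\mathbf0\mathbf0$ (or whatever realizes $\check0=\check0$). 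So $\mathbf{p}\mathbf{1}a$ works in that case. Realizing $u_c\subseteq\check1$ is uniform.

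Applying $e$ then gives $d\in\mathcal{A}$ with $d\Vdash u_c\in b$ in either case. Unfolding membership, there is $\langle f,w\rangle\in b$ with $f\Vdash u_c=w$, where $f$ is obtained from $d$. Now form the set
\[
S=\bigl\{\{0\mid \exists g\in\mathcal{A}\,(g\Vdash \check0\in w)\}\bigm| \langle f,w\rangle\in b\bigr\},
\]
which is a set by Replacement. This definition is insensitive to $f$, so no uniformity issue arises.

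It remains to show $c\in S$. From $f\Vdash u_c=w$, the extensionality clauses of realizability for $=$ give
\[
0\in c\iff \exists g\,(g\Vdash\check0\in u_c)\iff\exists g\,(g\Vdash\check0\in w).
\]
Hence $c=\{0\mid\exists g\,(g\Vdash\check0\in w)\}\in S$. Thus $\WDec(1)\subseteq S$, and $\WDec(1)=\{x\in S\mid x=0\lor\lnot\lnot(x=1)\}$ is a set by $\Delta_0$-Separation and \autoref{Formula:WDec1}, contradicting the hypothesis.

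I expect the main obstacle to be the second equivalence: transferring ``$\check0\in u_c$ is realized'' to ``$\check0\in w$ is realized'' using only a realizer of $u_c=w$. Doing this requires unpacking McCarty's definition of $\Vdash$ for $=$ and $\in$, and possibly the closure lemma (\autoref{Lemma:ClosureOrig}), taking care that everything stays constructive, with no case split on $0\in c$. If that step proves awkward, the fallback is to use the realizer $d$ directly to locate the element of $b$ and to read off $c$ from it.
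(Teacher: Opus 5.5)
Your proposal is correct and is essentially the paper's own argument. The paper likewise attaches to each $c$ the name $\{\langle\mathbf{0},\varnothing\rangle\mid 0\in c\}$ (this is your $u_c$, since $\check{0}=\varnothing$), feeds it to the realizer to land in the witnessing name, and reads $c$ back off the resulting element using the two clauses of realized equality.

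The differences are cosmetic. The paper first reduces to $\{x\subseteq 1\mid\lnot\lnot(x=1)\}$, which avoids your case split on the disjunction. It also reads off $c$ as $\{0\mid w\text{ is inhabited}\}$, a $\Delta_0$ condition, whereas your set $S$ implicitly needs \autoref{Lemma:BoundedRealizeSeparation} to separate by $\exists g\,(g\Vdash\check{0}\in w)$.
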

\begin{proof}
    Assume the contrary that $\WDec(1)$ does not exist in $V$, but $V(\mathcal{A})$ thinks $\WDec(1)$ exists. From $\WDec(1)=\{0\}\cup \{x\subseteq 1\mid \lnot\lnot(x=1)\}$, we can deduce that the existence of $\WDec(1)$ is equivalent to the existence of $\{x\subseteq 1\mid \lnot\lnot(x=1)\}$: one direction is trivial. For the remaining direction, use $\{x\subseteq 1\mid \lnot\lnot(x=1)\} = \WDec(1)\setminus\{0\}$.
    
    Therefore, there is $a\in V(\mathcal{A})$ and $e\in\mathcal{A}$ such that
    \begin{equation}\label{Formula:NotWDecPersistence00}
        e\Vdash\forall x: x\subseteq 1\land \lnot\lnot(x=1)\to x\in a.
    \end{equation}
    Now consider $W = \{ \{0\mid \text{$x$ is inhabited}\}\mid \exists f\in\mathcal{A}:
    \langle f,x\rangle\in a\}$.
    We claim that $\WDec(1)\subseteq W$, so we have a contradiction. 
    Let $c\in \WDec(1)$. Define
        \begin{equation*}
        	x_c = \{\langle\mathbf{0},\varnothing\rangle \mid 0\in c\}.
        \end{equation*}
    We will show that $V(\mathcal{A})\Vdash x_c\in a$. If $c=1$, then we can see that
    $\mathbf{p}(\bblambda f.\mathbf{p00})(\bblambda f.\mathbf{p00})\Vdash x_c=1$.
    Therefore $\lnot\lnot (c=1)$ implies $\lnot\lnot(\exists g: g\Vdash x_c=1)$, which is equivalent to $\mathbf{0}\Vdash \lnot\lnot(x_c=1)$. (In fact, we can replace $\mathbf{0}$ to any $e\in\mathcal{A}$.)
    It is easy to see that $\bblambda f.\mathbf{p00}\Vdash x_c\subseteq 1$. 
    Combining with \eqref{Formula:NotWDecPersistence00}, we have
	\begin{equation*}
		t\Vdash x_c\in a.
	\end{equation*}
	for $t=e\cdot \mathbf{p}(\bblambda f.\mathbf{p00})\mathbf{0}$. Hence, there is $z\in V(\mathcal{A})$ such that $\langle (t)_0,z\rangle\in a$ and $(t)_1\Vdash z=x_c$. We can see that
	\begin{align}
		0\in c & \iff \langle \mathbf{0},\varnothing\rangle \in x_c \\
		& \iff \text{$z$ is inhabited},
	\end{align}
	so $c=\{0\mid \text{$z$ is inhabited}\} \in W$.
\end{proof}

The following preservation results allow us to prove various compatibility results. 
For example, our preservation results prove the theorems in \cite{Hahanyan1981}%
\footnote{Note that Hahanyan \cite{Hahanyan1981} uses a different axiomatic system we have considered. However, we can translate the results of Hahanyan via methods in \cite[Chapter VII, \S1]{Beeson1985}.} and more: %
\begin{theorem}
    If $\mathsf{ZF}$ is consistent, then the following set of axioms is consistent with $T_0 = \mathsf{IZF + RDC + PAx}$ or $T_1=\mathsf{CZF+REA+RDC+PAx}$ respectively:
    \begin{enumerate}[label=(\alph*)]
        \item \label{Theorem:IZFDCom0}
        $\mathsf{DCom + CT_0 + MP + IP + UP}$
        \item \label{Theorem:IZFDCom2}
        $\mathsf{DCom + }$ There is a D-infinite set $D$ such that
        $D$ and ${^D}D$ are equipotent.
    \end{enumerate}
\end{theorem}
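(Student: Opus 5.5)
The plan is to start from a model of $\mathsf{ZF}$, obtained from $\mathsf{Con}(\mathsf{ZF})$, and move to a model of $\mathsf{IZF}$ or $\mathsf{CZF+REA}$ that already satisfies $\mathsf{DCom}$ together with $\mathsf{RDC}$ and $\mathsf{PAx}$. Then we pass to a realizability model $V(\mathcal{A})$. There the extra principles are known to hold, and $\mathsf{DCom}$ survives by \autoref{Theorem:PreservationDComOrig}.

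\textbf{Step 1: the ground model.} Classical $\mathsf{ZFC}$ proves $\mathsf{DCom}$, since by $\mathsf{LEM}$ every set is stable. $\mathsf{ZF}$ and $\mathsf{ZFC}$ are equiconsistent, and $\mathsf{RDC}$ follows from $\mathsf{AC}$. However, $\mathsf{PAx}$ is a genuinely constructive axiom, so we do not get it from $\mathsf{ZFC}$ directly. Following Rathjen and McCarty, we would instead take $\mathsf{IZF+RDC+PAx}$ (respectively $\mathsf{CZF+REA+RDC+PAx}$) as obtained from a realizability or forcing-style interpretation over $\mathsf{ZFC}$. The cleanest route is to observe that the realizability structure $V(\mathcal{A})$ built over a $\mathsf{ZFC}$ ground model validates $\mathsf{RDC}$ and $\mathsf{PAx}$ \cite{Rathjen2003Realizability,McCartyPhD}.

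So we fix a ground model $V\vDash\mathsf{ZFC}$. This satisfies $\mathsf{IZF+DCom}$, and also $\mathsf{CZF+REA+DCom}$, since $\mathsf{REA}$ holds classically. It also satisfies $\Sigma\mhyphen\mathsf{Sep}$, because Full Separation holds. Hence the hypotheses of \autoref{Theorem:PreservationDComOrig} are met.

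\textbf{Step 2: clause \ref{Theorem:IZFDCom0}.} We take $\mathcal{A}$ to be Kleene's first pca. By the standard results on McCarty realizability, $V(\mathcal{A})$ validates $\mathsf{IZF}$, $\mathsf{RDC}$, $\mathsf{PAx}$, $\mathsf{CT_0}$, $\mathsf{MP}$, $\mathsf{IP}$ and $\mathsf{UP}$. The models of $\mathsf{CZF+REA}$ are handled similarly, since $\mathsf{REA}$ is preserved \cite{Rathjen2003Realizability}. $\mathsf{MP}$ is realized because the ground model is classical. \autoref{Theorem:PreservationDComOrig} then gives $V(\mathcal{A})\models\mathsf{DCom}$.

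\textbf{Step 3: clause \ref{Theorem:IZFDCom2}.} We use instead the pca $\mathcal{A}$ arising from a model of the $\lambda$-calculus $D$ with $D\cong D^D$ inside the realizability universe, such as Scott's $D_\infty$ or the graph model. This is as in McCarty's construction of a D-infinite set $D$ equipotent with ${}^DD$. Again, \autoref{Theorem:PreservationDComOrig} adds $\mathsf{DCom}$ for free.

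\textbf{Main obstacle.} The only real work beyond citation is checking that the cited realizability models validate $\mathsf{RDC}$, $\mathsf{PAx}$ and the listed principles over a classical ground with the stated background. The delicate point is the $\mathsf{DCom}$ preservation, which needs $\Sigma\mhyphen\mathsf{Sep}$ or $\mathsf{REA}$ in the ground model. Both are available in $\mathsf{ZFC}$, so this reduces entirely to \autoref{Theorem:PreservationDComOrig}.
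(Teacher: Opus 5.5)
Your argument matches the paper's proof for $T_0$. It uses a $\mathsf{ZFC}$ ground model, Kleene realizability for (a) and $D_\infty$ for (b), transfer of $\mathsf{RDC}$, $\mathsf{PAx}$ and $\mathsf{REA}$ to $V(\mathcal{A})$ by citation, and \autoref{Theorem:PreservationDComOrig} for $\mathsf{DCom}$.

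The one real difference is $T_1$. The paper starts from Aczel's type-theoretic model of $\mathsf{CZF+REA+RDC+PAx}$, while you reuse the $\mathsf{ZFC}$ ground, which satisfies $\mathsf{REA}$ and Full (hence $\Sigma$-) Separation. Your choice is legitimate and actually fits \autoref{Theorem:PreservationDComOrig} better. That theorem needs $\mathsf{DCom}$ in the ground, and by \autoref{Corollary:DComToPow} this entails $\mathsf{Pow}$. Aczel's predicative model does not evidently supply $\mathsf{Pow}$. Over $\mathsf{ZFC}$, a single $V(\mathcal{A})$ satisfies $\mathsf{IZF}\supseteq\mathsf{CZF}$ together with $\mathsf{REA}$, $\mathsf{RDC}$ and $\mathsf{PAx}$, so one model serves for $T_0$ and $T_1$ at once.

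Two side remarks in your write-up should be corrected.

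\begin{enumerate}
\item Your claim that $\mathsf{PAx}$ cannot be obtained from $\mathsf{ZFC}$ directly is false. Under $\mathsf{AC}$ every set is a base, hence its own projective cover. This is exactly what the cited preservation results use: the paper invokes \cite[Theorem 3.1.12]{Dihoum2016}, which transfers $\mathsf{RDC}$ and $\mathsf{PAx}$ from $V$ to $V(\mathcal{A})$. So Step 1 should simply say that the ground model satisfies both.
\item The graph model is not an extensional $\lambda$-model: its continuous function space is only a retract of it. So it does not yield $D\cong{}^DD$. For (b) you need an extensional reflexive domain such as Scott's $D_\infty$, built in the ground model and used as the pca, as in \cite[Theorem 4.1.1]{Dihoum2016}.
\end{enumerate}
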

\begin{proof}\pushQED{\qed}
    We start from a model $V$ of $\mathsf{ZFC}$ for $T_0$, or the model constructed in \cite{Aczel1986} for $T_1$. Let us consider the realizability models $V(\mathcal{A})$.
    We know that $V(\mathcal{A})$ satisfies $\mathsf{IZF}$ or $\mathsf{CZF}$ if $V$ does. (See Chapter 3 of \cite{McCartyPhD} and \cite{Rathjen2003Realizability} for its proof.)
    Since $V$ satisfies $\mathsf{RDC}$ and $\mathsf{PAx}$, $V(\mathcal{A})$ also satisfy $\mathsf{RDC}$ and $\mathsf{PAx}$ by Theorem 3.1.12 of \cite{Dihoum2016}. In the case of $\mathsf{REA}$ over $\mathsf{CZF}$, apply Theorem 6.2 of \cite{Rathjen2003Realizability}.
	
	The consistency of $T_i$ ($i=0,1$) with \ref{Theorem:IZFDCom0} comes from the combination of \autoref{Theorem:PreservationDComOrig}, \cite[Chapter 3 and 4]{McCartyPhD}, ans \cite[Theorems 7.1 and 9.2]{Rathjen2003Realizability} under the Kleene realizability.
    \ref{Theorem:IZFDCom2} follows from \autoref{Theorem:PreservationDComOrig} and \cite[Theorem 4.1.1]{Dihoum2016} under $\mathcal{A}=D_\infty$.
\end{proof}
We may show the consistency of $\mathsf{IZF+DCom}$ with Brouwnian principles in the same vein by considering the realizability model over Kleene's second algebra $\mathcal{K}_2$. We may establish the details for that by applying facts on \cite{Dihoum2016} and \cite{Beeson1985}, but we will not make any progress in this article.

\section{Functional realizability}\label{Section: FunctionalRealizability}
In this section, we discuss why Hahanyan's model does not satisfy Powerset by showing it actually satisfies $\mathsf{ADCom}$. He followed an old-fashioned intensional realizability model to formulate his model, but we will not follow Hahanyan's original formulation. Instead, we follow the definition that is implicit in Swan's model $V^f_0(\mathcal{A})$ appearing in \cite{Swan2014}. Despite the different details, the basic idea behind the functional realizability is the same: Assigning a unique name to each realizer.

    \begin{definition}
        Let $\mathcal{A}$ be a pca and $V(\mathcal{A})$ be the realizability universe. A name $a\in V(\mathcal{A})$ is \emph{functional} if for every $\langle e,b\rangle, \langle e, c\rangle \in a$, $b=c$. $V^f(\mathcal{A})$ is the class of all functional names $a\in V(\mathcal{A})$.
        
        The realizability relation $\Vdash^f$ over $V^f(\mathcal{A})$ is identical with the usual realizablity relation $\Vdash$ over $V(\mathcal{A})$. (See \cite[Definition 4.1]{Rathjen2003Realizability}), except for that we restrict quantifiers to $V^f(\mathcal{A})$ instead of $V(\mathcal{A})$.
        We call $V^f(\mathcal{A})\models \phi(\vec{a})$ if there is $e\in\mathcal{A}$ such that $e\Vdash^f \phi(\vec{a})$.
    \end{definition}
    
    Let $\mathsf{ZFC^-}$ be $\mathsf{ZFC}$ without Powerset but with Collection instead of Replacement, and $\mathsf{GC}$ be the axiom of Global choice postulating there is a well-order of $V$.
    We can see that $\mathsf{ZFC^- + GC}$ proves $V^f(\mathcal{A})$ is a model of $\mathsf{CZF+Sep}$:
    \begin{theorem}[$\mathsf{ZFC^-+ GC}$]
        $V(\mathcal{A})\models \mathsf{CZF+Sep}$.
    \end{theorem}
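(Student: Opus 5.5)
The plan is to follow the standard verification that the McCarty--Rathjen realizability universe $V(\mathcal{A})$ satisfies $\mathsf{CZF}$ \cite{McCartyPhD, Rathjen2003Realizability}, checking axiom by axiom which ground-model principles are actually used. The point is to make sure that none of them is Powerset or Exponentiation, except through Subset Collection, which has to be treated separately. Reasoning classically in $\mathsf{ZFC^-+GC}$, the ground model satisfies Full Separation, Collection, Strong Collection and $\in$-induction. It also satisfies every instance of $\mathsf{LEM}$, so any set-formation that the realizability relation requires is available.

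First I will go through Extensionality, Pairing, Union, Infinity, $\in$-induction and Strong Collection. For these I reuse Rathjen's proofs verbatim. They only use the closure lemma (\autoref{Lemma:ClosureOrig}), Collection in $V$, and the construction of new names of the form $\{\langle e,z\rangle\mid \dots\}$ by Separation. None of these arguments needs $V(\mathcal{A})_\alpha$ to be a set. That matters, because $V(\mathcal{A})_\alpha$ need not be a set without Powerset; the hierarchy should be handled as a $\Gamma$-hierarchy of classes via \autoref{Theorem:ClassInductiveDefThm}. Next I treat Full Separation. Given $a\in V(\mathcal{A})$ and any formula $\phi$, I form
\begin{equation*}
	\Sep_\phi(a)=\{\langle \mathbf{p}fg, z\rangle \mid \langle f,z\rangle\in a\land g\Vdash\phi(z)\}.
\end{equation*}
This is a set by Full Separation in $V$, since $g\Vdash\phi(z)$ is a (class) formula of $V$. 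The usual realizers then witness the separation equivalence, exactly as in the bounded case of \autoref{Lemma:BoundedRealizeSeparation}, but with the boundedness hypothesis removed.

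The remaining axiom, and the step I expect to be the main obstacle, is Subset Collection, which I will verify in the form of Fullness. Rathjen's proof applies Subset Collection in $V$, and that is unavailable here: classically, Subset Collection implies Powerset. My plan is to use Global Choice to shrink an arbitrary realized multi-valued function $R\colon a\rightrightarrows b$ to a canonical subrelation. Suppose $e\Vdash R\colon a\rightrightarrows b$. For each $\langle f,u\rangle\in a$, the realizer $e f$ determines some $\langle g,v\rangle\in b$ together with a realizer that $\op(u,v)\in R$. Using the global well-order, I fix a canonical such choice; the resulting name $S_e$ depends only on $e$, $a$, $b$ and the canonical op-names. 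I will then take $c$ to be the collection of all such $S_e$ for $e\in\mathcal{A}$, which is a set by Replacement over the set $\mathcal{A}$.

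What remains to check is that $S_e$ really depends only on $e$ and not on $R$. In other words, I need that the data ``$e f$ yields $\langle g,v\rangle\in b$'' is determined by $e$ and the ground sets $a$, $b$, so that $c$ is independent of the quantifier over $R$. If this fails because the realizer of $\op(u,v)\in R$ still refers to $R$, I would instead let $S_e$ consist only of the canonical pairs $\op(u,v)$ named by $e$. I would then verify $S_e\subseteq R$ internally by a realizer extracted uniformly from $e$. This uniformity is exactly where I anticipate the difficulty lies.
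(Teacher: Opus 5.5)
There is a genuine gap, and it is exactly at the step you flag as the likely difficulty. In the full universe $V(\mathcal{A})$, a realizer $e$ of $R\colon a\rightrightarrows b$ gives you, for $\langle f,u\rangle\in a$, only two things: the realizer $g=(ef)_0$, and the bare existence of some $v$ with $\langle g,v\rangle\in b$ and $(ef)_1\Vdash\op(u,v)\in R$. Since $b$ may contain many pairs $\langle g,v\rangle$ with the same $g$, which $v$ qualifies depends on $R$. So no $S_e$ can depend on $e$ alone, whether chosen canonically via $\mathsf{GC}$ or otherwise. Your fallback fails for the same reason: $e$ never names $v$.

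This cannot be repaired, because over $\mathsf{ZFC^-+GC}$ the full $V(\mathcal{A})$ need not satisfy Subset Collection at all. Take $b=\{\langle\mathbf{0},\bar m\rangle\mid m\in\omega\}$ and, for $g\colon\omega\to\omega$, let $R_g=\{\langle\bar n,\op(\bar n,\overline{g(n)})\rangle\mid n\in\omega\}$. A single realizer, independent of $g$, realizes $R_g\colon\bar\omega\rightrightarrows b$. Hence a realizer of Fullness with witness $c$ yields a fixed $h$ and, for every $g$, some $\langle h,S\rangle\in c$ with $S\subseteq R_g$ and $S$ total. Then $g=\{\langle n,m\rangle\in\omega\times\omega\mid\exists e\,(e\Vdash\op(\bar n,\bar m)\in S)\}$. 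Separation plus Replacement over $\ran c$ therefore make ${}^\omega\omega$ a set, which fails in $L_{\omega_1}$ (a model of $\mathsf{ZFC^-+GC}$ in which every set is countable).

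The missing idea is \emph{functionality}. The theorem is really about $V^f(\mathcal{A})$; the displayed $V(\mathcal{A})$ is a slip, as the preceding sentence and the proof (which works with $\Vdash^f$) show. The paper defers to Swan's argument for everything except Strong Collection. When every name pairs each realizer with at most one set, $(ef)_0$ determines $v$ and $f$ determines $u$. So $S_e=\{\langle f,\op(u,v)\rangle\mid\langle f,u\rangle\in a,\ \langle(ef)_0,v\rangle\in b\}$ depends only on $e$, and a realizer of $S_e\subseteq R$ is obtained uniformly from $e$ via $f\mapsto(ef)_1$. The collection of pairs $\langle e,S_e\rangle$, over the suitable $e\in\mathcal{A}$, is then a functional full family, and no choice is needed. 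Your counterexample-style $b$ above is excluded precisely because it is not functional.

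Global Choice enters instead in Strong Collection, which you planned to copy verbatim from Rathjen. His collecting name attaches many witnesses to one realizer and so is not functional. The paper uses $\mathsf{GC}$ to pick one $y_f$ with $ef\Vdash^f\phi(x,y_f)$ for each $\langle f,x\rangle\in a$, and takes $c=\{\langle f,y_f\rangle\mid\langle f,x\rangle\in a\}$.
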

    \begin{proof}
        The proof is identical to the proof of \cite[Theorem 4.3]{Swan2014} except for Strong Collection. For Strong Collection, assume that we have $e\Vdash^f \forall x\in a\exists y \phi(x,y)$.
        Then for each $\langle f,x\rangle \in a$ there is $y\in V^f(\mathcal{A})$ such that $ef \Vdash^f \phi(x,y)$. Now choose $y_f$ for each $f$ by applying Global Choice. Now take $c=\{\langle f,y_f\rangle \mid \langle f,x\rangle\in a\}$. Clearly, $c$ is functional. Furthermore, we can see that $c$ witnesses Strong Collection by following Swan's proof of \cite[Theorem 4.3]{Swan2014}.
    \end{proof}
	As a corollary, we can reproduce Lubarsky's proof-theoretic equivalence between $\mathsf{CZF+Sep}$ and Second-order Arithmetic:
	\begin{corollary}[Lubarsky \cite{Lubarsky2006SOA}]
	    Second-order Arithmetic interprets $\mathsf{CZF+Sep}$.
	\end{corollary}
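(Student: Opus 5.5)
The plan is to reduce the corollary to the preceding theorem: inside second-order arithmetic, build a model of $\mathsf{ZFC^-+GC}$, then interpret the functional realizability universe $V^f(\mathcal{A})$ in that model for a suitable pca $\mathcal{A}$. By the preceding theorem, $V^f(\mathcal{A})$ satisfies $\mathsf{CZF+Sep}$, and composing the two interpretations gives an interpretation of $\mathsf{CZF+Sep}$ in second-order arithmetic.

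\textbf{Step 1: a set-theoretic model inside $\mathsf{Z}_2$.} Here $\mathsf{Z}_2$ denotes full second-order arithmetic, and I would use the classical fact that it is bi-interpretable with $\mathsf{ZFC^-}$ together with ``every set is countable.'' Concretely, sets are coded by well-founded extensional trees on $\omega$ (subsets of $\omega$), with membership and equality given by tree isomorphism. Full comprehension yields Separation, and the choice scheme available in $\mathsf{Z}_2$ yields Collection and choice.

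For Global Choice, I would work inside G\"odel's $L$ as built in $\mathsf{Z}_2$: interpret into $L$, which satisfies $V=L$ and hence has a definable global well-order. Alternatively, since every set is countable and is coded by a real, one could use the $\Sigma^1_2$ well-order of reals in $L$. So the target is an interpretation of $\mathsf{ZFC^-+GC}$ (indeed $\mathsf{ZFC^-}+V=L$) in $\mathsf{Z}_2$.

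\textbf{Step 2: choice of pca.} Take $\mathcal{A}$ to be Kleene's first algebra $\mathcal{K}_1$, i.e.\ $\omega$ with partial recursive application. This is a set in the interpreted universe, so the definitions of $V(\mathcal{A})$, $V^f(\mathcal{A})$ and $\Vdash^f$ all make sense there.

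\textbf{Step 3: verifying the inner interpretation.} The relation $\Vdash^f$ is defined by recursion on formulas, uniformly for each formula $\phi$. This gives a syntactic translation $\phi\mapsto \exists e\,(e\Vdash^f\phi)$, sending each axiom of $\mathsf{CZF+Sep}$ to a theorem of $\mathsf{ZFC^-+GC}$ by the preceding theorem. One must also check that intuitionistic logic is sound for the translation, but that is standard for realizability. Hence the composite is an interpretation.

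\textbf{Main obstacle.} The delicate point is Step 1. Two things need care: securing Global Choice rather than just local choice, and obtaining Collection rather than only Replacement. Both are supplied by passing to $L$ inside $\mathsf{Z}_2$, since $L\models\mathsf{ZFC^-}+V=L$. I expect to cite the classical construction (e.g.\ Simpson's treatment of $\mathsf{Z}_2$ and the interpretation of $\mathsf{ZFC^-}$) rather than redo it.

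A second subtlety is that the preceding theorem's statement mentions $V(\mathcal{A})$, whereas the intended model is $V^f(\mathcal{A})$. I would read it as $V^f(\mathcal{A})$, since functionality is exactly what makes the choice of $y_f$ in the Strong Collection argument work.
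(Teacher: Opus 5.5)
Your proposal is the paper's argument. The paper likewise composes the functional-realizability interpretation of $\mathsf{CZF+Sep}$ in $\mathsf{ZFC^-+GC}$ (and the ``$V(\mathcal{A})$'' in that theorem is indeed a typo for $V^f(\mathcal{A})$) with the interpretability of $\mathsf{ZFC^-+GC}$ in second-order arithmetic, which the paper just cites as well known and you sketch via $L$. One small correction: second-order arithmetic by itself does not prove the full choice scheme (only $\Sigma^1_2$-choice), so passing to $L$ is needed to secure Collection and Global Choice, not merely an optional alternative.
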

	\begin{proof}
	    We proved that $\mathsf{ZFC^- + GC}$ interprets $\mathsf{CZF+Sep}$. Then the conclusion follows from a well-known fact that $\mathsf{ZFC^- + GC}$ and Second-order Arithmetic are mutually interpretable. 
	\end{proof}
	
	Hahanyan claimed that functional realizability would satisfy Powerset. However, it turns out that this is not true even if we work over $\mathsf{ZFC+GC}$.
	\begin{proposition}[$\mathsf{ZFC+GC}$]
	    Let $\mathcal{A}$ be a non-trivial pca. Then $V^f(\mathcal{A})$ thinks $\WDec(1)$ cannot be a subset of any set.
	\end{proposition}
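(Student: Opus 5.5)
The plan is to argue directly from the definition of $\Vdash^f$. I will show that there is no functional name $b$ and realizer $e$ with
\[
e\Vdash^f \forall x\,\bigl[x\subseteq 1\land\bigl(x=0\lor\lnot\lnot(x=1)\bigr)\to x\in b\bigr].
\]
By \autoref{Formula:WDec1} this is exactly ``$\WDec(1)\subseteq b$''. Since every universal statement below is realized uniformly at $\bot$, it suffices to refute the existence of such $b$ and $e$. The idea is to feed $e$ many elements of $\WDec(1)$ that carry \emph{identical} realizers of the antecedent, but that require \emph{different} information to be identified with any candidate element of $b$.

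First I build the test names. For each $d\in\mathcal{A}$ put $x_d=\{\langle d,\check 0\rangle\}$. This name is trivially functional, so $x_d\in V^f(\mathcal{A})$. Next I compute realizers uniform in $d$:
\begin{itemize}
\item $x_d\subseteq 1$ is realized by a fixed term $s$ (essentially $\bblambda f.\mathbf{p}\mathbf{0}r$ with $r$ a realizer of $\check 0=\check 0$), independent of $d$.
\item $x_d=1$ is genuinely realized, using $d$ itself to witness $0\in x_d$.
\end{itemize}
Since the metatheory is classical, the second point means $\lnot\lnot(x_d=1)$ is realized by any element, say $\mathbf{0}$. Hence the antecedent is realized by the single element $r_0=\mathbf{p}s(\mathbf{p}\mathbf{1}\mathbf{0})$ (right disjunct tagged), for \emph{every} $d$.

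Then $t=e r_0$ realizes $x_d\in b$ for all $d$. Unfolding membership, $\langle (t)_0,z\rangle\in b$ for some $z$ with $(t)_1\Vdash^f x_d=z$. Functionality of $b$ makes $z$ the same name $z^*$ for all $d$. So the single realizer $g=(t)_1$ realizes $x_d=z^*$ for every $d\in\mathcal{A}$.

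The key step is to extract a contradiction from $g$. Unfolding the realizability clause for $z^*\subseteq x_d$ (part of $g$), I take any $\langle h,w\rangle\in z^*$ that is realized to be in $z^*$. The realizer $(g)_1 h$ must then produce $\mathbf{p}d'u$ with $\langle d',v\rangle\in x_d$, which forces $d'=d$. Such an element of $z^*$ exists because $g$ also realizes $x_d\subseteq z^*$, and applying it to $d$ yields an element of $z^*$. I must choose this element independently of $d$; this is the main obstacle. To handle it, I fix two distinct $d_1\neq d_2$ (using non-triviality). The realizer $(g)_0$ applied to $d_1$ gives a fixed element $\langle h,w\rangle\in z^*$, and this element does not depend on which $x_{d_i}$ we compare with, since $g$, $h$, $z^*$ are all fixed. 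Then $(g)_1 h$ is one fixed value whose first projection must equal both $d_1$ and $d_2$, which is a contradiction. I expect the delicate part to be the careful bookkeeping of the realizability clauses for $\subseteq$ and $=$, following Swan's definition in \cite{Swan2014}.
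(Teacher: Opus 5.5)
Your proposal is correct and follows the paper's argument: you use test names $\{\langle d,\varnothing\rangle\}$ (the paper's $a_X$ with $X=\{d\}$) whose membership in $\WDec(1)$ is realized uniformly, functionality of $b$ then forces a single name $z^*$ uniformly equal to all of them, and the contradiction comes from the first component of a fixed realizer having to equal two distinct elements of $\mathcal{A}$. The only difference is in the last step: the paper first builds a uniform realizer $\alpha\Vdash a_X=a_Y$ and uses its $a_X\subseteq a_Y$ half, whereas you read the contradiction directly off the $z^*\subseteq x_d$ half of $g$ (after using the other half to show $z^*$ is inhabited), which avoids the symmetry and transitivity bookkeeping.
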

	\begin{proof}
	    For $X$ a subset of $\mathcal{A}$, define 
	    \begin{equation*}
	        a_X = \{\langle f,\varnothing\rangle \mid f\in X\}.
	    \end{equation*}
	    We can see that if $X$ is inhabited then $\lambda f.\mathbf{p0i_r}\Vdash^f a_X\subseteq 1$. Furthermore, if $f\in X$, then $f\Vdash^f 0\in a_X$, and this shows $e\Vdash^f \lnot\lnot (0\in a_X)$ for any $e\in\mathcal{A}$.
	    Hence we can find a realizer $r$ uniform to $X$ such that $r\Vdash^f \lnot\lnot(a_X=1)$, so we can find another realizability $r'$ satisfying $r'\Vdash^f a_X\in \WDec(1)$ for all nonempty $X\subseteq\mathcal{A}$ by \autoref{Formula:WDec1}.
	    Now assume that $b\in V^f(\mathcal{A})$ and $e\in\mathcal{A}$ satisfy 
	    \begin{equation*}
	        e\Vdash^f \forall x [x\subseteq \WDec(1) \to x\in b].
	    \end{equation*}
	    Then for any inhabited $X\subseteq \mathcal{A}$, $e(\lambda f.\mathbf{p0i_r})\Vdash^f a_X\in b$.
	    Thus for each inhabited $X\subseteq \mathcal{A}$ we can find $\langle(e(\lambda f.\mathbf{p0i_r}))_0,c \rangle\in b$ such that $(e(\lambda f.\mathbf{p0i_r}))_1\Vdash^f a_X=c$.
	    By functionality of $b$, $c$ does not depend on $X$. Hence we can find a realizer $\alpha$, which does not depend on $X, Y\subseteq \mathcal{A}$, such that $\alpha\Vdash a_X=a_Y$.
	    
	    Now observe that $\alpha\Vdash a_X=a_Y$ implies $(\alpha)_0\Vdash a_X\subseteq a_Y$. By unpacking it, we have
	    \begin{equation*}
	        \forall \langle f,\varnothing\rangle \in a_X \exists \langle (\alpha f)_0, \varnothing\rangle \in a_Y [(\alpha f)_1 \Vdash^f \varnothing=\varnothing].
	    \end{equation*}
	    
	    Hence $f\mapsto (\alpha f)_0$ is a function from $X$ to $Y$, regardless of what $X$ and $Y$ are. This leads to a contradiction if we take, say, $X=\{\mathbf{k}\}$, $Y=\{\mathbf{k}\}$ and $Y=\{r\}$ for some $r\in\mathcal{A}$, $r\neq \mathbf{k}$. Then the map $f\mapsto (\alpha f)_0$ sends $\mathbf{k}$ to $\mathbf{k}$ and $r$, which is impossible.
	\end{proof}

\overfullrule=0pt
\printbibliography
\nocite{*}

\end{document}